\newcommand{\un}{\underline}
\newcommand{\ov}{\overline}
\newcommand{\bA}{\mathbb{A}}
\newcommand{\bC}{\mathbb{C}}
\newcommand{\bK}{\mathbb{K}}
\newcommand{\cC}{\mathcal{C}}
\newcommand{\cE}{\mathcal{E}}
\newcommand{\cF}{\mathcal{F}}
\newcommand{\cL}{\mathcal{L}}
\newcommand{\cT}{\mathcal{T}}
\newcommand{\sC}{{\mathscr{C}}}
\newcommand{\sE}{{\mathscr E}}
\newcommand{\sF}{{\mathscr F}}
\newcommand{\sG}{{\mathscr G}}
\newcommand{\sI}{{\mathscr I}}
\newcommand{\sK}{{\mathscr K}}
\newcommand{\sL}{{\mathscr L}}
\newcommand{\sN}{{\mathscr N}}
\newcommand{\sO}{{\mathscr O}}
\newcommand{\sT}{{\mathscr T}}
\newcommand{\bsF}[1]{\ov{\sF}_{#1}}
\newcommand{\mm}[1]{\mathfrak m_{#1}}
\newcommand{\y}[1]{\bar{y}_{#1}}
\newcommand{\x}[1]{\bar{x}_{#1}}
\newcommand{\M}[1]{\ov{M}_{#1}}
\newcommand{\be}[1]{b_{#1}}
\newcommand{\bet}[2]{b_{#1}(#2)}
\newcommand{\bp}[2]{b_{#1,#2}}
\newcommand{\btp}[3]{b_{#1,#2}(#3)}
\newcommand{\coker}[1]{\operatorname{coker}(#1)}
\renewcommand{\deg}[1]{\operatorname{deg}(#1)}
\newcommand{\Deg}[1]{\operatorname{Deg}(#1)}
\newcommand{\Ext}[1]{\operatorname{Ext}^{#1}}
\renewcommand{\ker}[1]{\operatorname{ker}(#1)}
\newcommand{\Gr}[2]{\operatorname{Gr}_{#1}(#2)}
\newcommand{\Hom}{\operatorname{Hom}}
\newcommand{\im}[1]{\operatorname{im}(#1)}
\newcommand{\Pic}[1]{\operatorname{Pic}(#1)}
\newcommand{\rk}[1]{\operatorname{rk}(#1)}
\newcommand{\Rk}[1]{\operatorname{R}(#1)}
\newcommand{\Spec}[1]{\operatorname{Spec}(#1)}
\newcommand{\SEnd}[1]{\un{\operatorname{End}}(#1)}
\newcommand{\SExt}[2]{\un{\operatorname{Ext}}^{#1}_{#2}}
\newcommand{\Specrel}[1]{\un{\operatorname{Spec}}\big(#1\big)}
\newcommand{\inj}{\hookrightarrow}
\newcommand{\surj}{\twoheadrightarrow}
\theoremstyle{plain}
\newtheorem{thm}{Theorem}[section]
\newtheorem{lemma}[thm]{Lemma}
\newtheorem{prop}[thm]{Proposition}
\newtheorem{cor}[thm]{Corollary}
\newtheorem{fact}[thm]{Fact}
\newtheorem{conj}[thm]{Conjecture}
\newtheorem{teor}{Theorem}
\theoremstyle{remark}
\newtheorem{rmk}[thm]{Remark}
\theoremstyle{definition}
\newtheorem{defi}[thm]{Definition}
\numberwithin{equation}{section}
\begin{document}

\title[Generalized line bundles on primitive multiple curves]{Generalized line bundles on primitive multiple curves and their moduli}

\author{Michele Savarese}
\address{Dipartimento di Matematica e Fisica,
Universit\`a Roma Tre, \\
Largo San Leonardo Murialdo 1,
00146 Roma (Italia)}
\email{msavarese@mat.uniroma3.it}

\keywords{Generalized Line Bundles, Primitive Multiple Curves, Generalized Divisors, Moduli Space of semistable sheaves, Compactified Jacobian}
\subjclass[2010]{14D20; 14H60}

\begin{abstract}
In this paper, we study generalized line bundles over $C_n$, a primitive multiple curve of arbitrary multiplicity $n$, where $n$ is a positive integer. In particular, we give a structure theorem for them and we characterize their semistability in terms of $n-1$ integral invariants, the indices. These results are used to describe the irreducible components that contain stable generalized line bundles in the Simpson moduli space of semistable sheaves of generalized rank $n$ and fixed generalized degree on $C_n$. We compute also the dimension of the Zariski tangent space to this moduli space in a point representing a generic generalized line bundle (any generalized line bundle for $n=3$). In the case $n=1$ all the results are completely trivial, while the case $n=2$ has already been treated in \cite{CK}.
\end{abstract}

\maketitle

\section*{Introduction}\label{SecIntro}
This work is devoted to the study of generalized line bundles on primitive multiple curves, which are a special kind of non-reduced curves, and of the moduli spaces of semistable ones. It extends the results already known on ribbons (cf. \cite{CK}), which are the easiest and most well-known type of primitive multiple curves, i.e. those of multiplicity $2$. It is also a partial answer to the first questions posed in \cite[\S 4]{DEL}, where it is suggested to study the moduli of sheaves on what is there called a ribbon of order $n$, which is precisely a primitive multiple curve of multiplicity $n$, with a special attention to a particular kind of sheaves which are exactly generalized line bundles.

\subsection*{Primitive multiple curves}
This is only an extremely brief introduction to the subject, for more details we refer to Section \ref{SecPmc}.

A primitive multiple curve $C_n=X$ is a Cohen-Macauley non-reduced but irreducible scheme of dimension $1$ over an algebraically closed field $\bK$ such that its reduced subscheme $C=X_{red}$ is a smooth projective curve and locally its nilradical is a principal ideal (or, equivalently, $X$ can be locally embedded in a smooth surface). Let $\sN\subset\sO_X$ be its nilradical, then $X$ is said to be of multiplicity $n$ if $\sN^n=0$ and $\sN^{n-1}\neq 0$; in the case $n=2$ it is just an irreducible ribbon (standard references about ribbons and generalized line bundles on them are \cite{BE} and \cite{EG}). $C_n$ admits a filtration $ C=C_1\subset C_2\subset\dotsb\subset C_{n-1}\subset X=C_n$, where $C_i$ is a primitive multiple curve of multiplicity $i$, for $2\le i\le n-1$. Multiple curves were introduced by B\u{a}nic\u{a} and Forster in \cite{BF} and primitive ones have been studied by Dr\'{e}zet in various articles, among which there are \cite{DR3}, parametrizing them and inspired by \cite{BE}, and \cite{DR1}, \cite{DR2} and \cite{DR4}, where coherent sheaves on them are studied. Note that in all Dr\'{e}zet articles $\bK$ is assumed to be $\bC$, but it seems that this hypothesis is not needed for the results we will use.

Any  coherent sheaf $\sF$ on $X$ has two fundamental invariants, introduced in \cite{DR1}: the generalized rank $\Rk{\sF}$ and the generalized degree $\Deg{\sF}$. A generalized line bundle $\sF$ on $X$ is a pure coherent sheaf that is generically a line bundle (i.e. $\sF_{\eta}\cong\sO_{X,\eta}$, where $\eta$ is the generic point of $X$); in particular, it has generalized rank $n$. It is relevant to observe that generalized line bundles coincide, in this context, with generalized divisors introduced by Hartshorne in \cite{H} (at a level of generality sufficient to comprehend primitive multiple curves); the coincidence is due to \cite[Proposition 2.8]{H}.

Primitive multiple curves and coherent sheaves on them are interesting objects of study mainly because they are a kind of non-reduced curves relatively easy to handle (particularly, in the case of multiplicity $2$) and non-reduced schemes and sheaves on them are still quite unknown. Indeed, according to my knowledge, there are only few papers about coherent sheaves on non-reduced schemes and their Simpson moduli space. In arbitrary dimension (but with special attention to curves and degenerate quadric surfaces) there is Inaba's article \cite{I}, while in the case of curves there are Dr\'{e}zet's studies \cite{DR1}, \cite{DR2} and \cite{DR4} for primitive multiple curves, that of Chen and Kass about the compactified Jacobian of a ribbon \cite{CK}, and Yang's one \cite{Y} about coherent sheaves on fat curves (within which there are ribbons and, more generally, ropes, but not primitive multiple curves of higher multiplicity). Some of the results of \cite{CK} had already been stated, without proofs and under more restrictive hypotheses, by Donagi, Ein and Lazarsfeld in \cite{DEL}. Finally, sheaves on ribbons are studied also in \cite{S3}, which completes the description of the irreducible components of the compactified Jacobian answering to \cite[Question 4.8]{CK}, and in \cite{S2}, which is essentially a reformulation of a long section of my forthcoming doctoral thesis \cite{S1}.

The Simpson moduli space of coherent sheaves of generalized rank $n$ on $C_n$ (including, in particular, generalized line bundles) is interesting also because it is a natural compactification of the Jacobian of $C_n$, when line bundles are stable (this happens, as we will see later, when the degree of the conormal sheaf of the reduced subcurve in the primitive multiple curve is negative).

Another reason of interest of primitive multiple curves is due to the fact that, when they have a retraction to the reduced subcurve, they are involved in the so-called spectral correspondence (for a brief introduction about twisted Higgs pairs and spectral correspondence see \cite[Appendix]{MRV2} or \cite{HP} for a longer one): if $C$ is a smooth projective curve, the spectral cover associated to nilpotent Higgs pairs of rank $n$ over $C$ is a primitive multiple curve of multiplicity $n$ with reduced subcurve $C$ and there is an isomorphism between the moduli space of (semistable) pure coherent sheaves of generalized rank $n$ on it and (semistable) nilpotent Higgs pairs of rank $n$ over $C$.
 
\subsection*{Structure of the work and main results}
This article begins with an introductory chapter about the theory of coherent sheaves on a primitive multiple curve, collecting the results and tools which will be used in the next ones; it is almost entirely based on \cite{DR1} and \cite{DR2}. It is divided in six subsections: the first one recalls the definition of a primitive multiple curve and its basic properties. The second one treats briefly line bundles and the Picard scheme of $C_n$, a primitive multiple curve of multiplicity $n$. The third one introduces the two canonical filtrations of a coherent sheaf on $C_n$ and their main properties. The forth section is about two fundamental invariants of a coherent sheaf: the generalized rank and the generalized degree. There we explain also their relation with ordinary rank and degree. The fifth one recalls the equivalent (on a primitive multiple curve) notions of pure sheaf of dimension $1$, torsion-free sheaf and reflexive sheaf. It treats also duality of sheaves and, in particular, the relations between the two canonical filtrations of dual sheaves. Finally, the sixth subsection is a brief overview about semistability of sheaves on a primitive multiple curve. We do not specify slope or Gieseker semistability because, as we will see in this subsection, these notions are equivalent on primitive multiple curves, as on smooth projective ones.

Section \ref{SecFirstProp} is concerned with various properties of generalized line bundles on $C_n$, with some results more generally about torsion-free sheaves of generalized rank $n$ on $C_n$. It is inspired by the case of ribbons treated in \cite[\S 2]{CK}. In particular, we introduce the indices $\bet{1}{\sF},\,\dotsc,\,\bet{n-1}{\sF}$ (which are non-negative integers) of a generalized line bundle $\sF$ on $C_n$, which will play a significant role throughout the whole work, and the associated torsion sheaves $\sT_i(\sF)$ for $1\le i\le n-1$ (see Definition \ref{Def:indices}).

Section \ref{SecStruc} studies the structure of a generalized line bundle on a primitive multiple curve. While it is quite easy to describe it on a ribbon (cf. \cite[Theorem 1.1]{EG}), the situation is much more complicated in higher multiplicity. The main result is the following: 
\begin{teor}\label{TeorIntroStruc}
Let $\sF$ be a generalized line bundle on $C_n$. Then $\sF$ is isomorphic to $\sI_{Z/C_n}\otimes\sG$, where $Z\subset C_{n-1}$ is a closed subscheme of finite support whose schematic intersection with $C$ is $\operatorname{Supp}(\sT_{n-1}(\sF))$, called the subscheme associated to $\sF$, and $\sG$ is a line bundle on $C_n$.\\
Moreover
\begin{enumerate}
\item $Z$ is unique up to adding a Cartier divisor.
\item Locally isomorphic generalized line bundles have the same associated subscheme, up to adding a Cartier divisor. In particular, if $\sF$ and $\sF'$ are locally isomorphic generalized line bundles, then there exists a line bundle $\sE$ such that $\sF=\sF'\otimes \sE$. Equivalently, there is a transitive action of $\Pic{X}$ on the set of locally isomorphic generalized line bundles.
\end{enumerate}
\end{teor}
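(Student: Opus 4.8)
The plan is to argue locally and then globalize, following the template of the ribbon case \cite[\S2]{CK}, \cite[Theorem 1.1]{EG} but confronting the genuinely harder local classification in multiplicity $n\ge 3$. Since $\sF$ is pure of dimension $1$ and generically a line bundle, it is locally free off a finite set $S\subset C$; away from $S$ it already provides the line bundle $\sG$, and the whole content is concentrated at the points of $S$. So I would fix $x\in S$ and study the stalk $M:=\sF_x$ over $R:=\sO_{X,x}$. By the local structure of primitive multiple curves (Section \ref{SecPmc}), up to completion $R\cong A[t]/(t^n)$ with $A=\sO_{C,x}$ a discrete valuation ring of uniformizer $s$ and $t$ a local generator of $\sN$; localizing at the generic point gives $R_\eta\cong K[t]/(t^n)$, $K=\operatorname{Frac}(A)$, on which $t$ acts as a regular nilpotent. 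Purity and genericity translate into: $M$ is a finitely generated $A$-free module of rank $n$, realized as a $t$-stable $A$-lattice in $R_\eta$ with $M\otimes_R R_\eta=R_\eta$.

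The first, and hardest, step is the local normal form: every such lattice is carried by multiplication by a unit $u\in R_\eta^{*}$ onto an ideal $I\subseteq R$ with $\sN^{n-1}_x=(t^{n-1})\subseteq I$; equivalently $M\cong\sI_{Z_x/C_n}$ for a finite subscheme $Z_x\subseteq C_{n-1}$. Multiplying by a unit of $R_\eta$ is exactly twisting by a local line bundle, which is precisely the freedom allowed in the statement. I would analyze the lattice through the filtration $M_i=M\cap t^iR_\eta$: each graded piece $M_i/M_{i+1}$ is a rank-one $A$-submodule $\cong s^{a_i}A$ of $t^iR_\eta/t^{i+1}R_\eta\cong K$, and $t$-stability forces $a_0\ge a_1\ge\dots\ge a_{n-1}$. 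Scaling by a power of $s$ normalizes $a_{n-1}=0$; the real work is to absorb the off-diagonal coefficients of a lifted $A$-basis into a single unit $u$ so that the resulting lattice lies in $R$ and still contains the socle generator $t^{n-1}$. For $n=2$ this is the elementary computation behind \cite[Theorem 1.1]{EG}, but for $n\ge3$ the mixed terms interact across several layers and do not decouple; showing they can always be cleared is the main obstacle, and I expect it to rest on the explicit description of the torsion sheaves $\sT_i(\sF)$ from Section \ref{SecFirstProp}. The same computation should identify the invariants $a_{i-1}-a_i$ with the indices $\bet{i}{\sF}$ and match $Z_x\cap C$ (cut out by $I+(t)$ in $A$) with the stalk of $\operatorname{Supp}(\sT_{n-1}(\sF))$, yielding the asserted schematic intersection.

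With the local picture in hand I would globalize. The local ideals $I_x$ agree (with $I_x=R$) off $S$, so they define a single closed subscheme $Z\subseteq C_{n-1}$ of finite support with $\sF\cong\sI_{Z/C_n}$ locally everywhere; the local trivializing units $u_x$, compared on overlaps, form a \v{C}ech $1$-cocycle with values in $\sO_X^{*}$ and hence define a line bundle $\sG\in\Pic{X}$ with $\sF\cong\sI_{Z/C_n}\otimes\sG$. Conceptually this is the incarnation for $C_n$ of Hartshorne's correspondence between generically invertible sheaves and effective generalized divisors \cite[Proposition 2.8]{H}.

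For the two refinements, uniqueness of $Z$ up to a Cartier divisor (part (i)) comes from the only ambiguity left in the normal form, namely the scaling $M\mapsto s^kM$, which replaces $Z$ by $Z$ plus the Cartier divisor $k\cdot x$; if $\sI_{Z/C_n}\otimes\sG\cong\sI_{Z'/C_n}\otimes\sG'$ with $Z,Z'\subseteq C_{n-1}$ finite, comparing the induced local ideals (both containing $(t^{n-1})$) shows that $Z$ and $Z'$ differ exactly by such a divisor. For part (ii), if $\sF$ and $\sF'$ are locally isomorphic then their stalks give equal local ideals up to units, hence the same $Z$ modulo a Cartier divisor; the local isomorphisms then differ by units that glue, as above, to a line bundle $\sE$ with $\sF'\cong\sF\otimes\sE$. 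Finally, for any $\sE\in\Pic{X}$ the sheaf $\sF\otimes\sE$ is visibly locally isomorphic to $\sF$, so this exhibits a transitive action of $\Pic{X}$ on the set of generalized line bundles locally isomorphic to $\sF$.
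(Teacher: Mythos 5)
Your overall architecture --- classify the stalks up to linear equivalence (multiplication by a unit of the generic stalk/total ring of fractions), then glue the local units into a line bundle in the spirit of Hartshorne --- is exactly the paper's strategy, which assembles Theorem \ref{Thm:locstr}, Lemma \ref{Lem:iso=leq} and \cite[Proposition 2.12]{H} into Corollary \ref{Cor:globstr}. But what you have written is a plan, not a proof: the step you yourself call ``the main obstacle'', namely that every $t$-stable lattice $M\subset R_\eta$ can be moved by a unit of $R_\eta$ onto an ideal of $R$ containing $t^{n-1}$, is where essentially all of the mathematical content of the theorem lives, and you leave it unproven. In the paper this is the whole of the Local Structure Theorem \ref{Thm:locstr}, proved by induction on the multiplicity: one applies the inductive hypothesis to $M^{(n-1)}$ (whose indices are controlled via Corollary \ref{Cor:betaF(i)}), lifts a generator of $M/M^{(n-1)}\cong A_1$, and pins down its relation with the remaining generators using Lemma \ref{Lem:supportofTi}. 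Even then one has only a classification up to \emph{isomorphism}; a separate induction (Lemma \ref{Lem:iso=leq}) is required to upgrade isomorphism to \emph{linear equivalence}, which is precisely what the \v{C}ech/Hartshorne gluing you describe needs as input (cf. Remark \ref{Rmk:Thm:locstr}). Your proposal records the correct target but supplies neither of these two inductions, so parts (i) and (ii), which you derive from the local normal form, are also left hanging.

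Moreover, the specific route you sketch for the local step aims at a statement that is false for $n\ge 3$. You propose to ``absorb the off-diagonal coefficients of a lifted $A$-basis into a single unit $u$'', with the expectation that the argument rests on the torsion sheaves $\sT_i(\sF)$, i.e.\ that the lattice can be normalized into a form governed by the filtration jumps $a_{i-1}-a_i$, alias the indices $\bet{i}{\sF}$. This cannot be done: the normal form of Theorem \ref{Thm:locstr} necessarily retains extra coefficients $\alpha_{i,j}$, well defined only modulo $(x^{\be{n-j}-\be{n-j-1}},y)$, and these are genuine invariants beyond the indices --- for $n=3$ the ideals $(x^2+y,\,xy,\,y^2)$ and $(x^2,\,xy,\,y^2)$ have the same indices-vector but are not isomorphic (Remark \ref{Rmk:Cor:locstructspecial}). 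So the mixed terms can in general not be ``cleared'', only organized; any correct proof must produce a normal form carrying these residual parameters, which is what the paper's induction does and what no amount of bookkeeping with the $\sT_i(\sF)$ alone will yield.
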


In the text it appears as Corollary \ref{Cor:globstr}, because it is a consequence of the extremely involved local description, given in Theorem \ref{Thm:locstr}, and \cite[Proposition 2.12]{H}. The above cited action is studied with particular attention for some special types of generalized line bundles which will play a fundamental role in determining the irreducible components of the moduli space (see Corollaries \ref{Cor:globstrmolt3} and \ref{Cor:globstrmoltn}).

Section \ref{SecStab} studies semistability of generalized line bundles on $C_n$; the main result is Theorem \ref{Thm:ssinequ}:
\begin{teor}\label{TeorIntroSemistabglb}
Let $\sF$ be a generalized line bundle of generalized degree $D$ on $C_n$ with indices $\bet{1}{\sF},\,\dotsc,\,\bet{n-1}{\sF}$. Then $\sF$ is semistable if and only if the following inequalities hold:
\[
i\sum\limits_{j=i}^{n-1}\bet{j}{\sF}-(n-i)\sum\limits_{j=1}^{i-1}\bet{j}{\sF}\le-\frac{in(n-i)}{2}\deg{\sN/\sN^2}, \; \forall\; 1\le i\le n-1,  
\]
where $\sN$ is the nilradical of $\sO_{C_n}$.

It is stable if and only if all the inequalities are strict. 
\end{teor}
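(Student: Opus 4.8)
The plan is to test slope semistability directly, after cutting down the subsheaves one must inspect to a single canonical one in each generalized rank. Recall that $\sF$ is semistable iff $\mu(\sF')\le\mu(\sF)$ for every nonzero proper subsheaf $\sF'$, where $\mu(\sF')=\Deg{\sF'}/\Rk{\sF'}$ and $\mu(\sF)=D/n$, and that it suffices to test \emph{saturated} subsheaves, since saturating only increases the slope. First I would show that in each generalized rank $i$ there is a unique saturated subsheaf, namely $\sF^{(i)}:=\ker{\sN^i\colon\sF\to\sF}$, the largest subsheaf annihilated by $\sN^i$. At the generic point $\eta$ one has $\sF_\eta\cong\sO_{C_n,\eta}\cong\bK(C)[t]/(t^n)$, whose submodules form a chain, so the only one of generic rank $i$ is $(t^{n-i})_\eta=\ker{\sN^i}_\eta$; hence any rank-$i$ subsheaf $\sF'$ satisfies $\sN^i\sF'_\eta=0$, and purity of $\sF$ upgrades this to $\sN^i\sF'=0$, i.e. $\sF'\subseteq\sF^{(i)}$, with $\Deg{\sF'}\le\Deg{\sF^{(i)}}$ since the quotient is torsion. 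As $\sF/\sF^{(i)}$ is (a twist of) the image $\sN^i\sF\subseteq\sF$, it is torsion-free, so $\sF^{(i)}$ is saturated, of generalized rank $i$, and is itself a generalized line bundle on $C_i$. Thus semistability is equivalent to the $n-1$ conditions $\mu(\sF^{(i)})\le\mu(\sF)$, $1\le i\le n-1$, with strict inequalities characterizing stability.

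The technical heart is then to compute $\Deg{\sF^{(i)}}$ in terms of the indices. Using the structure theorem (Theorem \ref{Thm:locstr} and Corollary \ref{Cor:globstr}) I would write $\sF\cong\sI_{Z/C_n}\otimes\sG$ and analyze $\sF^{(i)}$ locally, comparing with the locally free case $\sF=\sG$, where $\sF^{(i)}=\sN^{n-i}\sG$ has graded pieces $\sN^j\sG/\sN^{j+1}\sG$ for $n-i\le j\le n-1$. I expect the defect to be governed precisely by $\bet{1}{\sF},\dotsc,\bet{i-1}{\sF}$ through Definition \ref{Def:indices}, yielding a formula of the shape
\[
\Deg{\sF^{(i)}}=i\,g+\Big(\binom{n}{2}-\binom{n-i}{2}\Big)\deg{\sN/\sN^2}-\sum_{j=1}^{i-1}\bet{j}{\sF},
\]
with $g=\deg{\sG_{|C}}$ and, for $i=n$, $D=n\,g+\binom{n}{2}\deg{\sN/\sN^2}-\sum_{j=1}^{n-1}\bet{j}{\sF}$. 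Establishing this identity is the main obstacle: it requires unwinding the ``extremely involved'' local description of $\sF$ on $C_n$ and matching the length data of the torsion sheaves $\sT_j(\sF)$ with the successive degree drops of the second canonical pieces $\sF^{(i)}/\sF^{(i-1)}$.

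Finally I would substitute into the cleared inequality $n\Deg{\sF^{(i)}}\le i\,D$, which is exactly $\mu(\sF^{(i)})\le\mu(\sF)$. The terms in $g$ cancel, the coefficient of $\deg{\sN/\sN^2}$ collapses via the elementary identity $(n-i)\binom{n}{2}-n\binom{n-i}{2}=\tfrac{in(n-i)}{2}$, and the index terms reorganize as $i\sum_{j=i}^{n-1}\bet{j}{\sF}-(n-i)\sum_{j=1}^{i-1}\bet{j}{\sF}$, reproducing the asserted inequality for each $i$; replacing $\le$ by $<$ throughout gives the stability statement. As a consistency check, for a genuine line bundle (all indices zero) the inequalities reduce to $\deg{\sN/\sN^2}\le 0$, recovering the criterion that line bundles are (semi)stable precisely when the conormal degree is non-positive, while for $n=2$ they give $\bet{1}{\sF}\le-\deg{\sN/\sN^2}$, matching \cite{CK}.
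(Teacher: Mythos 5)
Your first paragraph is correct, and it is in substance the paper's own reduction seen from the subsheaf side: the claim that the only saturated subsheaves of $\sF$ are the terms $\sF^{(i)}$ of the second canonical filtration is, after passing to quotients, exactly Lemma \ref{Lem:purequot} together with Remark \ref{Rmk:purequot} (the pure quotients of $\sF$ are precisely the $\bsF{i}=\sF/\sF^{(n-i)}$). Up to that point nothing is missing.

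The genuine gap is in the second step, which you yourself flag as ``the main obstacle'': the degree identity carrying all the content is never proved, and the formula you conjecture is in fact wrong. With the normalization of Corollary \ref{Cor:globstr} (where $\sI_{Z,P}\cong\sF_P$, coming from Theorem \ref{Thm:locstr}), the colength of $Z$ at a point $P$ equals $\sum_{b=0}^{n-1}\big(\bp{n-1}{P}-\bp{b}{P}\big)=n\bp{n-1}{P}-\sum_{j=1}^{n-1}\bp{j}{P}$ --- visible already for the monomial ideal $\big(y^{b}x^{\be{n-1}-\be{b}}\,\big|\,0\le b\le n-1\big)$ --- so that $D=ng+\binom{n}{2}\deg{\cC}-\big(n\be{n-1}-\sum_{j=1}^{n-1}\be{j}\big)$ and not $ng+\binom{n}{2}\deg{\cC}-\sum_{j=1}^{n-1}\be{j}$; for instance, for $n=3$ and $(\be{1},\be{2})=(0,1)$ one has $\sF_P\cong(x+\alpha y,\,y^2)$, of colength $2$, not $1$, and no other admissible choice of $Z$ (they differ by Cartier divisors, hence by colengths divisible by $n$) can repair this. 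Likewise the defect in $\Deg{\sF^{(i)}}$ is $i\be{n-1}-\sum_{j=n-i}^{n-1}\be{j}$, not $\sum_{j=1}^{i-1}\be{j}$ (compare \eqref{Eq:Deg2filc}). What you wrote down are precisely the correct formulas for $\sF^{\vee}$, re-expressed in the indices of $\sF$ via \eqref{Eq:betadual}; this is why your final substitution still lands on the inequalities \eqref{Eq:ssin}: the system is invariant under the duality $i\leftrightarrow n-i$ (Corollary \ref{Cor:stabiffdualstab}), and with the correct formulas the subsheaf $\sF^{(i)}$ produces the $(n-i)$-th inequality rather than the $i$-th. So your conclusion comes out right only because two index-reversal errors cancel, and in any case the key identity --- Proposition \ref{Prop:DegFi}, equivalently \eqref{Eq:Degquot}/\eqref{Eq:Deg2filc}, which the paper proves by induction on $n$ using Euler characteristics and Fact \ref{Fact:primaesecondfc}\ref{Fact:primaesecondfc:1} --- is left unestablished. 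Until that induction (or a correct local computation of the colengths) is supplied, the proposal is not a proof.
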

The case of ribbons had already been treated in \cite[\S 3]{CK}.
Another significant result of this section is the computation of a surprisingly canonical Jordan-Holder filtration of a strictly semistable generalized line bundle (see Proposition \ref{Prop:JH}).  

Section \ref{SecModuli} studies the irreducible components of the moduli space of stable generalized rank $n$ sheaves on $C_n$ that contain stable generalized line bundles. It extends some results of \cite[\S 4]{CK} to higher multiplicity and it is divided into two sections, the first about multiplicity $3$, about which we know something more, and the other about multiplicity greater than or equal to $4$. These irreducible components of stable generalized line bundles, which are all of the same dimension, when they exist, are completely described. There are also some results about the local geometry of the moduli space; in particular, we compute the dimension of the tangent space to points representing some special (any in multiplicity $3$) generalized line bundles (see Proposition \ref{Prop:DimSpTgn}), including, in particular, the generic elements of the irreducible components, which result to be generically smooth only when their generic element is a line bundle. The main results, which are Theorems \ref{Thm:compirrid} and \ref{Thm:connessioneluogoflgen} and Corollary \ref{Cor:SpTgpgen} for higher multiplicity, can be summarized in a simplified version as follows:

\begin{teor}\label{TeorIrrComp}
Let $C_n$ be a primitive multiple curve of arithmetic genus $g_n$ such that $\deg{\sN/\sN^2}<0$, where $\sN$ is the nilradical of $\sO_{C_n}$.
\begin{enumerate}
\item The closure of the locus of stable generalized line bundles of fixed indices $\be{1},\dotsc,\be{n-1}$ on $C_n$, $\bar{Z}_{\be{1},\dotsc,\be{n-1}}$, is a $g_n$-dimensional irreducible component of the moduli space of semistable sheaves of generalized rank $n$ (when this locus is not empty).
\item The union of these loci is connected for $n=3$ or for $n\ge 4$ and $\deg{\sN/\sN^2}$ sufficiently small.
\item The tangent space to the generic point of $\bar{Z}_{\be{1},\dotsc,\be{n-1}}$ has dimension $g_n+\sum_{i=[\frac{n+1}{2}]}^{n-1}\be{i}-\sum_{i=1}^{[\frac{n-2}{2}]}\be{i}$. 
\end{enumerate} 

\end{teor}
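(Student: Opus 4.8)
The plan is to treat the three assertions separately, in each case leaning on the structure theorem (Theorem \ref{TeorIntroStruc}) to reduce statements about the moduli space $M$ to bookkeeping with the indices and the Picard scheme $\Pic{C_n}$. For (i), I would first build an explicit family realizing the locus $Z_{\be1,\dots,\be{n-1}}$ of stable generalized line bundles with the prescribed indices. By Theorem \ref{TeorIntroStruc} every such $\sF$ can be written as $\sI_{Z/C_n}\otimes\sG$ with $\sG\in\Pic{C_n}$ and $Z\subset C_{n-1}$ of finite support whose local type is constrained by $\be1,\dots,\be{n-1}$; fixing the generalized degree $D$ together with the indices fixes, via Riemann--Roch, the degree class of $\sG$. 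This gives a morphism from an irreducible parameter scheme $P$ (the admissible $Z$, forming an irreducible stratum of a relative Quot/Hilbert scheme of $C_{n-1}$, times the appropriate component of $\Pic{C_n}$) onto $Z_{\be1,\dots,\be{n-1}}$, so its closure $\bar Z_{\be1,\dots,\be{n-1}}$ is irreducible. The dimension is then computed by analysing the fibres of $P\to M$: two pairs yield isomorphic sheaves exactly when they differ by the Cartier-divisor indeterminacy of Theorem \ref{TeorIntroStruc}(i), while on a fixed local-isomorphism class the action of $\Pic{C_n}$ is transitive by Theorem \ref{TeorIntroStruc}(ii). Putting these together, the image is swept out by $\operatorname{Pic}^0(C_n)$-orbits, and one obtains $\dim\bar Z_{\be1,\dots,\be{n-1}}=\dim\operatorname{Pic}^0(C_n)=g_n$. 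I expect this count --- keeping precise track of how the choice of $Z$ is absorbed into the Cartier-divisor equivalence, so that no directions beyond the Picard ones survive --- to be the main obstacle, and it is exactly where the detailed local normal form behind Theorem \ref{TeorIntroStruc} (that is, Theorem \ref{Thm:locstr}) is genuinely needed.

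To upgrade ``$g_n$-dimensional irreducible closed subset'' to ``irreducible component'', I would argue that $Z_{\be1,\dots,\be{n-1}}$ is locally closed with the indices constant on it, and that the indices vary semicontinuously under specialization, so its generic point cannot lie in the closure of a locus with different indices. Since all the $\bar Z_{\be1,\dots,\be{n-1}}$ have the same dimension $g_n$ and pairwise distinct generic points (separated by the indices), none can be properly contained in another, nor in any other irreducible subset through its generic point, without forcing a strict drop in dimension; hence each is maximal, i.e.\ a component. Nonemptiness of the locus is precisely the solvability of the inequalities of Theorem \ref{TeorIntroSemistabglb}.

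For (ii) I would produce explicit incidences between the strata. The cleanest route is to degenerate a stable generalized line bundle of indices $\be1,\dots,\be{n-1}$ to a strictly semistable sheaf lying on the common boundary of two strata, and to use the canonical Jordan--Holder filtration of Proposition \ref{Prop:JH} to identify the S-equivalence class shared by the two sides. For $n=3$ the list of admissible index triples is short enough to link them directly; for $n\ge4$ the hypothesis that $\deg{\sN/\sN^2}$ is small makes the inequalities of Theorem \ref{TeorIntroSemistabglb} so restrictive that only few tuples $(\be1,\dots,\be{n-1})$ occur, and one then checks that the resulting incidence graph is connected. Exhibiting enough common boundary sheaves to join every allowed tuple is the delicate point here.

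For (iii) I would compute the Zariski tangent space $T_{[\sF]}M=\Ext{1}_{C_n}(\sF,\sF)$ at a generic $\sF$ of the given indices, which is the substance of Proposition \ref{Prop:DimSpTgn}. Via $\SEnd{\sF}$ and the local-to-global spectral sequence, the global term $H^1(\SEnd{\sF})$ contributes the summand $g_n$, while the local contribution $\SExt{1}{C_n}(\sF,\sF)$, supported at the singularities of $\sF$, produces the index terms. Computing these local $\operatorname{Ext}$'s from the local structure of $\sF$ and then pairing the contribution of ``level'' $i$ against that of level $n-i$ yields the asymmetric expression $\sum_{i=[\frac{n+1}{2}]}^{n-1}\be i-\sum_{i=1}^{[\frac{n-2}{2}]}\be i$, the floor functions recording exactly which levels survive the pairing. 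The bulk of the work, and the step most likely to conceal sign and duality subtleties, is this local $\operatorname{Ext}$ computation and its globalization.
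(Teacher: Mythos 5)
Your overall skeleton (stratify by indices, use the structure theorem and the $\Pic{X}$-action for the dimension, explicit degenerations for connectedness, local-to-global $\Ext{1}$ for the tangent space) is the paper's, but several of your key steps fail as stated. The most serious is the maximality argument in (i): the indices are \emph{not} invariants whose constancy separates closures, and in fact part (ii) rests on the opposite phenomenon --- the paper connects the components by exhibiting single \emph{stable} generalized line bundles that lie in several closures $\bar{Z}_{\be{1},\dotsc,\be{n-1}}$ at once, because they are specializations of generalized line bundles with different indices-vectors (Lemmata \ref{Lem:deformationsn} and \ref{Lem:def-n}). So ``its generic point cannot lie in the closure of a locus with different indices'' cannot follow from any semicontinuity of the indices; what actually forbids containments among the $\bar{Z}$'s is equidimensionality plus distinctness of generic members. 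Worse, your argument never excludes that some $\bar{Z}_{\be{1},\dotsc,\be{n-1}}$ is properly contained in a component whose generic member is not a generalized line bundle at all, but a sheaf supported on $C_{n-1}$; ruling this out is exactly what the openness of the generalized-line-bundle locus in families (Lemma \ref{Lem:openessofgenlinbund}) is for, and it is missing from your proposal. In (ii) you also read the hypothesis backwards: $\deg{\cC}$ ``sufficiently small'' means $\delta=-\deg{\cC}$ large, which makes the inequalities \eqref{Eq:ssin} \emph{weaker}, hence admits \emph{more} tuples, not fewer; in the paper, large $\delta$ is needed so that the connecting sheaves (stable generalized line bundles with concentrated local indices) are themselves stable. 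Finally, the paper's connecting points are stable, not strictly semistable: your mechanism via common Jordan--H\"older graded objects would require degenerating two different strata to the same S-equivalence class, which you neither construct nor justify.

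The quantitative steps are also off. In (i), ``swept out by $\operatorname{Pic}^0(C_n)$-orbits, hence of dimension $g_n$'' ignores both that the stabilizer $\ker{q^*:\Pic{X}\to\Pic{X'}}$ of the action is positive-dimensional --- so each orbit has dimension $g(X')<g_n$ --- and that there are extra parameters: the positions of the singular points and the local moduli $z_{h,i}$ of Corollary \ref{Cor:locstructspecial} (in multiplicity $\ge 3$ the local indices do not determine the local isomorphism class, cf.\ Remark \ref{Rmk:Cor:locstructspecial}). The paper's count (Lemma \ref{Lem:DimZbetamult}) balances these and gives $g_n-\be{n-1}+r$ for a stratum with $r$ singular points, so only the stratum whose local indices are spread out as $0/1$ over $\be{n-1}$ distinct points attains $g_n$; the identity $\bar{Z}_{\be{1},\dotsc,\be{n-1}}=\bar{Z}_{\un{b}}$ for that maximal stratum --- i.e.\ the irreducibility you simply assume for your parameter scheme --- is precisely the content of Lemma \ref{Lem:deformationsn} and Theorem \ref{Thm:compirrid}. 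In (iii) your allocation of the two terms of the local-to-global sequence is inconsistent with the final formula: $H^1(\SEnd{\sF})$ contributes $g(X')=g_n-\tilde{b}_{n-1}$, not $g_n$ (via $\SEnd{\sF}\cong q_*\sO_{X'}$, Remark \ref{Rmk:b1=bn-1}, together with $\operatorname{h}^0(X',\sO_{X'})=1$, which uses stability through Lemma \ref{Lem:glosecblup}), while each singular point $P$ contributes $2\min\{h(P),n-h(P)\}\bp{n-1}{P}$ to the local term, i.e.\ $2\tilde{b}_{n-1}$ in total, in the notation of Lemma \ref{Lem:ExtFLGn}; the stated dimension arises as $(g_n-\tilde{b}_{n-1})+2\tilde{b}_{n-1}=g_n+\tilde{b}_{n-1}$. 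If one computes the local $\operatorname{Ext}$'s correctly but keeps your value $g_n$ for the global term, the answer comes out too large by $\tilde{b}_{n-1}$.
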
 

It was not possible to get a complete description of the irreducible components of the moduli space, because within semistable pure coherent sheaves of generalized rank $n$ on $C_n$ there are also direct images of semistable pure coherent sheaves of generalized rank $n$ on $C_{i}$, for any $1\le i\le n-1$ and they are quite hard to handle, in general. In the case of ribbons, treated in \cite{CK}, this is not a real problem because there is only $C_1$, i.e. the reduced subcurve, to be considered and pure sheaves of generalized rank $2$ on it are just vector bundles of rank $2$, whose moduli space is well-known.
For a conjecture about the global picture, see \cite[Section 4.3]{S1}.

\section[Sheaves on primitive multiple curves]{Generalities on sheaves on primitive multiple curves}\label{SecPmc} 
As anticipated in the introduction, this section collects the definition and few properties of primitive multiple curves (in the first subsection) and the basis of the theory of coherent sheaves on them developed in \cite{DR1}, \cite{DR2} and \cite{DR4} (in the next ones). Here we fix also notations and conventions which will be used throughout the work.

\subsection{Primitive multiple curves}\label{SubSecPmc}
This subsection is based on \cite[\S 2.1]{DR1} and \cite[\S 2.1]{DR2}. 
\begin{defi}\label{Def:pmc}
A \emph{primitive multiple curve} of \emph{multiplicity} $n$ is an irreducible Cohen-Macaulay algebraic scheme $(X=C_n,\sO_X=\sO_{C_n})$ over an algebraically closed field $\bK$ such that:
\begin{enumerate}
\item\label{Def:pmc:1} its reduced subscheme $(X_{red},\sO_{X_{red}})$ is a smooth projective curve $(C,\sO_C)$ over $\bK$;
\item the multiplicity $n$ is the least natural number such that $\sN^n=0$, where $\sN=\ker{\sO_X\surj\sO_C}$ is the \emph{nilradical} ideal sheaf of $\sO_{X}$;
\item\label{Def:pmc:3} it is locally embedded in a smooth surface, i.e. any closed point admits a neighbourhood that can be embedded in a smooth surface, or, equivalently, the nilradical is locally a principal ideal.
\end{enumerate}
\end{defi}
\begin{rmk}\label{Rmk:pmc}
\noindent
If hypothesis \ref{Def:pmc:3} is omitted, then $X$ is called a \emph{multiple curve} or a \emph{multiple structure} over $C$, but in this work only primitive ones are treated.

\end{rmk}
Observe that a primitive multiple curve of multiplicity $1$ is just a smooth projective curve and a primitive multiple curve of multiplicity $2$ is just a ribbon over a smooth projective curve (which from now on will be called simply a ribbon). 
The topological space underlying a primitive multiple curve is homeomorphic to that of its reduced subcurve, but the structure sheaves are quite different. Indeed, if $P\in X$ is a closed point, it holds that $\sO_{X,P}=\sO_{C,P}\otimes_{\bK} (\bK[y]/(y)^n)$.

The above definition is not the original one, used in \cite{DR1} and given in terms of an ambient three-fold, but this abstract one, used e.g. in \cite{DR2}, is equivalent to the embedded one by \cite[Th\'{e}or\`{e}me 5.3.2]{DR3}.

The arithmetic genus of $C_n$, equal to $1-\chi(C_n,\sO_{C_n})$, will be denoted by $g(C_n)=g_n$ and will be called simply the \emph{genus} of $C_n$ (more generally for any curve $Y$ that will appear throughout the work its genus will be $g(Y)=1-\chi(Y,\sO_Y)$).

There is a \emph{canonical filtration} of $X$ by closed subschemes
\[
C_1=C\subset C_2\subset \dotsb \subset  C_{n-1}\subset C_n=X,
\] 
where $C_i$ is a primitive multiple curve of multiplicity $i$ (whose genus will be denoted by $g_i$) with reduced subcurve $C$ and such that its ideal sheaf in $X$ is $\sI_{C_i/X}=\sN^i$, for $1\le i\le n-1$. It holds that $\sN$ is a line bundle over $C_{n-1}$ and there exists a line bundle $\sC$ over $C_n$ extending it.

The \emph{conormal} sheaf of $C$ in $X$ is $\sN/\sN^2$ and is denoted by $\cC$. It is a line bundle over $C$ and it plays a quite important role in the study of $X$: if its degree is negative, $X$ has no non-constant global sections (cf. \cite[\S 2.6]{DR2}) and so in this case $g_n=\operatorname{h}^1(X,\sO_X)$. Moreover, $\sI_{C_i/X}/\sI_{C_{i+1}/X}=\sN^i/\sN^{i+1}$ is a line bundle on $C$ and it is equal to $\cC^{\otimes i}$, for $2\le i\le  n-1$. The nilradical ideal of $\sO_{C_i}$ is $\sN/\sN^i$, for any $2\le i\le n-1$. This implies that the conormal sheaf of $C$ with respect to $C_i$ is again $\cC$ (indeed, it is evident that $(\sN/\sN^i)/(\sN/\sN^i)^2=\sN/\sN^2$).

A primitive multiple curve is called \emph{split} if it admits a retraction to the reduced subcurve. A primitive multiple curve of multiplicity $n$ is \emph{trivial}, if it is the $n$-th infinitesimal neighbourhood of a smooth projective curve $C$ in the geometric vector bundle associated to $\cL^*$, where $\cL$ is a line bundle on $C$. These are the only primitive multiple curves that appear in the spectral correspondence. Any trivial primitive multiple curve is split. The converse holds in general only in multiplicity $2$ (essentially by \cite[Proposition 1.1]{BE}), while it is false in higher multiplicity (cf. \cite[\S 1.1.6]{DR3}).

\subsection{Line bundles and the Picard scheme}\label{SubSecLinBunPicGr}
In this short subsection we collect some useful facts about line bundles on $C_n$ and its Picard group.

The first relevant properties are the following, which are, respectively, \cite[Th\'{e}or\`{e}me 3.1.1 and \S 3.1.5]{DR1}:
\begin{fact}\label{Fact:extvb}
\noindent
\begin{enumerate}
\item\label{Fact:extvb:1}For $1\le i\le n-1$, any line bundle (and more generally any vector bundle) on $C_i$ extends to a line bundle (resp. vector bundle of the same rank) on $C_n$.
\item\label{Fact:extvb:2}Let $\sL$ be a line bundle over $C_{n-1}$ and $\cL=\sL|_C$. Then there is a short exact sequence
\[
0\to H^1(\cC^{n-1})\to\Ext{1}_{\sO_{C_n}}(\cL,\sL\otimes\sN)\overset{\pi}{\to}\bK\to 0.
\]
Moreover, the set $P_\sL$ of line bundles extending $\sL$ over $C_n$ is identified with $\pi^{-1}(1)$, which is an affine space isomorphic to $H^1(\cC^{n-1})$. In particular, the bijection between the latter and $P_{\sO_{C_{n-1}}}$ is an isomorphism of abelian groups.
\end{enumerate} 
\end{fact}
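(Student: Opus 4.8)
The plan is to prove (i) by extending one infinitesimal step at a time, and (ii) by combining the local-to-global spectral sequence for $\operatorname{Ext}$ with an explicit local computation over $C_n$. For (i), I would note that for each $i\le j\le n-1$ the closed immersion $C_j\hookrightarrow C_{j+1}$ is a square-zero thickening: its ideal is $\sN^j/\sN^{j+1}\cong\cC^{\otimes j}$ and $(\sN^j)^2=\sN^{2j}\subseteq\sN^{j+1}$ since $2j\ge j+1$. Given a vector bundle $\sE$ on $C_j$, local extensions across such a thickening always exist (a free module extends to a free module) and form a torsor under $\SEnd{\sE}\otimes\cC^{\otimes j}$, so the obstruction to gluing them into a global locally free extension of the same rank lies in $H^2(C_j,\SEnd{\sE}\otimes\cC^{\otimes j})$. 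As $C_j$ is one-dimensional this group vanishes, the extension exists, and iterating from $C_i$ up to $C_n$ proves (i). Taking $\rk{\sE}=1$ gives the line bundle case and, in particular, shows $P_\sL\neq\emptyset$.

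For (ii), I would first attach to each $\sG\in P_\sL$ an extension class. If $\sG$ is a line bundle on $C_n$ with $\sG|_{C_{n-1}}\cong\sL$, then $\sN\sG\cong\sL\otimes\sN$ and $\sG/\sN\sG=\sG|_C=\cL$, so $\sG$ fits in a short exact sequence $0\to\sL\otimes\sN\to\sG\to\cL\to 0$ defining a class $[\sG]\in\Ext{1}_{\sO_{C_n}}(\cL,\sL\otimes\sN)$. The exact sequence in the statement I would derive from the local-to-global spectral sequence for $\operatorname{Ext}$, which on the one-dimensional scheme $C_n$ degenerates (the $H^2$-term vanishing) to
\[
0\to H^1\big(\SExt{0}{\sO_{C_n}}(\cL,\sL\otimes\sN)\big)\to\Ext{1}_{\sO_{C_n}}(\cL,\sL\otimes\sN)\xrightarrow{\pi}H^0\big(\SExt{1}{\sO_{C_n}}(\cL,\sL\otimes\sN)\big)\to 0.
\]
The substance is the computation of the two sheaf-$\operatorname{Ext}$ terms: working locally, where $\sO_{C_n,P}\cong\sO_{C,P}[y]/(y^n)$, $\cL\cong\sO_{C_n,P}/(y)$ and $\sL\otimes\sN\cong\sO_{C_n,P}/(y^{n-1})$, one resolves $\cL$ by the two-periodic complex whose differentials are alternately multiplication by $y$ and by $y^{n-1}$, and after reinstating the twists by the line bundle $\sC$ extending $\sN$ (with $\sC|_C=\cC$) one finds $\SExt{0}{\sO_{C_n}}(\cL,\sL\otimes\sN)\cong\cC^{n-1}$ and $\SExt{1}{\sO_{C_n}}(\cL,\sL\otimes\sN)\cong\sO_C$. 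Since $C$ is smooth, projective and connected, $H^0(\sO_C)=\bK$, and the sequence becomes the asserted one.

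It then remains to identify $P_\sL$ with $\pi^{-1}(1)$. The value $\pi([\sG])\in H^0(\SExt{1}{\sO_{C_n}})=\bK$ is read pointwise, and at each point the sequence $0\to\sL\otimes\sN\to\sG\to\cL\to 0$ becomes the local extension $0\to(y)\to\sO_{C_n,P}\to\sO_{C,P}\to 0$, whose class is the generator of the local $\operatorname{Ext}^1$ corresponding to the free module; hence $\pi([\sG])$ is the nowhere-vanishing constant section $1$. Conversely, any class in $\pi^{-1}(1)$ restricts at every point to this free-module extension, so its middle term is locally free of rank one, that is, a line bundle on $C_n$ restricting to $\sL$. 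Finally $P_\sL$ is, by (i), a nonempty torsor under $H^1(\cC^{n-1})$ (by the deformation theory of the square-zero extension $C_{n-1}\hookrightarrow C_n$ any two extensions differ by a class in $H^1(\cC^{n-1})$), while $\pi^{-1}(1)$ is a torsor under $\ker{\pi}=H^1(\cC^{n-1})$; since $\sG\mapsto[\sG]$ is equivariant for these actions it is a bijection. For the last clause, taking the trivial extension $\sO_{C_n}$ as base point makes $P_{\sO_{C_{n-1}}}=\ker{\Pic{C_n}\to\Pic{C_{n-1}}}$ a group, and the identification with $H^1(\cC^{n-1})$ is a homomorphism because the gluing class of a tensor product of line bundles is the sum of the two classes.

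I expect the delicate point to be the pointwise description of $\pi$ — matching the abstract class in $\Ext{1}_{\sO_{C_n}}(\cL,\sL\otimes\sN)$ with the geometric condition that the middle sheaf be locally free — together with the correct bookkeeping of the $\sC$-twists in the two local $\operatorname{Ext}$ computations; once these are in place, the torsor identification, the equivariance of $\sG\mapsto[\sG]$, and the group-homomorphism statement are formal.
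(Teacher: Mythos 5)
This statement is a \emph{Fact} in the paper: it is imported without proof from Dr\'ezet (\cite[Th\'eor\`eme 3.1.1 and \S 3.1.5]{DR1}), so there is no internal proof to compare yours against; your proposal has to be judged on its own merits. On those merits it is essentially sound, and it follows what is surely the standard route. Part (i) via square-zero deformation theory is correct: each $C_j\subset C_{j+1}$ has ideal $\sN^j/\sN^{j+1}\cong\cC^{\otimes j}$ with vanishing square, the obstruction lives in an $H^2$ on a one-dimensional space, hence vanishes, and one climbs the canonical filtration. Part (ii) is also correctly set up: the local-to-global sequence degenerates for dimension reasons, and your two sheaf-$\operatorname{Ext}$ computations check out, including the cancellation of the $\cL$-twists ($\SExt{0}{\sO_{C_n}}(\cL,\sL\otimes\sN)\cong\cC^{-1}\otimes\cL^{-1}\otimes\cC^{n-1}\otimes(\cC\otimes\cL)\cong\cC^{n-1}$ and $\SExt{1}{\sO_{C_n}}(\cL,\sL\otimes\sN)\cong\sO_C$); the two-periodic resolution $\dotsb\to\sC^{\otimes n}\otimes\sG\to\sC\otimes\sG\to\sG\to\cL\to 0$ you invoke is exactly the one this paper itself uses later, in the proof of Lemma \ref{Lem:ExtVBrkn}.

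Two points, however, are asserted where they need an argument, and they are precisely where the content of Dr\'ezet's theorem beyond the computation lies. First, in the surjectivity step you conclude that a class in $\pi^{-1}(1)$ has middle term ``a line bundle on $C_n$ restricting to $\sL$'': local freeness does follow from the pointwise analysis, but the identification $\sG|_{C_{n-1}}\cong\sL$ does not; it requires observing that the extension forces $\sN\sG=\iota(\sN\otimes\sL)$, i.e. $\sN\otimes\sG|_{C_{n-1}}\cong\sN\otimes\sL$, and then cancelling $\sN$, which is legitimate because $\sN$ is an \emph{invertible} $\sO_{C_{n-1}}$-module. Second, the map $\sG\mapsto[\sG]$ is only defined after choosing an isomorphism $\sG|_{C_{n-1}}\cong\sL$, and changing that choice by a unit $u\in H^0(\sO_{C_{n-1}}^*)$ rescales the inclusion and the projection by actions through the two different arguments of $\Ext{1}_{\sO_{C_n}}(\cL,\sL\otimes\sN)$; these agree locally but their global difference is governed by the connecting map $H^0(\sO_{C_{n-1}}^*)\to H^1(\cC^{n-1})$. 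The same map controls whether the $H^1(\cC^{n-1})$-action on isomorphism classes in $P_\sL$ is free. When $\deg{\cC}<0$ (the only case this paper ever uses) one has $H^0(\sO_{C_{n-1}})=\bK$ and the issue disappears, but the Fact is stated for arbitrary primitive multiple curves, so calling the torsor identification and equivariance ``formal'' underestimates them: that is the one step of your outline that genuinely needs to be worked out rather than checked.
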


On a primitive multiple curve $C_n$ there exists, by, e.g., \cite[Theorem 8.2.3]{BLR}, the so-called Picard scheme $\Pic{C_n}$. It is a scheme locally of finite type parametrizing line bundles on $C_n$ and endowed with a tautological line bundle, called the Poincar\'{e} line bundle, over $\Pic{C_n}\times C_n$. A general introduction to the rich theory of relative and absolute Picard schemes can be found in \cite{K} or in \cite[Chapter 8]{BLR}.

The following fact is an application to our case of the general theory; in particular, the first point follows from \cite[Proposition 9.5.3]{K} for separateness, from \cite[Proposition 9.5.19]{K} for the smoothness and from \cite[Theorem 8.4.1]{BLR} or, equivalently, \cite[Corollary 9.5.13]{K} for the dimension. The second and the third point are contained in \cite[\S 3.3]{DR1}, while the last assertion follows from the general theory and from the previous points and is inspired by the case of ribbons treated in \cite[Fact 2.10]{CK}. 
\begin{fact}\label{Fact:Pic} Let $C_n$ be a primitive multiple curve. Then
\begin{enumerate} 
\item\label{Fact:Pic:1} The Picard scheme $\Pic{C_n}$ for $C_n$ is smooth, separated and of dimension $\operatorname{h}^1(C_n,\sO_{C_n})$.
\item\label{Fact:Pic:2} Its irreducible components are the varieties parametrizing the line bundles on $C_n$ whose restrictions to $C$ have fixed degree $j$.
\item\label{Fact:Pic:3} There are two short exact sequences of abelian group schemes:
\[
0\to P_{\sO_{C_{n-1}}}\simeq H^1(\cC^{n-1})\to\Pic{C_n}\to\Pic{C_{n-1}}\to 0,\]
\[
0\to \mathbf{P}_n\to\Pic{C_n}\to\Pic{C}\to 0;
\]
where $\mathbf{P}_n\subset\Pic{C_n}$ is the affine subgroup scheme of line bundles with trivial restriction to $C$. Moreover, there exists a filtration of group schemes $0=G_0\subset G_1\subset\dotsb\subset G_{n-1}=\mathbf{P}_n$ such that $G_i/G_{i-1}\simeq H^1(\cC^{i})$.
\item\label{Fact:Pic:4} The component of the identity, which is called the \emph{Jacobian variety}, is not proper if and only if $\mathbf{P}_n\neq 0$. The latter holds, in particular, if $\operatorname{h}^1(C,\cC)\neq 0$, and, thus, if $\deg{\cC}\le g_1-2$, where $g_1$ is the genus of $C$.
\end{enumerate}
\end{fact}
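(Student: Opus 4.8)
The plan is to derive all four assertions by specializing the general theory of the Picard scheme, as developed in \cite{K} and \cite[Chapter 8]{BLR}, to the projective curve $C_n$, feeding in the two explicit inputs recorded in Fact~\ref{Fact:extvb} (the cited results need only that $C_n$ be projective over $\bK$ with a rational point, not that it be reduced, so the non-reducedness of $C_n$ is harmless). For assertion~\ref{Fact:Pic:1} the decisive remark is that $C_n$ has dimension $1$, so $H^2(C_n,\sO_{C_n})=0$; since this is the space receiving the obstruction to deforming line bundles, \cite[Proposition 9.5.19]{K} yields smoothness, while \cite[Proposition 9.5.3]{K} gives separatedness. Smoothness then lets me read off the dimension as that of the tangent space at the origin, namely $\operatorname{h}^1(C_n,\sO_{C_n})$, by \cite[Theorem 8.4.1]{BLR} (equivalently \cite[Corollary 9.5.13]{K}). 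For assertion~\ref{Fact:Pic:2} I would compose the restriction $\Pic{C_n}\to\Pic{C}$ with the degree $\Pic{C}\to\bZ$: its fibres are the loci of fixed restricted degree, and each is connected once assertion~\ref{Fact:Pic:3} is known, since $\mathbf{P}_n$ is connected and the degree-$j$ component of $\Pic{C}$ is connected, so their extension is connected; smoothness upgrades connected to irreducible.

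For assertion~\ref{Fact:Pic:3}, both short exact sequences arise from the closed immersions $C\subset C_{n-1}\subset C_n$ by restriction of line bundles. Surjectivity of $\Pic{C_n}\to\Pic{C_{n-1}}$ and of $\Pic{C_n}\to\Pic{C}$ is precisely the extension statement, part~\ref{Fact:extvb:1} of Fact~\ref{Fact:extvb}; the kernel of the former is the set of line bundles restricting trivially to $C_{n-1}$, identified with $P_{\sO_{C_{n-1}}}\cong H^1(\cC^{n-1})$ by part~\ref{Fact:extvb:2} of Fact~\ref{Fact:extvb}, and the kernel of the latter is $\mathbf{P}_n$ by definition. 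To filter $\mathbf{P}_n$ I would set $G_{n-i}=\ker{\Pic{C_n}\to\Pic{C_i}}$ for $1\le i\le n$, so that $G_0=0$ and $G_{n-1}=\mathbf{P}_n$; the successive quotient $G_{n-i}/G_{n-i-1}$ is measured by restriction to $C_{i+1}$ and so equals $\ker{\Pic{C_{i+1}}\to\Pic{C_i}}$, which part~\ref{Fact:extvb:2} of Fact~\ref{Fact:extvb}, applied to the step $C_i\subset C_{i+1}$, identifies with $H^1(\cC^{i})$. Thus the subquotients of the filtration are exactly the groups $H^1(\cC^{i})$, $1\le i\le n-1$, in a suitable indexing.

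Finally, for assertion~\ref{Fact:Pic:4} I would restrict the second sequence of assertion~\ref{Fact:Pic:3} to identity components, presenting the Jacobian $\operatorname{Pic}^0(C_n)$ as an extension of the abelian variety $\operatorname{Pic}^0(C)$ by $\mathbf{P}_n$; by part~\ref{Fact:extvb:2} of Fact~\ref{Fact:extvb} the latter is a successive extension of the vector groups $H^1(\cC^{i})$, hence unipotent and affine. The structure theory of commutative algebraic groups says that a connected such group is proper exactly when its linear part is trivial, so $\operatorname{Pic}^0(C_n)$ is proper if and only if $\mathbf{P}_n=0$. For the sufficient condition I would exhibit a nonzero graded piece: by Serre duality $\operatorname{h}^1(C,\cC)=\operatorname{h}^0(C,\omega_C\otimes\cC^{-1})$, and Riemann--Roch forces the right-hand side to be positive as soon as $\deg{\omega_C\otimes\cC^{-1}}\ge g_1$, that is $\deg{\cC}\le g_1-2$, whence $H^1(\cC)\neq 0$ and $\mathbf{P}_n\neq 0$. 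I expect the genuine obstacle to be concentrated here: one must carefully justify that properness of the connected group scheme $\operatorname{Pic}^0(C_n)$ is controlled solely by the vanishing of its affine part, which rests on the Chevalley--Barsotti structure theorem together with the fact that a nontrivial $\mathbf{P}_n$ is affine and therefore non-proper; by contrast, assertions~\ref{Fact:Pic:1} and~\ref{Fact:Pic:2} reduce to citing the general machinery once $H^2(\sO_{C_n})=0$ and Fact~\ref{Fact:extvb} are in hand.
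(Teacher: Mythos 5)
Your proposal is correct and is essentially the paper's own justification written out in full: the paper proves part (i) by exactly the citations you invoke, attributes (ii)--(iii) to Dr\'ezet (whose extension results are recorded here as Fact~\ref{Fact:extvb}, the same input you use), and says (iv) ``follows from the general theory and from the previous points'', which is precisely your Chevalley-plus-Serre-duality/Riemann--Roch argument. One cosmetic remark: your filtration $G_{j}=\ker{\Pic{C_n}\to\Pic{C_{n-j}}}$ has subquotients $G_j/G_{j-1}\simeq H^1(\cC^{n-j})$, i.e. the groups $H^1(\cC^{i})$ in reverse order relative to the statement --- as you yourself note, this is harmless, since every use of the filtration (connectedness, affineness and unipotence of $\mathbf{P}_n$, and its nontriviality when $H^1(\cC)\neq 0$) depends only on the multiset of subquotients.
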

\begin{rmk}\label{Rmk:PicSch}
The first assertion of Fact \ref{Fact:Pic} is true for any projective curve over a field. In the following, we will consider the Picard scheme also of some blowing-ups of a primitive multiple curve which are not necessarily primitive multiple curves themselves.
\end{rmk}

\subsection{Canonical filtrations}\label{SubSecCanFiltr}
Here we introduce two tools, which are fundamental to study a coherent sheaf on a primitive multiple curve: the so-called canonical filtrations. The first one has been introduced by Dr\'{e}zet in \cite[\S 4.1]{DR1}, while the second one has been studied for the first time by Inaba, although in a more general context (cf. \cite[\S 1]{I}), as Dr\'{e}zet himself points out. Our presentation will essentially follow Dr\'{e}zet's article.  

Before starting with their definitions, which are given not only for sheaves but also for finitely generated $\sO_{C_n,P}$-modules, where $P$ is a closed point of $C_n$, it is useful to fix some more conventions.

Throughout the work, if $\sF$ will be coherent sheaf over $C_i$ for some $1\le i\le n-1$, its direct image over $C_n$ will be denoted again by $\sF$ and they will be treated as if they were the same object. All the sheaves studied throughout this paper will be coherent, so this attribute will be omitted. Vector (resp. line) bundle will be used as a synonym of locally free sheaf of finite rank (resp. of rank $1$). 
 
It is also convenient to fix the notation for the local set-up: set $A_i:=\sO_{C_i,P}$, for $1\le i\le n$, where $P$ is a closed point. This implies that $A_n=A_1\otimes_{\bK} \bK[y]/(y^n)$; moreover, $A_i=A_n/(y^i)$, for $1\le i\le n-1$.
The following definitions could more generally be made when $A_1$ is a DVR and $A_n$ a local ring whose nilradical is principal, generated by an element $y$ such that $y^n=0\neq y^{n-1}$, and whose reduced ring is $A_1$ and, in this more general context, the $A_i$ could be defined as $A_n/(y^i)$, but for this work we do not need that generality.\begin{defi}\label{Def:primafiltrcan}\noindent

\begin{enumerate}
\item The \emph{first canonical filtration} of a finitely generated $A_n$-module $M$ is
\[
\{0\}=M_n\subseteq M_{n-1}=y^{n-1}M\subseteq\dotsb\subseteq M_1=yM\subseteq M_0=M.
\]
Equivalently, $M_i$ is equal to $\ker{M_{i-1}\surj M_{i-1}\otimes_{A_n}A_1}$, for $1\le i\le n$.

The \emph{first graded object} of $M$ is $\Gr{1}{M}:=\bigoplus_{i=0}^{n-1}M_i/M_{i+1}$.
\item The \emph{first canonical filtration} of a sheaf of $\sO_{C_n}$-modules $\sF$ is, analogously,
\[
0=\sF_n\subseteq \sF_{n-1}=\sN^{n-1}\sF\subseteq\dotsb\subseteq \sF_1=\sN\sF\subseteq \sF_0=\sF.
\]
Equivalently, $\sF_i$ is equal to $\ker{\sF_{i-1}\surj \sF_{i-1}|_C}$, for $1\le i\le n$.

The \emph{first graded object} of $\sF$ is
\[
\Gr{1}{\sF}=\bigoplus_{i=0}^{n-1}G_i(\sF):=\bigoplus_{i=0}^{n-1}\sF_i/\sF_{i+1}.
\]

The \emph{complete type} of $\sF$ is
\[
\Big(\big(\rk{G_0(\sF)},\dotsc,\rk{G_{n-1}(\sF)}\big),\big(\deg{G_0(\sF)},\dotsc,\deg{G_{n-1}(\sF)}\big)\Big).
\]

\end{enumerate}
\end{defi}
The following remark collects some easy properties of the first canonical filtration.
\begin{rmk}\label{Rmk:PrimaFC}
For any $1\le i\le n-1$, $M_i/M_{i+1}=M_i\otimes_{A_n}A_1$, while $M/M_i\cong M\otimes_{A_n}A_i$ (analogously $\sF_i/\sF_{i+1}=\sF_i|_C$ and $\sF/\sF_i\cong\sF|_{C_i}$).

Again for any $1\le i\le n-1$, $M_i=\{0\}$ (respectively $\sF_i=0$) if and only if $M$ is an $A_i$-module (resp. $\sF$ is a sheaf of $\sO_{C_i}$-modules). $M_i$ (resp. $\sF_i$) is an $A_{n-i}$-module (resp. a sheaf of $\sO_{C_{n-i}}$-modules) with first canonical filtration $\{0\}\subseteq M_{n-1}\subseteq\dotsb\subseteq M_{i+1}\subseteq M_i$ (resp. $0\subseteq \sF_{n-1}\subseteq\dotsb\subseteq \sF_{i+1}\subseteq \sF_i$).

Any morphism of $A_n$-modules (resp. of sheaves over $C_n$) maps the first canonical filtration of the first module (resp. sheaf) to that of the second one.
\end{rmk}

The first canonical filtration (and thus also the related invariants of generalized rank and degree, cf. Definition \ref{Def:GenRkeDeg}) could be defined exactly in the same way for a multiple curve not necessarily primitive.

\begin{defi}\label{Def:secondafiltrcanon}
\noindent
\begin{enumerate}
\item The \emph{second canonical filtration} of a finitely generated $A_n$-module $M$ is
\[
\{0\}=M^{(0)}\subseteq M^{(1)}\subseteq\dotsb\subseteq M^{(n-1)}\subseteq M^{(n)}=M,
\]
where $M^{(i)}:=\{m\in M|y^i m=0\}$, for $1\le i\le n$.

The \emph{second graded object} of $M$ is $\Gr{2}{M}\!:=\!\bigoplus_{i=1}^{n}M^{(i)}/M^{(i-1)}$.

\item The \emph{second canonical filtration} of a sheaf of $\sO_{C_n}$-modules $\sF$ is defined analogously and is denoted by
\[
0=\sF^{(0)}\subseteq\sF^{(1)}\subseteq\dotsb\subseteq\sF^{(n-1)}\subseteq\sF^{(n)}=\sF.
\]
The \emph{second graded object} of $\sF$ is 
\[
\Gr{2}{\sF}=\bigoplus_{i=1}^{n}G^{(i)}(\sF):=\bigoplus_{i=1}^{n}\sF^{(i)}/\sF^{(i-1)}.
\]
\end{enumerate}
\end{defi}
For any $1\le i\le n-1$, it holds that $M_{n-i}\subset M^{(i)}$ (resp. $\sF_{n-i}\subset \sF^{(i)}$) and that $M^{(i)}$ (resp. $\sF^{(i)}$) is an $A_{i}$-module (resp. a sheaf of $\sO_{C_{i}}$-modules) with second canonical filtration $\{0\}\subseteq M^{(1)}\subseteq\dotsb\subseteq M^{(i-1)}\subseteq M^{(i)}$ (resp. $0\subseteq \sF^{(1)}\subseteq\dotsb\subseteq \sF^{(i-1)}\subseteq \sF^{(i)}$). Any morphism of $A_n$-modules (resp. of sheaves over $C_n$) maps the second canonical filtration of the first module (resp. sheaf) to that of the second one.

The following fact collects some properties, proved by Dr\'{e}zet, about the two canonical filtrations.
\begin{fact}\label{Fact:primaesecondfc}
Let $\sF$ be a sheaf of $\sO_{C_n}$-modules. 
\begin{enumerate}

\item\label{Fact:primaesecondfc:1}(\cite[Proposition 3.1(i)]{DR2}) There is a canonical isomorphism between $\sF_i$ and $(\sF/\sF^{(i)})\otimes\sC^{\otimes i}$, for any $0\le i\le n$.

\item\label{Fact:primaesecondfc:2}(\cite[Proposition 3.7]{DR2}) It holds that  $\deg{G^{(i+1)}(\sF)}\!=\!\deg{G_i(\sF)}+\big(\sum_{j=i+1}^{n-1}\rk{G_j(\sF)}-i\rk{G_i(\sF)}\big)\deg{\cC}$ and $\rk{G^{(i+1)}(\sF)}=\rk{G_i(\sF)}$, for any $0\le i\le n-1$,. 

\item\label{Fact:primaesecondfc:3} (\cite[Proposition 3.3 and Corollaire 3.4]{DR2}) Consider the canonical morphism defined by multiplication $\nu:\sF\otimes\sC\to\sF$. Then:
\begin{enumerate}
\item\label{Fact:primaesecondfc:3:1} $\nu$ induces injective morphisms $\lambda_{i,k}=\lambda_{i,k}(\sF):G^{(i+1)}(\sF)\otimes\cC^k\inj G^{(i+1-k)}(\sF)$, for any integers $0<i<n$, $0<k\le i+1$;

\item\label{Fact:primaesecondfc:3:2} $\nu$ induces surjective morphisms $\mu_{j,m}=\mu_{j,m}(\sF):G_{j}(\sF)\otimes\cC^m\surj G_{j+m}(\sF)$, for any non-negative integer $j$ and positive one $m$ such that $j+m\le n-1$.
\end{enumerate}

\end{enumerate} 
\end{fact}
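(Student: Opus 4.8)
The plan is to establish the three items in the order that lets each feed the next: prove the structural isomorphism first, read off the numerical identities from it, and verify the multiplication maps directly from the definitions. For the \emph{canonical isomorphism} $\sF_i\cong(\sF/\sF^{(i)})\otimes\sC^{\otimes i}$, I would consider the $i$-fold iterate $\nu^{i}\colon\sF\otimes\sC^{\otimes i}\to\sF$ of the multiplication morphism $\nu\colon\sF\otimes\sC\to\sF$. Working locally with $A_n=A_1\otimes_{\bK}\bK[y]/(y^n)$ and a local generator $y$ of $\sN$ trivialising $\sC$, the map $\nu^i$ is just $m\otimes y^{i}\mapsto y^{i}m$, so its image is $y^{i}M=M_i$ and its kernel is $\{m: y^{i}m=0\}=M^{(i)}$ (twisted by the local trivialisation). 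Since $\sC$ is a line bundle, $-\otimes\sC^{\otimes i}$ is exact, so these local computations globalise to $\im{\nu^{i}}=\sF_i$ and $\ker{\nu^{i}}=\sF^{(i)}\otimes\sC^{\otimes i}$, and the first isomorphism theorem for $\nu^{i}$ gives the desired canonical identification. The extreme cases $i=0,n$ are immediate.

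For the \emph{numerical formulas}, the key observation is that both canonical filtrations exhaust $\sF$ with successive quotients the graded pieces $G_j(\sF)$, respectively $G^{(k)}(\sF)$, which are sheaves on the smooth curve $C$; hence, by additivity of the generalized rank and degree, $\Rk{\sF_i}=\sum_{j\ge i}\rk{G_j(\sF)}$ and $\Deg{\sF_i}=\sum_{j\ge i}\deg{G_j(\sF)}$, while $\Rk{(\sF/\sF^{(i)})}=\sum_{k\ge i+1}\rk{G^{(k)}(\sF)}$ and $\Deg{(\sF/\sF^{(i)})}=\sum_{k\ge i+1}\deg{G^{(k)}(\sF)}$. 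I would then apply $\Rk$ and $\Deg$ to the isomorphism just proved, using that $\Rk$ is invariant under twist by a line bundle and that $\Deg{(\sG\otimes\sC^{\otimes i})}=\Deg{\sG}+i\,\Rk{\sG}\,\deg{\cC}$ (here $\sC|_C=\cC$). Comparing ranks for all $i$ and taking the difference of consecutive indices yields $\rk{G^{(i+1)}(\sF)}=\rk{G_i(\sF)}$; substituting this back into the degree comparison and again telescoping produces precisely the stated formula for $\deg{G^{(i+1)}(\sF)}$. This step is purely formal bookkeeping once the isomorphism is available.

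For the \emph{multiplication maps}, I would argue straight from the definitions. For the surjections $\mu_{j,m}$, the iterate $\nu^{m}$ carries $\sF_j=\sN^{j}\sF$ onto $\sF_{j+m}=\sN^{m}(\sN^{j}\sF)$ and sends $\sF_{j+1}$ into $\sF_{j+m+1}$, hence descends to a map $G_j(\sF)\otimes\cC^{m}\surj G_{j+m}(\sF)$ whose surjectivity is the very identity $\sF_{j+m}=\sN^{m}\sF_j$. For the injections $\lambda_{i,k}$, the map $\nu^{k}$ sends $\sF^{(i+1)}$ into $\sF^{(i+1-k)}$ and $\sF^{(i)}$ into $\sF^{(i-k)}$, so it induces $\lambda_{i,k}\colon G^{(i+1)}(\sF)\otimes\cC^{k}\inj G^{(i+1-k)}(\sF)$; a local section $s\in\sF^{(i+1)}$ lies in the kernel iff $y^{k}s\in\sF^{(i-k)}$, i.e.\ $y^{i}s=0$, i.e.\ $s\in\sF^{(i)}$, so the class of $s$ in $G^{(i+1)}(\sF)$ already vanishes, giving injectivity.

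The one genuinely delicate point is the first item: one must globalise the local multiplication-by-$y^{i}$ map to an honest morphism of $\sO_{C_n}$-modules and check that its kernel is \emph{canonically} $\sF^{(i)}\otimes\sC^{\otimes i}$, not merely locally so. This is exactly where the choice of the extending line bundle $\sC$ and the exactness of $-\otimes\sC^{\otimes i}$ enter, and it is the only place requiring care; with that isomorphism in hand, items (ii) and (iii) reduce to the formal manipulations of the two filtrations sketched above.
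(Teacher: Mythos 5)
This statement is a \emph{Fact} in the paper: it is quoted without proof from Dr\'ezet, namely \cite[Proposition 3.1(i)]{DR2}, \cite[Proposition 3.7]{DR2} and \cite[Proposition 3.3 and Corollaire 3.4]{DR2}, so there is no internal argument of the paper to compare yours against. Your proposal is a correct, self-contained reconstruction, and its strategy is the natural one (and, as far as the statements suggest, the same as the cited source's): item \ref{Fact:primaesecondfc:1} by identifying image and kernel of the iterated multiplication morphism $\nu^i\colon\sF\otimes\sC^{\otimes i}\to\sF$, which locally is multiplication by $y^i$, so that $\im{\nu^i}=\sN^i\sF=\sF_i$ and $\ker{\nu^i}=\sF^{(i)}\otimes\sC^{\otimes i}$ (both are equalities of subsheaves, hence checkable on stalks, as you do); item \ref{Fact:primaesecondfc:2} by applying generalized rank and degree to that isomorphism and telescoping; item \ref{Fact:primaesecondfc:3} directly from the definitions. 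The deduction of \ref{Fact:primaesecondfc:2} from \ref{Fact:primaesecondfc:1} is non-circular: it uses only additivity (Fact \ref{Fact:PropertiesGenRkeDeg}\ref{Fact:PropertiesGenRkeDeg:5}), the coincidence of generalized and ordinary invariants on $C$ (Fact \ref{Fact:PropertiesGenRkeDeg}\ref{Fact:PropertiesGenRkeDeg:1}), and the twist formula $\Deg{\sG\otimes\sC^{\otimes i}}=\Deg{\sG}+i\,\Rk{\sG}\deg{\cC}$. You assert this last formula without proof; to keep the argument self-contained, note that $(\sG\otimes\sC^{\otimes i})_j=\sG_j\otimes\sC^{\otimes i}$, hence $G_j(\sG\otimes\sC^{\otimes i})=G_j(\sG)\otimes\cC^{\otimes i}$, and sum over $j$ --- this avoids any appeal to Corollary \ref{Cor:DegandTensProd}, which occurs later in the paper.

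Two further remarks, neither of which is a genuine gap. First, in \ref{Fact:primaesecondfc:3:1} your kernel computation ($y^ks\in\sF^{(i-k)}$ iff $y^is=0$ iff $s\in\sF^{(i)}$) uses the sheaf $\sF^{(i-k)}$ and therefore requires $k\le i$; for $k=i+1$ the target $G^{(i+1-k)}(\sF)$ would be $\sF^{(0)}/\sF^{(-1)}$, which is zero (indeed undefined in Definition \ref{Def:secondafiltrcanon}), and injectivity would fail whenever $G^{(i+1)}(\sF)\neq 0$. The range $0<k\le i+1$ in the statement is thus an off-by-one slip of the paper; the meaningful range $0<k\le i$ is exactly the one your argument covers. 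Second, in \ref{Fact:primaesecondfc:2} you implicitly use that each $G^{(k)}(\sF)$ is an $\sO_C$-module, so that its rank and degree on $C$ are defined and agree with the generalized invariants; this is the one-line observation $\sN\sF^{(k)}\subseteq\sF^{(k-1)}$, the same computation powering your injectivity argument, and it is worth making explicit.
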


\subsection{Generalized rank and degree}\label{SubSecGenRkDeg}
This subsection is devoted to recall the definitions (cf. \cite[\S\S 4.1.3-4.1.4]{DR1} or \cite[\S 3.2]{DR2}) and main properties of two fundamental invariants of a sheaf on $C_n$. The notation adopted is the same of the previous sections.
\begin{defi}\label{Def:GenRkeDeg}
\noindent
\begin{enumerate}
\item Let $M$ be a finitely-generated $A_n$-module, then its \emph{generalized rank} is $\Rk{M}=\rk{\Gr{1}{M}}=\rk{\Gr{2}{M}}$.
\item Let $\sF$ be a sheaf on $C_n$. Its \emph{generalized rank} $\Rk{\sF}$ is, by definition, $\rk{\Gr{1}{\sF}}$, while its \emph{generalized degree} is $\Deg{\sF}=\deg{\Gr{1}{\sF}}$. This is equivalent, by Fact \ref{Fact:primaesecondfc}\ref{Fact:primaesecondfc:2}, to $\Rk{\sF}=\rk{\Gr{2}{\sF}}$ and $\Deg{\sF}=\deg{\Gr{2}{\sF}}$. 
\end{enumerate}
\end{defi}

The following are some basic but fundamental properties of the generalized rank and degree.
\begin{fact}\label{Fact:PropertiesGenRkeDeg}
\noindent
\begin{enumerate}
\item\label{Fact:PropertiesGenRkeDeg:1} (\cite[\S\S 4.1.3-4.1.4]{DR1}) If $\cF$ is a sheaf over $C$, then $\rk{\cF}=\Rk{\cF}$ and $\deg{\cF}=\Deg{\cF}$.

More generally, if $\sF$ is the direct image of a sheaf of $\sO_{C_i}$-modules, for any $1\le i<n$, its generalized rank and degree as a sheaf on $C_n$ and those as a sheaf on $C_i$ coincide by definition of the first canonical filtration. 

\item\label{Fact:PropertiesGenRkeDeg:2} (\cite[\S\S 4.1.3-4.1.4]{DR1}). Let $\sF$ be a locally free sheaf of rank $m$ over $C_n$, then $\Rk{\sF}=nm=n\rk{\sF|_C}$ and $\Deg{\sF}=n\deg{\sF|_C}+(n(n-1)/2)m\deg{\cC}$. In particular, any line bundle has generalized rank $n$ and $\Deg{\sO_{C_n}}=(n(n-1)/2)\deg{\cC}$.
 
\item\label{Fact:PropertiesGenRkeDeg:3} (\cite[Th\'{e}or\`{e}me 4.2.1]{DR1})
Let $\sF$ be a sheaf of $\sO_{C_n}$-modules. It verifies the so-called \emph{generalized Riemann-Roch theorem}, i.e. $\chi(\sF)=\Deg{\sF}+\Rk{\sF}\chi(\sO_C)$.

\item\label{Fact:PropertiesGenRkeDeg:4} (\cite[\S 4.2.2]{DR1})
Let $\sF$ be a sheaf and $\sO_{C_n}(1)$ be a very ample line bundle on $C_n$, let $\sO_C(1)$ be its restriction to $C$ and $d=\deg{\sO_C(1)}$. Then the Hilbert polynomial of $\sF$ with respect to $\sO_{C_n}(1)$ is
\begin{equation}\label{Eq:HilbPol}
P_{\sF}(T)=\Deg{\sF}+\Rk{\sF}\chi(\sO_C)+\Rk{\sF}dT.
\end{equation}

\item\label{Fact:PropertiesGenRkeDeg:5} (\cite[Corollaire 4.3.2]{DR1}) The generalized rank and degree are additive, i.e.:
\begin{enumerate}

\item\label{Fact:PropertiesGenRkeDeg:5:1}  if $0\to M'\to M\to M''\to 0$ is a short exact sequence of finitely generated $A_n$-modules, then $\Rk{M}=\Rk{M'}+\Rk{M''}$;

\item\label{Fact:PropertiesGenRkeDeg:5:2} if $0\to \sF'\to \sF\to \sF''\to 0$ is a short exact sequence of sheaves on $C_n$, then $\Rk{\sF}=\Rk{\sF'}+\Rk{\sF''}$ and $\Deg{\sF}=\Deg{\sF'}+\Deg{\sF''}$.
\end{enumerate}

\item\label{Fact:PropertiesGenRkeDeg:6} (\cite[Proposition 4.3.3]{DR1})
\begin{enumerate}

\item\label{Fact:PropertiesGenRkeDeg:6:1} The generalized rank of finitely generated $A_n$-modules is invariant by deformation.

\item\label{Fact:PropertiesGenRkeDeg:6:2} The generalized rank and degree of sheaves over $C_n$ are invariant by deformation.
\end{enumerate}
\end{enumerate}
\end{fact}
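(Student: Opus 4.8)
The plan is to derive all six statements from two ingredients: the structure of the first canonical filtration, whose graded pieces live on the smooth curve $C$, and ordinary Riemann--Roch on $C$. I would prove them roughly in the listed order. Statement (i) is immediate: a sheaf $\cF$ pushed forward from $C$ is killed by $\sN$, so its first canonical filtration collapses to $\cF_0=\cF$, $\cF_1=\sN\cF=0$, whence $\Gr{1}{\cF}=\cF$ and the generalized invariants coincide with the classical ones; the same vanishing of the top graded pieces shows that for a sheaf on $C_i$ the invariants are computed identically on $C_i$ and on $C_n$. For (ii) the point is the canonical identification, valid when $\sF$ is locally free, of the graded piece $G_i(\sF)=\sN^i\sF/\sN^{i+1}\sF$ with $\cC^{\otimes i}\otimes(\sF|_C)$, a rank-$m$ sheaf on $C$ of degree $\deg{\sF|_C}+im\deg{\cC}$; summing the ranks and degrees over $0\le i\le n-1$ gives $\Rk{\sF}=nm$ and $\Deg{\sF}=n\deg{\sF|_C}+\tfrac{n(n-1)}{2}m\deg{\cC}$, and $m=1$, $\sF=\sO_{C_n}$ is the special case.

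For the generalized Riemann--Roch (iii) I would use additivity of $\chi$ along the single canonical filtration of $\sF$, so that $\chi(\sF)=\sum_{i}\chi(G_i(\sF))$; each $G_i(\sF)$ is coherent on the smooth curve $C$, ordinary Riemann--Roch gives $\chi(G_i(\sF))=\deg{G_i(\sF)}+\rk{G_i(\sF)}\chi(\sO_C)$, and summation yields the formula. Statement (iv) follows by feeding the twists $\sF\otimes\sO_{C_n}(T)$ into (iii): tensoring with a line bundle fixes every graded rank and raises each graded degree by $\rk{G_i(\sF)}\,dT$, so $\Rk{\sF(T)}=\Rk{\sF}$ and $\Deg{\sF(T)}=\Deg{\sF}+\Rk{\sF}\,dT$, and substitution reproduces the Hilbert polynomial (\ref{Eq:HilbPol}).

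Additivity (v) is where the only real care is needed, and it is the step I expect to be the main obstacle. The first canonical filtration is \emph{not} an exact functor --- for a short exact sequence $0\to\sF'\to\sF\to\sF''\to 0$ the filtration induced on $\sF'$ need not agree with its own canonical filtration --- so $\Deg=\deg\circ\Gr{1}{\,\cdot\,}$ cannot be shown additive by directly comparing graded objects. My workaround is to compute the generalized rank instead as the length of the stalk $\sF_\eta$ over the Artinian local ring $\sO_{C_n,\eta}\cong\bK(C)[y]/(y^n)$ (all of whose composition factors are copies of the residue field $\bK(C)$); since localization at the generic point $\eta$ is exact and length is additive, $\Rk$ is additive. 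Degree additivity is then formal: by (iii), $\Deg{\sF}=\chi(\sF)-\Rk{\sF}\chi(\sO_C)$, a difference of additive functions. The module versions are the same computations performed on stalks.

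Finally, for deformation invariance (vi) I would appeal to (iv): in a flat family the Hilbert polynomial is locally constant, and by (\ref{Eq:HilbPol}) its leading coefficient recovers $\Rk$ and its constant term then recovers $\Deg$, so both are invariant; statement (vi)(a) is the local counterpart, the length of the generic stalk being preserved under flat deformation.
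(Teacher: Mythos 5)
You should note at the outset that the paper contains no proof of this statement: it is packaged as a ``Fact'' whose items are individually attributed to Dr\'{e}zet, and the only justification the paper offers is the list of citations to \cite{DR1}. So the comparison here is between your reconstruction and an external reference, not an internal argument. On its own terms your proposal is essentially correct, and it is the natural route: (i) holds because $\sN$ (resp.\ $\sN^i$) kills a sheaf pushed forward from $C$ (resp.\ from $C_i$), so the first canonical filtration, hence $\Gr{1}{\cdot}$, is literally the same computed on $C_i$ or on $C_n$; (ii) follows from the identification $G_i(\sF)\cong\cC^{\otimes i}\otimes(\sF|_C)$ for $\sF$ locally free; (iii) from additivity of $\chi$ along the filtration plus Riemann--Roch on the smooth curve $C$; (iv) from (iii) applied to twists, using $G_i(\sF\otimes\sL)\cong G_i(\sF)\otimes(\sL|_C)$ for $\sL$ invertible; and (v) from the generic-length description of the generalized rank together with $\Deg{\sF}=\chi(\sF)-\Rk{\sF}\chi(\sO_C)$. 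Your key observation for (v) --- that the filtration is not exact, so additivity cannot be read off the graded objects, and one should instead use exactness of localization at $\eta$ together with additivity of length and of $\chi$ --- is exactly right, and it is consistent with the paper itself, which records precisely these alternative characterizations of $\Rk{\cdot}$ and $\Deg{\cdot}$ in Remark \ref{Rmk:GenRkDeg} and calls their equivalence with the definitions ``almost immediate''. There is no circularity, since (iii) uses only additivity of $\chi$ along the filtration of a single sheaf.

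The one step that is asserted rather than proved is (vi)(a). For sheaves, item (vi)(b), your argument is complete: the family is proper over the base, the fibrewise Hilbert polynomial is locally constant, and \eqref{Eq:HilbPol} recovers first $\Rk{\cdot}$ from the leading coefficient and then $\Deg{\cdot}$ from the constant term. For modules, however, there is no properness and no Hilbert polynomial, and ``the length of the generic stalk is preserved under flat deformation'' is precisely the content of the claim rather than a citable triviality: invariants of the fibres genuinely can jump in flat families over an affine base --- for instance the ideal $(\pi,t)\subset A_1[t]$, $\pi$ a uniformizer of $A_1$, is flat over $\bK[t]$ with fibres $A_1$ for $t\neq 0$ but $A_1\oplus\bK$ at $t=0$ --- so the constancy of the generic length is a substantive assertion. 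The natural completion of your sketch is to localize the family at the generic point of $\Spec{A_n}$, obtaining a finitely generated module over $K[y]/(y^n)\otimes_{\bK}\sO_S$ with $K=\operatorname{Frac}(A_1)$, flat over $\sO_S$, and then to show that its fibre dimension over $K$ at closed points of $S$ is locally constant; this is true but needs care, because flatness over $\sO_S$ does not imply flatness over $K\otimes_{\bK}\sO_S$ (e.g.\ $K[t]/(t-\alpha)$ with $\alpha$ transcendental over $\bK$ is $\bK[t]$-flat yet is a torsion $K[t]$-module). Since the paper defers this point to \cite[Proposition 4.3.3]{DR1} in any case, the gap is not fatal to your write-up, but as it stands item (vi)(a) restates the claim rather than proving it.
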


\begin{rmk}\label{Rmk:GenRkDeg}
\noindent
\begin{enumerate}
\item\label{Rmk:GenRkDeg:1} Fact \ref{Fact:PropertiesGenRkeDeg} \ref{Fact:PropertiesGenRkeDeg:1} is one fundamental reason for which it is possible to do not distinguish between a sheaf over $C_i$ and its direct image over $C_n$, for any $1\le i\le n-1$. It is also a significant reason to use generalized rank and degree instead of the usual ones, which do not have this very useful property.

\item\label{Rmk:GenRkDeg:2} It is possible to give also another characterization of  generalized rank and degree of a sheaf $\sF$ over $C_n$ without making use of the canonical filtrations: the generalized rank of $\sF$ can be seen as its generic length, i.e. the length of the $\sO_{C_n,\eta}$-module $\sF_\eta$, where $\eta$ is the generic point of $C_n$, while its generalized degree could be defined also as $\chi(\sF)-\Rk{\sF}\chi(\sO_C)$.
The equivalence of these characterizations to the original definitions is almost immediate (for the generalized degree it has to be used the additivity of the Euler characteristic, which implies that $\chi(\sF)=\chi(\Gr{1}{\sF})$.

\end{enumerate}
\end{rmk}
Now we will describe the relation of generalized rank and degree with the usual rank and degree. Indeed, the latter can be defined also in this context and are often used for sheaves on ribbons (as in \cite{CK} or, at least the degree, in \cite{EG}). First of all, we need to recall the classical definitions: if $\sF$ is a sheaf of $\sO_{C_n}$-modules then its rank, $\rk{\sF}$, and its degree, $\deg{\sF}$, are the rational numbers for which its Hilbert polynomial with respect to a fixed very ample line bundle has the form
\begin{equation}\label{Eq:HilbPol2}
P_{\sF}(T)=\deg{\sF}+\rk{\sF}\chi(\sO_{C_n})+nd\rk{\sF}T,
\end{equation} 
where $d$ is as in Fact \ref{Fact:PropertiesGenRkeDeg}\ref{Fact:PropertiesGenRkeDeg:4}, (for this definition, cf., e.g., \cite[Definition 3.7]{HP}). Observe that if $\sF$ is a sheaf of $\sO_{C_i}$-modules, then its rank and degree are not equal to those of its direct image over $C_n$.

The next lemma, which is implied by formulae \eqref{Eq:HilbPol} and \eqref{Eq:HilbPol2}, compares generalized rank and degree with the usual ones:
\begin{lemma}\label{Lem:ComprkdegandRkDeg}
Let $\sF$ be a sheaf of $\sO_{C_n}$-modules, then $\Rk{\sF}=n\rk{\sF}$ and $\Deg{\sF}=\deg{\sF}+\rk{\sF}\Deg{\sO_{C_n}}=\deg{\sF}+\rk{\sF}\frac{n(n-1)}{2}\deg{\cC}$.
\end{lemma}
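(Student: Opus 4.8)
The plan is to exploit the fact that \eqref{Eq:HilbPol} and \eqref{Eq:HilbPol2} are two expressions for the \emph{same} Hilbert polynomial $P_{\sF}(T)$ computed with respect to a fixed very ample line bundle $\sO_{C_n}(1)$. Since a polynomial in $T$ is determined by its coefficients, I would simply equate the two expressions and compare coefficients in each degree of $T$.

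First I would compare the linear coefficients. From \eqref{Eq:HilbPol} the coefficient of $T$ is $\Rk{\sF}d$, while from \eqref{Eq:HilbPol2} it is $nd\rk{\sF}$. Since $d=\deg{\sO_C(1)}>0$ (the restriction of a very ample line bundle to $C$ is ample, hence of positive degree), I may cancel $d$ and obtain $\Rk{\sF}=n\rk{\sF}$, which is the first assertion.

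Next I would compare the constant terms, which gives the identity
\[
\Deg{\sF}+\Rk{\sF}\chi(\sO_C)=\deg{\sF}+\rk{\sF}\chi(\sO_{C_n}).
\]
To eliminate $\chi(\sO_{C_n})$ I would apply the generalized Riemann--Roch theorem (Fact \ref{Fact:PropertiesGenRkeDeg}\ref{Fact:PropertiesGenRkeDeg:3}) to the line bundle $\sO_{C_n}$ itself. Since $\sO_{C_n}$ is a line bundle, Fact \ref{Fact:PropertiesGenRkeDeg}\ref{Fact:PropertiesGenRkeDeg:2} gives $\Rk{\sO_{C_n}}=n$ and $\Deg{\sO_{C_n}}=\frac{n(n-1)}{2}\deg{\cC}$, whence $\chi(\sO_{C_n})=\Deg{\sO_{C_n}}+n\chi(\sO_C)$. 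Substituting this into the equality of constant terms and using the already-established relation $\Rk{\sF}=n\rk{\sF}$, the term $n\rk{\sF}\chi(\sO_C)$ cancels on both sides, leaving exactly $\Deg{\sF}=\deg{\sF}+\rk{\sF}\Deg{\sO_{C_n}}=\deg{\sF}+\rk{\sF}\frac{n(n-1)}{2}\deg{\cC}$, which is the second assertion.

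The argument is essentially pure bookkeeping, so there is no serious obstacle; the only points requiring minor care are that the comparison of coefficients is legitimate (both formulae describe the same polynomial in the single variable $T$) and that $d\neq 0$. The one genuine input beyond matching coefficients is the evaluation of $\chi(\sO_{C_n})$ via generalized Riemann--Roch together with the known generalized rank and degree of the structure sheaf.
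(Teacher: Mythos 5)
Your proof is correct and is essentially the paper's own argument: the paper simply states that the lemma "is implied by formulae \eqref{Eq:HilbPol} and \eqref{Eq:HilbPol2}", and your comparison of the linear and constant coefficients (using generalized Riemann--Roch on $\sO_{C_n}$ to express $\chi(\sO_{C_n})$) is exactly the intended bookkeeping.
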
  
\begin{cor}\label{Cor:DegandTensProd}
Let $\sF$ be a sheaf on $C_n$ of generalized rank $R$ and $\sE$ a vector bundle of rank $m$ (i.e. generalized rank $nm$) on $C_n$. Then
\begin{equation}\label{Eq:DegandTensProd}
\Deg{\sF\otimes\sE}=\frac{R}{n}\Deg{\sE}+m\Deg{\sF}-\frac{Rm(n-1)}{2}\deg{\cC}.
\end{equation}
\end{cor}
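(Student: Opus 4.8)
The plan is to reduce the statement to ordinary rank and degree via Lemma \ref{Lem:ComprkdegandRkDeg}, apply the elementary behaviour of these invariants under tensoring with a locally free sheaf, and substitute back. Write $r:=\rk{\sF}=R/n$; since $\sE$ is locally free of rank $m$, Lemma \ref{Lem:ComprkdegandRkDeg} gives
\[
\deg{\sF}=\Deg{\sF}-\frac{R(n-1)}{2}\deg{\cC},\qquad \deg{\sE}=\Deg{\sE}-\frac{mn(n-1)}{2}\deg{\cC}.
\]
The strategy is therefore to compute $\deg{\sF\otimes\sE}$ and $\rk{\sF\otimes\sE}$ in ordinary terms and then re-apply Lemma \ref{Lem:ComprkdegandRkDeg} to convert the answer into generalized degree.

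First I would establish the ordinary tensor-product formula on the (non-reduced) projective curve $C_n$, namely $\rk{\sF\otimes\sE}=m\,\rk{\sF}$ and $\deg{\sF\otimes\sE}=m\deg{\sF}+\rk{\sF}\deg{\sE}$. Because $\sE$ is locally free, the functor $-\otimes\sE$ is exact, so it descends to multiplication by the class $[\sE]$ on the Grothendieck group $K_0(C_n)$. By formula \eqref{Eq:HilbPol2} the ordinary rank and degree are read off the two leading coefficients of the Hilbert polynomial, hence depend only on the class in $K_0(C_n)$ and are additive; they are precisely the codimension-$0$ and codimension-$1$ components extracted through the Chern character $\operatorname{ch}\colon K_0(C_n)\to A^\bullet(C_n)_{\mathbb{Q}}$. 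Since $\operatorname{ch}$ is a ring homomorphism, $\operatorname{ch}(\sF\otimes\sE)=\operatorname{ch}(\sF)\,\operatorname{ch}(\sE)$, and reading off the two lowest graded pieces yields the displayed identities (the fixed Todd-class correction of $C_n$ contributes only a multiple of $\rk{\sF}$, which cancels against $\rk{\sF}\chi(\sO_{C_n})$ and so does not affect the formula). This step is the only genuine point, and I expect it to be the main obstacle: one must check that the non-reducedness of $C_n$ is harmless, which it is, precisely because $\sE$ is locally free so that the argument uses nothing beyond exactness of $-\otimes\sE$, additivity of the two invariants, and multiplicativity of $\operatorname{ch}$.

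Finally I would substitute. Applying Lemma \ref{Lem:ComprkdegandRkDeg} to $\sF\otimes\sE$, whose ordinary rank is $m r=mR/n$, gives
\[
\Deg{\sF\otimes\sE}=\deg{\sF\otimes\sE}+\frac{mR}{n}\cdot\frac{n(n-1)}{2}\deg{\cC}
= m\deg{\sF}+\frac{R}{n}\deg{\sE}+\frac{mR(n-1)}{2}\deg{\cC}.
\]
Inserting the two expressions for $\deg{\sF}$ and $\deg{\sE}$ from the first paragraph, the multiples of $\deg{\cC}$ collected from $m\deg{\sF}$, from $\tfrac{R}{n}\deg{\sE}$, and from the correction term above are $-\tfrac{mR(n-1)}{2}$, $-\tfrac{Rm(n-1)}{2}$ and $+\tfrac{mR(n-1)}{2}$, whose sum is $-\tfrac{Rm(n-1)}{2}$, while the remaining terms assemble into $m\Deg{\sF}+\tfrac{R}{n}\Deg{\sE}$. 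This is exactly \eqref{Eq:DegandTensProd}, completing the argument.
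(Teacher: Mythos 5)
Your overall reduction coincides with the paper's: its proof of Corollary \ref{Cor:DegandTensProd} also converts everything to ordinary rank and degree via Lemma \ref{Lem:ComprkdegandRkDeg}, and for the key tensor identity it simply cites \cite[Tag 0AYV]{SP}, which asserts $\chi(\sF\otimes\sE)=\rk{\sF}\deg{\sE}+\rk{\sE}\chi(\sF)$ for a coherent sheaf and a vector bundle on any proper irreducible curve over a field, reduced or not; this is exactly your identity $\deg{\sF\otimes\sE}=m\deg{\sF}+\rk{\sF}\deg{\sE}$ after subtracting $\rk{\sF\otimes\sE}\chi(\sO_{C_n})$ and using $\rk{\sF\otimes\sE}=m\rk{\sF}$. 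Your closing substitution is also correct. The gap is in your justification of that key identity, the step you yourself single out as the only genuine point. On the non-reduced curve $C_n$ there is no Chow ring $A^{\bullet}(C_n)_{\mathbb{Q}}$ (the Chow groups of a singular scheme carry no intersection product), and the Grothendieck group of coherent sheaves is not a ring either, since the tensor product is not exact in both variables and does not descend to classes; consequently the assertions that $\operatorname{ch}$ is a ring homomorphism and that $\operatorname{ch}(\sF\otimes\sE)=\operatorname{ch}(\sF)\operatorname{ch}(\sE)$ have no meaning in this setting. Multiplicativity of the Chern character for arbitrary coherent sheaves is a smooth-variety fact; it is precisely the ingredient that does not survive non-reducedness, so it cannot be what renders non-reducedness harmless.

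The identity itself is true, and your argument can be repaired in any of three ways. (a) Cite \cite[Tag 0AYV]{SP}, as the paper does. (b) Use the correct singular Riemann--Roch statement: the group $G_0(C_n)$ of coherent sheaves is a module over the ring $K^0(C_n)$ of vector bundles, and the Baum--Fulton--MacPherson map $\tau\colon G_0(C_n)\to A_*(C_n)_{\mathbb{Q}}$ satisfies $\tau(\sE\otimes\sF)=\operatorname{ch}(\sE)\cap\tau(\sF)$ together with $\chi(\sF)=\deg{\tau(\sF)_0}$; since the $1$-dimensional part of $\tau(\sF)$ is the cycle $\Rk{\sF}\,[C]$, comparing $0$-dimensional parts gives $\chi(\sE\otimes\sF)=m\chi(\sF)+\Rk{\sF}\deg{\sE|_C}$, which is the required formula because $\deg{\sE}=n\deg{\sE|_C}$. (c) Most elementary, and closest to the additivity you already observed: both sides of the identity are additive in $\sF$ (exactness of $-\otimes\sE$ plus additivity of the Hilbert polynomial), hence factor through $G_0(C_n)$; by d\'evissage — for instance via the first canonical filtration — $G_0(C_n)$ is generated by pushforwards $\iota_*\cG$ of coherent sheaves on $C$, and for such sheaves the identity follows from the projection formula $\iota_*\cG\otimes\sE\cong\iota_*(\cG\otimes\sE|_C)$ and Riemann--Roch on the smooth curve $C$. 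Any of these closes the argument; as written, the Chern-character step is a genuine gap.
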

\begin{proof}
It follows from the above Lemma and from \cite[Tag 0AYV]{SP}, asserting that $\chi(\sF\otimes\sE)=\rk{\sF}\deg{\sE}+\rk{\sE}\chi(\sF)$, in a wider context, i.e. if $\sF$ is a sheaf and $\sE$ a vector bundle on a proper irreducible curve over a field.
\end{proof}

\subsection{Purity and duality}\label{SubSec:PurandDual}
The next step is to introduce the key (equivalent in our case) notions of pure, torsion-free and reflexive sheaves. The distinction between pure and torsion-free is taken from \cite[Definition 2.1]{CK}; Dr\'{e}zet speaks only of reflexive and torsion-free sheaves (\emph{faisceaux sans torsion} in French), but he defines the latter as Chen and Kass define pure ones (cf. \cite[\S 3.3]{DR2}).

Let us begin with pure and torsion-free sheaves:
\begin{defi}\label{Def:pureandtorsfree}
Let $\sF$ be a sheaf on $X$, its \emph{dimension}, $\mathrm{d}(\sF)$, is the dimension of its support.
A sheaf $\sF$ on $X$ is \emph{pure} if it has dimension $1$ and $\mathrm{d}(\sG)=1$ for any non-zero subsheaf $\sG\subset\sF$.

Let $U$ be an open subscheme of $X$, a regular function $f\in H^0(U,\sO_X)$ is a \emph{nonzerodivisor on} $\sF$ if the multiplication map $f \cdot \_ :\sF|_U\to\sF|_U$ is injective and the sheaf $\sF$ is \emph{torsion-free} if every nonzerodivisor on $\sO_X$ is a nonzerodivisor also on $\sF$.
\end{defi}
\begin{rmk}\label{Rmk:Pure}
Our definition of pure is not completely equal to \cite[Definition 2.1]{CK}: there, as often in literature, it is only required that the dimension of any proper subsheaf equals that of the sheaf; hence, any sheaf of dimension $0$ would be pure according to that definition, but we are not interested in them.
\end{rmk}
The following result is the extension of \cite[Lemma 2.2]{CK} from the case of ribbons to the case of primitive multiple curves. Also the proof is almost identical to that of the place cited, which extends verbatim to our case (it holds also in wider generality, namely at least for sheaves over any irreducible algebraic scheme of dimension $1$).
\begin{lemma}\label{Lem:p=tf}
Let $\sF$ be a sheaf on a primitive multiple curve $X$. Then $\sF$ is pure if and only if it is torsion-free. 
\end{lemma}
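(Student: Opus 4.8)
The plan is to establish the two implications separately, the common engine being an explicit description of the nonzerodivisors of $\sO_X$ afforded by the local structure $\sO_{X,P}\cong\sO_{C,P}\otimes_{\bK}\bK[y]/(y^{n})$. Since $\sO_{C,P}$ is a DVR and $\sO_{X,P}$ is a free $\sO_{C,P}$-module with basis $1,y,\dots,y^{n-1}$, an element $f\in\sO_{X,P}$ is a nonzerodivisor exactly when its image in $\sO_{C,P}=\sO_{X,P}/\sN_{P}$ is nonzero, i.e. when $f\notin\sN_{P}$. Globally this reads: a regular function $f$ on an open $U$ is a nonzerodivisor on $\sO_X$ if and only if its restriction to $C$ is not identically zero, and any such $f$ then vanishes on a finite, hence $0$-dimensional, closed subset. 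I will record this observation first, as it drives both directions.

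For \emph{pure $\Rightarrow$ torsion-free}, assume $\sF$ is pure, let $f$ be a nonzerodivisor on $\sO_X|_{U}$, and let $\sK\subseteq\sF|_{U}$ be the kernel of multiplication by $f$. Wherever $f$ is a unit the multiplication is an isomorphism on the stalk, so $\operatorname{Supp}(\sK)\subseteq V(f)\cap U$, which is $0$-dimensional by the observation; thus $\sK$ is a subsheaf of $\sF|_{U}$ of dimension $\le 0$. A nonzero $0$-dimensional subsheaf of $\sF|_{U}$ is supported on a finite set that is closed in $X$, so extension by zero exhibits it as a nonzero $0$-dimensional subsheaf of $\sF$, which purity forbids; hence $\sK=0$ and $f$ is a nonzerodivisor on $\sF$. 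As $U$ and $f$ are arbitrary, $\sF$ is torsion-free.

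For \emph{torsion-free $\Rightarrow$ pure}, assume $\sF\neq 0$ is torsion-free. If $\operatorname{Supp}(\sF)$ were $0$-dimensional, then picking $P$ in it and a nonzero germ, the stalk $\sF_{P}$ would have finite length and a lifted uniformizer $t$ of $\sO_{C,P}$ — a nonzerodivisor of $\sO_X$ lying in $\mm{P}$ — would annihilate a power of that germ, contradicting torsion-freeness; so $\sF$ has dimension $1$. Suppose now $\sF$ had a nonzero $0$-dimensional subsheaf $\sG$: choose $P\in\operatorname{Supp}(\sG)$ and $0\neq s\in\sG_{P}$, so that $\sG_{P}$ has finite length. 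The same lifted uniformizer $t$ lies in $\mm{P}$, hence acts nilpotently on $\sG_{P}$, giving $t^{k}s=0$ for some $k\ge 1$ with $t^{k}$ still a nonzerodivisor on $\sO_X$, again contradicting torsion-freeness. Hence $\sF$ admits no such subsheaf and is pure.

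The steps that are mere bookkeeping are the coherence of $\sK$ and of the torsion subsheaf and the inheritance of purity by open subschemes. The single genuinely non-formal point — and the only place where the non-reducedness of $X$ really enters — is the identification of the nonzerodivisors of $\sO_{X,P}$ via the local structure theorem: this is exactly what lets me confine the kernel $\sK$ to a finite set in the first implication and manufacture the annihilating nonzerodivisor $t^{k}$ in the second.
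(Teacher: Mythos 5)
Your proof is correct, and it turns on the same key fact as the paper's: on a primitive multiple curve the zerodivisors of $\sO_X$ are exactly the sections of the nilradical $\sN$, so a regular function is a nonzerodivisor precisely when it does not vanish identically on $C$, and such a function vanishes at only finitely many points. The packaging, however, is genuinely different. The paper's proof (following Chen--Kass) is a single chain of equivalences about sections over affine opens: $\sF$ is not pure iff some nonzero $g\in H^0(U,\sF)$ has finite support, iff $\operatorname{ann}(g)\not\subset\sN|_U$, iff $fg=0$ for some nonzerodivisor $f$ --- the annihilating nonzerodivisor being simply any element of $\operatorname{ann}(g)$ outside $\sN|_U$. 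You instead prove the two implications separately and by different devices: for pure $\Rightarrow$ torsion-free you bound the support of the kernel of multiplication by $f$ inside the finite set $V(f)\cap U$ and extend by zero to contradict purity; for torsion-free $\Rightarrow$ pure you manufacture the offending nonzerodivisor explicitly as a power of a lifted uniformizer, using that stalks of $0$-dimensional sheaves have finite length. What each buys: your route is more constructive and explicitly verifies the dimension-one clause of Definition \ref{Def:pureandtorsfree} (which the paper's argument leaves implicit), at the cost of the germ-to-section bookkeeping you acknowledge; the paper's route is shorter and never invokes the local structure $\sO_{X,P}\cong\sO_{C,P}\otimes_{\bK}\bK[y]/(y^n)$, only that the complement of $\sN$ consists of nonzerodivisors (a consequence of $X$ being irreducible and Cohen--Macaulay), which is why, as remarked in the text, it extends beyond primitive multiple curves. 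One slip of wording, not of substance: in your dimension argument $t$ does not ``annihilate a power of that germ''; rather, a power of $t$ annihilates the germ, as you state correctly in the subsequent argument about the subsheaf $\sG$.
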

\begin{proof}
By definition, $\sF$ is not pure if and only if there exists a non-zero subsheaf of $\sF$ with finite support. This is equivalent to the existence of an open affine subscheme $U\subset X$ and a non-zero $g\in H^0(U,\sF)$ with finite support. Equivalently, there exist an open affine subscheme $U\subset X$ and a non-zero $g\in H^0(U,\sF)$ such that $\operatorname{ann}(g)\not\subset\sN|_U$. This is equivalent to the fact that there exist an open affine subscheme $U\subset X$, a nonzerodivisor $f\in H^0(U,\sO_X)$ and a non-zero $g\in H^0(U,\sF)$ such that $fg=0$. The last assertion means that $\sF$ is not torsion-free, by definition.
\end{proof}
In order to introduce reflexiveness, we need first to recall the notion of dual of a sheaf on a primitive multiple curve.
\begin{defi}\label{Def:Dual}
Let $\sF$ be a sheaf on $C_n$. Its \emph{dual} is $\sF^{\vee_n}=\sF^\vee=\un{\operatorname{Hom}}(\sF,\sO_{C_n})$.

The sheaf $\sF$ is \emph{reflexive} if the canonical morphism $\sF\to\sF^{\vee\vee}$ is an isomorphism.
\end{defi} 
\begin{rmk}\label{Rmk:Dual}
If $\sF$ is a sheaf on $C_i$, for $1\le i<n$, then $\sF^{\vee_i}\neq\sF^{\vee_n}$. But there is a canonical isomorphism $\sF^{\vee_n}\simeq\sF^{\vee_i}\otimes\sN^{n-i}$ (this is \cite[Lemme 4.1]{DR2}).
\end{rmk}
The following fact collects some properties of duality and of reflexive sheaves.
\begin{fact}\label{Fact:Dual} Let $\sF$ be a sheaf over $C_n$. 
\begin{enumerate}
\item\label{Fact:Dual:1}(\cite[Proposition 3.8 and Th\'{e}or\`{e}me 4.4]{DR2}) The following are equivalent:
\begin{enumerate}
\item $\sF$ is torsion-free;
\item $\sF^{(1)}=G^{(1)}(\sF)$ is a vector bundle on $C$;
\item $\sF$ is reflexive;
\item $\SExt{1}{\sO_{C_n}}(\sF,\sO_{C_n})=0$.
\end{enumerate}
Moreover, if the above conditions hold, $\SExt{i}{\sO_{C_n}}(\sF,\sO_{C_n})=0$ for any $i\ge 1$ and $G^{(j)}(\sF)$ is a vector bundle on $C$ for $1\le j\le n$. 
\item\label{Fact:Dual:2} (\cite[Proposition 4.2]{DR2}) For any $1\le i<n$, $(\sF^\vee)^{(i)}=(\sF|_{C_i})^{\vee}$.
\item\label{Fact:Dual:3}(\cite[Proposition 4.4.1]{DR4})$\Rk{\sF^\vee}\!=\!\Rk{\sF}$and $\Deg{\sF^\vee}\!=\!-\!\Deg{\sF}\!+\Rk{\sF}(n-1)\deg{\cC}+\operatorname{h}^0(\sT(\sF))$, where $\sT(\sF)$ is the torsion subsheaf of $\sF$, i.e. its greatest subsheaf with finite support.
\item\label{Fact:Dual:4} Assume, moreover, that $\sF$ is torsion-free. Then, for any $1\le i< n$:
\begin{enumerate}
\item\label{Fact:Dual:4:1}(\cite[Proposition 4.3.1.1]{DR4}) There is a canonical isomorphism between $\SExt{1}{\sO_{C_n}}(\sT(\sF|_{C_i}), \sO_{C_n})\otimes\sC^i$ and $\sT(\sF^{\vee}|_{C_i})$, where $\sT(\sF^{\vee}|_{C_i})$ and $\sT(\sF|_{C_i})$ are the torsion subsheaves of, respectively, $\sF^{\vee}|_{C_i}$ and $\sF|_{C_i}$. 
\item\label{Fact:Dual:4:2}(\cite[Proposition 4.3.1.2]{DR4}) There is a canonical isomorphism between $\ker{\sF\surj(\sF|_{C_i})^{\vee\vee}}$ and $(\sF^{\vee})_i\otimes\sC^{-i}$.
\end{enumerate}
\item\label{Fact:Dual:5}(\cite[Corollaire 4.5]{DR2}) If $0\to\sE\to\sF\to\sG\to 0$ is a short exact sequence of sheaves on $C_n$ with $\sG$ pure, then also the dual sequence $0\to\sG^\vee\to\sF^\vee\to\sE^\vee\to 0$ is exact.
\end{enumerate}
\end{fact}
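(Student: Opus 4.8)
The plan is to reduce each assertion to local commutative algebra over $A_n=A_1\otimes_\bK\bK[y]/(y^n)$ and then to globalize, keeping track of the twist by $\sC$. The key structural input is that hypothesis \ref{Def:pmc:3} of Definition \ref{Def:pmc} exhibits $A_n$ as the quotient of the localization of $A_1[y]$ at its maximal ideal --- a two-dimensional regular local ring --- by the single nonzerodivisor $y^n$; hence $A_n$ is a one-dimensional Gorenstein local ring with $\omega_{A_n}\cong A_n$. The nonzerodivisors of $A_n$ are precisely the elements outside $\sN$, so the paper's torsion-freeness coincides with depth $\ge 1$, i.e. with being maximal Cohen--Macaulay (MCM).

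For part \ref{Fact:Dual:1} I would invoke the duality theory of MCM modules over a one-dimensional Gorenstein ring. The Auslander--Bridger characterization of $k$th syzygies (depth $\ge\min(k,\dim)$) gives at once the equivalence of (a), (c), (d) and the vanishing $\SExt{i}{\sO_{C_n}}(\sF,\sO_{C_n})=0$ for all $i\ge1$. For (a)$\Llra$(b) I would note that $G^{(1)}(\sF)=\sF^{(1)}=\ker{y}$ and that $y$ is nilpotent on the torsion subsheaf $\sT(\sF)$: thus $\sT(\sF)\ne0$ forces a nonzero element of $\sT(\sF)$ killed by $y$, producing $\sO_C$-torsion in $\sF^{(1)}$, while conversely any $\sO_C$-torsion element of $\sF^{(1)}$ is already $A_n$-torsion. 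The final clause follows from the injections $\lambda_{j,j}\colon G^{(j+1)}(\sF)\otimes\cC^{j}\inj G^{(1)}(\sF)$ of Fact \ref{Fact:primaesecondfc}\ref{Fact:primaesecondfc:3:1}: once $G^{(1)}(\sF)$ is locally free, each $G^{(j)}(\sF)$ embeds after a twist into a vector bundle, hence is torsion-free, hence locally free on $C$. Part \ref{Fact:Dual:5} is then immediate: applying $\un{\operatorname{Hom}}(-,\sO_{C_n})$ to the given sequence produces the exact sequence $0\to\sG^\vee\to\sF^\vee\to\sE^\vee\to\SExt{1}{\sO_{C_n}}(\sG,\sO_{C_n})$, whose last term vanishes because $\sG$, being pure, is torsion-free by Lemma \ref{Lem:p=tf}.

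For part \ref{Fact:Dual:2} I would compute directly: $\phi\in(\sF^\vee)^{(i)}$ means $y^i\phi=0$, i.e. $\phi$ takes values in $\sO_{C_n}^{(i)}=\sN^{n-i}$; since $\sN^{n-i}$ is annihilated by $\sN^i$, every such $\phi$ factors through $\sF|_{C_i}=\sF/\sN^i\sF$, giving $(\sF^\vee)^{(i)}=\un{\operatorname{Hom}}_{C_i}(\sF|_{C_i},\sN^{n-i})=(\sF|_{C_i})^{\vee_i}\otimes\sN^{n-i}$, which is exactly $(\sF|_{C_i})^{\vee}$ in the convention of Remark \ref{Rmk:Dual}. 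For part \ref{Fact:Dual:3}, the equality of ranks is generic: at the generic point $\sO_{C_n}$ localizes to the Artinian Gorenstein (hence self-injective) ring $\bK(C)[y]/(y^n)$, over which taking the dual preserves length, and the generalized rank is the generic length (Remark \ref{Rmk:GenRkDeg}\ref{Rmk:GenRkDeg:2}). For the degree I would first reduce to the torsion-free case: since $\sO_{C_n}$ is torsion-free, $\un{\operatorname{Hom}}(\sT(\sF),\sO_{C_n})=0$ and hence $\sF^\vee\cong(\sF/\sT(\sF))^\vee$, while $\Deg{\sT(\sF)}=\operatorname{h}^0(\sT(\sF))$; this isolates the extra term and leaves the identity $\Deg{\sF^\vee}=-\Deg{\sF}+\Rk{\sF}(n-1)\deg{\cC}$ for torsion-free $\sF$. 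The latter I would obtain from Serre duality on the Gorenstein curve $C_n$ (using $\SExt{i}{\sO_{C_n}}(\sF,\sO_{C_n})=0$ to get $\chi(\sF^\vee\otimes\omega_{C_n})=-\chi(\sF)$) combined with Corollary \ref{Cor:DegandTensProd} for the degree of a tensor product by the line bundle $\omega_{C_n}$, the contribution of $\Deg{\omega_{C_n}}$ being eliminated by specializing the same computation to $\sF=\sO_{C_n}$.

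Finally, part \ref{Fact:Dual:4} would be extracted by feeding part \ref{Fact:Dual:2} into the filtration-exchange isomorphism $\sF_j\cong(\sF/\sF^{(j)})\otimes\sC^{j}$ of Fact \ref{Fact:primaesecondfc}\ref{Fact:primaesecondfc:1}: applied to $\sF^\vee$ it gives $(\sF^\vee)_i\otimes\sC^{-i}\cong\sF^\vee/(\sF^\vee)^{(i)}\cong\sF^\vee/(\sF|_{C_i})^{\vee}$, and dualizing the natural map $\sF\surj(\sF|_{C_i})^{\vee\vee}$ (using reflexivity $\sF\cong\sF^{\vee\vee}$ from part \ref{Fact:Dual:1}) identifies this cokernel with $\ker{\sF\surj(\sF|_{C_i})^{\vee\vee}}$, yielding \ref{Fact:Dual:4:2}; comparing the induced identifications at consecutive indices and computing the relevant local $\SExt{1}{}$ of a finite-length module over the Gorenstein ring $A_n$ (where local duality makes $\SExt{1}{\sO_{C_n}}(T,\sO_{C_n})$ the Matlis dual of $T$) then produces the torsion comparison \ref{Fact:Dual:4:1}. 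In my view the two genuine obstacles are the reflexivity equivalence in part \ref{Fact:Dual:1} --- precisely the point where the local-embedding (Gorenstein) hypothesis is indispensable --- and the consistent bookkeeping of the $\sC$-twists threading through parts \ref{Fact:Dual:2}--\ref{Fact:Dual:4}, which is the source of essentially all the technical friction, together with the verification that the double-dual map in \ref{Fact:Dual:4:2} is genuinely surjective.
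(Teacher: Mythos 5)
The paper itself contains no proof of this Fact: every item is quoted directly from Dr\'ezet (\cite{DR2}, \cite{DR4}), so your proposal is by necessity a different route, namely a self-contained one resting on the observation that $A_n$ is a one-dimensional Gorenstein local ring. For most of the statement it is sound. In part \ref{Fact:Dual:1}, identifying the paper's torsion-freeness with depth $\ge 1$ (maximal Cohen--Macaulay) is correct, and over a one-dimensional Gorenstein local ring MCM, reflexive and $\Ext{1}_{A_n}(-,A_n)=0$ do coincide, with $\Ext{i}_{A_n}(-,A_n)=0$ for all $i\ge 1$ (local duality alone gives this; Auslander--Bridger is heavier than needed); your torsion argument for (a)$\Llra$(b) and the use of $\lambda_{j,j}$ for the last clause are exactly right. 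Part \ref{Fact:Dual:2} is a correct computation, part \ref{Fact:Dual:5} follows from \ref{Fact:Dual:1} as you say, and in part \ref{Fact:Dual:3} both the generic Matlis-duality argument for the rank and the Serre-duality computation of the degree close up: combining $\chi(\sF^\vee\otimes\omega_{C_n})=-\chi(\sF)$ with Corollary \ref{Cor:DegandTensProd} and eliminating $\Deg{\omega_{C_n}}$ by specializing to $\sF=\sO_{C_n}$ really does return $\Deg{\sF^\vee}=-\Deg{\sF}+\Rk{\sF}(n-1)\deg{\cC}$ for torsion-free $\sF$, and the reduction modulo $\sT(\sF)$ is correct.

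The genuine gap is in part \ref{Fact:Dual:4:2}. Set $K:=\ker{\sF\surj(\sF|_{C_i})^{\vee\vee}}$. Dualizing $0\to K\to\sF\to(\sF|_{C_i})^{\vee\vee}\to 0$ (legitimate by part \ref{Fact:Dual:5}, the quotient being pure) gives $0\to(\sF|_{C_i})^{\vee}\to\sF^\vee\to K^{\vee}\to 0$: the cokernel of $(\sF|_{C_i})^{\vee}\inj\sF^\vee$, which equals $(\sF^\vee)_i\otimes\sC^{-i}$ by part \ref{Fact:Dual:2} and Fact \ref{Fact:primaesecondfc}\ref{Fact:primaesecondfc:1}, is thereby identified with $K^{\vee}$, \emph{not} with $K$. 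So your argument proves $K^{\vee}\cong(\sF^\vee)_i\otimes\sC^{-i}$, equivalently $K\cong\big((\sF^\vee)_i\otimes\sC^{-i}\big)^{\vee}$, and the conflation of $K$ with $K^{\vee}$ is not harmless. Concretely, on a ribbon ($n=2$, $i=1$): $K=\sF^{(1)}$ has degree $\tfrac{1}{2}\big(D+\be{1}+\deg{\cC}\big)$ by \eqref{Eq:Deg2filc}, whereas $(\sF^\vee)_1\otimes\sC^{-1}\cong\ov{(\sF^\vee)}_1$ (Remark \ref{Rmk:purequot}) has degree $\tfrac{1}{2}\big(-D-\bet{1}{\sF^\vee}+\deg{\cC}\big)$ by \eqref{Eq:Degquot} applied to $\sF^\vee$ together with part \ref{Fact:Dual:3}; for $D>0$ these are different, so $K\not\cong(\sF^\vee)_i\otimes\sC^{-i}$ in general. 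In other words, the statement as printed fails a degree test and cannot be proved: it differs by exactly one dualization from Lemma \ref{Lem:duality}\ref{Lem:duality:1}, which asserts $\ov{(\sF^\vee)}_{n-i}\simeq(\sF^{(n-i)})^{\vee}$ (note the dual) and is the degree-consistent form that the paper actually uses downstream. Your method, executed correctly, proves that corrected form; as a proof of \ref{Fact:Dual:4:2} as printed, the step is invalid.

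Two smaller points. The surjectivity of $\sF\to(\sF|_{C_i})^{\vee\vee}$, which you list as a remaining verification, is automatic: over a ring of injective dimension one the cokernel of the biduality map $N\to N^{\vee\vee}$ is an $\Ext{2}$ into the ring, hence zero. Part \ref{Fact:Dual:4:1}, by contrast, is only named, not proved; the route consistent with your toolkit is to dualize $0\to\sN^i\sF\to\sF\to\sF|_{C_i}\to 0$, obtaining $0\to(\sF|_{C_i})^{\vee}\to\sF^{\vee}\to(\sN^i\sF)^{\vee}\to\SExt{1}{\sO_{C_n}}(\sF|_{C_i},\sO_{C_n})\to 0$, then identify the last term with $\SExt{1}{\sO_{C_n}}(\sT(\sF|_{C_i}),\sO_{C_n})$ by dualizing the torsion sequence of $\sF|_{C_i}$, and finally compare with $\sT_i(\sF^\vee)\cong(\sF^\vee)^{(n-i)}/(\sF^\vee)_i$ (Lemma \ref{Lem:supportofTi}) via part \ref{Fact:Dual:2} and Fact \ref{Fact:primaesecondfc}\ref{Fact:primaesecondfc:1}; ``comparing identifications at consecutive indices'' is not yet an argument.
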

\begin{rmk}\label{Rmk:Fact:Dual:5}
The hypothesis of the last point of the Fact is a bit weaker than that of the place cited, where it is required that also $\sE$ and $\sF$ are pure, but the only significant point for the proof is that $\SExt{1}{\sO_{C_n}}(\sG,\sO_{C_n})$ vanishes. So the assertion remains true also under our hypothesis. 
\end{rmk}
There is a special type of pure sheaves which plays a major role in the theory of sheaves over a primitive multiple curve: the so-called \emph{quasi locally free} sheaves.
\begin{defi}\label{Def:qll} (Cf. \cite[\S 5.1]{DR1}.)
A finitely-generated $A_n$-module M is \emph{quasi free} if there exist non-negative integers $m_1,\dotsc,\,m_n$ such that $M\cong\bigoplus_{i=1}^nA_i^{\oplus m_i}$. The $n$-tuple $(m_1,\dotsc,\,m_n)$ is called the \emph{type} of $M$.

Let $\sF$ be a sheaf on $C_n$; it is \emph{quasi locally free in a closed point} $P$ if there exists an open neighbourhood $U$ of $P$ and non-negative integers $m_1,\dotsc,\,m_n$ such that $\sF_Q$ is quasi free of type $(m_1,\dotsc,\,m_n)$ for any $Q\in U$. It is \emph{quasi locally free} if it is such in any closed point. 
\end{defi}
The following fact contains some significant results.
\begin{fact}\label{Fact:qll} Let $\sF$ be a sheaf on $\sO_{C_n}$ and let $P$ be a closed point of $C$.
\begin{enumerate}
\item\label{Fact:qll:1}(\cite[Th\'{e}or\`{e}me 3.9 and Corollaire 3.10]{DR2})
The following are equivalent:
\begin{enumerate}
\item $\sF$ is quasi locally free (resp. quasi locally free in $P$);
\item for $0\le i<n$, $G_i(\sF)$ is a vector bundle over $C$ (resp. is free in $P$);
\end{enumerate}
\item\label{Fact:qll:2}(\cite[Th\'{e}or\`{e}me 5.1.6]{DR1}) $\sF$ is generically quasi locally free, i.e. there exists a non-empty open $U$ of $C_n$ such that $\sF$ is quasi locally free in each point of $U$.
\end{enumerate}
\end{fact}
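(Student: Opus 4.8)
Both assertions are local on the curve $C$, whose points coincide with those of $C_n$, so I reduce to the stalk $M:=\sF_P$, a finitely generated module over $A_n=A_1\otimes_{\bK}\bK[y]/(y^n)$ with $A_1$ a DVR, and I must show that $M$ is quasi free if and only if each $G_i(M)=y^iM/y^{i+1}M$ is a free $A_1$-module for $0\le i<n$. The forward implication is a direct computation: since the first canonical filtration is additive on direct sums, it suffices to treat $M=A_i$, where $y^jA_i$ is the image of $(y^j)$ in $A_n/(y^i)$, so $G_j(A_i)\cong A_1$ for $0\le j<i$ and $G_j(A_i)=0$ for $j\ge i$. In particular every graded piece is $A_1$-free, and this records the numerics $\rk{G_j(M)}=\sum_{i>j}m_i$ when $M=\bigoplus_i A_i^{\oplus m_i}$, which I will invert below.

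For the converse, put $r_j:=\rk{G_j(M)}$. Multiplication by $y$ yields the surjections $G_j(M)\twoheadrightarrow G_{j+1}(M)$ of Fact \ref{Fact:primaesecondfc}\ref{Fact:primaesecondfc:3}\ref{Fact:primaesecondfc:3:2} (locally $\cC$ is trivial), so $r_0\ge r_1\ge\dotsb\ge r_{n-1}$, and the kernels $K_i:=\ker{G_0(M)\twoheadrightarrow G_i(M)}$ of the iterated maps form a flag of free $A_1$-modules with free quotients $G_0(M)/K_i\cong G_i(M)$. Since $A_1$ is a PID I may choose a basis $v_1,\dotsc,v_{r_0}$ of $G_0(M)$ adapted to this flag, so that $K_i=\langle v_{r_i+1},\dotsc,v_{r_0}\rangle$ for every $i$; I then lift each $v_k$ to $\tilde v_k\in M$ along $M\twoheadrightarrow M/yM=G_0(M)$. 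The plan is to show that each $\tilde v_k$ generates a copy of $A_{d_k}$, where $d_k$ is fixed by $v_k\in K_{d_k}\setminus K_{d_k-1}$, and that $M=\bigoplus_k A_n\tilde v_k$; this produces exactly $m_i:=r_{i-1}-r_i$ generators of type $A_i$, whence $M\cong\bigoplus_{i=1}^n A_i^{\oplus m_i}$.

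The crux, and the step I expect to be hardest, is this last claim. A raw lift $\tilde v_k$ need not be annihilated by $y^{d_k}$ exactly — one knows only that $y^{d_k}\tilde v_k\in y^{d_k+1}M$ — so I would correct the lifts by a descending induction, replacing $\tilde v_k$ by $\tilde v_k-yw$ and controlling the effect on the lower graded images, until $y^{d_k}\tilde v_k=0$ on the nose. These relations then define a surjection $\bigoplus_k A_{d_k}\twoheadrightarrow M$ (surjectivity by Nakayama over $A_n$, since $\dim_{\bK}M/\mm{A_n}M=\dim_{\bK}G_0(M)/\mm{A_1}G_0(M)=r_0$ equals the number of lifts); comparing generalized ranks (Fact \ref{Fact:PropertiesGenRkeDeg}), both equal to $\sum_i i\,m_i=\sum_j r_j$, forces the kernel to have generalized rank zero, and being a submodule of a module free over $A_1$ it is $A_1$-torsion-free, hence vanishes and the sum is direct. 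An alternative packaging is induction on the multiplicity $n$, via the $A_{n-1}$-module $yM$ with $G_j(yM)=G_{j+1}(M)$ (Remark \ref{Rmk:PrimaFC}) and the extension $0\to yM\to M\to G_0(M)\to 0$; the same annihilator bookkeeping reappears as the essential point.

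Finally, part (ii) is an immediate consequence of part (i). Each $G_i(\sF)$ is a coherent sheaf on the smooth projective curve $C$, and over the regular integral curve $C$ a finitely generated stalk is free over its local DVR exactly when it is torsion-free, so the torsion of $G_i(\sF)$ is supported on a finite set $S_i\subset C$. On the nonempty open subscheme $U$ of $C_n$ supported on $C\setminus\bigcup_{i=0}^{n-1}S_i$ every $G_i(\sF)$ is a vector bundle, and part (i) then shows that $\sF$ is quasi locally free at each point of $U$.
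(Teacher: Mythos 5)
Your proof is correct, but there is nothing in the paper to compare it against: Fact \ref{Fact:qll} is stated without proof, as a citation of Dr\'{e}zet (\cite[Th\'{e}or\`{e}me 3.9 and Corollaire 3.10]{DR2} for part (i), \cite[Th\'{e}or\`{e}me 5.1.6]{DR1} for part (ii)), so what you have written is a self-contained replacement for those references rather than a variant of an argument in the text. Two comments. First, the step you single out as the crux is in fact immediate, so your sketch closes into a complete proof: if $v_k\in K_{d_k}$, then for any lift $\tilde v_k$ one has $y^{d_k}\tilde v_k\in y^{d_k+1}M$, and since $y^{d_k+1}M$ is literally the set $\{y^{d_k+1}w \mid w\in M\}$, you may write $y^{d_k}\tilde v_k=y^{d_k+1}w$ and replace $\tilde v_k$ by $\tilde v_k-yw$; this is still a lift of $v_k$ (it is unchanged modulo $yM$), it is killed by $y^{d_k}$ on the nose, and the corrections for distinct $k$ are independent --- no descending induction and no control of lower graded images is needed, because those images depend only on $v_k$ and not on the choice of lift. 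With that, your Nakayama surjection $\bigoplus_k A_{d_k}\twoheadrightarrow M$ and the rank count (the kernel has generalized rank $0$ and is $A_1$-torsion-free, hence vanishes) finish part (i). Second, a minor bookkeeping point: Definition \ref{Def:qll} makes ``quasi locally free in $P$'' a condition on a whole neighbourhood, with constant type, not just on the stalk $\sF_P$; your stalk criterion does imply it, because a coherent sheaf on $C$ whose stalk at $P$ is free is free on a neighbourhood of $P$, so the ranks $r_i$ --- and hence the type --- are locally constant; this is the same observation your part (ii) already relies on, but it deserves a word in part (i) as well. As for what the two routes buy: the paper's citation is shorter and defers to the literature, while your argument (adapted bases for the flag of kernels over the DVR $A_1$, plus additivity of the generalized rank) keeps the statement self-contained at the cost of a page of elementary local algebra.
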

The last point of the above fact allows to give the following definition:
\begin{defi}\label{Def:type}
The \emph{type} of a sheaf $\sF$ on $C_n$ is the $n$-tuple of non-negative integers $(m_1,\dotsc,m_n)$ such that $\sF_\eta\cong \bigoplus_{i=1}^{n}\sO_{C_i,\eta}^{\oplus m_i}$, where, as usual, $\eta$ is the generic point of $C_n$.  
\end{defi}

Within quasi locally free sheaves there are those \emph{of rigid type}, studied in \cite{DR2} and \cite{DR4}:
\begin{defi}\label{Def:qllrt}
A sheaf $\sF$ on $C_n$ is said to be \emph{quasi locally free of rigid type} if there exist two non-negative integers $a>0$ and $j<n$ such that $\sF$ is locally isomorphic to $\sO_{C_n}^{\oplus a}\oplus\sO_{C_j}$.
\end{defi}
Observe that these include vector bundles (they are the quasi locally free sheaves of rigid type with $j=0$).
They are relevant because being quasi locally free of rigid type is an open condition in flat families of sheaves on $C_n$, as the name suggests (see \cite[Proposition 6.9]{DR2}). They are the only kind of pure sheaves on $C_n$ such that there are some results in literature (precisely \cite[Proposition 6.12]{DR2} and \cite[Th\'{e}or\`{e}me 5.3.3]{DR4}) about loci containing them in the moduli space of semistable sheaves.

\subsection{Semistability}\label{SubSecStab}
The last argument that we quickly treat in this section is semistability.
First of all, it is necessary to recall how it is defined on a primitive multiple curve (cf. \cite[\S 1.1]{DR1}).
\begin{defi}\label{Def:slopestability}
Let $\sF$ be a pure sheaf on $C_n$, then its \emph{slope} is $\mu(\sF)=\Deg{\sF}/\Rk{\sF}$. The definition of semistability of $\sF$ is the usual definition of (slope-)semistability: $\sF$ is (slope-)\emph{semistable} if for any non-trivial subsheaf $\sE$ it holds that $\mu(\sE)\le\mu(\sF)$ or, equivalently, if for any non-trivial pure quotient $\sG$ it holds that $\mu(\sG)\ge\mu(\sF)$. If the inequality is always strict, $\sF$ is said to be \emph{stable}.
\end{defi}
\begin{rmk}\label{Rmk:stability}
\noindent
\begin{enumerate}

\item\label{Rmk:stability:2} Thanks to the description of the Hilbert polynomial given by formula \eqref{Eq:HilbPol}, on a primitive multiple curve slope semistability coincides with Gieseker one, which considers the reduced Hilbert polynomial instead of the slope. So the latter results to be independent of the polarization, as in the case of vector bundles on a smooth projective curve.

\item\label{Rmk:stability:1} The equivalence of the condition about subsheaves and pure quotients is almost trivial and is a well-known property of semistability (cf. e.g. \cite[Proposition 1.2.6]{HL}). 

\item\label{Rmk:stability:3} It is evident, by definition, that if $\sF$ is a semistable sheaf on $C_i$, for some $1\le i\le n-1$, then it is semistable also on $C_n$.
\item\label{Rmk:stability:4} It is possible to verify (cf. e.g. \cite[\S 1.2]{DR4}) that there are interesting (i.e. different from direct images of stable vector bundles on $C$) stable sheaves on $C_n$ only if $\deg{\cC}<0$. It is quite easy to check the assertion for a vector bundle $\sF$ on $C_n$: by Fact \ref{Fact:PropertiesGenRkeDeg} \ref{Fact:PropertiesGenRkeDeg:2} it holds that $\mu(\sF)=\mu(\sF|_C)+((n-1)/2)\deg{\cC}$; hence it can be stable only if $\deg{\cC}<0$. Under this assumption all the line bundles on $C_n$ are stable (it is almost trivial, but it is also a consequence of Theorem \ref{Thm:ssinequ}, which, moreover, confirms the necessity of $\deg{\cC}<0$ in order to have stable generalized line bundles).  
\end{enumerate}
\end{rmk}
The following easy lemma concludes this quick overview about semistability.
\begin{lemma}\label{Lem:StabandDual}
Let $\sF$ be a pure sheaf on $C_n$. It is (semi)stable if and only if $\sF^\vee$ is (semi)stable.
\end{lemma}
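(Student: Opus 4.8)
The plan is to reduce (semi)stability, which by Definition \ref{Def:slopestability} can be tested equivalently on subsheaves or on pure quotients, to an inequality of slopes, and then to transport those inequalities across duality using the fact that $\un{\Hom}(-,\sO_{C_n})$ carries the pure quotients of $\sF^\vee$ to subsheaves of $\sF^{\vee\vee}\cong\sF$.

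First I would record how the slope behaves under duality. Since $\sF$ is pure, Lemma \ref{Lem:p=tf} shows it is torsion-free, so its torsion subsheaf vanishes and Fact \ref{Fact:Dual:3} reads $\Rk{\sF^\vee}=\Rk{\sF}$ and $\Deg{\sF^\vee}=-\Deg{\sF}+\Rk{\sF}(n-1)\deg{\cC}$. Dividing, one obtains
\[
\mu(\sF^\vee)=-\mu(\sF)+(n-1)\deg{\cC},
\]
and the same identity holds with any nonzero pure sheaf in place of $\sF$. The crucial qualitative feature is that $t\mapsto -t+(n-1)\deg{\cC}$ is an order-reversing affine bijection of $\mathbb{Q}$ that preserves strictness of inequalities; this is exactly what will interchange the subsheaf and quotient tests and send the strict inequalities defining stability to strict inequalities.

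Then, to prove that $\sF$ (semi)stable implies $\sF^\vee$ (semi)stable, I would take an arbitrary nonzero pure quotient $\sF^\vee\twoheadrightarrow\sG$ with kernel $\sK$ and dualize the sequence $0\to\sK\to\sF^\vee\to\sG\to 0$. Since $\sG$ is pure, Fact \ref{Fact:Dual:5} applies and yields an exact sequence $0\to\sG^\vee\to\sF^{\vee\vee}\to\sK^\vee\to 0$; by reflexivity (Fact \ref{Fact:Dual:1}) $\sF^{\vee\vee}\cong\sF$, so $\sG^\vee$ is a subsheaf of $\sF$, nonzero because $\Rk{\sG^\vee}=\Rk{\sG}>0$, and pure as a subsheaf of the pure sheaf $\sF$. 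Applying the subsheaf form of the (semi)stability of $\sF$ to $\sG^\vee\subset\sF$ gives $\mu(\sG^\vee)\le\mu(\sF)$ (strictly in the stable case), and feeding in the slope identity for $\sG$ turns this into $\mu(\sG)\ge\mu(\sF^\vee)$ (resp. strict), which is precisely the quotient test for (semi)stability of $\sF^\vee$. The converse is then immediate: applying what we have just proved to $\sF^\vee$ in place of $\sF$ and invoking $\sF^{\vee\vee}\cong\sF$ shows that $\sF^\vee$ (semi)stable implies $\sF$ (semi)stable.

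I expect the only genuinely delicate points to be bookkeeping rather than substance: checking that the dual of a nonzero pure sheaf is again nonzero and pure, so that its slope is defined, which follows from $\Rk{\sG^\vee}=\Rk{\sG}$ and from $\sG^\vee$ sitting inside the pure sheaf $\sF$; and making sure the torsion correction term $\operatorname{h}^0(\sT(-))$ in Fact \ref{Fact:Dual:3} really drops out, which it does because purity forces the torsion subsheaf to vanish by Lemma \ref{Lem:p=tf}. No saturation argument is needed, since testing (semi)stability of $\sF^\vee$ on pure quotients and dualizing lands exactly on the subsheaves of $\sF$ that the (semi)stability of $\sF$ already controls.
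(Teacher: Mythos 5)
Your proposal is correct and follows essentially the same route as the paper's own proof: reduce to one implication via reflexivity of pure sheaves, dualize a pure quotient $\sG$ of $\sF^\vee$ into a subsheaf $\sG^\vee\subset\sF$ using Fact \ref{Fact:Dual}\ref{Fact:Dual:5}, and transfer the slope inequality through the identity $\mu(\sG)=-\mu(\sG^\vee)+(n-1)\deg{\cC}$ coming from Fact \ref{Fact:Dual}\ref{Fact:Dual:3} with the torsion term killed by purity. The only difference is expository: you spell out the vanishing of $\operatorname{h}^0(\sT(-))$ and the nonvanishing of $\sG^\vee$, which the paper leaves implicit.
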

\begin{proof}
The proof is done only in the case of semistability, because that for stabilty is essentially the same.

By the equivalence of purity and reflexiveness, it is sufficient to show that $\sF$ semistable implies $\sF^\vee$ semistable.

So, assume $\sF$ semistable and let $\sG$ be any pure quotient of $\sF^\vee$. Hence, $\sG^\vee$ is a subsheaf of $\sF$ by Fact \ref{Fact:Dual}\ref{Fact:Dual:5}. Thus,
$\mu(\sG)=-\mu(\sG^\vee)+(n-1)\deg{\cC}\ge-\mu(\sF)+(n-1)\deg{\cC}=\mu(\sF^\vee)-(n-1)\deg{\cC}+(n-1)\deg{\cC}=\mu(\sF^\vee)$, 
where the various equalities follow from Fact \ref{Fact:Dual}\ref{Fact:Dual:3} while the inequality is due to the semistabilty of $\sF$. 
\end{proof}
  
\section{Generalized line bundles}\label{SecFirstProp} 

This section is devoted to describe various properties of generalized line bundles on a primitive multiple curve, generalizing, as far as possible, \cite[\S 2]{CK} from multiplicity $2$ to arbitrary one. It is divided into two subsections. The first is more generally about pure sheaves of generalized rank $n$ on a primitive multiple curve of multiplicity $n$ while the second is more specific about generalized line bundles. 

Throughout this section $X$ will be a primitive multiple curve of multiplicity $n$.

\subsection{Pure sheaves of generalized rank n}\label{SubSecgenrkn}

All the results of this subsection are completely trivial for $n=1$, while the case $n=2$ is that treated in \cite{CK}. For $n\ge 3$ the properties described and also their proves are straightforward extensions of those by Chen and Kass.

First of all, we need to define properly the main object of study of this work.
\begin{defi}\label{Def:GenLB} 
A \emph{generalized line bundle} is a pure sheaf $\sF$ on $X$ such that $\sF_{\eta}$ is isomorphic to $\sO_{X,\eta}$, where $\eta$ is the generic point of $X$. 
\end{defi}
 \begin{rmk}
 Let $\sF$ be a generalized line bundle on $X$. By definition $\Rk{\sF}=n$ (or, equivalently, $\rk{\sF}=1$).
 
 According to my knowledge this definition of generalized line bundle is new for $n\ge 3$, but it is an obvious extension of the notion for ribbons (i.e. $n=2$). Furthermore, as in case $n=2$ (cf. \cite[beggining of page 759]{EG}), generalized line bundles coincide, by \cite[Proposition 2.12]{H}, with generalized divisors which have been introduced for Gorenstein schemes, including, in particular, primitive multiple curves, by Hartshorne in the cited article.
 \end{rmk}
Following Chen and Kass, we prove two easy lemmata about pure sheaves on $X$. The first one extends to the general case \cite[Lemma 2.3]{CK}.
\begin{lemma}\label{Lem:endpure}
If $\sF$ is a pure sheaf on $X$, then $\SEnd{\sF}$ is pure, too. 
\end{lemma}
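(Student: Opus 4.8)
The plan is to reduce everything to the equivalence between purity and torsion-freeness established in Lemma \ref{Lem:p=tf}, so that it suffices to check that $\SEnd{\sF}$ is torsion-free. Since $\sF$ is pure of dimension $1$ it is nonzero and its support is $1$-dimensional; because $\un{\operatorname{End}}(\sF)$ contains the identity and is supported exactly on $\operatorname{Supp}(\sF)$, the sheaf $\SEnd{\sF}$ is a nonzero sheaf of dimension $1$. Thus the notion of purity from Definition \ref{Def:pureandtorsfree} is the relevant one and Lemma \ref{Lem:p=tf} is applicable rather than vacuous.

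Next I would unwind the definition of a nonzerodivisor on the $\sO_X$-module $\SEnd{\sF}$. Fix an open $U\subseteq X$ and a regular function $f\in H^0(U,\sO_X)$ that is a nonzerodivisor on $\sO_X$; I want to show that the multiplication map $f\cdot\_:\SEnd{\sF}|_U\to\SEnd{\sF}|_U$ is injective. So let $\phi$ be a local section of $\SEnd{\sF}$ over some open $V\subseteq U$ with $f\phi=0$. The key point is that the $\sO_X$-module structure on the Hom-sheaf is given by $(f\phi)(s)=f\cdot\phi(s)$ for every local section $s$ of $\sF$, so the vanishing $f\phi=0$ means precisely that $f\cdot\phi(s)=0$ for all such $s$.

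Now I would invoke the torsion-freeness of $\sF$ itself: since $\sF$ is pure, Lemma \ref{Lem:p=tf} gives that $\sF$ is torsion-free, so $f$ is a nonzerodivisor on $\sF$, i.e.\ multiplication by $f$ is injective on $\sF|_V$. Hence $f\cdot\phi(s)=0$ forces $\phi(s)=0$ for every local section $s$, and therefore $\phi=0$. This shows that $f$ is a nonzerodivisor on $\SEnd{\sF}$; as $f$ was an arbitrary nonzerodivisor on $\sO_X$, the sheaf $\SEnd{\sF}$ is torsion-free, and one final application of Lemma \ref{Lem:p=tf} yields purity.

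I do not expect any genuine obstacle here: the statement follows from a direct manipulation of the definitions, exactly as in the ribbon case \cite[Lemma 2.3]{CK}, and nothing in the argument uses $n=2$. The only points that require a little care are the bookkeeping of the $\sO_X$-linear structure on $\un{\operatorname{Hom}}(\sF,\sF)$ (so that multiplication by $f$ really acts as $\phi\mapsto f\phi$ and evaluates as $f\cdot\phi(s)$) and the verification that $\SEnd{\sF}$ has dimension $1$, which is what makes Definition \ref{Def:pureandtorsfree} the correct notion to test.
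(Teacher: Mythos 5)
Your proof is correct and takes essentially the same route as the paper: the paper also reduces to the purity/torsion-freeness equivalence of Lemma \ref{Lem:p=tf} and uses the same evaluation trick, showing that if an endomorphism $\varphi$ is annihilated (there by the maximal ideal $\mathfrak{m}_x$ at a point, in your version by a nonzerodivisor $f$), then each value $\varphi(s)$ is annihilated as well and hence vanishes because $\sF$ is pure, forcing $\varphi=0$. The only cosmetic difference is that the paper tests stalk elements killed by $\mathfrak{m}_x$ whereas you test the nonzerodivisor condition directly; your extra checks (that $\SEnd{\sF}$ is nonzero of dimension $1$) are a sensible precaution but do not change the substance of the argument.
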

\begin{proof}
The proof of \cite{CK} extends almost verbatim. We omit the trivial proof for sheaves of dimension $0$, because, as pointed out in Remark \ref{Rmk:Pure}, for us pure is a synonym of pure of dimension $1$.

It suffices to prove that if $\varphi$ is an element of $\SEnd{\sF}_x$ annihilated by $\mathfrak{m}_x\subset\sO_{X,x}$, where $x \in X$ is a closed point and $\mathfrak{m}_x$ is the maximal ideal of $\sO_{X,x}$, then $\varphi=0$. Indeed, given such a $\varphi$, it holds that for any $s\in\sF_x$ its image $\varphi(s)\in\sF_x$ is annihilated by $\mathfrak{m}_x$ and so $\varphi(s)=0$, because $\sF$ is pure (or, equivalently, torsion-free, thanks to Lemma \ref{Lem:p=tf}).   
\end{proof}
The next lemma is a generalization of \cite[Lemma 2.4]{CK} from the case of ribbons to any multiplicity.
\begin{lemma}\label{Lem:kernels}
Let $\sF$ be a pure sheaf over $X$. The kernel of the natural morphism $\varphi:\sO_X\to\SEnd{\sF}$ is equal to $\sN^i$ for some $1\le i\le n$. Hence the schematic support of $\sF$ is $C_i$.
\end{lemma}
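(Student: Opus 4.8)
The plan is to identify $\ker{\varphi}$ with the annihilator ideal sheaf $\cJ:=\operatorname{ann}(\sF)$ and then to show that, stalk by stalk, this annihilator is a power of the (locally principal) nilradical, with the exponent forced to be constant by purity. First I would note that $\varphi$ sends a local section $a$ to multiplication by $a$, so that $\ker{\varphi}=\cJ$ is precisely the ideal sheaf of sections annihilating $\sF$; since $\sF$ is pure it is nonzero, and it suffices to prove $\cJ=\sN^i$ for a suitable $1\le i\le n$. Because $\sN$ is locally principal, the whole problem localizes: at a closed point $P$ I set $A_n=A_1[y]/(y^n)$ and $M=\sF_P$, and I must show $\operatorname{ann}_{A_n}(M)=(y^{i_P})$, where $i_P:=\min\{j:y^jM=0\}$ (such $j$ exists since $y^n=0$, and $i_P\ge 1$ since $M\neq 0$).

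The local computation is the mechanical core and relies on torsion-freeness. Given a nonzero $a=\sum_{j\ge 0}a_jy^j\in\operatorname{ann}(M)$, let $k$ be the least index with $a_k\neq 0$ and write $a=y^k b$ with $b=a_k+a_{k+1}y+\dotsb$. An elementary check shows that any element of $A_n$ with nonzero constant-in-$y$ term is a nonzerodivisor, so $b$ is a nonzerodivisor on $\sO_X$ near $P$; since $\sF$ is torsion-free (Lemma \ref{Lem:p=tf}), multiplication by $b$ is injective on $M$. From $0=aM=b\,(y^kM)$ and the injectivity of $b$ I conclude $y^kM=0$, whence $k\ge i_P$ and $a\in(y^k)\subseteq(y^{i_P})$. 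As $(y^{i_P})\subseteq\operatorname{ann}(M)$ holds by definition of $i_P$, this gives $\operatorname{ann}(M)=(y^{i_P})=(\sN^{i_P})_P$ at every closed point.

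It remains to globalize, and this is where I expect the real content to lie. Let $i$ be the least integer with $\sN^i\sF=0$ as a sheaf (so $1\le i\le n$, with $C_n=X$); then $\sN^i\subseteq\cJ$. For each $P$, vanishing of the sheaf $\sN^i\sF$ forces $y^iM_P=0$, hence $i_P\le i$. For the reverse inequality I use purity: the sheaf $\sF_{i-1}=\sN^{i-1}\sF$ is a nonzero subsheaf of the pure sheaf $\sF$, hence itself torsion-free, i.e. pure, so its support is all of $C=X_{red}$ and its stalk $y^{i-1}M_P$ is nonzero at \emph{every} closed point $P$; thus $i_P\ge i$. Therefore $i_P=i$ is constant, so $\cJ_P=(y^i)=(\sN^i)_P$ for all $P$, and combined with $\sN^i\subseteq\cJ$ this yields the equality of subsheaves $\cJ=\sN^i$. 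Since $\sI_{C_i/X}=\sN^i$, the schematic support $V(\cJ)$ equals $C_i$. The main obstacle is precisely the passage from the pointwise exponents $i_P$ to a single global $i$: a priori $\cJ$ could be any ideal with radical $\sN$, and it is exactly purity — via the two facts that a torsion-free sheaf vanishing generically vanishes, and that a nonzero pure sheaf has nonzero stalks everywhere — that rules out the exponent jumping at special points and pins $\cJ$ down to a power of $\sN$.
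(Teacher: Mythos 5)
Your proof is correct, and it takes a genuinely different route from the paper's. The paper never computes annihilators stalk by stalk: it sets $\sK=\ker{\varphi}$, invokes Lemma \ref{Lem:endpure} to see that $\SEnd{\sF}$ is pure, so that $\sO_X/\sK\inj\SEnd{\sF}$ is pure too, deduces $\sK\subseteq\sN$ from primary decomposition (the only height-zero prime is $\sN$), and then does a single computation at the generic point, where $\sO_{X,\eta}\cong\sO_{C,\eta'}[y]/(y^n)$ has no ideals other than the powers $\sN_\eta^j$, so $\sK_\eta=\sN_\eta^i$; finally the purity of $\sO_X/\sK$ (a generically zero subsheaf of a pure sheaf has finite support, hence vanishes) spreads the equality $\sK=\sN^i$ from $\eta$ to all of $X$. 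You instead identify $\ker{\varphi}$ with $\operatorname{ann}(\sF)$, determine every closed stalk annihilator by hand --- splitting off the largest power of $y$ and playing torsion-freeness (Lemma \ref{Lem:p=tf}) against the nonzerodivisor factor $b$ --- and then force the pointwise exponent $i_P$ to be constant because the nonzero pure sheaf $\sN^{i-1}\sF$ has full support. The two globalization devices are two faces of the same coin (yours: a nonzero pure sheaf has nonzero stalks everywhere; the paper's: a generically zero subsheaf of a pure sheaf is zero), and both arguments rest on the same Cohen--Macaulay fact that the zerodivisors of $\sO_X$ are exactly $\sN$: you use it to certify that $b$, having nonzero reduction modulo $\sN$, is a nonzerodivisor (this is also the one spot needing a little care, since the paper's definition of torsion-free is phrased in terms of sections over opens rather than stalk elements; the transfer is immediate because $C$ is integral, so a germ with nonzero reduction extends to a section that is a nonzerodivisor on a neighbourhood), while the paper uses it, via associated primes, to get $\sK\subseteq\sN$. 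The paper's route buys brevity --- Lemma \ref{Lem:endpure} is already on hand, and one computation at $\eta$ replaces work at every closed point --- whereas yours buys independence from Lemma \ref{Lem:endpure} and from associated primes, plus finer local output: the identity $\operatorname{ann}(\sF_P)=(y^{i_P})$ with $i_P=\min\{j:y^j\sF_P=0\}$, the constancy of which is precisely what purity contributes.
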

\begin{proof}
Let $\sK=\ker{\varphi}$. By definition, there is an injection $\sO_X/\sK\inj\SEnd{\sF}$ and it is clear that $\sO_X/\sK\neq 0$ (e.g. any nonzerodivisor constant defines a non-zero endomorphism, being $\sF$ pure). Because $\SEnd{\sF}$ is pure (by Lemma \ref{Lem:endpure}), $\sO_X/\sK$ is such too. By primary decomposition, $\sK$ is contained in $\sN$ (indeed, over every open affine $U$, the prime ideals associated to $\sK(U)$ in $\sO_X(U)$ must have height zero, but the only prime of $\sO_X(U)$ with this property is $\sN(U)$).

Let $\eta$ be, as usual, the generic point of $X$. By definition of primitive multiple curve, $\sO_{X,\eta}$ is isomorphic to $\sO_{C,\eta'}[y]/(y^n)$ and $\sO_{C,\eta'}$, where $\eta'$ is the generic point of $C$, is a field. The only subideals of $\sN_\eta$ are its powers, i.e. $\sN_\eta^j$ with $1\le j\le n$ (with $\sN_\eta^n=0$) and thus $\sK_\eta=\sN_\eta^i$ for an $i\in\{1,\dotsc,n\}$. At this point the conclusion follows from the purity of $\sN^i/\sK$ (it is a subsheaf of $\sO_X/\sK$, which is pure by the above argument): the fact that $\sN_\eta^i/\sK_\eta=0$ implies that $\sN^i/\sK$ has finite support and so it must be zero, i.e. $\sK=\sN^i$.   
\end{proof}
\begin{rmk}
In the cited lemma of \cite{CK} there is the adjunctive hypothesis of generic length $2$ (equivalent to generalized rank $2$) of the sheaves involved (and that would correspond to generalized rank $n$), but it is superfluous. 
\end{rmk}
The previous lemma allows to get another characterization of generalized line bundles. The case of multiplicity $2$ is \cite[Lemma 2.5]{CK}.
\begin{cor}\label{Cor:genlb2}
Let $\sF$ be a pure sheaf of generalized rank $n$ on $X$. Then $\sF$ is a generalized line bundle if and only if the morphism $\varphi:\sO_X\to\SEnd{\sF}$ is injective.
\end{cor}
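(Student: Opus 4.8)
The plan is to read off the invariant $i$ produced by Lemma \ref{Lem:kernels} at the generic point and compare it with the generalized-rank normalization. Set $\sK=\ker{\varphi}$. By Lemma \ref{Lem:kernels} we have $\sK=\sN^i$ for some $1\le i\le n$, and the schematic support of $\sF$ is $C_i$. Hence $\varphi$ is injective if and only if $\sN^i=0$, which, since $\sN^{n-1}\neq 0$ while $\sN^n=0$, happens exactly when $i=n$. The whole statement therefore reduces to proving that $i=n$ if and only if $\sF$ is a generalized line bundle.

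To identify $i$ I would pass to the generic point $\eta$ of $X$. As in the proof of Lemma \ref{Lem:kernels}, $\sO_{X,\eta}\cong K[y]/(y^n)$ with $K=\sO_{C,\eta'}$ a field, and $\sK_\eta=\sN_\eta^i=(y^i)$. Since taking stalks is exact, $\sK_\eta=\ker{\varphi_\eta}$, and the kernel of $\varphi_\eta\colon\sO_{X,\eta}\to\operatorname{End}_{\sO_{X,\eta}}(\sF_\eta)$ is precisely the annihilator of $\sF_\eta$; thus $(y^i)=\operatorname{Ann}_{\sO_{X,\eta}}(\sF_\eta)$. Over the chain ring $K[y]/(y^n)$ the finitely generated module $\sF_\eta$ splits as $\bigoplus_j K[y]/(y^{a_j})$ with $1\le a_j\le n$, whose annihilator is $\big(y^{\max_j a_j}\big)$; hence $i=\max_j a_j$. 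Moreover the hypothesis $\Rk{\sF}=n$ says, via the generic-length description of the generalized rank (Remark \ref{Rmk:GenRkDeg}\ref{Rmk:GenRkDeg:2}), that $\sum_j a_j=n$.

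The conclusion is then an elementary comparison: since every $a_j\ge 1$ and $\sum_j a_j=n$, one has $\max_j a_j=n$ if and only if there is a single summand, necessarily $K[y]/(y^n)\cong\sO_{X,\eta}$. In other words $i=n$ if and only if $\sF_\eta\cong\sO_{X,\eta}$, which is exactly the definition of a generalized line bundle; this settles both implications simultaneously. There is no genuinely hard step here: the only point needing care is the use of the normalization $\sum_j a_j=n$ to force a \emph{single} cyclic summand, as without the generalized-rank hypothesis the condition $i=n$ would only give $\max_j a_j=n$ and not that $\sF$ is generically of rank one.
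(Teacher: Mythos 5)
Your proof is correct, and while it shares the paper's overall strategy (reduce everything to the generic point via Lemma \ref{Lem:kernels}), its execution is genuinely different in the second half. The paper argues the two implications separately: for ``generalized line bundle $\Rightarrow$ injective'' it uses purity of $\SEnd{\sF}$ (Lemma \ref{Lem:endpure}) and a commutative diagram to transfer injectivity from $\varphi_\eta$ back to $\varphi$; for the converse it picks $s_0\in\sF_\eta$ with $y_\eta^{n-1}s_0\neq 0$, shows $f\mapsto fs_0$ is injective by an annihilator argument, and gets surjectivity by comparing lengths. You instead exploit Lemma \ref{Lem:kernels} globally --- $\ker{\varphi}=\sN^i$, so injectivity of $\varphi$ is equivalent to $i=n$ --- and then compute $i$ at $\eta$ by identifying $\ker{\varphi_\eta}$ with $\operatorname{Ann}_{\sO_{X,\eta}}(\sF_\eta)$ and invoking the structure theorem for finitely generated modules over the chain ring $\sO_{X,\eta}\cong\sO_{C,\eta'}[y]/(y^n)$: the annihilator of $\bigoplus_j \sO_{C,\eta'}[y]/(y^{a_j})$ is $(y^{\max_j a_j})$, and the generic-length reading of the generalized rank (Remark \ref{Rmk:GenRkDeg}\ref{Rmk:GenRkDeg:2}) gives $\sum_j a_j=n$, so $i=n$ forces a single cyclic summand isomorphic to $\sO_{X,\eta}$. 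What your route buys is a single chain of equivalences settling both directions at once, and it correctly isolates the one point of substance (that the normalization $\sum_j a_j=n$ is what upgrades $\max_j a_j=n$ to cyclicity); what it costs is an appeal to the elementary-divisor decomposition, which the paper never needs --- its argument is more hands-on, constructing the isomorphism explicitly. Both arguments ultimately rest on Lemma \ref{Lem:kernels} (hence on Lemma \ref{Lem:endpure}) and on the fact that the ideals of $\sO_{X,\eta}$ contained in $\sN_\eta$ are exactly the powers $\sN_\eta^j$.
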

\begin{proof}
By the previous Lemma, it is sufficient to show that, if $\sF$ is pure, $\sF_\eta\cong\sO_{X,\eta}$, where, as usual, $\eta$ is the generic point of $X$, is equivalent to $\varphi$ injective.

Assume that $\sF$ is a generalized line bundle. The morphism $\varphi$ gives rise to the following commutative diagram:
\[
\begin{tikzcd}
\sO_X \ar[d, hook] \ar[r, "\varphi"] & \SEnd{\sF}_{\phantom{\eta}}\ar[d, hook]\\
\sO_{X,\eta} \ar[r, "\varphi_\eta"]  & \SEnd{\sF}_\eta,
\end{tikzcd}
\]
where the vertical arrows are injective because both $\sO_X$ and $\SEnd{\sF}$ are pure (the latter by Lemma \ref{Lem:endpure}). Moreover, being $\sF$ a generalized line bundle, $\varphi_\eta$ is an isomorphism, hence $\varphi$ is injective.

Conversely, assume that $\varphi$ is injective and let $y_\eta$ be a generator of the generic stalk $\sN_\eta$. By hypothesis, multiplication by $y_\eta^{n-1}$ on $\sF_\eta$ is not the zero map. Hence we can choose $s_0\in\sF_\eta$ such that $y_\eta^{n-1}s_0\neq 0$; let us consider the morphism $\psi:\sO_{X,\eta}\to\sF_\eta$ defined by $\psi(f)=fs_0$: it is the desired isomorphism. Indeed, $\ker{\psi}=\operatorname{Ann}(s_0)$ is a submodule of $\sN_\eta$, being $\sF$ pure. Moreover, the fact that $y_\eta^{n-1}s_0\neq 0$ implies that $\ker{\psi}\subsetneq\sN_\eta^{n-1}$ and thus $\ker{\psi}=0$. Surjectivity follows from the fact that $\sF_\eta$ and the submodule generated by $s_0$ have both length $n$.      
\end{proof}
These lemmata and the last corollary imply the following classification of pure sheaves of generalized rank $n$ on $X$, which extends \cite[\S 8.2]{DR1} and \cite[Proposition 2.6]{CK}.
\begin{prop}\label{Prop:granknsheaves}
Let $\sF$ be a pure sheaf of generalized rank $n$ on $X$. Then $\sF$ is either a generalized line bundle or the direct image of a pure sheaf of generalized rank $n$ defined on $C_{n-1}$ under the inclusion $C_{n-1}\subset X$ (in the following such sheaves were called sheaves of generalized rank $n$ defined on $C_{n-1}$). 
\end{prop}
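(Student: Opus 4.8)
The plan is to exploit the dichotomy furnished by the kernel of the structure morphism $\varphi:\sO_X\to\SEnd{\sF}$. By Lemma \ref{Lem:kernels}, this kernel equals $\sN^i$ for a single integer $1\le i\le n$, and the schematic support of $\sF$ is the primitive multiple curve $C_i$. The whole argument is therefore a case distinction according to whether $i=n$ or $i<n$, everything else being bookkeeping with the invariants recalled in Section \ref{SecPmc}.

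First I would treat the case $i=n$. Here $\ker{\varphi}=\sN^n=0$, so $\varphi$ is injective. I would then invoke Corollary \ref{Cor:genlb2}, which says precisely that a pure sheaf of generalized rank $n$ is a generalized line bundle if and only if $\varphi$ is injective; this immediately places $\sF$ in the first alternative of the statement.

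In the complementary case $1\le i\le n-1$, Lemma \ref{Lem:kernels} tells me that the schematic support of $\sF$ is $C_i$, i.e. $\sF$ is annihilated by $\sN^i$ and hence is the direct image of a sheaf of $\sO_{C_i}$-modules. Since $C_i\subseteq C_{n-1}$ along the canonical filtration, this realizes $\sF$ as the direct image of a sheaf on $C_{n-1}$ under the inclusion $C_{n-1}\subset X$, placing it in the second alternative. To match the wording of the statement I would still verify two properties of $\sF$ viewed on $C_{n-1}$: purity, which is intrinsic since it only refers to the nonexistence of subsheaves of dimension $0$, independently of the ambient curve; and that its generalized rank on $C_{n-1}$ is again $n$, which follows from Fact \ref{Fact:PropertiesGenRkeDeg}\ref{Fact:PropertiesGenRkeDeg:1}, asserting that generalized rank is preserved under direct image along the inclusions $C_i\subset C_j$.

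The argument is essentially immediate once Lemma \ref{Lem:kernels} and Corollary \ref{Cor:genlb2} are in hand, so I do not expect any serious obstacle; the only point requiring a little care is the bookkeeping in the second case, namely checking that being pure of generalized rank $n$ is preserved when passing between $C_i$, $C_{n-1}$ and $X$, which is exactly what the invariance of purity and of generalized rank under direct image guarantees.
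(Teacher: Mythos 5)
Your proposal is correct and follows essentially the same route as the paper: the paper's proof likewise splits on the injectivity of $\varphi:\sO_X\to\SEnd{\sF}$, invoking Corollary \ref{Cor:genlb2} in the injective case and Lemma \ref{Lem:kernels} (so that $\sN^{n-1}\subseteq\ker{\varphi}$) in the other, concluding that $\sF$ is an $\sO_{C_{n-1}}$-module. Your extra bookkeeping on the preservation of purity and generalized rank under direct image is left implicit in the paper (it is part of the conventions of Section \ref{SubSecCanFiltr} and Fact \ref{Fact:PropertiesGenRkeDeg}\ref{Fact:PropertiesGenRkeDeg:1}), so it is a harmless, slightly more careful rendering of the same argument.
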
 
\begin{proof}
Consider again the natural morphism $\varphi:\sO_X\to\SEnd{\sF}$. If $\varphi$ is injective, $\sF$ is a generalized line bundle by Corollary \ref{Cor:genlb2}. Otherwise $\sN^{n-1}\subseteq\ker{\varphi}$ by Lemma \ref{Lem:kernels} and thus $\sF$ can be seen as an $\sO_X/\sN^{n-1}$-module, i.e. an $\sO_{C_{n-1}}$-module. 
\end{proof}
\begin{rmk}
When $n\ge 3$ this result is quite vague with respect to that for ribbons because it does not give a precise classification of pure sheaves of generalized rank $n$ defined on $C_{n-1}$, which include various different kinds of sheaves, from vector bundles of rank $n$ defined on $C$ to sheaves generically of the form $\sO_{C_{n-1}}\oplus\sO_C$. In general, they are pure sheaves generically of the form $\bigoplus_{i=1}^{n-1} \sO_{C_i}^{\oplus a_i}$, with the $a_i$ non-negative integers  such that $\sum_{i=1}^{n-1}ia_i=n$ (cf. Fact \ref{Fact:qll}\ref{Fact:qll:2}). This will be a huge complication in the study of the moduli space of generalized rank $n$ sheaves on $X$. 
\end{rmk}

\subsection{Generalities about generalized line bundles}\label{SubSecflgen}

In this subsection we will study generalized line bundles, so throughout it $\sF$ will denote a generalized line bundle on $X$.

The following lemma extends to morphisms between generalized line bundles a well-known property of those between line bundles.
\begin{lemma}\label{Lem:surj=iso}
If a morphism between generalized line bundles on $X$ is surjective, then it is an isomorphism. 
\end{lemma}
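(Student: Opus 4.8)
The plan is to combine the additivity of the generalized rank with the purity of the source, so the argument is entirely formal once those two foundational properties are in hand. Let $\varphi:\sF\to\sG$ be a surjective morphism of generalized line bundles on $X$. First I would record the short exact sequence
\[
0\to\ker{\varphi}\to\sF\overset{\varphi}{\to}\sG\to 0,
\]
in which $\ker{\varphi}$ is a subsheaf of $\sF$. By additivity of the generalized rank (Fact \ref{Fact:PropertiesGenRkeDeg}\ref{Fact:PropertiesGenRkeDeg:5}) one has $\Rk{\sF}=\Rk{\ker{\varphi}}+\Rk{\sG}$; since both $\sF$ and $\sG$ are generalized line bundles, each has generalized rank $n$, and therefore $\Rk{\ker{\varphi}}=0$.

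Next I would interpret the vanishing $\Rk{\ker{\varphi}}=0$ through the characterization of the generalized rank as the generic length (Remark \ref{Rmk:GenRkDeg}\ref{Rmk:GenRkDeg:2}): the generic stalk of $\ker{\varphi}$ is zero, so $\ker{\varphi}$ is supported on a proper closed subset of $X$, i.e. it has finite support and hence dimension $0$. But $\ker{\varphi}$ is a subsheaf of the pure sheaf $\sF$, and by the definition of purity (Definition \ref{Def:pureandtorsfree}) every nonzero subsheaf of $\sF$ has dimension $1$. The only way to reconcile these is $\ker{\varphi}=0$, whence $\varphi$ is injective; being surjective as well, it is an isomorphism.

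There is essentially no serious obstacle here, as the statement is a short consequence of properties already established in the excerpt. The one point deserving a moment of care is the passage from $\Rk{\ker{\varphi}}=0$ to \emph{finite support}: this is immediate via the generic-length interpretation of the generalized rank, but if one prefers to avoid invoking that remark, one can argue directly that $\ker{\varphi}_\eta=0$ forces the support of $\ker{\varphi}$ to be $0$-dimensional, which is then excluded by purity exactly as above.
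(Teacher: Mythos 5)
Your proof is correct and follows essentially the same route as the paper: both arguments reduce to showing $\ker{\varphi}$ is generically zero, hence of finite support, and then invoke the purity of $\sF$ to conclude it vanishes. The only cosmetic difference is that the paper observes directly that $\varphi_\eta$ is an isomorphism of the length-$n$ modules $\sF_\eta\cong\sG_\eta\cong\sO_{X,\eta}$, whereas you reach the same generic vanishing via additivity of the generalized rank — an equivalent bookkeeping of the same length count.
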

\begin{proof}
Let $\pi:\sF\surj\sG$ be a surjective morphism between generalized line bundles. It is evident that $\pi_\eta:\sF_\eta\to\sG_\eta$ is an isomorphism, hence $\ker{\pi}$ is generically zero and so has finite support. Therefore, the purity of $\sF$ implies that $\ker{\pi}=0$.
\end{proof} 

The two canonical filtrations of $\sF$ will play a crucial role in the following; in particular, we will use them to define other sheaves associated to $\sF$ and some invariants. So we need to study them in some detail.

First of all, observe that, being $\sF$ a generalized line bundle, all the containments in the two filtrations are strict, $\sF_i$ is a generalized line bundle defined on $C_{n-i}$ while $\sF^{(i)}$ is a generalized line bundle defined on $C_i$.
 However $\sF/\sF_i=\sF|_{C_i}$ is not necessarily a generalized line bundle on $C_i$, for any $1\le i\le n-1$: in general, it has a nonzero torsion subsheaf; on the other side, $\sF/\sF^{(j)}$, being isomorphic, up to tensor product with a line bundle, to $\sF_j$ (by Fact \ref{Fact:primaesecondfc}\ref{Fact:primaesecondfc:1}), is a generalized line bundle on $C_{n-j}$, for any $1\le j \le n-1$. This suggests to study the relation between these quotients and, in order to do that, it is useful to introduce some definitions and notations.
\begin{defi}\label{Def:purequotandtp}
For any $1\le i\le n$, the \emph{$i$-th pure quotient} of $\sF$ is $\bsF{i}:=(\sF|_{C_i})^{\vee\vee}$, while the kernel of the natural morphism $\sF|_{C_i}\surj\bsF{i}$ is denoted $\sT_i(\sF)$, or simply $\sT_i$ if it is clear which is the generalized line bundle involved. In order to avoid any risk of confusion between $\bsF{i}$ and $\sF_i$ in the following the latter will be always denoted by $\sN^i\sF$.   
\end{defi}
It makes sense to call $\bsF{i}$ \emph{the} $i$-th pure quotient of $\sF$ by the next lemma, asserting that it is the only pure quotient of $\sF$ supported exactly on $C_i$ (in the sense that it is an $\sO_{C_i}$-module but it does not have a structure of $\sO_{C_{i-1}}$-module). Clearly, for any generalized line bundle $\sF$, it holds that $\bsF{n}=\sF$ and $\sT_n=0$.

In the case of ribbons, the following lemma has not been explicitly enunciated in \cite{CK} but it is contained in the proof of \cite[Lemma 3.2]{CK}.
\begin{lemma}\label{Lem:purequot}
Let $\sG$ be a pure sheaf on $X$ and let $q:\sF\twoheadrightarrow\sG$ be a surjective morphism, then $\sG=\bsF{i}$ for some $1\le i\le n$.
\end{lemma}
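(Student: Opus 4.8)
The plan is to pull the whole problem back onto the curve $C_i$ that supports $\sG$ and there to invoke Lemma \ref{Lem:surj=iso}.

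First I would identify the support and the generic structure of $\sG$. Since $\sG$ is pure, Lemma \ref{Lem:kernels} gives $\ker{\sO_X\to\SEnd{\sG}}=\sN^i$ for some $1\le i\le n$, so that $\sG$ is a sheaf on $C_i$ and $\sN^i\sG=0$. At the generic point $\eta$ of $X$ the stalk $\sG_\eta$ is cyclic, being a quotient of $\sF_\eta\cong\sO_{X,\eta}\cong\sO_{C,\eta'}[y]/(y^n)$; hence $\sG_\eta\cong\sO_{X,\eta}/(y^j)$ for some $j$, with annihilator $(y^j)$. Localizing the equality $\ker{\sO_X\to\SEnd{\sG}}=\sN^i$ at $\eta$ shows that this annihilator equals $\sN^i_\eta=(y^i)$, so $j=i$ and $\sG_\eta\cong\sO_{C_i,\eta}$. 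Thus $\sG$ is a generalized line bundle on the primitive multiple curve $C_i$, and in particular it is reflexive by Fact \ref{Fact:Dual}\ref{Fact:Dual:1}.

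Next I would factor $q$. As $q$ is $\sO_X$-linear and $\sN^i\sG=0$, it annihilates $\sN^i\sF$ and hence descends to a surjection $\bar q\colon\sF|_{C_i}=\sF/\sN^i\sF\twoheadrightarrow\sG$. Because $\sG$ is reflexive, the biduality morphism $\sG\to\sG^{\vee\vee}$ is an isomorphism; naturality of biduality applied to $\bar q$ then factors $\bar q$ as $\sF|_{C_i}\to(\sF|_{C_i})^{\vee\vee}=\bsF{i}\xrightarrow{\ \phi\ }\sG$ through the canonical map $\sF|_{C_i}\to\bsF{i}$. Since $\bar q$ is surjective and factors through $\phi$, the morphism $\phi\colon\bsF{i}\twoheadrightarrow\sG$ is itself surjective. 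Finally $\bsF{i}$, being the reflexive hull of $\sF|_{C_i}$ (which is generically $\sO_{C_i,\eta}$), is a generalized line bundle on $C_i$, as is $\sG$ by the first step, so Lemma \ref{Lem:surj=iso} applied on $C_i$ forces $\phi$ to be an isomorphism; hence $\sG\cong\bsF{i}$.

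The step I expect to require the most care is the first one: one must match the index $i$ of the schematic support with the generic length of $\sG$ to conclude $\sG_\eta\cong\sO_{C_i,\eta}$, and one must read the biduality defining $\bsF{i}$ as the duality intrinsic to $C_i$ (compare Remark \ref{Rmk:Dual}), so that reflexivity and Lemma \ref{Lem:surj=iso} are legitimately applied on $C_i$ rather than on $X$. Once $\sG$ is known to be a generalized line bundle on $C_i$, the remainder is formal. A variant avoiding biduality altogether is a direct comparison of kernels: $\ker{\bar q}$ is generically zero, hence is a torsion subsheaf and so is contained in the torsion subsheaf $\sT_i$ of $\sF|_{C_i}$, while purity of $\sG$ forces $\bar q$ to vanish on $\sT_i$; therefore $\ker{\bar q}=\sT_i$ and $\sG\cong\sF|_{C_i}/\sT_i=\bsF{i}$.
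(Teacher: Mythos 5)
Your proof is correct and follows essentially the same route as the paper's: both rest on Lemma \ref{Lem:kernels} together with the cyclicity of $\sG_\eta$ to identify $\sG$ as a generalized line bundle on $C_i$, then factor $q$ through $\bsF{i}$ and conclude with Lemma \ref{Lem:surj=iso}. The only differences are cosmetic: the paper treats the case $\Rk{\sG}=n$ separately via Proposition \ref{Prop:granknsheaves} while your argument is uniform in $i$, and you spell out (via naturality of biduality, or the torsion-kernel comparison) the factorization $\sF|_{C_i}\surj\bsF{i}\to\sG$ that the paper merely asserts.
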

\begin{proof}
There are two different cases to be discussed according to the generalized rank of $\sG$, i.e. $\Rk{\sG}=n$ and $\Rk{\sG}<n$. 

If $\Rk{\sG}=n$, it is sufficient to show that $\sG$ is a generalized line bundle on $X$ because this implies that $q$ is an isomorphism by Lemma \ref{Lem:surj=iso} (and thus, $\sG=\sF=\bsF{n}$). Hence, by Lemma \ref{Prop:granknsheaves} it is sufficient to prove that $\sG$ is not the direct image of a pure sheaf of $\sO_{C_{n-1}}$-modules of generalized rank $n$. This is the case because $\sG_\eta$ can be generated as $\sO_{X,\eta}$-module by a single element (the image of a generator of $\sF_\eta$) and the generic stalk of a $\sO_{C_{n-1}}$-module of generalized rank $n$ does not have this property (indeed, it is of the form $\bigoplus_{i=1}^{n-1}\sO_{C_{n-i}}^{\oplus a_i}$ with the $a_i$ non-negative integers such that $\sum_{i=1}^{n-1}ia_i=n$).

Now assume $\Rk{\sG}=r<n$. The morphism $q:\sF\twoheadrightarrow\sG$ induces an epimorphism $q_{\eta}:\sF_{\eta}\cong\sO_{X,\eta}\twoheadrightarrow\sG_\eta$, thus $\sG_\eta$ can be generated by a single element, say $s_0$. Let $\sK=\ker{\sO_X\to\SEnd{\sG}}$. By Lemma \ref{Lem:kernels}, $\sK\simeq\sN^i$ for some $1\le i\le n$. Moreover, $\sK_\eta=\operatorname{Ann}(s_0)$ and $\sG_\eta\cong\sO_{X,\eta}s_0\cong\sO_{X,\eta}/\sK_\eta$. The fact that the length of $\sG_\eta$ is $r$ implies that $\sK_\eta$ is isomorphic to $\sN_\eta^{r}$. Hence, $\sK=\sN^{r}$ and it follows that $\sG$ is a pure $\sO_X/\sN^r$-module, i.e. a $\sO_{C_r}$-module, or rather a generalized line bundle on $C_r$. Moreover, $q$ can be factorized as $\sF\to\sF|_{C_r}\surj\bsF{r}\overset{\bar{q}}{\twoheadrightarrow}\sG$. So $\bar{q}$ is a surjective morphism between generalized line bundles on $C_r$, hence an isomorphism, again by Lemma \ref{Lem:surj=iso}.          
\end{proof}

\begin{rmk}\label{Rmk:purequot}
The above lemma gives another useful characterization of the $i$-th pure quotient: it is isomorphic to $\sF/\sF^{(n-i)}$, because the latter is a pure quotient of $\sF$ on $C_i$.
\end{rmk}

A priori $\sT_i$ has a structure of $\sO_{C_i}$-module, but thanks to this remark it is possible to say something more, for $i>n/2$.
\begin{lemma}\label{Lem:supportofTi}
For any $1\le i\le n-1$, the torsion sheaf $\sT_i$ is isomorphic to $\sF^{(n-i)}/\sN^i\sF$; in particular, if $n/2<i<n$, it is an $\sO_{C_{n-i}}$-module.
\end{lemma}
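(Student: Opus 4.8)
The plan is to prove the two claims in sequence, working with the second canonical filtration and the pure-quotient description supplied by Remark \ref{Rmk:purequot}. First I would establish the isomorphism $\sT_i\cong\sF^{(n-i)}/\sN^i\sF$. By Remark \ref{Rmk:purequot}, the $i$-th pure quotient satisfies $\bsF{i}\cong\sF/\sF^{(n-i)}$, and by definition $\sT_i$ is the kernel of the natural surjection $\sF|_{C_i}=\sF/\sN^i\sF\surj\bsF{i}$. So the task is to identify this kernel. Since $\sF^{(n-i)}$ is the subsheaf annihilated by $\sN^{n-i}$ while $\sN^i\sF=\sF_i$, the containment $\sF_i=\sN^i\sF\subseteq\sF^{(n-i)}$ (recorded just after Definition \ref{Def:secondafiltrcanon}, with $i$ there playing the role of $n-i$ here) shows $\sF^{(n-i)}/\sN^i\sF$ is a well-defined subsheaf of $\sF/\sN^i\sF=\sF|_{C_i}$.

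The key step is then to match the two surjections. The composite $\sF\surj\sF/\sN^i\sF=\sF|_{C_i}\surj\bsF{i}$ agrees, via Remark \ref{Rmk:purequot}, with the canonical projection $\sF\surj\sF/\sF^{(n-i)}$, so both surjections from $\sF|_{C_i}$ onto $\bsF{i}$ have the same kernel when pulled back to $\sF$; concretely, the kernel of $\sF|_{C_i}\surj\bsF{i}$ is exactly $\sF^{(n-i)}/\sN^i\sF$, which is $\sT_i$ by definition. I would phrase this cleanly by applying the snake lemma (or the third isomorphism theorem for sheaves) to the two short exact sequences $0\to\sN^i\sF\to\sF\to\sF|_{C_i}\to 0$ and $0\to\sF^{(n-i)}\to\sF\to\bsF{i}\to 0$, which together yield $\sT_i=\ker{\sF|_{C_i}\surj\bsF{i}}\cong\sF^{(n-i)}/\sN^i\sF$.

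For the second assertion, suppose $n/2<i<n$, equivalently $n-i<i$, and in fact $2(n-i)<n$ so $n-i\le i-1$. I want to show $\sN^{n-i}$ annihilates $\sT_i=\sF^{(n-i)}/\sN^i\sF$, i.e. that $\sT_i$ is an $\sO_{C_{n-i}}$-module. By definition of the second canonical filtration, $\sN^{n-i}$ already kills every section of $\sF^{(n-i)}$, so $\sN^{n-i}\sF^{(n-i)}=0$ and a fortiori $\sN^{n-i}$ annihilates the quotient $\sF^{(n-i)}/\sN^i\sF$. Hence $\sT_i$ is supported on $C_{n-i}$. (One should check $n-i\ge 1$, which holds since $i<n$, so that $\sO_{C_{n-i}}$ makes sense.)

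The main obstacle I anticipate is purely bookkeeping: keeping the indices of the two filtrations straight, since the first canonical filtration is indexed by powers of $\sN$ (so $\sF_i=\sN^i\sF$) while the second is indexed by annihilators (so $\sF^{(j)}$ is killed by $\sN^j$), and the statement cross-references them at complementary levels $i$ and $n-i$. The conceptual content — identifying $\sT_i$ as a subquotient squeezed between the two filtrations and reading off its annihilator from the defining property of $\sF^{(n-i)}$ — is short once the correct indices are pinned down; I would be careful to verify the inclusion $\sN^i\sF\subseteq\sF^{(n-i)}$ explicitly so that the quotient in the statement is legitimate.
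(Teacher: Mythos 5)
Your proposal is correct and follows essentially the same route as the paper's proof: both identify $\bsF{i}\cong\sF/\sF^{(n-i)}$ via Remark \ref{Rmk:purequot} and then apply the snake lemma to the pair of short exact sequences $0\to\sN^i\sF\to\sF\to\sF|_{C_i}\to 0$ and $0\to\sF^{(n-i)}\to\sF\to\bsF{i}\to 0$ to obtain $\sT_i\cong\sF^{(n-i)}/\sN^i\sF$. Your explicit verification of the inclusion $\sN^i\sF\subseteq\sF^{(n-i)}$ and of the annihilation of the quotient by $\sN^{n-i}$ merely spells out what the paper leaves as "immediate," so there is no substantive difference.
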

\begin{proof}
The second assertion is an immediate consequence of the first one.

The following diagram is exact by definition of the various sheaves involved and by Remark \ref{Rmk:purequot}:
\[
\begin{tikzcd}
 & & &0 \ar[d] &\\
& 0 \ar[d] & &\sT_i \ar[d] &\\
0 \ar[r] & \sN^i\sF \ar[d] \ar[r] &\sF \ar[r] \ar[d, equal] &\sF|_{C_i} \ar[r] \ar[d] & 0\\
0 \ar[r] & \sF^{(n-i)} \ar[d] \ar[r] &\sF \ar[r] &\bsF{i} \ar[r] \ar[d] & 0\\
& \sF^{(n-i)}/\sN^i\sF \ar[d] & &0 &\\
&0 & & &     
\end{tikzcd}
\]
The first assertion follows from it by snake's lemma.
\end{proof}

Now we investigate when a generalized line bundle is a line bundle.

\begin{prop}\label{Prop:fliffT1=0}
A generalized line bundle on $X$ is a line bundle if and only if its restriction to $C$ is a line bundle. 
\end{prop}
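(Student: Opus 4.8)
The forward implication is immediate: the restriction of a locally free sheaf of rank one is again locally free of rank one. For the converse the assertion is local, so the plan is to pass to the stalk $M=\sF_P$ at an arbitrary closed point $P$ of $C$, a finitely generated torsion-free module over $A_n=\sO_{C_n,P}=A_1\otimes_{\bK}\bK[y]/(y^n)$, where $A_1=\sO_{C,P}$ is a DVR. By Remark \ref{Rmk:PrimaFC} the stalk of $\sF|_C=\sF/\sN\sF$ at $P$ is $M/yM\cong M\otimes_{A_n}A_1$, so the hypothesis that $\sF|_C$ is a line bundle says exactly that $M/yM$ is free of rank one over $A_1$.

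The first key step is to produce a single generator of $M$. By Nakayama's lemma the minimal number of generators of $M$ equals $\dim_{\bK}M/\mm{P}M$. Since the maximal ideal of $A_n$ is $\mm{P}=\mm{A_1}A_n+(y)$, one has $M/\mm{P}M\cong(M/yM)\otimes_{A_1}\bK$, and as $M/yM\cong A_1$ this fibre is one-dimensional over $\bK$. Hence $M$ is cyclic, and there is a surjection $A_n\surj M$ of $A_n$-modules; write $M\cong A_n/I$.

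The second step is to show $I=0$. Localizing at the generic point $\eta$ of $X$, both $A_{n,\eta}=\sO_{X,\eta}\cong\bK(C)[y]/(y^n)$ and $M_\eta\cong\sO_{X,\eta}$ have length $n$, so the induced surjection $A_{n,\eta}\surj M_\eta$ between modules of equal finite length is an isomorphism; thus $I_\eta=0$. But $I$ is a submodule of $A_n$, which is torsion-free over itself (so pure, by Lemma \ref{Lem:p=tf}), and a submodule of a torsion-free module is torsion-free; a torsion-free module whose generic localization vanishes must be zero. Therefore $I=0$ and $M\cong A_n$ is free of rank one. Since this holds at every closed point and freeness of the stalk of a coherent sheaf spreads to a neighbourhood, $\sF$ is a line bundle on $X$.

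The main obstacle is precisely the passage from rank-one information on the reduced curve $C$ to cyclicity of $M$ over the full non-reduced local ring $A_n$; the crux is the identification of the fibre $M/\mm{P}M$ of $\sF$ at $P$ with the fibre of $\sF|_C$ at $P$, so that a single generator of $M/yM$ already generates $M$ by Nakayama. Once cyclicity is in hand, purity of $\sO_{C_n}$ forces the cyclic presentation to be an isomorphism, in the same spirit as the proof of Lemma \ref{Lem:surj=iso}.
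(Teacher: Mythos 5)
Your proof is correct, and it takes a genuinely different route from the paper's. The paper proceeds by induction on the multiplicity $n$: it translates the hypothesis into the vanishing $\sT_1(\sF)=0$, i.e. $\sN\sF=\sF^{(n-1)}$ (via Lemmata \ref{Lem:purequot} and \ref{Lem:supportofTi}), reduces through the quasi-locally-free criterion of Fact \ref{Fact:qll}\ref{Fact:qll:1} to proving that $\sN\sF\simeq\bsF{n-1}\otimes\sN$ is a line bundle on $C_{n-1}$, and then applies the inductive hypothesis to $\bsF{n-1}$, whose restriction to $C$ is a line bundle because it receives a surjection from the line bundle $\sF|_C$. You instead argue directly on stalks: identifying the fibre of $\sF$ at a closed point $P$ with the fibre of $\sF|_C$ at $P$, Nakayama gives cyclicity of $\sF_P$, and then the generic triviality $\sF_\eta\cong\sO_{X,\eta}$ together with a length count forces the kernel $I$ of $A_n\surj\sF_P$ to be generically zero, hence zero. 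Your argument is more elementary and self-contained (no induction, no pure quotients, no canonical filtrations), and it even shows that purity of $\sF$ is not really needed --- coherence and $\sF_\eta\cong\sO_{X,\eta}$ suffice; what the paper's proof buys is that it stays inside the filtration formalism ($\sT_i$, $\bsF{i}$, $\sF^{(i)}$) on which the neighbouring results (Corollaries \ref{Cor:flifTi=0} and \ref{Cor:fliffbetan-1=0}) are built, so that whole cluster of statements is proved uniformly. Two minor points on your write-up: Lemma \ref{Lem:p=tf} concerns the equivalence of purity and torsion-freeness for sheaves, so citing it for the (trivially verified) torsion-freeness of $A_n$ over itself is not quite apt; and the step asserting that a torsion-free module with vanishing generic localization is zero silently uses that the zerodivisors of $A_n\cong A_1\otimes_{\bK}\bK[y]/(y^n)$ are exactly the elements of the nilradical $(y)$ --- true here since $(y)$ is the unique associated prime of $A_n$ --- a fact which also underlies the paper's proof of Lemma \ref{Lem:p=tf}, but which deserves a word of justification in your argument.
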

\begin{proof}
The necessity is obvious; hence the only interesting part is sufficiency.

This proof proceeds by induction on $n$, the base is the completely trivial case $n=1$, although also the case $n=2$ is already known (cf., e.g, the proof of \cite[Theorem 1.1]{EG}).
Assume that the statement is true for $n-1\ge 1$ and that $\sF|_C$ is a line bundle. By Lemma \ref{Lem:purequot}, $\sF|_C$ is a line bundle if and only if $\sF|_C=\bsF{1}$ if and only if $\sT_1=0$ (by definition) if and only if $\sN\sF=\sF^{(n-1)}$ by Lemma \ref{Lem:supportofTi}.   
By Fact \ref{Fact:qll}\ref{Fact:qll:1} and by the trivial observation that a generalized line bundle is quasi locally free if and only if it is a line bundle, it is sufficient to show that $\sF^{(n-1)}=\sN\sF$ is a line bundle on $C_{n-1}$: indeed its second canonical filtration is the same of $\sF$ and the fact it is a line bundle implies that $\sF^{(j)}/\sF^{(j-1)}$ is a line bundle on $C$ for any $j\le n-1$, while $\sF/\sF^{(n-1)}$ is $\bsF{1}=\sF|_C$, which is locally free by hypothesis.
By Fact \ref{Fact:primaesecondfc}\ref{Fact:primaesecondfc:1}, $\sN\sF\simeq\bsF{n-1}\otimes\sN$, thus it is sufficient to prove that $\bsF{n-1}$ is a line bundle on $C_{n-1}$. The fact that $\sF\surj\bsF{n-1}$ implies that $\sF|_C\surj\bsF{n-1}|_C$, too. But $\sF|_C$ is a line bundle and $\bsF{n-1}|_C$ has rank $1$ on $C$, so the epimorphism has to be an isomorphism. Hence $\bsF{n-1}|_C$ is a line bundle on $C$ and, by inductive hypothesis, $\bsF{n-1}$ is a line bundle on $C_{n-1}$, as required.
\end{proof}
\begin{cor}\label{Cor:flifTi=0}
The following are equivalent:
\begin{enumerate}
\item\label{Cor:flifTi=0:1} $\sF$ is a line bundle on $X$;
\item\label{Cor:flifTi=0:2} $\sF|_{C_i}$ is a line bundle on $C_i$ for any $1\le i\le n-1$;
\item\label{Cor:flifTi=0:3} there exists $i\in\{1,\dotsc,n-1\}$ such that $\sF|_{C_i}$ is a line bundle on $C_i$.
\end{enumerate}
\end{cor}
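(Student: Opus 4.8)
The plan is to prove the three equivalences by a cyclic chain of implications, exploiting the filtration $C_1\subset\dotsb\subset C_{n-1}\subset C_n$ together with Proposition \ref{Prop:fliffT1=0}, which is the crucial input. The implications $\ref{Cor:flifTi=0:1}\Rightarrow\ref{Cor:flifTi=0:2}$ and $\ref{Cor:flifTi=0:2}\Rightarrow\ref{Cor:flifTi=0:3}$ are both immediate: if $\sF$ is a line bundle on $X$ then its restriction to any closed subscheme $C_i$ is again locally free of rank $1$, giving the first, and the second is trivial since $\ref{Cor:flifTi=0:2}$ asks for all $i$ while $\ref{Cor:flifTi=0:3}$ asks only for one. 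So the entire content of the corollary lies in the implication $\ref{Cor:flifTi=0:3}\Rightarrow\ref{Cor:flifTi=0:1}$.

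For that implication, I would argue as follows. Suppose $\sF|_{C_i}$ is a line bundle on $C_i$ for some fixed $1\le i\le n-1$. The restriction $\sF|_{C_i}$ is itself a generalized line bundle on $C_i$ (since $\sF_\eta\cong\sO_{X,\eta}$ reduces, modulo $\sN^i$, to a generic rank-one module), and restricting it further to $C=C_1$ gives $(\sF|_{C_i})|_C=\sF|_C$. Because $\sF|_{C_i}$ is locally free, its restriction $\sF|_C$ is a line bundle on $C$. Thus the single hypothesis in \ref{Cor:flifTi=0:3} already forces $\sF|_C$ to be a line bundle on $C$. Now apply Proposition \ref{Prop:fliffT1=0} directly to $\sF$ on $X=C_n$: a generalized line bundle on $X$ is a line bundle if and only if its restriction to $C$ is a line bundle. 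Hence $\sF$ is a line bundle on $X$, which is \ref{Cor:flifTi=0:1}.

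The key observation making this short is that being a line bundle on $C_i$ is controlled entirely by what happens after the final restriction to the reduced curve $C$: the hypotheses for different $i$ all collapse to the same condition ``$\sF|_C$ is locally free'', which is exactly the criterion isolated in Proposition \ref{Prop:fliffT1=0}. There is essentially no obstacle here, since all the real work has been absorbed into that proposition (whose proof is the genuinely inductive argument using the second canonical filtration and Fact \ref{Fact:qll}). The only point requiring a line of care is checking that $\sF|_{C_i}$ is genuinely a generalized line bundle on $C_i$ so that the restriction-to-$C$ language is legitimate; this follows because purity and the generic isomorphism $\sF_\eta\cong\sO_{X,\eta}$ pass to $C_i$, and restriction of a locally free sheaf to a closed subscheme stays locally free. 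With that remark in place the corollary is an immediate consequence of the proposition, and the whole argument is purely formal.
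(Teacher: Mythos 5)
Your proposal is correct and takes essentially the same route as the paper: the implications \ref{Cor:flifTi=0:1}$\Rightarrow$\ref{Cor:flifTi=0:2}$\Rightarrow$\ref{Cor:flifTi=0:3} are immediate, and for \ref{Cor:flifTi=0:3}$\Rightarrow$\ref{Cor:flifTi=0:1} one observes that $(\sF|_{C_i})|_C=\sF|_C$ is then a line bundle on $C$ and invokes Proposition \ref{Prop:fliffT1=0}. The only blemish is your side remark that purity ``passes to $C_i$'' (false in general, since $\sF|_{C_i}$ can have torsion, as the paper notes), but that remark is superfluous: under hypothesis \ref{Cor:flifTi=0:3} the restriction is locally free, and the argument never needs $\sF|_{C_i}$ to be a generalized line bundle.
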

\begin{proof}
It is immediate that \ref{Cor:flifTi=0:1} implies \ref{Cor:flifTi=0:2} and that the latter implies \ref{Cor:flifTi=0:3}.

If \ref{Cor:flifTi=0:3} holds, then also $(\sF|_{C_i})|_C=\sF|_C$ is a line bundle and then the Proposition allows to conclude that \ref{Cor:flifTi=0:3} implies \ref{Cor:flifTi=0:1}.  
\end{proof}

The next step is to introduce the generalizations of the index and of the local index sequence of a generalized line bundle on a ribbon (cf. \cite[Definition 2.7]{CK}).
\begin{defi}\label{Def:indices}
The \emph{$i$-th index} of $\sF$ is $\be{i}=\bet{i}{\sF}:=\operatorname{h}^0(C,\sT_{i}(\bsF{i+1}))$ and the \emph{indices-vector} of $\sF$ is $\be{.}=\bet{.}{\sF}:=(\be{1},\dotsc,\be{n-1})$.

Let $P\in X$ be a closed point, then the \emph{local $i$-th index} of $\sF$ at $P$ is $\bp{i}{P}=\btp{i}{P}{\sF}:=\operatorname{lenght}((\sT_{i}(\bsF{i+1}))_P)$ while its \emph{local indices-vector} at $P$ is $\bp{.}{P}=\btp{.}{P}{\sF}:=(\bp{1}{P},\dotsc,\bp{n-1}{P})$.

The \emph{local indices sequence} of $\sF$ is $\btp{.}{.}{\sF}=\bp{.}{.}=(\bp{.}{P_1},\dotsc,\bp{.}{P_k})$, where $P_1,\,\dotsc,\,P_k$ are the closed points supporting the torsion sheaves $\sT_i(\bsF{i+1})$, for $1\le i\le n-1$, i.e. the points in which $\sF$ is not locally free.
\end{defi}
The definition makes sense because $\bsF{i+1}$ is a generalized line bundle on $C_{i+1}$ and thus, thanks to Lemma \ref{Lem:supportofTi}, $\sT_i(\bsF{i+1})$ is an $\sO_C$-module.
\begin{rmk}
For any $1\le i\le n-1$, it holds that $\be{i}=\sum_{j=1}^{k}\bp{i}{P_j}$.
By definition, $\bet{j}{\bsF{i}}=\bet{j}{\sN^{n-i}\sF}=\bet{j}{\sF}$ for any $0< j<i$.
\end{rmk}
\begin{lemma}\label{Lem:propindices}
It holds that $\sT_i(\bsF{i+1})\subseteq\sT_{i+1}(\bsF{i+2})$, for any $1\le i\le n-2$. In particular, $\be{i}\le\be{i+1}$ and $\bp{i}{P}\le\bp{i+1}{P}$ for any $1\le i\le n-2$ and for any closed point $P\in X$. 
\end{lemma}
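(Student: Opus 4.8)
The plan is to identify each torsion sheaf $\sT_i(\bsF{i+1})$ with the cokernel of one of the injective multiplication maps $\lambda_{\bullet,\bullet}$ of Fact~\ref{Fact:primaesecondfc}\ref{Fact:primaesecondfc:3:1}, and then to exploit the fact that these maps are induced by iterated multiplication by $\nu$, hence compose multiplicatively.

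First I would compute $\sT_i(\bsF{i+1})$ in terms of the second canonical filtration of $\sF$ itself. By Remark~\ref{Rmk:purequot} we have $\bsF{i+1}\cong\sF/\sF^{(n-i-1)}$, and a direct check on the second canonical filtration of a quotient of this shape gives $(\bsF{i+1})^{(1)}\cong\sF^{(n-i)}/\sF^{(n-i-1)}=G^{(n-i)}(\sF)$, while Fact~\ref{Fact:primaesecondfc}\ref{Fact:primaesecondfc:1} yields $\sN^i\bsF{i+1}\cong(\bsF{i+1}/(\bsF{i+1})^{(i)})\otimes\cC^i\cong G^{(n)}(\sF)\otimes\cC^i$; moreover the natural inclusion $\sN^i\bsF{i+1}\inj(\bsF{i+1})^{(1)}$ is exactly $\lambda_{n-1,i}\colon G^{(n)}(\sF)\otimes\cC^i\inj G^{(n-i)}(\sF)$. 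Lemma~\ref{Lem:supportofTi} then gives $\sT_i(\bsF{i+1})\cong\coker{\lambda_{n-1,i}}$, and the same computation applied to $\bsF{i+2}\cong\sF/\sF^{(n-i-2)}$ gives $\sT_{i+1}(\bsF{i+2})\cong\coker{\lambda_{n-1,i+1}}$.

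Next, since all the $\lambda$'s come from the multiplication $\nu$, they satisfy the composition relation
\[
\lambda_{n-1,i+1}=\lambda_{n-i-1,1}\circ(\lambda_{n-1,i}\otimes\mathrm{id}_{\cC}),
\]
and both factors are injective by Fact~\ref{Fact:primaesecondfc}\ref{Fact:primaesecondfc:3:1} (note that $1\le i\le n-2$ is exactly what makes $\lambda_{n-i-1,1}$ well defined). For any composite of injections $f,g$ the second factor induces an injection $\coker{f}\inj\coker{gf}$ with cokernel $\coker{g}$; applying this with $f=\lambda_{n-1,i}\otimes\mathrm{id}_{\cC}$ and $g=\lambda_{n-i-1,1}$ produces a short exact sequence
\[
0\to\sT_i(\bsF{i+1})\otimes\cC\to\sT_{i+1}(\bsF{i+2})\to\coker{\lambda_{n-i-1,1}}\to 0.
\]
This is the asserted inclusion $\sT_i(\bsF{i+1})\subseteq\sT_{i+1}(\bsF{i+2})$, canonical up to the twist by the line bundle $\cC$.

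Finally, because all these sheaves have finite support and tensoring by a line bundle on $C$ preserves lengths, taking $\operatorname{h}^0$ (resp.\ localizing at a closed point $P$ and taking lengths) in the last sequence yields $\be{i+1}=\be{i}+\operatorname{h}^0(\coker{\lambda_{n-i-1,1}})\ge\be{i}$ and $\bp{i+1}{P}=\bp{i}{P}+\operatorname{length}((\coker{\lambda_{n-i-1,1}})_P)\ge\bp{i}{P}$. I expect the main obstacle to be the first step: carefully matching the two canonical filtrations of the pure quotients $\bsF{i+1}$ and $\bsF{i+2}$ with the graded pieces $G^{(\bullet)}(\sF)$ of $\sF$, verifying that the structural inclusions are literally the maps $\lambda_{\bullet,\bullet}$, and bookkeeping the conormal twists throughout.
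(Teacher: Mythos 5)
Your proposal is correct, but it is organized genuinely differently from the paper's proof. The paper first reduces to the single extreme case $i=n-2$ (using $\ov{(\bsF{i+2})}_{j}=\bsF{j}$, a consequence of Lemma \ref{Lem:purequot}), and then writes both torsion sheaves as subquotients with the \emph{same} denominator: $\sT_{n-2}(\bsF{n-1})=\sT_{n-2}(\sN\sF)=(\sN\sF)^{(1)}/\sN^{n-1}\sF$ and $\sT_{n-1}(\sF)=\sF^{(1)}/\sN^{n-1}\sF$, both via Lemma \ref{Lem:supportofTi} and Fact \ref{Fact:primaesecondfc}\ref{Fact:primaesecondfc:1}, so that the containment follows at once from $(\sN\sF)^{(1)}\subset\sF^{(1)}$. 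You instead treat all $i$ uniformly, with no reduction step: you present the two torsion sheaves as cokernels of Dr\'ezet's injective multiplication maps $\lambda_{n-1,i}$ and $\lambda_{n-1,i+1}$, and deduce the inclusion from the factorization $\lambda_{n-1,i+1}=\lambda_{n-i-1,1}\circ(\lambda_{n-1,i}\otimes\mathrm{id}_{\cC})$. Your identifications check out: $(\bsF{i+1})^{(1)}=G^{(n-i)}(\sF)$, $\sN^i\bsF{i+1}\cong G^{(n)}(\sF)\otimes\cC^i$ with structural inclusion equal to $\lambda_{n-1,i}$ (both maps are induced by multiplication by the $i$-th power of a local generator of $\sN$), and the constraint $i\le n-2$ is exactly what makes $\lambda_{n-i-1,1}$ exist, as you note. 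What your route buys is a sharper statement: the short exact sequence pins down the discrepancy as $\be{i+1}-\be{i}=\operatorname{h}^0(\coker{\lambda_{n-i-1,1}})$, the colength of an injection of line bundles on $C$, whereas the paper's route is shorter and produces a containment not requiring the twist by $\cC$. That twist is not a defect of your argument: the paper's own proof hides the same phenomenon in the step $\sT_{n-2}(\bsF{n-1})=\sT_{n-2}(\sN\sF)$, which is really an isomorphism up to tensoring by the line bundle $\sN^{-1}$, and since all these sheaves have finite support the twist changes neither supports nor lengths, hence none of the stated consequences.
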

\begin{proof}
The second assertion is a straightforward consequence of the first one.

By the fact that $\ov{(\bsF{i})}_j=\bsF{j}$ for any $1\le j\le i\le n-1$, which is an obvious consequence of Lemma \ref{Lem:purequot}, it is sufficient to show that $\sT_{n-2}(\bsF{n-1})\subseteq\sT_{n-1}(\sF)$.

Indeed, $\bsF{n-1}\simeq\sN\sF\otimes\sN^{-1}$ by Fact \ref{Fact:primaesecondfc}\ref{Fact:primaesecondfc:1}. Therefore, $\sT_{n-2}(\bsF{n-1})=\sT_{n-2}(\sN\sF)$, because $\sN^{-1}$ is a line bundle on $C_{n-1}$. 
By the fact $\sN\sF$ is a generalized line bundle on $C_{n-1}$ and by Lemma \ref{Lem:supportofTi}, $\sT_{n-2}(\sN\sF)=(\sN\sF)^{(1)}/\sN^{n-1}\sF$ (thanks to the fact that the non-trivial terms of the two canonical filtrations of $\sN\sF$ seen as an $\sO_{C_{n-1}}$-module and seen as an $\sO_{X}$-module coincide). Moreover, $\sN\sF\subset\sF$ implies $(\sN\sF)^{(1)}\subset\sF^{(1)}$, hence
$\sT_{n-2}(\bsF{n-1})=\sT_{n-2}(\sN\sF)\subset\sF^{(1)}/\sN^{n-1}\sF=\sT_{n-1}(\sF)$, as wanted (the last equality holds again by Lemma \ref{Lem:supportofTi}).  
\end{proof}

\begin{cor}\label{Cor:fliffbetan-1=0}
The following are equivalent:
\begin{enumerate}
\item\label{Cor:fliffbetan-1=0:1} $\sF$ is a line bundle on $X$;
\item\label{Cor:fliffbetan-1=0:2} $\be{i}=0$ for any $1\le i\le n-1$;
\item\label{Cor:fliffbetan-1=0:3} $\be{n-1}=0$. 
\end{enumerate}
\end{cor}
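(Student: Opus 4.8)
The plan is to establish the cycle of implications \ref{Cor:fliffbetan-1=0:1}$\Rightarrow$\ref{Cor:fliffbetan-1=0:2}$\Rightarrow$\ref{Cor:fliffbetan-1=0:3}$\Rightarrow$\ref{Cor:fliffbetan-1=0:1}. The implication \ref{Cor:fliffbetan-1=0:2}$\Rightarrow$\ref{Cor:fliffbetan-1=0:3} is trivial, as $\be{n-1}$ is one of the indices $\be{1},\dotsc,\be{n-1}$. For \ref{Cor:fliffbetan-1=0:1}$\Rightarrow$\ref{Cor:fliffbetan-1=0:2}, if $\sF$ is a line bundle then Corollary \ref{Cor:flifTi=0} gives that each $\sF|_{C_{i+1}}$ is a line bundle on $C_{i+1}$; being reflexive, it agrees with its double dual $\bsF{i+1}$, whose restriction $\bsF{i+1}|_{C_i}$ is again locally free, hence torsion-free. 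Thus $\sT_i(\bsF{i+1})=0$ and $\be{i}=\operatorname{h}^0(C,\sT_i(\bsF{i+1}))=0$ for every $1\le i\le n-1$.

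The core of the argument is \ref{Cor:fliffbetan-1=0:3}$\Rightarrow$\ref{Cor:fliffbetan-1=0:1}, which I would carry out in two steps. First I would promote the single hypothesis $\be{n-1}=0$ to the vanishing of \emph{all} the indices: by Lemma \ref{Lem:propindices} one has $\be{1}\le\be{2}\le\dotsb\le\be{n-1}=0$, and since the indices are non-negative integers this forces $\be{i}=0$ for all $1\le i\le n-1$ (so that \ref{Cor:fliffbetan-1=0:2} and \ref{Cor:fliffbetan-1=0:3} are in fact equivalent).

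Then I would induct on the multiplicity $n$, the case $n=1$ being trivial since on the smooth curve $C$ every generalized line bundle is a line bundle. For $n\ge 2$, consider the pure quotient $\bsF{n-1}$, a generalized line bundle on $C_{n-1}$. By the remark following Definition \ref{Def:indices} its indices satisfy $\bet{j}{\bsF{n-1}}=\bet{j}{\sF}=0$ for all $0<j<n-1$; in particular its top index vanishes, so the inductive hypothesis applied on $C_{n-1}$ shows that $\bsF{n-1}$ is a line bundle. On the other hand, $\be{n-1}=\operatorname{h}^0(C,\sT_{n-1}(\sF))=0$ and $\sT_{n-1}(\sF)$ is a torsion sheaf of finite support, whence $\sT_{n-1}(\sF)=0$; since by Lemma \ref{Lem:purequot} this sheaf is the kernel of the surjection $\sF|_{C_{n-1}}\surj\bsF{n-1}$, we obtain $\sF|_{C_{n-1}}\cong\bsF{n-1}$. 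Therefore $\sF|_{C_{n-1}}$ is a line bundle on $C_{n-1}$, and Corollary \ref{Cor:flifTi=0} with $i=n-1$ yields that $\sF$ is a line bundle on $C_n$.

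The step I expect to be the real obstacle is the passage from \emph{torsion-freeness} to \emph{local freeness} of $\sF|_{C_{n-1}}$: the vanishing of $\be{n-1}$ only identifies $\sF|_{C_{n-1}}$ with the generalized line bundle $\bsF{n-1}$, which need not be a line bundle a priori. This is exactly what forces the inductive reduction to $\bsF{n-1}$, and it hinges on the index-comparison identity $\bet{j}{\bsF{n-1}}=\bet{j}{\sF}$ to propagate the vanishing hypothesis down to multiplicity $n-1$.
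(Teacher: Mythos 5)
Your proposal is correct and follows essentially the same route as the paper's proof: the implication \ref{Cor:fliffbetan-1=0:3}$\Rightarrow$\ref{Cor:fliffbetan-1=0:2} via Lemma \ref{Lem:propindices}, and then \ref{Cor:fliffbetan-1=0:2}$\Rightarrow$\ref{Cor:fliffbetan-1=0:1} by induction on the multiplicity, passing to $\bsF{n-1}$ (whose indices agree with those of $\sF$), using $\be{n-1}=0$ to identify $\sF|_{C_{n-1}}$ with $\bsF{n-1}$, and concluding with Corollary \ref{Cor:flifTi=0}. The only cosmetic difference is that you anchor the induction at the trivial case $n=1$, while the paper starts at the ribbon case $n=2$ via Proposition \ref{Prop:fliffT1=0}.
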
  

\begin{proof}
It is evident that \ref{Cor:fliffbetan-1=0:1} implies \ref{Cor:fliffbetan-1=0:2} which implies \ref{Cor:fliffbetan-1=0:3}; by the above Lemma \ref{Cor:fliffbetan-1=0:3} implies \ref{Cor:fliffbetan-1=0:2}.
The proof that \ref{Cor:fliffbetan-1=0:2} implies \ref{Cor:fliffbetan-1=0:1} is by induction. The basis is the case of ribbons, i.e. $n=2$, which is Proposition \ref{Prop:fliffT1=0}.

So let $n\ge 3$. By the fact $\be{i}=0$ for $1\le i\le n-2$, it holds that $\bsF{n-1}$ is a line bundle by inductive hypothesis; moreover, $\be{n-1}=0$ means that $\sF|_{C_{n-1}}=\bsF{n-1}$. Hence, $\sF$ is a line bundle by Corollary \ref{Cor:flifTi=0}. 
\end{proof}

An interesting problem, whose solution will be useful also in the study of stability conditions, is how to express the generalized degrees of the $\bsF{i}$'s in terms of that of $\sF$. The solution is the following:

\begin{prop}\label{Prop:DegFi}
Let $\sF$ be a generalized line bundle on $X$ of generalized degree $\Deg{\sF}=D$. Then 

\begin{equation}\label{Eq:Degquot}
\Deg{\bsF{i}}=\frac{1}{n}\Bigg[iD+(n-i)\sum\limits_{j=1}^{i-1}\be{j}-i\sum\limits_{j=i}^{n-1}\be{j}-\frac{in(n-i)}{2}\deg{\cC}  \Bigg],
\end{equation}

for any $1\le i\le n$.

\end{prop}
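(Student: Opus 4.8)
The plan is to prove the formula by descending induction on $i$, computing $\Deg{\bsF{i}}$ from $\Deg{\bsF{i+1}}$. Writing $D_i:=\Deg{\bsF{i}}$, the known value $D_n=D$ serves as the base case; note that the right-hand side of \eqref{Eq:Degquot} reduces to $D$ when $i=n$, since both $\be{}$-sums and the $\deg{\cC}$-term vanish there. The engine of the induction is the defining exact sequence of the torsion sheaf $\sT_i$, applied to the generalized line bundle $\bsF{i+1}$ on $C_{i+1}$:
\[
0 \to \sT_i(\bsF{i+1}) \to \bsF{i+1}|_{C_i} \to \bsF{i} \to 0,
\]
where I use $\ov{(\bsF{i+1})}_i=\bsF{i}$, which follows from Lemma \ref{Lem:purequot} exactly as recorded in the proof of Lemma \ref{Lem:propindices}. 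By additivity of the generalized degree (Fact \ref{Fact:PropertiesGenRkeDeg}\ref{Fact:PropertiesGenRkeDeg:5}) this gives $D_i=\Deg{\bsF{i+1}|_{C_i}}-\Deg{\sT_i(\bsF{i+1})}$.

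The two terms on the right are computed separately. Since $\sT_i(\bsF{i+1})$ is a torsion sheaf on $C$, its generalized rank is zero, so its generalized degree equals $\operatorname{h}^0$, that is, exactly $\be{i}$ (cf. Remark \ref{Rmk:GenRkDeg}\ref{Rmk:GenRkDeg:2}). For the first term, restriction to $C_i$ kills the top piece $\sN^i\bsF{i+1}$ of the first canonical filtration of $\bsF{i+1}$ on $C_{i+1}$, so additivity again gives $\Deg{\bsF{i+1}|_{C_i}}=D_{i+1}-\Deg{\sN^i\bsF{i+1}}$. Here I would identify $\sN^i\bsF{i+1}$ using Fact \ref{Fact:primaesecondfc}\ref{Fact:primaesecondfc:1} together with Remark \ref{Rmk:purequot}: the quotient $\bsF{i+1}/(\bsF{i+1})^{(i)}$ is the first pure quotient of $\bsF{i+1}$, namely $\bsF{1}$, a line bundle on $C$, whence $\sN^i\bsF{i+1}\cong\bsF{1}\otimes\cC^{\otimes i}$ and $\Deg{\sN^i\bsF{i+1}}=\Deg{\bsF{1}}+i\deg{\cC}$. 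Combining the pieces yields the recursion
\[
D_{i+1}-D_i=\Deg{\bsF{1}}+i\deg{\cC}+\be{i}, \qquad 1\le i\le n-1.
\]

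Finally I would solve this recursion. Telescoping from $i=1$ to $n-1$ expresses $D-D_1$ as $(n-1)\Deg{\bsF{1}}+\frac{n(n-1)}{2}\deg{\cC}+\sum_{j=1}^{n-1}\be{j}$; since $D_1=\Deg{\bsF{1}}$, this is a single linear equation in the one unknown $\Deg{\bsF{1}}$, solved by $\Deg{\bsF{1}}=\frac{1}{n}\big[D-\frac{n(n-1)}{2}\deg{\cC}-\sum_{j=1}^{n-1}\be{j}\big]$, which is precisely the $i=1$ case of \eqref{Eq:Degquot}. Substituting this value back into the telescoped expression for general $D_m$, using $\sum_{i=m}^{n-1}i=\frac{n(n-1)-m(m-1)}{2}$ and splitting $\sum_{j=1}^{n-1}\be{j}$ into the ranges $j<m$ and $j\ge m$, gives \eqref{Eq:Degquot}; this last step is a routine rearrangement that I have checked reproduces the three groups of terms ($D$, $\deg{\cC}$, and $\be{}$) with the stated coefficients.

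The only genuinely delicate point is the identification $\sN^i\bsF{i+1}\cong\bsF{1}\otimes\cC^{\otimes i}$, which requires recognizing $\bsF{i+1}/(\bsF{i+1})^{(i)}$ as the first pure quotient and checking that the second canonical filtration of $\bsF{i+1}$ is unaffected by viewing it as a sheaf on $C_{i+1}$ rather than on $C_n$; once this is in place, everything else is bookkeeping with the additive invariants $\Rk{}$ and $\Deg{}$.
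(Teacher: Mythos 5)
Your proof is correct, but it is organized differently from the paper's. The paper proves Proposition \ref{Prop:DegFi} by induction on the multiplicity $n$: it derives only the top-level relation between $D=\Deg{\sF}$ and $D'=\Deg{\bsF{n-1}}$ --- via exactly the chain you use (Remark \ref{Rmk:purequot}, Lemma \ref{Lem:supportofTi}, and Fact \ref{Fact:primaesecondfc}\ref{Fact:primaesecondfc:1}, amounting to $D-D'=\Deg{\bsF{1}}+(n-1)\deg{\cC}+\be{n-1}$) --- and then invokes the proposition for the generalized line bundle $\bsF{n-1}$ on $C_{n-1}$ to express $\Deg{\bsF{1}}$ in terms of $D'$, turning that relation into a linear equation in $D'$; the solved value is substituted back into the inductive formulas. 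You instead stay on $C_n$, establish the one-step recursion $D_{i+1}-D_i=\Deg{\bsF{1}}+i\deg{\cC}+\be{i}$ uniformly in $i$ (by applying the same facts to $\bsF{i+1}$ on $C_{i+1}$), and telescope, closing the loop with the same self-consistency trick: the unknown $\Deg{\bsF{1}}=D_1$ appears linearly on both sides. The essential identifications coincide; what differs is the architecture. Your version avoids induction on the statement and makes the uniform recursion explicit, but, as you correctly flag, it requires knowing that the relevant terms of the two canonical filtrations of $\bsF{i+1}$ are the same whether it is viewed as a sheaf on $C_{i+1}$ or on $C_n$ --- precisely the compatibility the paper records in the proof of Lemma \ref{Lem:propindices}, and which the paper's induction sidesteps by always applying each fact natively on the curve where the sheaf in question lives. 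Conversely, the paper's induction hides the recursion that your telescoping displays, so your write-up arguably makes the combinatorial origin of the coefficients in \eqref{Eq:Degquot} more transparent.
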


\begin{proof}
The proof is by induction on $n$. The basis is given by the trivial case $n=1$, where there is only the equality $D=D$.

In order to simplify the notation, let $D'=\Deg{\bsF{n-1}}$ and $\delta=-\deg{\cC}$.

By inductive hypothesis, it holds that  $\Deg{\bsF{i}}=\frac{1}{n-1}\Big[iD'+(n-1-i)\sum_{j=1}^{i-1}\be{j}-i\sum_{j=i}^{n-2}\be{j}+\frac{i(n-1)(n-1-i)}{2}\delta  \Big]$ for $1 \le i\le n-1$.

Now let us calculate $D'$ in terms of $D$: 
$D'=\chi(\bsF{n-1})-(n-1)\chi(\sO_{C})=\chi(\sF)-\chi(\sF^{(1)})-(n-1)\chi(\sO_C)=D-\chi(\sN^{n-1}\sF)
-\be{n-1}+\chi(\sO_C)=D-\chi(\bsF{1}\otimes \cC^{\otimes n-1})+\chi(\sO_C)-\be{n-1}=D-\deg{\bsF{1}\otimes \cC^{\otimes n-1}}-\be{n-1}=D-\deg{\bsF{1}}+(n-1)\delta-\be{n-1}=D-\frac{1}{n-1}\Big[D'-\sum_{j=1}^{n-2}\be{j}+\frac{(n-1)(n-2)}{2}\delta  \Big]+(n-1)\delta-\be{n-1}$;
 where the first equality holds by definition, the second by Remark \ref{Rmk:purequot} (and additivity of the Euler characteristic), the third by the definitions of $D$ and $\be{n-1}$, the forth by Fact \ref{Fact:primaesecondfc}\ref{Fact:primaesecondfc:1}, the fifth by definition of degree of a line bundle on $C$, the sixth by its additivity and the last by inductive hypothesis and by the fact that for a line bundle on $C$ degree and generalized degree coincide.
 
Thus we have that $D'=\frac{1}{n}\Big[(n-1)D+\sum_{j=1}^{n-2}\be{j} -(n-1)\be{n-1} +\frac{n(n-1)}{2}\delta\Big]$, as desired. To obtain the claim for $\Deg{\bsF{i}}$ with $1\le i\le n-2$, it is sufficient to substitute this value of $D'$ in the formulae obtained by inductive hypothesis. The case $i=n$ is a trivial identity.
\end{proof} 

\begin{cor}\label{Cor:DegF^i}
For any $1\le i\le n-1$, it holds that
\begin{equation}\label{Eq:Deg2filc}
\Deg{\sF^{(i)}}=\frac{1}{n}\Bigg[iD-i\sum\limits_{j=1}^{n-i-1}\be{j}+(n-i)\sum\limits_{j=n-i}^{n-1}\be{j}+\frac{in(n-i)}{2}\deg{\cC}\Bigg],
\end{equation}
where $D=\Deg{\sF}$, as in the proposition.
\end{cor}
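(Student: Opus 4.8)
The plan is to deduce the formula for $\Deg{\sF^{(i)}}$ directly from the formula for the degrees of the pure quotients already established in Proposition \ref{Prop:DegFi}, by playing off the second canonical filtration against the additivity of the generalized degree. First I would write down the tautological short exact sequence coming from the second canonical filtration,
\[
0 \to \sF^{(i)} \to \sF \to \sF/\sF^{(i)} \to 0,
\]
and apply additivity of the generalized degree (Fact \ref{Fact:PropertiesGenRkeDeg}\ref{Fact:PropertiesGenRkeDeg:5}), which holds for \emph{any} short exact sequence of sheaves on $C_n$, not merely pure ones. This immediately gives $\Deg{\sF^{(i)}} = D - \Deg{\sF/\sF^{(i)}}$, reducing the statement to computing $\Deg{\sF/\sF^{(i)}}$.

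The crucial identification is that $\sF/\sF^{(i)}$ is itself a pure quotient of $\sF$. By Remark \ref{Rmk:purequot} we have $\bsF{j} \simeq \sF/\sF^{(n-j)}$; taking $j = n-i$ yields $\sF/\sF^{(i)} \simeq \bsF{n-i}$. Hence $\Deg{\sF/\sF^{(i)}}$ is given by Proposition \ref{Prop:DegFi} with $i$ replaced by $n-i$, and substituting this into $\Deg{\sF^{(i)}} = D - \Deg{\bsF{n-i}}$ produces the claimed formula \eqref{Eq:Deg2filc} after collecting the $D$-term, the two index sums, and the $\deg{\cC}$-term over the common denominator $n$.

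The only genuine work is the bookkeeping of the index ranges under the substitution $i \mapsto n-i$: the sum $\sum_{j=1}^{i-1}\be{j}$ becomes $\sum_{j=1}^{n-i-1}\be{j}$, the sum $\sum_{j=i}^{n-1}\be{j}$ becomes $\sum_{j=n-i}^{n-1}\be{j}$, the coefficients $i$ and $n-i$ exchange roles, and the sign of the $\deg{\cC}$-term flips when passing from $\Deg{\bsF{n-i}}$ to $D - \Deg{\bsF{n-i}}$ (the $nD$ coming from $D$ combining with the $-(n-i)D$ to leave $iD$). I expect no serious obstacle here: it is routine algebra, and the main thing to verify is simply that the fractions recombine correctly into the asserted $\tfrac{1}{n}[\cdots]$ expression.
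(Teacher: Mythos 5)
Your proposal is correct and is essentially identical to the paper's own proof: both use the identification $\sF/\sF^{(i)}\simeq\bsF{n-i}$ from Remark \ref{Rmk:purequot}, additivity of the generalized degree (Fact \ref{Fact:PropertiesGenRkeDeg}\ref{Fact:PropertiesGenRkeDeg:5}), and substitution of formula \eqref{Eq:Degquot} from Proposition \ref{Prop:DegFi} with $i$ replaced by $n-i$. Your bookkeeping of the index ranges and the sign of the $\deg{\cC}$-term is also accurate.
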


\begin{proof}
The assertion follows from the Proposition, because $\sF/\sF^{(i)}$ is isomorphic to $\bsF{n-i}$ (cf. Remark \ref{Rmk:purequot}) and the generalized degree is additive (cf. Fact \ref{Fact:PropertiesGenRkeDeg}\ref{Fact:PropertiesGenRkeDeg:5}). 
\end{proof}

Proposition \ref{Prop:DegFi} can be used also to give another, apparently surprising, characterization of the indices of a generalized line bundle in terms of the torsion parts of the quotients of the first canonical filtration.

\begin{prop}\label{Prop:AltCarInd}
Let $\sF$ be a generalized line bundle on $X$. For any $1\le i\le n-1$, $\bet{i}{\sF}=\operatorname{h}^0(\cT_{n-1-i}(\sF))$, where $\cT_{n-1-i}(\sF)$ is the torsion part of $G_{n-1-i}(\sF)$. 
\end{prop}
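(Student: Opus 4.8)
The plan is to relate the torsion of each first-canonical graded piece $G_{n-1-i}(\sF)$ to the pure quotients of $\sF$, and then to exploit a numerical symmetry between the ``bottom'' and ``top'' torsion of a generalized line bundle. First I would identify $G_{j}(\sF)$ for $0\le j\le n-1$. By Fact \ref{Fact:primaesecondfc}\ref{Fact:primaesecondfc:1} and Remark \ref{Rmk:purequot}, $\sN^{j}\sF=\sF_{j}\cong(\sF/\sF^{(j)})\otimes\sC^{\otimes j}\cong\bsF{n-j}\otimes\sC^{\otimes j}$, and since $G_{j}(\sF)=\sN^{j}\sF/\sN^{j+1}\sF=(\sN^{j}\sF)|_{C}$, restriction to $C$ yields $G_{j}(\sF)\cong\bsF{n-j}|_{C}\otimes\cC^{j}$. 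Tensoring by the line bundle $\cC^{j}$ does not change the torsion, so the torsion part $\cT_{j}(\sF)$ of $G_{j}(\sF)$ is the torsion of $\bsF{n-j}|_{C}$, which by Definition \ref{Def:purequotandtp} is precisely $\sT_{1}(\bsF{n-j})$. Taking $j=n-1-i$ gives $\operatorname{h}^0(\cT_{n-1-i}(\sF))=\operatorname{h}^0(\sT_{1}(\bsF{i+1}))$.

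It then remains to prove the auxiliary statement that, for every generalized line bundle $\sG$ on $C_{m}$, one has $\operatorname{h}^0(\sT_{1}(\sG))=\operatorname{h}^0(\sT_{m-1}(\sG))$; applied to $\sG=\bsF{i+1}$ (so $m=i+1$, and $\sT_{m-1}(\sG)=\sT_{i}(\bsF{i+1})$) this gives $\operatorname{h}^0(\sT_{1}(\bsF{i+1}))=\operatorname{h}^0(\sT_{i}(\bsF{i+1}))=\be{i}$, which is the assertion. To establish the auxiliary statement I would write both quantities as differences of degrees of line bundles on $C$. Since $\sT_{1}(\sG)$ is the torsion of $\sG|_{C}$, additivity of the generalized degree gives $\operatorname{h}^0(\sT_{1}(\sG))=\Deg{\sG|_{C}}-\Deg{\bsG{1}}$. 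On the other hand, Lemma \ref{Lem:supportofTi} identifies $\sT_{m-1}(\sG)$ with $\sG^{(1)}/\sN^{m-1}\sG$; here $\sG^{(1)}=G^{(1)}(\sG)$ is a line bundle on $C$ (Fact \ref{Fact:Dual}\ref{Fact:Dual:1}) containing the line bundle $\sN^{m-1}\sG\cong\bsG{1}\otimes\cC^{m-1}$, so $\operatorname{h}^0(\sT_{m-1}(\sG))=\Deg{\sG^{(1)}}-\Deg{\sN^{m-1}\sG}$.

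The crux is the cancellation of the twist terms. By Fact \ref{Fact:primaesecondfc}\ref{Fact:primaesecondfc:2} with $i=0$ (all graded pieces of a generalized line bundle having rank $1$) I get $\Deg{\sG^{(1)}}=\Deg{\sG|_{C}}+(m-1)\deg{\cC}$, while $\sN^{m-1}\sG\cong\bsG{1}\otimes\cC^{m-1}$ gives $\Deg{\sN^{m-1}\sG}=\Deg{\bsG{1}}+(m-1)\deg{\cC}$. Subtracting, the terms $(m-1)\deg{\cC}$ cancel and I obtain $\operatorname{h}^0(\sT_{m-1}(\sG))=\Deg{\sG|_{C}}-\Deg{\bsG{1}}=\operatorname{h}^0(\sT_{1}(\sG))$, completing the argument. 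I expect the main obstacle to be organizing precisely this bookkeeping of $\sC$-twists relating the first and second canonical filtrations, so that every contribution of $\deg{\cC}$ cancels and only $\be{i}$ survives; a fully equivalent route would be to compute $\deg G_{n-1-i}(\sF)$ by substituting the closed formulas for $\Deg{\bsF{k}}$ from Proposition \ref{Prop:DegFi} and subtracting the degree of the locally free part of $G_{n-1-i}(\sF)$, but the symmetry argument above makes the cancellation transparent.
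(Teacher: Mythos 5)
Your proof is correct, and it takes a genuinely different route from the paper. The paper argues by induction on the multiplicity $n$: it uses the surjections $\mu_{i,n-1-i}\colon G_i(\sF)\otimes\cC^{n-1-i}\surj G_{n-1}(\sF)$ of Fact \ref{Fact:primaesecondfc}\ref{Fact:primaesecondfc:3:2} to write $\deg{G_i(\sF)}=\deg{G_{n-1}(\sF)}-(n-1-i)\deg{\cC}+\operatorname{h}^0(\cT_{n-1-i}(\sF))$, sums over $i$, substitutes the closed formula \eqref{Eq:Degquot} for $\deg{\bsF{1}}$ to obtain the aggregate identity $\sum_i\bet{i}{\sF}=\sum_i\operatorname{h}^0(\cT_{n-1-i}(\sF))$, and then splits this into the individual equalities by applying the inductive hypothesis to $\sN\sF\cong\bsF{n-1}\otimes\sN$ and deducing the remaining case $i=n-1$ by subtraction. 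You instead avoid induction and Proposition \ref{Prop:DegFi} entirely: the identification $G_j(\sF)\cong\bsF{n-j}|_C\otimes\cC^{\otimes j}$ (from Fact \ref{Fact:primaesecondfc}\ref{Fact:primaesecondfc:1} and Remark \ref{Rmk:purequot}) reduces the statement, index by index, to the symmetry $\operatorname{h}^0(\sT_1(\sG))=\operatorname{h}^0(\sT_{m-1}(\sG))$ for a generalized line bundle $\sG$ on $C_m$, which you prove by a two-line degree comparison using Lemma \ref{Lem:supportofTi} and Fact \ref{Fact:primaesecondfc}\ref{Fact:primaesecondfc:2}; the twists by $(m-1)\deg{\cC}$ cancel exactly as you claim (all the auxiliary facts you invoke — that $\sG^{(1)}$ is a line bundle on $C$, that $\sN^{m-1}\sG\cong\bsG{1}\otimes\cC^{m-1}\subseteq\sG^{(1)}$, and that $\Deg{}$ of a finite-length sheaf equals its $\operatorname{h}^0$ — are available and are used non-circularly, since Lemma \ref{Lem:supportofTi} and Remark \ref{Rmk:purequot} precede the proposition). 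What each approach buys: yours is more direct and makes the mechanism transparent, isolating a clean intrinsic statement (bottom and top torsion of a generalized line bundle have the same length) that treats each index independently; the paper's proof is less self-contained but recycles machinery it has already built (Proposition \ref{Prop:DegFi} and the multiplication maps), and produces the sum identity $\sum\be{i}=\sum\beta_i$ as an intermediate byproduct.
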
 
\begin{proof}
In order to simplify notations throughout the proof, we will set $\be{i}=\bet{i}{\sF}$ and $\beta_i=\beta_i(\sF)=\operatorname{h}^0(\cT_{n-1-i}(\sF))$.
We proceed by induction on $n$, the multiplicity of $X$.
The basis is constituted by $n=2$. In this case, it has to be considered only $\be{1}$ and the desired equality is verified by definition.

So let $n\ge 3$ and assume that the statement holds for $n-1$.
Let $\sF$ be a generalized line bundle on $X$ of generalized degree $D$ and let $d_i=\deg{G_i(\sF)}$. By definition and by additivity of the generalized degree, $D=\sum_{i=0}^{n-1}d_i$. 
It holds also that $d_i=d_{n-1}-(n-1-i)\deg{\cC}+\beta_{n-1-i}$, for any $0\le i\le n-2$. Indeed, for any $0\le i\le n-2$, by Fact \ref{Fact:primaesecondfc}\ref{Fact:primaesecondfc:3:2} there is a surjective morphism $\mu_{i,n-1-i}:G_i(\sF)\otimes\cC^{n-1-i}\surj G_{n-1}(\sF)=\sN^{n-1}\sF$; moreover, by the fact $\sF$ is a generalized line bundle, $G_{n-1}(\sF)$ is a line bundle over $C$, while $G_i(\sF)\otimes\cC^{n-1-i}$ has rank $1$ over $C$; hence, its locally free part is isomorphic to $G_{n-1}(\sF)$ and the kernel of $\mu_{i,n-1-i}$ is isomorphic to $\cT_{i}(\sF)$.
Therefore, $D=nd_{n-1}-n(n-1)/2\deg{\cC}+\sum_{i=1}^{n-1}\beta_{i}$.

Recall that by Fact \ref{Fact:primaesecondfc}\ref{Fact:primaesecondfc:2}, $d_{n-1}=\deg{\bsF{1}}+(n-1)\deg{\cC}$.

It follows that $D=n\deg{\bsF{1}}+n(n-1)/2\deg{\cC}+\sum_{i=1}^{n-1}\beta_{i}$. 
Substituting in this equality the value of $\deg{\bsF{1}}$ given by formula \eqref{Eq:Degquot} we get that $\sum_{i=1}^{n-1}\be{i}=\sum_{i=1}^{n-1}\beta_i$.

By Fact \ref{Fact:primaesecondfc}\ref{Fact:primaesecondfc:1}, $\sN\sF$ (which is a generalized line bundle over $C_{n-1}$) is isomorphic to $\bsF{n-1}\otimes \sN$, hence $\bet{i}{\sN\sF}=\bet{i}{\bsF{n-1}}=\be{i}$, for $1\le i\le n-2$, where the last equality holds by definition. Again by definition, $\beta_i=\beta_i(\sN\sF)$ for $1\le i\le n-2$. Thus we can use inductive hypothesis to assert that $\be{i}=\beta_i$, for any $1\le i\le n-2$. 

Therefore, the previous equality $\sum_{i=1}^{n-1}\be{i}=\sum_{i=1}^{n-1}\beta_i$ implies that also $\be{n-1}=\beta_{n-1}$.
\end{proof}

The next lemma and corollary describe some relations between a generalized line bundle and its dual.
\begin{lemma}\label{Lem:duality}
Let $\sF$ be, as usual, a generalized line bundle on $X$ and let $\sF^\vee$ be its dual (which is a generalized line bundle, too). Then there are the following canonical isomorphisms
\begin{enumerate}
\item\label{Lem:duality:1}$\sN^i(\sF^\vee)\simeq\big(\sF^{(n-i)}\big)^\vee\otimes\sC^{\otimes i}$, i.e. $\ov{(\sF^\vee)}_{n-i}\simeq\big(\sF^{(n-i)}\big)^\vee$;
\item\label{Lem:duality:2} $\sT_i(\sF^\vee)\simeq\un{\operatorname{Ext}}^1_{\sO_X}(\sT_i(\sF),\sO_X)\otimes\sC^{\otimes i}$, and then there is a non-canonical isomorphism between $\sT_i(\sF^\vee)$ and $\sT_i(\sF)$. 
\end{enumerate}
\end{lemma}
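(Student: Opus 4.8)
The plan is to establish the two isomorphisms by different routes: for (i) I would dualize a single canonical short exact sequence and recognise the resulting quotient as a pure quotient of $\sF^\vee$, while (ii) turns out to be a transcription of Fact \ref{Fact:Dual}\ref{Fact:Dual:4:1} followed by a local-duality argument. For (i) the starting point is the short exact sequence
\[
0\to\sF^{(n-i)}\to\sF\to\bsF{i}\to 0
\]
read off from the diagram of Lemma \ref{Lem:supportofTi} (the kernel of $\sF\surj\bsF{i}=(\sF|_{C_i})^{\vee\vee}$ is exactly $\sF^{(n-i)}$). Since $\bsF{i}$ is pure, Fact \ref{Fact:Dual}\ref{Fact:Dual:5} lets me dualize while keeping exactness, giving
\[
0\to(\bsF{i})^\vee\to\sF^\vee\to(\sF^{(n-i)})^\vee\to 0 .
\]

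Now $(\sF^{(n-i)})^\vee$ is a quotient of the generalized line bundle $\sF^\vee$; being a dual sheaf it is reflexive, in particular torsion-free, hence pure (Lemma \ref{Lem:p=tf}), and by Fact \ref{Fact:Dual}\ref{Fact:Dual:3} its generalized rank equals $\Rk{\sF^{(n-i)}}=n-i$. Lemma \ref{Lem:purequot} then identifies it with the unique pure quotient of $\sF^\vee$ of that rank, namely $\ov{(\sF^\vee)}_{n-i}$, which is the second isomorphism in (i). The first isomorphism follows formally by applying Fact \ref{Fact:primaesecondfc}\ref{Fact:primaesecondfc:1} to $\sF^\vee$ together with Remark \ref{Rmk:purequot}, since $\sN^i(\sF^\vee)=(\sF^\vee)_i\simeq\big(\sF^\vee/(\sF^\vee)^{(i)}\big)\otimes\sC^{\otimes i}\simeq\ov{(\sF^\vee)}_{n-i}\otimes\sC^{\otimes i}\simeq(\sF^{(n-i)})^\vee\otimes\sC^{\otimes i}$.

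For (ii) I would first observe that the torsion sheaves in question are precisely the torsion subsheaves of the restrictions: by Definition \ref{Def:purequotandtp}, $\sT_i(\sF)$ is the kernel of $\sF|_{C_i}\surj(\sF|_{C_i})^{\vee\vee}$, i.e. the torsion part $\sT(\sF|_{C_i})$, and likewise $\sT_i(\sF^\vee)=\sT(\sF^\vee|_{C_i})$. Granting this identification, the canonical isomorphism $\sT_i(\sF^\vee)\simeq\SExt{1}{\sO_X}(\sT_i(\sF),\sO_X)\otimes\sC^{\otimes i}$ is exactly Fact \ref{Fact:Dual}\ref{Fact:Dual:4:1}.

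It remains to extract the non-canonical isomorphism $\sT_i(\sF^\vee)\simeq\sT_i(\sF)$. Since $\sC^{\otimes i}$ is a line bundle this reduces to an isomorphism $\SExt{1}{\sO_X}(\sT_i(\sF),\sO_X)\simeq\sT_i(\sF)$. The sheaf $\sT_i(\sF)$ has finite support and $X=C_n$ is Gorenstein (locally a complete intersection), so local duality identifies $\SExt{1}{\sO_X}(\sT_i(\sF),\sO_X)$ with the Grothendieck--Matlis dual of $\sT_i(\sF)$, a finite-length sheaf supported at the same points and of the same length. I expect this last point to be the main obstacle: a finite-length module over a Gorenstein local ring is in general \emph{not} isomorphic to its Matlis dual (only of equal length), so to upgrade the equality of lengths to an actual, if non-canonical, isomorphism I would need to exploit the explicit local structure of $\sT_i(\sF)$ for a generalized line bundle --- namely that its local pieces are of cyclic type (modules $\sO_{C_j,P}/\mathfrak m_P^{\,a}$), which are Matlis self-dual --- rather than invoking self-duality abstractly.
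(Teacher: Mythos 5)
Your proof of part (i) is correct, but it is a genuinely different argument from the paper's. The paper gets (i) in one stroke: it identifies $\ker{\sF\surj(\sF|_{C_i})^{\vee\vee}}$ with $\sF^{(n-i)}$ (via Lemma \ref{Lem:purequot}) and then quotes Dr\'ezet's result recorded as Fact \ref{Fact:Dual}\ref{Fact:Dual:4:2}. You instead rederive the isomorphism inside the paper's own toolkit: dualize $0\to\sF^{(n-i)}\to\sF\to\bsF{i}\to 0$ with Fact \ref{Fact:Dual}\ref{Fact:Dual:5}, note that the quotient $(\sF^{(n-i)})^\vee$ is pure of generalized rank $n-i$ (Fact \ref{Fact:Dual}\ref{Fact:Dual:3}), and pin it down as $\ov{(\sF^\vee)}_{n-i}$ by uniqueness of pure quotients (Lemma \ref{Lem:purequot}); the translation into the formulation with $\sN^i(\sF^\vee)$ via Fact \ref{Fact:primaesecondfc}\ref{Fact:primaesecondfc:1} and Remark \ref{Rmk:purequot} is common to both. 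Your route buys independence from the black-box citation; the one step you should spell out is the purity of the dual sheaf $(\sF^{(n-i)})^\vee$, which follows as in Lemma \ref{Lem:endpure} (a local homomorphism into $\sO_X$ killed by a nonzerodivisor vanishes, because $\sO_X$ is torsion-free). For the canonical isomorphism in (ii) you do exactly what the paper does, namely invoke Fact \ref{Fact:Dual}\ref{Fact:Dual:4:1} after identifying $\sT_i(\sF)$ with the torsion subsheaf of $\sF|_{C_i}$; that part is fine.

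The genuine gap is in your last step, and the repair you propose does not work. First, the stalks of $\sT_i(\sF)$ are not of the form $\sO_{C_j,P}/\mathfrak{m}_P^{a}$: already on $C_3$, if $\sF_P\cong(x^{b_2},x^{b_2-b_1}y,y^2)$ with $0<b_1<b_2$ (local notation of Section \ref{SecStruc}), then $\sT_1(\sF)_P\cong\sO_{C,P}/(x^{b_1})\oplus\sO_{C,P}/(x^{b_2-b_1})$, which is not cyclic. Second, and more seriously, cyclic modules over $\sO_{C_j,P}$ with $j\ge 2$ are in general \emph{not} Matlis self-dual: Matlis duality exchanges the minimal number of generators with the $\bK$-dimension of the socle, and these differ, e.g., for $\sO_{C_2,P}/\mathfrak{m}_P^2$ or $\sO_{C_2,P}/(x^2,xy)$ (one generator, two-dimensional socle). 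What actually makes the claim work in the cases the paper needs is annihilation by $\sN$, not cyclicity: whenever $\sT_i(\sF)$ is an $\sO_C$-module --- which holds for every $i$ when $n\le 3$, and holds for $i=n-1$ for every $n$ because $\sN\sF^{(1)}=0$ --- it is a finite-length module over the DVR $\sO_{C,P}$, hence a direct sum of cyclic modules $\sO_{C,P}/(x^c)$, and those are Matlis self-dual; and in all cases Matlis duality preserves length, which yields $\operatorname{h}^0(\sT_i(\sF^\vee))=\operatorname{h}^0(\sT_i(\sF))$, the only consequence used later (the case $i=n-1$ of Corollary \ref{Cor:betaandduals}).

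Be aware, finally, that for $n\ge 4$ and intermediate $i$ your worry is not merely technical: the asserted non-canonical isomorphism can actually fail. On $C_4$ take $\sF_P\cong(x^2,xy^2,y^3)$ (local indices $(0,1,2)$), so that $\sF^\vee_P\cong(x^2,xy,y^2)$. A direct computation gives $\sT_2(\sF)_P=(xy^2,y^3)/(x^2y^2)$, which is minimally $2$-generated with $1$-dimensional socle, whereas $\sT_2(\sF^\vee)_P=(y^2)/(x^2y^2,xy^3)\cong\sO_{C_2,P}/(x^2,xy)$ is cyclic with $2$-dimensional socle; the two have the same length $3$ but are not isomorphic (they are Matlis duals of one another, consistently with Fact \ref{Fact:Dual}\ref{Fact:Dual:4:1}). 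The paper's own proof is silent on this point --- it cites only Fact \ref{Fact:Dual}\ref{Fact:Dual:4:1}, which produces the canonical isomorphism with the $\un{\operatorname{Ext}}$-sheaf --- so you have put your finger on a real subtlety of the statement; but the way to salvage what is needed is the $\sO_C$-module/length argument above, not the cyclicity claim.
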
 
\begin{proof}
The first assertion is Fact \ref{Fact:Dual}\ref{Fact:Dual:4:2}, thanks to the fact that for a generalized line bundle $(\sF|_{C_i})^{\vee\vee}=\bsF{i}$ by Lemma \ref{Lem:purequot} and, thus, $\ker{\sF\!\surj\!(\sF|_{C_i})^{\vee\vee}}$ coincides with $\sF^{(n-i)}$. The second one is Fact \ref{Fact:Dual}\ref{Fact:Dual:4:1}.   
\end{proof}
\begin{cor}\label{Cor:betaandduals}
For $1\le i\le n-1$, the following formula holds:
\begin{equation}\label{Eq:betadual}
\bet{i}{\sF^\vee}=\bet{n-1}{\sF}-\bet{n-1-i}{\sF}, 
\end{equation}
where $\bet{0}{\sF}$ is posed equal to $0$. 
\end{cor}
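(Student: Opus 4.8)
The plan is to compute the generalized degree of the $i$-th pure quotient $\ov{(\sF^\vee)}_{i}$ of $\sF^\vee$ in two different ways and to compare the resulting expressions. Since $\sF^\vee$ is again a generalized line bundle, the first way is simply to apply Proposition \ref{Prop:DegFi} to it, obtaining $\Deg{\ov{(\sF^\vee)}_{i}}$ as a linear expression in $\Deg{\sF^\vee}$ and the unknown indices $\bet{1}{\sF^\vee},\dots,\bet{n-1}{\sF^\vee}$. The crucial simplification throughout is that a generalized line bundle is pure, hence torsion-free, so its torsion subsheaf vanishes; this makes the last term in the degree-duality formula Fact \ref{Fact:Dual}\ref{Fact:Dual:3} disappear, and in particular, using $\Rk{\sF}=n$, one gets $\Deg{\sF^\vee}=-\Deg{\sF}+n(n-1)\deg{\cC}$.

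For the second computation I would use Lemma \ref{Lem:duality}\ref{Lem:duality:1}, which identifies $\ov{(\sF^\vee)}_{i}$ with $\big(\sF^{(i)}\big)^\vee$. Since $\sF^{(i)}$ is a generalized line bundle on $C_i$, in particular pure and with $\Rk{\sF^{(i)}}=i$, Fact \ref{Fact:Dual}\ref{Fact:Dual:3} again loses its torsion term and gives $\Deg{\big(\sF^{(i)}\big)^\vee}=-\Deg{\sF^{(i)}}+i(n-1)\deg{\cC}$, while $\Deg{\sF^{(i)}}$ is provided explicitly by Corollary \ref{Cor:DegF^i} in terms of $\Deg{\sF}$ and the indices $\be{1},\dots,\be{n-1}$ of $\sF$.

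Comparing the two formulae, the terms in $\Deg{\sF}$ and in $\deg{\cC}$ cancel after substituting the expression for $\Deg{\sF^\vee}$ found above, leaving, for each $1\le i\le n-1$, a single linear relation between the partial sums of the dual indices $\bet{j}{\sF^\vee}$ and those of the $\be{j}$. Writing $P_i:=\sum_{j=1}^{i-1}\bet{j}{\sF^\vee}$ and $\Sigma:=\sum_{j=1}^{n-1}\bet{j}{\sF^\vee}$, each relation takes the shape $nP_i-i\Sigma=R_i$ with $R_i$ an explicit expression in the $\be{j}$; the case $i=1$ (where $P_1=0$) pins down $\Sigma$, and taking successive differences then recovers each $\bet{i}{\sF^\vee}$ uniquely. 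Finally I would check that the claimed closed form $\bet{i}{\sF^\vee}=\bet{n-1}{\sF}-\bet{n-1-i}{\sF}$ satisfies all these relations, which settles the corollary by uniqueness.

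The main obstacle is purely bookkeeping: keeping the two degree expansions aligned and verifying both that the linear system $nP_i-i\Sigma=R_i$ is nondegenerate (so the indices are determined, not merely constrained) and that the reflection formula solves it. There is also one convention to watch, namely that the dual in Lemma \ref{Lem:duality}\ref{Lem:duality:1} is the dual over $C_n$, so that the factor $i(n-1)\deg{\cC}$ (rather than $i(i-1)\deg{\cC}$) enters the second computation; this is precisely the factor that makes the final answer come out as the stated reflection of the indices.
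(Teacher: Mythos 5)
Your proposal is correct, but it takes a genuinely different route from the paper's own proof. The paper argues sheaf-theoretically: it settles $i=n-1$ by Lemma \ref{Lem:duality}\ref{Lem:duality:2} (the torsion sheaves $\sT_{n-1}(\sF)$ and $\sT_{n-1}(\sF^\vee)$ are non-canonically isomorphic), and for $i\le n-2$ it uses Lemma \ref{Lem:duality}\ref{Lem:duality:1} to reduce the claim to $\bet{i}{\sF^{(i+1)}}=\bet{n-1}{\sF}-\bet{n-1-i}{\sF}$, which it proves by a snake-lemma comparison of the two exact sequences $0\to\sN^{n-1}\sF\to\sF^{(1)}\to\sT_{n-1}(\sF)\to 0$ and $0\to\sN^{i}\sF^{(i+1)}\to\sF^{(1)}\to\sT_{i}(\sF^{(i+1)})\to 0$, thereby obtaining an isomorphism of torsion sheaves and not merely an equality of lengths. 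You instead work purely numerically, and the computation does close up as you predict: equating the value of $\Deg{\ov{(\sF^\vee)}_{i}}$ given by Proposition \ref{Prop:DegFi} applied to $\sF^\vee$ (with $\Deg{\sF^\vee}=-\Deg{\sF}+n(n-1)\deg{\cC}$ from Fact \ref{Fact:Dual}\ref{Fact:Dual:3}) with the value given by Lemma \ref{Lem:duality}\ref{Lem:duality:1}, Fact \ref{Fact:Dual}\ref{Fact:Dual:3} and Corollary \ref{Cor:DegF^i}, the terms in $\Deg{\sF}$ and $\deg{\cC}$ cancel and one is left with $n\sum_{j=1}^{i-1}\bet{j}{\sF^\vee}-i\sum_{j=1}^{n-1}\bet{j}{\sF^\vee}=i\sum_{j=1}^{n-i-1}\be{j}-(n-i)\sum_{j=n-i}^{n-1}\be{j}$; the case $i=1$ yields $\sum_{j=1}^{n-1}\bet{j}{\sF^\vee}=(n-1)\be{n-1}-\sum_{j=1}^{n-2}\be{j}$, and successive differences then give exactly $\bet{i}{\sF^\vee}=\be{n-1}-\be{n-1-i}$, so the system is nondegenerate and solved by the reflection formula. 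Your route buys uniformity — all indices, including $i=n-1$, are handled at once, and Lemma \ref{Lem:duality}\ref{Lem:duality:2} is never needed — at the price of leaning on the earlier degree formulas and of producing only numerical rather than sheaf-level information; you are also right to flag that the dual in Lemma \ref{Lem:duality}\ref{Lem:duality:1} is the dual over $C_n$, since the coefficient $i(n-1)\deg{\cC}$ coming from Fact \ref{Fact:Dual}\ref{Fact:Dual:3} for the direct image of $\sF^{(i)}$ is precisely what makes the $\deg{\cC}$ terms cancel.
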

\begin{proof}
The case $i=n-1$ is implied by the second statement of the Lemma.

Thus, let $i\le n-2$.  
By the first point of the Lemma, $\bet{i}{\sF^\vee}=\bet{i}{\sF^{(i+1)}}$; hence, it is sufficient to show that $\bet{i}{\sF^{(i+1)}}=\bet{n-1}{\sF}-\bet{n-1-i}{\sF}$.

Consider the following commutative diagram:
\[
\begin{tikzcd}
0\ar[r] & \sN^{n-1}\sF \ar[d, "f"] \ar[r] & \sF^{(1)} \ar[d, equal] \ar[r] &\sT_{n-1}(\sF)\ar[d, "g"] \ar[r] & 0\\
0\ar[r] & \sN^{i}\sF^{(i+1)}  \ar[r] & \sF^{(1)} \ar[r] &\sT_{i}(\sF^{(i+1)}) \ar[r] & 0  
\end{tikzcd}
\]
By snake's lemma, $f$ is injective (as obvious), $g$ is surjective, as expected, and  $\ker{g}\simeq\coker{f}$; thus, it suffices to prove that $\coker{f}\simeq\sT_{n-1-i}(\bsF{n-i})$. The fact that, by their definitions, $\sN^{i}\sF^{(i+1)}\simeq(\sN^i\sF)^{(1)}$ implies that $\coker{f}\simeq\sT_{n-1-i}(\sN^i\sF)\simeq\sT_{n-1-i}(\bsF{n-i})$, where the latter isomorphism is due to Fact \ref{Fact:primaesecondfc}\ref{Fact:primaesecondfc:1}. 
\end{proof}

The next corollary will be useful in order to determine a surprisingly canonical Jordan-Holder filtration of a semistable generalized line bundle (see Proposition \ref{Prop:JH}).

\begin{cor}\label{Cor:betaF(i)}
For any $2\le i\le n-1$ and for any $1\le j\le i-1$, it holds that
\begin{equation}\label{Eq:beta2filtrcan}
\bet{j}{\sF^{(i)}}=\bet{n-i+j}{\sF}-\bet{n-i}{\sF}.
\end{equation} 
\end{cor}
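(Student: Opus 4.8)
The plan is to reduce \eqref{Eq:beta2filtrcan} to the duality formula already proved in Corollary \ref{Cor:betaandduals}, applying it twice: once on the primitive multiple curve $C_i$ (of multiplicity $i$) and once on $X=C_n$. The bridge between the second-canonical-filtration term $\sF^{(i)}$ and duals is the identity
\[
\sF^{(i)}\simeq\big(\ov{(\sF^\vee)}_i\big)^{\vee_n},
\]
which expresses $\sF^{(i)}$ as the dual of the $i$-th pure quotient of $\sF^\vee$. I would first establish this identity. Since $\sF$ is pure, hence reflexive by Lemma \ref{Lem:p=tf} and Fact \ref{Fact:Dual}\ref{Fact:Dual:1}, one has $(\sF^\vee)^\vee=\sF$, so Fact \ref{Fact:Dual}\ref{Fact:Dual:2} applied to $\sF^\vee$ gives $\sF^{(i)}=\big((\sF^\vee)^\vee\big)^{(i)}=\big((\sF^\vee)|_{C_i}\big)^{\vee_n}$. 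Using that a sheaf has the same dual as its reflexive hull, i.e. $\cG^{\vee_n}=\cG^{\vee_n\vee_n\vee_n}$, together with the definition $\ov{(\sF^\vee)}_i=\big((\sF^\vee)|_{C_i}\big)^{\vee_n\vee_n}$, I replace $(\sF^\vee)|_{C_i}$ by its double dual and obtain the displayed identity.

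Next I would pass to indices. The indices of a generalized line bundle depend only on the torsion subsheaves of its pure quotients, which are preserved (up to the length-preserving operation of tensoring a torsion sheaf by a line bundle on $C$) under twisting by a line bundle; hence the indices are invariant under such twists. Since $\ov{(\sF^\vee)}_i$ is a generalized line bundle on $C_i$, Remark \ref{Rmk:Dual} gives $\big(\ov{(\sF^\vee)}_i\big)^{\vee_n}\simeq\big(\ov{(\sF^\vee)}_i\big)^{\vee_i}\otimes\sN^{n-i}$, a twist by a line bundle, so $\bet{j}{\sF^{(i)}}=\bet{j}{(\ov{(\sF^\vee)}_i)^{\vee_i}}$. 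Applying Corollary \ref{Cor:betaandduals} on $C_i$ to the generalized line bundle $\ov{(\sF^\vee)}_i$ then yields, for $1\le j\le i-1$ and with the convention $\bet{0}{\cdot}=0$,
\[
\bet{j}{\sF^{(i)}}=\bet{i-1}{\ov{(\sF^\vee)}_i}-\bet{i-1-j}{\ov{(\sF^\vee)}_i}.
\]
Because the indices of the $i$-th pure quotient agree with those of the sheaf in the relevant range, namely $\bet{k}{\ov{(\sF^\vee)}_i}=\bet{k}{\sF^\vee}$ for $0\le k<i$, the right-hand side equals $\bet{i-1}{\sF^\vee}-\bet{i-1-j}{\sF^\vee}$. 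Finally, applying Corollary \ref{Cor:betaandduals} on $X$ to $\sF^\vee$ rewrites both terms via $\bet{k}{\sF^\vee}=\bet{n-1}{\sF}-\bet{n-1-k}{\sF}$; the two copies of $\bet{n-1}{\sF}$ cancel and one is left with exactly $\bet{n-i+j}{\sF}-\bet{n-i}{\sF}$, which is \eqref{Eq:beta2filtrcan}. (One checks that $i-1$ and $i-1-j$ lie in the admissible ranges for $2\le i\le n-1$, the boundary value $i-1-j=0$ being covered by the convention.)

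The main obstacle, and essentially the only delicate point, is the bookkeeping around the identity $\sF^{(i)}\simeq(\ov{(\sF^\vee)}_i)^{\vee_n}$: one must track carefully whether each dual is taken on $C_n$ or on $C_i$ and absorb the unavoidable $\sN^{n-i}$-twist supplied by Remark \ref{Rmk:Dual}. Once it is noted that all of these twists are by line bundles and are therefore invisible to the indices, the argument collapses to the two applications of Corollary \ref{Cor:betaandduals} and the telescoping cancellation above.
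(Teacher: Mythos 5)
Your proof is correct and follows essentially the same route as the paper: the key identity $\sF^{(i)}\simeq(\ov{(\sF^\vee)}_i)^\vee$ (which is precisely Lemma \ref{Lem:duality}\ref{Lem:duality:1}, cited directly in the paper, whereas you re-derive it from Fact \ref{Fact:Dual}\ref{Fact:Dual:2} and reflexivity) followed by a double application of Corollary \ref{Cor:betaandduals}, once on $C_i$ and once on $C_n$, with the two copies of $\bet{n-1}{\sF}$ cancelling. Your extra care with the $\vee_i$ versus $\vee_n$ distinction, absorbed by the invariance of the indices under twisting by line bundles, is exactly the bookkeeping the paper leaves implicit.
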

\begin{proof}
Note that $\sF^{(i)}\simeq(\ov{(\sF^\vee)}_i)^\vee$ by Lemma \ref{Lem:duality}\ref{Lem:duality:1}. Thus, by a double application of the previous Corollary, it holds that $\bet{j}{\sF^{(i)}}=\bet{i-1}{\sF^\vee}-\bet{i-j-1}{\sF^\vee}=\bet{n-1}{\sF}-\bet{n-i}{\sF}-\bet{n-1}{\sF}+\bet{n-i+j}{\sF}=\bet{n-i+j}{\sF}-\bet{n-i}{\sF}$, as desired.
\end{proof}   

\section{Structure theorem}\label{SecStruc}
This section is devoted firstly to study the local and global structure of a generalized line bundle on a primitive multiple curve $X$ and then to describe the action of $\Pic{X}$ on the set of locally isomorphic generalized line bundles. It is not possible to extend straightforwardly to higher multiplicity \cite[Lemma 2.9]{CK}, which asserts that two generalized line bundles with the same local index sequence on a ribbon differ by the tensor product by a line bundle and makes also explicit the stabilizer of this action of the Picard group. Indeed we will show that having the same local indices sequence does not mean being locally isomorphic and that, moreover, in general, there is not a natural blow up on which a generalized line bundle becomes a line bundle (which in the case of ribbons is \cite[Theorem 1.1]{EG} and is the fundamental argument beyond the Lemma by Chen and Kass). However there is an action of the Picard group on the set of locally isomorphic generalized line bundles (see Corollary \ref{Cor:globstr}) whose stabilizer is completely known in multiplicity $3$ (see Corollary \ref{Cor:globstrmolt3}) and in some special cases in higher multiplicity (see Corollary \ref{Cor:globstrmoltn}). These special cases include those of the generic elements of irreducible components of stable generalized line bundles in the moduli space (see Theorem \ref{Thm:compirrid}).

The next lines recall the local set-up introduced before Definition \ref{Def:primafiltrcan}, adding also some more notation.
Let $P\in C$ be a closed point; then, in local arguments, $A_1=A_{\mathrm{red}}$ denotes $\sO_{C,P}$ (which is a DVR) and $\mm{A_1}=\mm{1}$ is its maximal ideal, while $A=A_n$ denotes $\sO_{X,P}$ with maximal ideal $\mm{A}=\mm{n}$ and $A_i$ denotes $\sO_{C_i,P}$ and $\mm{A_i}=\mm{i}$ is its maximal ideal, for $2\le i\le n-1$; moreover, $\pi_i$ denotes the projection $A\surj A_i$, for $1\le i\le n-1$. Let $y$ denote a generator of the nilradical of $A$ and let $\y{i}=\pi_i(y)$, for any $1\le i\le n-1$; fix a nonzero divisor $x$ such that $(x,y)=\mm{A}$ and let $\x{i}=\pi_i(x)$, for any $1\le i\le n-1$.   
\begin{defi}\label{Def:geninvmod}
An $A$-module $M$ is said to be \emph{generalized invertible} if there exists a generalized line bundle $\sF$ on $X$ such that the stalk $\sF_P$ is isomorphic to $M$. In algebraic terms, this means that $M$ is a torsion-free $A$-module (in the sense that $\operatorname{ann}(m)\subseteq (y)=\mathrm{Nil}(A)$ for any $0\neq m\in M$) and $M^{(i)}/M^{(i-1)}$ is an invertible $A_1$-module, i.e., being in a local context, it is isomorphic to $A_1$, for $1\le i\le n$ (in particular, $\Rk{M}=n$). In the trivial case $n=1$ generalized invertible is just invertible.

By the theory of generalized line bundles developed in the previous section, $M$ admits only one torsion-free quotient $\M{i}=M/(M^{(n-i)})$, which is an $A_i$ generalized invertible module, for any $1\le i\le n-1$.

The indices-vector of $M$ is $\be{.}=\bet{.}{M}:=\btp{.}{P}{\sF}$ and its $i$-th index is $\be{i}=\bet{i}{M}:=\btp{i}{P}{\sF}$, for $1\le i\le n-1$.
\end{defi}
The following theorem describes the structure of generalized invertible $A$-modules. It is called Local Structure Theorem because it describes all the stalks at closed points of a generalized line bundle on $X$.
\begin{thm}[Local Structure Theorem]\label{Thm:locstr}
Let $M$ be a generalized invertible $A$-module with indices-vector $\be{.}$. Then there exist elements (possibly equal to zero) $\alpha_{i,j}\in A$, with $3\le i\le n$ and $1\le j\le i-2$, well-defined modulo $(x^{\be{n-j}-\be{n-j-1}},y)$, and $m_i\in M$, with $1\le i\le n$ such that
\begin{equation*}
\begin{split}
M & \cong \frac{\bigoplus\limits_{i=1}^n m_iA}{\bigg(ym_1,ym_i-x^{\be{n-i+1}-\be{n-i}}m_{i-1}-\sum\limits_{j=1}^{i-2}\alpha_{i,j}m_j\Big|2\le i\le n\bigg)} 
 \\
& \cong \Biggl(y^{n-i} x^{\be{n-1}-\be{n-i}}+\sum\limits_{j=2}^{i-1}\Bigg(\sum\limits_{h=0}^{j-2}(-1)^h\alpha_{i-h,j-1-h}x^{\be{n-2(j+h)+5}-\be{n-2(j+h)+4}}\Bigg)\cdot \\ &\phantom{\cong\Biggl(} \cdot y^{n-j}\Big|
1\le i\le n \Biggl),
\end{split}
\end{equation*}
where $\be{0}=\be{n}=0$ and the last module is an ideal of $A$.
\end{thm}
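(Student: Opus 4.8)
The plan is to argue by induction on the multiplicity $n$, exploiting the torsion-free quotient $\M{n-1}=M/M^{(1)}$ of Definition \ref{Def:geninvmod}. By the remark following Definition \ref{Def:indices} this quotient is a generalized invertible $A_{n-1}$-module whose indices-vector is $(\be1,\dots,\be{n-2})$, so the inductive hypothesis applies to it with the \emph{same} lower indices (this is the crucial point that makes the recursion close up). The case $n=1$ is trivial, and $n=2$ recovers the structure of generalized line bundles on a ribbon. First I would set up generators adapted to the second canonical filtration: choose $m_1$ generating the invertible $A_1$-module $M^{(1)}$, and for $2\le i\le n$ choose $m_i\in M^{(i)}$ reducing to a generator of $M^{(i)}/M^{(i-1)}\cong A_1$; one checks by descending induction on the second filtration that these generate $M$ over $A$. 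Since multiplication by $y$ induces \emph{injective} maps $M^{(i)}/M^{(i-1)}\to M^{(i-1)}/M^{(i-2)}$ between invertible $A_1$-modules (Fact \ref{Fact:primaesecondfc}\ref{Fact:primaesecondfc:3:1}), each such map is multiplication by $x^{e_i}$ for a unique integer $e_i\ge 0$, which forces the relations into the asserted shape $y m_i=x^{e_i}m_{i-1}+\sum_{j=1}^{i-2}\alpha_{i,j}m_j$, with $y m_1=0$.

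Next I would identify the exponents and coefficients. For $3\le i\le n$ the relations of $M$ are obtained by lifting those of $\M{n-1}$ under the index shift $\bar m_{k}^{\M}\leftrightarrow m_{k+1}\ (\mathrm{mod}\ M^{(1)})$; the inductive exponent $\gamma_{(n-1)-k+1}-\gamma_{(n-1)-k}$ with $k=i-1$ becomes exactly $\be{n-i+1}-\be{n-i}$ after substituting the indices of $\M{n-1}$, while the passage modulo $M^{(1)}=A_1 m_1$ produces the new correction term $\alpha_{i,1}m_1$; the higher coefficients $\alpha_{i,j}\ (j\ge 2)$ are inherited from the inductive hypothesis, together with their stated ambiguity $\bmod\ (x^{\be{n-j}-\be{n-j-1}},y)$. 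The single genuinely new datum is the relation $y m_2=x^{e_2}m_1$, and the identification $e_2=\be{n-1}-\be{n-2}$ is proved by a length computation: embedding $M$ into its generic stalk $A_\eta\cong K[y]/(y^n)$ (possible by purity, since $M_\eta\cong A_\eta$) and normalizing $m_1=y^{n-1}$, Lemma \ref{Lem:supportofTi} expresses $\be{n-1}$ and $\be{n-2}$ as $\operatorname{length}(M^{(1)}/y^{n-1}M)$ and $\operatorname{length}(M^{(2)}/(y^{n-2}M+M^{(1)}))$, whose difference is $e_2$.

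The completeness of the relation list (i.e. that $M$ is genuinely \emph{presented}, not merely generated, by these relators) I would establish by the five lemma rather than by a direct torsion-freeness argument. Let $F=\bigoplus_{i=1}^n A e_i$, let $R$ be generated by the stated relators, and consider $\phi\colon F/R\twoheadrightarrow M$, $e_i\mapsto m_i$. Because $y e_1\in R$ and $\operatorname{ann}_{F/R}(e_1)\subseteq\operatorname{ann}_A(m_1)=(y)$, one gets $A e_1\cong A_1$; reducing $R$ modulo $e_1$ kills every $\alpha_{i,1}m_1$ term and the relator $y e_2-x^{e_2}e_1$ becomes $y e_2$, so the induced presentation on $e_2,\dots,e_n$ is \emph{precisely} the inductive presentation of $\M{n-1}$, whence $(F/R)/A e_1\cong\M{n-1}$. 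Comparing the short exact sequences $0\to A_1 e_1\to F/R\to\M{n-1}\to 0$ and $0\to A_1 m_1\to M\to\M{n-1}\to 0$ through $\phi$, whose outer vertical maps are isomorphisms, forces $\phi$ to be an isomorphism. The exact ambiguity of each $\alpha_{i,j}$ is then read off by tracking the effect on the relations of the only permissible changes of generators, namely $m_i\rightsquigarrow u\,m_i+(\text{terms in }M^{(i-1)})$ with $u$ a unit.

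Finally, to pass to the second (ideal) presentation I would solve the triangular recursion inside $A_\eta$: starting from $m_1=y^{n-1}$ and applying $y m_i=x^{\be{n-i+1}-\be{n-i}}m_{i-1}+\sum_{j=1}^{i-2}\alpha_{i,j}m_j$ repeatedly expresses each $m_i$ as an explicit $A_1$-polynomial in $y$; an induction on $i$ (where the alternating signs and the shifted exponents $\be{n-2(j+h)+5}-\be{n-2(j+h)+4}$ arise from substituting the $\alpha_{\cdot,\cdot}$ of earlier steps) yields the displayed closed form, and since every resulting $m_i$ lies in $A$ they generate an ideal $I\subseteq A$ with $M\cong I$ via $\phi$. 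The main obstacle, in my view, is not the conceptual core (induction plus the five lemma, which is clean) but the bookkeeping: keeping the index shift between $M$ and $\M{n-1}$ straight, verifying the single new exponent equals $\be{n-1}-\be{n-2}$, pinning down the precise modulus for each $\alpha_{i,j}$, and—most computation-heavy—unwinding the recursion into the explicit alternating-sum formula, which is routine only once the relations are firmly in place.
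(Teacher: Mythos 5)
Your proposal is correct, and it reaches the theorem by a route that mirrors, rather than reproduces, the paper's. The paper also inducts on $n$, but it splits $M$ the other way: it applies the inductive hypothesis to the \emph{submodule} $M^{(n-1)}$ --- whose indices are the shifted quantities $\bet{i}{M^{(n-1)}}=\be{i+1}-\be{1}$, computed via Corollary \ref{Cor:betaF(i)} --- adjoins one new generator $m_n$ lifting a generator of $\M{1}=M/M^{(n-1)}$, and pins down the new relation at the top by normalizing $y^{n-1}m_n=x^{\be{n-1}}m_1$ (Lemma \ref{Lem:supportofTi}) and then massaging this, through the inherited relations, into $ym_n=x^{\be{1}}m_{n-1}+\sum_{j}\alpha_{n,j}m_j$. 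You instead apply the inductive hypothesis to the \emph{quotient} $\M{n-1}=M/M^{(1)}$, whose indices are the unshifted $(\be{1},\dotsc,\be{n-2})$ by the remark following Definition \ref{Def:indices}, so your new generator sits at the bottom and the single genuinely new exponent is identified as $\be{n-1}-\be{n-2}$ by a length computation (two applications of Lemma \ref{Lem:supportofTi}, for $\be{n-1}$ and $\be{n-2}$, combined with the isomorphism $yM^{(2)}\cong M^{(2)}/M^{(1)}$), which I checked does give the correct value. Each choice buys something: the paper's submodule route needs only one appeal to Lemma \ref{Lem:supportofTi}, but pays with the index shift and a rather compressed derivation of the final relation; your quotient route keeps the index bookkeeping trivial and, more importantly, your short-five-lemma comparison of $0\to Ae_1\to F/R\to\M{n-1}\to 0$ with $0\to A_1m_1\to M\to\M{n-1}\to 0$ makes explicit the completeness of the relation list --- that $M$ is \emph{presented}, not merely generated, by the stated relators --- a point the paper's proof leaves implicit. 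The steps you defer as bookkeeping (the precise modulus of ambiguity of the $\alpha_{i,j}$, the alternating-sum closed form of the ideal generators) are exactly the ones the paper also treats summarily, via change of generators and the observation that the ideal's generators satisfy the same relations as the $m_i$.
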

\begin{proof}
First of all, observe that the second isomorphism is trivial: indeed, the relations between the generators of the ideal are those required for the $m_i$'s. So the only point is to show the first isomorphism.

The easiest way to prove such a statement is induction. The basis is the trivial case $n=1$, where generalized invertible modules are exactly invertible $A$-modules and there is no $y$: the statement reduces to the obvious observation that the only invertible modules on a local domain are free modules of rank $1$. In the case $n=2$ the statement is quite simpler than the general one: it reduces to the assertion that $M$ is isomorphic to $(x^{b},y)$. This is already known, although I do not know any explicit reference for it: it is a consequence of \cite[Theorem 1.1]{EG} and it is used various times in \cite{CK}.

So, let the statement hold for $n-1\ge 1$ and let us prove it for $n$.
Consider $M^{(n-1)}\subset M$: it is a generalized invertible module on $A_{n-1}$ with $\bet{i}{M^{(n-1)}}=\be{i+1}-\be{1}$, for $1\le i\le n-2$, by a local application of Corollary \ref{Cor:betaF(i)}. Thus, $\bet{n-i}{M^{(n-1)}}-\bet{n-i-1}{M^{(n-1)}}=\be{n-i+1}-\be{n-i}$ and, by inductive hypothesis, it holds that 
$M^{(n-1)}\cong \bigoplus_{i=1}^{n-1}  \tilde{m}_iA_{n-1}/(\y{n-1}\tilde{m}_{1},\y{n-1}\tilde{m}_i-\x{n-1}^{\be{n-i}-\be{n-i-1}}\tilde{m}_{i-1}-\sum_{j=1}^{i-2}\tilde{\alpha}_{i,j}\tilde{m}_j\big|1\le i\le n-2)$. The $\tilde{m}_i$'s belong to $M$; rename $\tilde{m}_i=m_{i}$ and choose $\alpha_{i,j}\in A$ over $\tilde{\alpha}_{i,j}$, for each pair $(i,j)$; so, we get that $M^{(n-1)}\cong
\bigoplus_{i=2}^n m_iA/(ym_1,ym_i-x^{\be{n-i+1}-\be{n-i}}m_{i-1}-\sum_{j=1}^{i-2}\alpha_{i,j}m_j\big|2\le i\le n-1)$.
 Moreover, $M/M^{(n-1)}=\M{1}\cong A_1$ by hypothesis. Therefore, choosing $m_n\in M$ over a generator of $\M{1}$ we obtain a set of generators of $M$, i.e. $m_1,\dotsc,m_n$. In order to complete the proof, we need to find the relations between $m_n$ and the other generators. Indeed, the submodule generated by $y^{n-1}m_n$ is isomorphic to $y^{n-1}M$; hence, substituting, if necessary, $m_n$ with another element with the same image in $\M{1}$, we have that $y^{n-1}m_n=x^{\be{n-1}}m_1$, by a local application of Lemma \ref{Lem:supportofTi}. Now using the other relations we get the desired one $ym_n-x^{\be{1}}m_{n-1}-\sum_{j=1}^{n-2}\alpha_{n,j}m_j$. By the fact we can substitute again $m_n$ with $m_n$ plus a linear combination of the other $m_i$'s we obtain that the $\alpha_{n,j}$'s are defined only modulo $(x^{\be{n-j}-\be{n-j-1}},y)$.
\end{proof}
\begin{rmk}\label{Rmk:Thm:locstr}
In order to apply \cite[Proposition 2.12]{H} to derive the global structure of generalized line bundles from the local one described in the previous theorem (see the proof of Corollary \ref{Cor:globstr}), generalized invertible $A$-modules should be classified up to \emph{linear equivalence} (i.e. up to product by an element of the total ring of fractions of $A$) and not up to isomorphism. A priori linear equivalence is stronger than being isomorphic. But, as on a DVR, also in this case linear equivalence and being isomorphic are equivalent, as we will see in the following lemma.

The classification of the ideals of $A_2$ up to linear equivalence had already been worked out in \cite[Example 3.9]{H} and in this case .
\end{rmk}
\begin{lemma}\label{Lem:iso=leq}
Two generalized invertible $A$-modules are isomorphic if and only if they are linearly equivalent.
\end{lemma}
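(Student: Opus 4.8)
The plan is to realize both modules as fractional ideals inside the total ring of fractions of $A$ and to show that any abstract $A$-linear isomorphism between them is induced by multiplication by a unit of that ring — which is exactly what linear equivalence means (cf. Remark \ref{Rmk:Thm:locstr}).

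First I would dispose of the easy implication. If $M'=fM$ for some unit $f$ of the total ring of fractions $\cK:=\operatorname{Frac}(A)$, then multiplication by $f$ is an $A$-linear isomorphism $M\to M'$, so linearly equivalent generalized invertible modules are isomorphic. It remains to prove the converse.

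For that I would first pin down the total ring of fractions. Since $X$ is irreducible with generic point $\eta$ and the nilradical $(y)$ is the unique minimal prime of $A$, the nonzerodivisors of $A$ are precisely the elements of $A\setminus(y)$; inverting them therefore produces simultaneously the total ring of fractions and the generic stalk, so $\cK=\sO_{X,\eta}\cong F[y]/(y^n)$, where $F$ is the fraction field of the DVR $A_1$. This $\cK$ is a local Artinian chain ring whose units coincide with its nonzerodivisors. Because a generalized invertible module $M$ is torsion-free and generically a line bundle, $M_\eta=M\otimes_A\cK\cong\cK$ is free of rank one, while torsion-freeness makes the localization map $M\hookrightarrow M_\eta$ injective; fixing an isomorphism $M_\eta\cong\cK$ thus realizes $M$ as a fractional ideal $M\subseteq\cK$ with $M\cdot\cK=\cK$, and likewise for $M'$. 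By definition $M$ and $M'$ are linearly equivalent precisely when $M'=fM$ for some nonzerodivisor $f\in\cK$.

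The main step is then the following. Given an $A$-module isomorphism $\phi\colon M\to M'$, I would tensor with $\cK$ to obtain a $\cK$-linear isomorphism $\phi_\eta\colon M_\eta\to M'_\eta$. Under the chosen identifications $M_\eta\cong\cK\cong M'_\eta$ this is a $\cK$-linear automorphism of the free rank-one module $\cK$, hence is multiplication by a unit $f\in\cK^{*}$. The square relating $\phi$ and $\phi_\eta$ through the injective localization maps $M\hookrightarrow M_\eta$ and $M'\hookrightarrow M'_\eta$ commutes, so $\phi$ is the restriction of $\phi_\eta$ to $M$; since $\phi$ is onto $M'$, we conclude $M'=\phi(M)=fM$ inside $\cK$, i.e. $M$ and $M'$ are linearly equivalent. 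The only point requiring genuine care — and the step I would check most attentively — is the identification of $\sO_{X,\eta}$ with the total ring of fractions together with the fact that its units are exactly the nonzerodivisors, since this is what guarantees that the scalar $f$ produced above is an admissible factor for linear equivalence; everything else reduces to the elementary observation that a $\cK$-linear automorphism of a free $\cK$-module of rank one is multiplication by a unit.
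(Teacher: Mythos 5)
Your proof is correct, and it takes a genuinely different route from the paper's. You argue by pure generic localization: after checking that the nonzerodivisors of $A$ are exactly $A\setminus(y)$, so that the total ring of fractions coincides with the generic stalk $\sO_{X,\eta}\cong F[y]/(y^n)$ (with $F$ the fraction field of the DVR $A_1$), you embed both modules as fractional ideals and note that any $A$-linear isomorphism between them localizes to a $\cK$-linear automorphism of the free rank-one module $\cK$, i.e. to multiplication by a unit, which the commuting localization square then exhibits as a linear equivalence. The paper proceeds quite differently: it first reduces to ideals of $A$, or of its completion, via \cite[Lemma 2.13 and Proposition 2.14]{H}, and then inducts on the multiplicity $n$ using the explicit generators and relations of the Local Structure Theorem (Theorem \ref{Thm:locstr}): for isomorphic ideals $I$ and $J$ the submodules $I^{(n-1)}$ and $J^{(n-1)}$ are isomorphic generalized invertible $A_{n-1}$-modules, hence linearly equivalent by induction via some element $\bar f$ of the total ring of fractions of $A_{n-1}$; this $\bar f$ is lifted to an element $f$ over $A$, adjusted so that it carries the last generator of $J$ into $A$, and the relations among the generators then force $fJ=I$. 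Your route buys brevity and logical economy: it uses neither induction nor Theorem \ref{Thm:locstr}, so the lemma becomes independent of the structure theorem it is meant to complement, and it isolates the only genuine inputs --- torsion-freeness, giving the embedding $M\hookrightarrow M_\eta$, and generic triviality, giving $M_\eta\cong\cK$ --- so that the same argument proves the analogous statement for fractional ideals on any irreducible (possibly non-reduced) curve. What the paper's longer proof buys is mainly coherence with the surrounding generators-and-relations machinery, where the multiplier is produced compatibly with the second canonical filtration; nothing in it is needed beyond what your argument supplies.
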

\begin{proof}
As observed in the above Remark, linearly equivalence implies trivially being isomorphic. 
For the other implication, it is sufficient to work with ideals of $A$ (cf. \cite[Lemma 2.13]{H}); furthermore, one could consider only those of the completion of $A$ (cf. \cite[Proposition 2.14]{H}).

The proof is by induction, the basis is the elementary case of $A=A_1$ a DVR for which a generalized invertible ideal is just an invertible ideal, hence a principal ideal; principal ideals on $A$ are all both isomorphic and linearly equivalent to $A$ itself.

Hence, let us assume the statement holds for $A_{n-1}$, with $n\ge 2$, and let us prove it for $A=A_n$. Let $I$ and $J$ be two isomorphic ideals of $A$. In particular, they have the same indices that we denote simply as $\be{1}, \dotsc, \be{n-1}$. Let $i_1, \dotsc, i_n$ be the generators of $I$ verifying the relations given in the above Theorem and let $j_1,\dotsc, j_n$ be those generators of $J$; we can assume that one of them, say $I$, is precisely the ideal in the statement of the Theorem (in particular $i_1=y^{n-1}$ and $i_n=x^{\be{n}}+\alpha y$, for an appropriate $\alpha\in A$). It holds that $I^{(n-1)}$ and $J^{(n-1)}$, which are generated, by, respectively, $i_1,\dotsc,i_{n-1}$ and $j_1,\dotsc,j_{n-1}$, are isomorphic generalized invertible $A_{n-1}$-modules. Therefore, they are linearly equivalent on $A_{n-1}$ by inductive hypothesis; this means that there exists $\bar{f}\in F_{n-1}$ such that $\bar{f}J^{(n-1)}=I^{(n-1)}$, where $F_{n-1}$ is the total ring of fractions of $A_{n-1}$. Let $F$ be the total ring of fractions of $A$ and let $f\in F$ be an element restricting to $\bar{f}$; by the relations within the $j_l$'s and by the fact $f j_1,\dotsc,fj_{n-1}$ belong to $I$ and hence to $A$, $f$ can be chosen so that $fj_n$ belongs to $A$, too. It follows that $fJ=(i_1,\dotsc,i_{n-1},fj_n)$ and $fj_n$ verifies the same relation of $i_n$ with $i_1,\dotsc,i_{n-1}$; this imply that $fj_n-i_n$ belong to $(y^{n-1})=(i_1)$. Therefore, $fJ=I$.      
\end{proof}

Observe that sometimes a generalized invertible module can be generated by a smaller set of generators than those given by the local structure Theorem; a quite important case, which is fundamental to describe the irreducible components of generalized line bundles in the moduli space of semistable pure sheaves of generalized rank $n$ in Chapter \ref{SecModuli}, is that treated in the following corollary.
\begin{cor}\label{Cor:locstructspecial}
Let $M$ be a generalized invertible $A$-module with indices-vector $\be{.}$ such that there exists an integer $1\le j\le n-1$ such that $0=\be{j-1}<\be{j}=\be{n-1}=b$.
Then there exists $\alpha\in A$ such that $M\cong (x^{b}+\alpha y,\,y^j)$.
Moreover, there exist unique $z_{h,i}\in \bK$, for $1\le h\le \bar{\jmath}$, where $\bar{\jmath}=\min\{j,n-j\}-1$, and $0\le i\le b-1$, such that $M\cong\big(x^{b}+\sum_{h=1}^{\bar{\jmath}}\big(\sum_{i=0}^{b-1}z_{h,i}x^i\big)y^h,\,y^j\big)$. 
\end{cor}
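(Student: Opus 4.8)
The plan is to feed the special indices into the Local Structure Theorem~\ref{Thm:locstr} and watch its presentation of $M$ collapse to two generators. Since $\be{0}=\be{n}=0$ and $0=\be{j-1}<\be{j}=\dots=\be{n-1}=b$, every consecutive difference $\be{k}-\be{k-1}$ vanishes except $\be{j}-\be{j-1}=b$. Therefore the exponent $\be{n-i+1}-\be{n-i}$ occurring in the $i$-th relation is $0$ for all $i$ save $i=n-j+1$, where it equals $b$; moreover each coefficient $\alpha_{i,l}$ is only well-defined modulo $(x^{\be{n-l}-\be{n-l-1}},y)$, and this ideal is all of $A$ unless $l=n-j$. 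Hence I would take $\alpha_{i,l}=0$ for every $l\neq n-j$, so that the only surviving coefficients are $\alpha_{i,n-j}$ with $n-j+2\le i\le n$, each representable by a polynomial in $x$ of degree $<b$.

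With these simplifications the relations read $ym_i=m_{i-1}$ for $2\le i\le n-j$, then $ym_{n-j+1}=x^{b}m_{n-j}$, and finally $ym_i=m_{i-1}+\alpha_{i,n-j}m_{n-j}$ for $n-j+2\le i\le n$. The first chain gives $m_i=y^{\,n-j-i}m_{n-j}$ for $i\le n-j$, so putting $s:=m_{n-j}$ the relation $ym_1=0$ becomes $y^{\,n-j}s=0$; the last chain expresses every $m_i$ with $i>n-j$ through $s$ and $t:=m_n$, whence $M=\langle s,t\rangle$. Substituting the resulting expression for $m_{n-j+1}$ into the middle relation produces $y^{\,j}t=(x^{b}+\alpha y)s$ for a suitable $\alpha\in A$. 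This already yields the first assertion: the assignment $s\mapsto y^{\,j}$, $t\mapsto x^{b}+\alpha y$ respects the relations and so defines a surjection $\phi\colon M\to I:=(x^{b}+\alpha y,\,y^{\,j})$; as $x^{b}+\alpha y$ is a unit at the generic point $\eta$, the map $\phi_\eta$ is a surjection of $A_\eta$-modules of the same length $n$, hence an isomorphism, so $\ker{\phi}$ has finite support and therefore vanishes because $M$ is torsion-free (Lemma~\ref{Lem:p=tf}).

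For the refined normal form I would first control the powers of $y$. By construction $\alpha$ is a combination of $1,y,\dots,y^{\,j-2}$ with $x$-coefficients of degree $<b$, so $x^{b}+\alpha y=x^{b}+\sum_{h=1}^{j-1}P_h(x)y^{\,h}$ with $\deg_x P_h<b$. Because $s$ is annihilated by $y^{\,n-j}$ (local form of Lemma~\ref{Lem:supportofTi}), in the relation $y^{\,j}t=(x^{b}+\alpha y)s$ every summand $P_h y^{\,h}$ with $h\ge n-j$ kills $s$ and is thus irrelevant; replacing $\alpha y$ by its truncation $\sum_{h=1}^{\bar{\jmath}}P_h(x)y^{\,h}$, where $\bar{\jmath}=\min\{j,n-j\}-1$, changes neither $M$ nor the relation. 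This gives the existence of $z_{h,i}\in\bK$ with $M\cong\big(x^{b}+\sum_{h=1}^{\bar{\jmath}}(\sum_{i=0}^{b-1}z_{h,i}x^{i})y^{\,h},\,y^{\,j}\big)$.

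For uniqueness I would invoke Lemma~\ref{Lem:iso=leq} to turn an isomorphism of two such ideals into a linear equivalence $I'=gI$, passing to the completion $\widehat A=\bK[[x]][y]/(y^{n})$ as in the proof of that lemma. Comparing the $y^{0}$-components, which in either ideal fill out the principal ideal $(x^{b})\subseteq\bK[[x]]$, shows that after a harmless rescaling of generators $g$ may be taken to be a unit $\nu=1+\nu_1 y+\dots$ with constant term $1$. Reading off the coefficient of $y^{\,h}$ in $f'\equiv\nu f$ for $1\le h\le\bar{\jmath}$ yields $P'_h=P_h+\nu_h x^{b}+\sum_{k=1}^{h-1}\nu_k P_{h-k}$; since $P_h,P'_h$ have degree $<b$ while $\nu_h x^{b}$ is divisible by $x^{b}$, an induction on $h$ forces $\nu_h=0$ and $P_h=P'_h$, giving uniqueness of the $z_{h,i}$. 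I expect this last reduction to be the main obstacle: one must check that, once the two generators are fixed, the only remaining freedom is multiplication by such a $\nu$, and that adding multiples of $y^{\,j}$ and working modulo $y^{\,n-j}$ leave the coefficients of $y^{1},\dots,y^{\bar{\jmath}}$ untouched (which is exactly why the range stops at $\bar{\jmath}$). By contrast, the collapse to two generators in the first part is essentially forced the moment the indices are inserted.
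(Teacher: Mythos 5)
Your reduction of the Local Structure Theorem to the two-generator presentation $\langle s,t \mid y^{n-j}s=0,\ y^{j}t=(x^{b}+\alpha y)s\rangle$ is correct, and your existence argument is genuinely different from (and cleaner than) the paper's: the paper starts from an arbitrary $\alpha$ and normalizes it by hand, first modulo $y^{\bar{\jmath}+1}$ via an explicit isomorphism, then modulo $x^{b}$ via the unit identity $x^{b}+\sum_{l}(-\beta)^{l-1}y^{l}\gamma=(x^{b}+(\gamma+\beta x^{b})y)\prod_{r}(1+(-1)^{r}(\beta y)^{2^{r}})$, whereas you obtain the normal form at once by choosing the representatives $\alpha_{i,n-j}$ to be polynomials of degree $<b$ (legitimate, since they are only defined modulo $(x^{b},y)$) and truncating in $y$ using $y^{n-j}s=0$. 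The surjection-onto-the-ideal argument (equal length $n$ at the generic point plus torsion-freeness) is also fine, up to the cosmetic point that Lemma \ref{Lem:p=tf} concerns sheaves; what you actually use is the torsion-freeness built into Definition \ref{Def:geninvmod}.

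The uniqueness argument, however, has a gap exactly where you suspected. Linear equivalence (Lemma \ref{Lem:iso=leq}) gives an equality of ideals $I'=\nu I$, not a matching of the chosen generators, so you may not write $f'\equiv\nu f$: a priori you only get $f'=a\,\nu f+b\,\nu y^{j}$ with $a,b\in\widehat{A}$. The fix runs as follows. Since $\nu y^{j}\in\widehat{A}$, the coefficients $\nu_{1},\dots,\nu_{n-j-1}$ lie in $\bK[[x]]$, hence $\nu y^{j}=u\,y^{j}$ with $u\in\widehat{A}^{*}$ and therefore $(f',y^{j})=(\nu f,y^{j})$. Reducing modulo $(y^{j})$, the classes of $f'$ and $\nu f$ generate the same ideal of $\bK[[x]][y]/(y^{j})$ and both are nonzerodivisors there (their $y^{0}$-coefficient is $x^{b}$), so $\bar{f'}=\bar{a}\,\overline{\nu f}$ with $\bar{a}$ a unit; comparing $y^{0}$-coefficients gives $a_{0}=1$, and one replaces $\nu$ by $a\nu$, whose coefficients in degrees $\le\bar{\jmath}\le n-j-1$ still lie in $\bK[[x]]$ and whose constant term is still $1$. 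Only after this absorption does your displayed recursion $P'_{h}=P_{h}+\nu_{h}x^{b}+\sum_{k=1}^{h-1}\nu_{k}P_{h-k}$ hold, and the induction then works precisely because $\nu_{h}\in\bK[[x]]$ (with a pole allowed, $\nu_{h}x^{b}$ could have degree $<b$ and the degree argument would collapse). The paper sidesteps this bookkeeping by never invoking linear equivalence for the uniqueness: it takes an abstract isomorphism $\psi$, writes its values on the two generators, and exploits the relation $y^{j}\psi(s)=s\,\psi(y^{j})$ to force the coefficients to agree. So: right strategy, correct conclusion, and a genuinely different route in both halves, but the step you labelled ``the main obstacle'' is a real missing argument rather than a routine check — fillable, as above, in a few lines.
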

\begin{proof}
The first assertion is a trivial consequence of the Theorem.

In order to simplify the notation in the proof of the second assertion, set $M(\beta)=(x^b+\beta y,\,y^j)$, for any $\beta\in A$. 
The existence of the $z_{h,i}\in \bK$ is equivalent to the fact that there exists $\alpha'\in A$, defined modulo $(x^b,\,y^{\bar{\jmath}+1})$, such that $M(\alpha)\cong M(\alpha')$.
First of all, observe that $M(\gamma+\beta y^{\bar{\jmath}})\cong M(\gamma)$, for any $\gamma,\beta\in A$, which implies that $\alpha$ is defined modulo $y^{\bar{\jmath}+1}$. Indeed, if $\bar{\jmath}=j-1$, the two modules are equal, while, if $\bar{\jmath}=n-j-1$, there is a trivial isomorphism, say $\varphi$ defined on the generators as $\varphi(x^b+(\gamma+\beta y^{\bar{\jmath}}) y) = x^b +\gamma y$ and $\varphi(y^j)= y^j$: in order to check that $\varphi$ is not only a bijection of sets but also a morphism of $A$-modules it is sufficient to verify that $y^j\varphi(x^b+(\gamma+\beta y^{\bar{\jmath}}) y)=(x^b+(\gamma+\beta y^{\bar{\jmath}}) y)\varphi(y^j)$, which is a trivial equality: in this case $\beta y^{\bar{\jmath}}y\varphi(y^j)=\beta y^n=0$.

The next step is to show that $M(\gamma+\beta x^b)$ is isomorphic to the module  $M\big(\sum_{l=1}^{\bar{\jmath}}(-\beta y)^{l-1}\gamma\big)$, for any $\gamma,\beta\in A$. Setting $\alpha=\bar{\alpha}+\beta x^b$, where $\bar{\alpha}\in A$ is an element without terms in $x^k$, with $k\ge b$ (looking for a while at $A$ as a $\bK$-vector space of infinite dimension), and iterating, if necessary, the proceeding, such an isomorphism is sufficient to conclude the existence of the desired $\alpha'$ (which is not necessarily congruent to $\alpha$ modulo $(x^b,y^{\bar{\jmath}+1})$).
The point is to check that $M(\gamma+\beta x^b)$ is equal to
$M\big(\sum_{l=1}^{2^c-1}(-\beta y)^{l-1}\gamma\big)$, where $c=[\log_2(\bar{\jmath})]+1$, because the latter is isomorphic to $M\big(\sum_{l=1}^{\bar{\jmath}}(-\beta y)^{l-1}\gamma\big)$, by the first step. The equality holds by the fact that $x^b+\sum_{l=1}^{2^c-1}(-\beta)^{l-1}y^l\gamma=(x^{b}+(\gamma+\beta x^b)y)\prod_{r=0}^{c}(1+(-1)^r(\beta y)^{2^r})$ and the latter is an invertible element of $A$.

It remains to prove the uniqueness of the $z_{h,i}$. So, we need to show that if  $M\big(\sum_{h=1}^{\bar{\jmath}}\big(\sum_{i=0}^{b-1}z_{h,i}x^i\big)y^{h-1}\big)$ and $M\big(\sum_{h=1}^{\bar{\jmath}}\big(\sum_{i=0}^{b-1}z'_{h,i}x^i\big)y^{h-1}\big)$ are isomorphic, then $z_{h,i}=z'_{h,i}$, for any $h$ and $i$. In order to simplify notations, set $s=x^{b}+\sum_{h=1}^{\bar{\jmath}}\big(\sum_{i=0}^{b-1}z_{h,i}x^i\big)y^{h}$ and $s'=x^{b}+\sum_{h=1}^{\bar{\jmath}}\big(\sum_{i=0}^{b-1}z'_{h,i}x^i\big)y^{h}$. 
Let $\psi$ be such an isomorphism and $\psi^{-1}$ its inverse. It holds that $\psi(y^j)=a_1 y^j+a_2 s'$ and $\psi(s)=a_3y^j+a_4s'$ and, analogously, $\psi^{-1}(y^j)=a'_1 y^j+a'_2 s$ and $\psi^{-1}(s')=a'_3 y^j+a'_4 s$. By the fact $y^{n-j}\psi{y^j}=0$ and $y^{n-1}\psi^{-1}(y^j)=0$, it follows that $a_1$ and $a'_1$ can be chosen so that $a_2=a_2'=0$; moreover, $y^j=\psi^{-1}(\psi(y^j))=a_1' a_1 y^j$ implies that $a_1$ and $a'_1$ are invertible and $a'_1=a_1^{-1}$ (set $a_1=u_1+m_1$ with $u_1\in \bK$ and $m_1\in\mm{A}$).
Moreover, $s=\psi^{-1}\psi(s)=a'_4 a_4 s+(a'_3 a_4 +a_1 a_3)y^j$ implies that $a'_4=a_4^{-1}$ and $(a'_3 a_4 +a_1 a_3)$ is a multiple of $y^{n-j}$.
As usual, $\psi$ is really a morphism if and only if $y^j\psi(s)=s\psi(y^j)$. But $y^j\psi(s)=a_3 y^{2j}+a_4y^js'=a_3 y^{2j}+a_4 y^j x^{b}+\sum_{h=1}^{\bar{\jmath}}\big(\sum_{i=0}^{b-1}a_4 z'_{h,i} x^i\big)y^{j+h}$ and $s\psi(y^j)=a_1 y^js=a_1 y^jx^{b}+\sum_{h=1}^{\bar{\jmath}}\big(\sum_{i=0}^{b-1}a_1(z_{h,i}-z'_{h,i}+z'_{h,i})x^i\big)y^{j+h}$.
Hence they are equal if and only if $(a_1-a_4)x^b y^j=\sum_{h=1}^{\bar{\jmath}}\sum_{i=0}^{b-1}\big((a_4-a_1)z'_{h,i}-(u_1+m_1)(z_{h,i}-z'_{h,i})\big)x^iy^{j+h}+a_3 y^{2j}$. So it has to be $a_1-a_4=\epsilon y$, for some $\epsilon\in A$, and the equality becomes $\epsilon x^b y^{j+1}=\sum_{h=1}^{\bar{\jmath}}\sum_{i=0}^{b-1}\big(-\epsilon z'_{h,i}y-u_1(z_{h,i}-z'_{h,i})-m_1(z_{h,i}-z'_{h,i})\big)x^iy^{j+h}+cy^{2j}$. Observing the powers of $x$ and $y$ in the right term (and remembering that $m_1$ belongs to $\mm{A}=(x,y)$, while $u_1$, $z_{h,i}$ and $z'_{h,i}$ belong to $\bK$), it has to hold that $z_{1,i}=z'_{1,i}$ for any $i$. But then all the terms on the right are divided by $y^{j+2}$ so $\epsilon=\eta y$, for some $\eta\in A$, and by the same considerations $z_{2,i}=z'_{2,i}$ for any $i$, and so on: the same argument continues to hold at any step and it follows that $z_{h,i}=z'_{h,i}$, for any $h$ and $i$, as wanted.
\end{proof}
\begin{rmk}\label{Rmk:Cor:locstructspecial}
\noindent
\begin{enumerate}

\item\label{Rmk:Cor:locstructspecial:1} 
The Corollary classifies these kind of modules also up to linear equivalence (cf. Remark \ref{Rmk:Thm:locstr}). It can be verified also directly, without using Lemma \ref{Lem:iso=leq}: indeed, if two modules are not isomorphic, they are also not linearly equivalent, and the only isomorphism used throughout the proof, i.e. $\varphi:M(\gamma+\beta y^{\bar{\jmath}})\overset{\sim}{\to} M(\gamma)$, in the case $\bar{\jmath}=n-j-1$, could be substituted with multiplication by $x^{-(2j-n)b}\prod_{l=0}^{2j-n-1}(x^{b}+(-\beta)^{l+1}\gamma^{l}y^{n-j+l})$.  

\item\label{Rmk:Cor:locstructspecial:2}
By the Theorem, in multiplicity greater than or equal to $3$ the local indices sequence is not always sufficient to characterize up to isomorphism stalks of generalized line bundles. Indeed, e.g. in the case $n=3$, using local notation, $(x^2+y,xy,y^2)$ has the same indices-vector of $(x^2,xy,y^2)$ but it is easy to show that they are not isomorphic.

Only in some special cases two generalized line bundles having the same local indices sequence are necessarily locally isomorphic; by the above Corollary it happens in particular for those having either $\bp{1}{P}=\bp{n-1}{P}$ or $\bp{1}{P}=\bp{n-2}{P}=0$, for any closed point $P\in C$. 
\end{enumerate}
\end{rmk}

It is easy to pass from the local description to the following affine picture.

\begin{cor}\label{Cor:affstruct}
Let $\sF$ be a generalized line bundle on $X$ and let $P$ be a closed point where $\sF$ has non-trivial local indices sequence $\bp{.}{P}=\be{.}$. There exists an affine neighbourhood $P\in U\subset X$, where $\sF(U)$ is isomorphic to the ideal $\Big(\!y^{n\!-i} x^{\be{n-1}\!-\be{n-i}}\!+\!\sum_{j=2}^{i-1}\!\big(\!\sum_{h=0}^{j-2}(-\!1)^h\alpha_{i\!-\!h,j\!-\!1\!-\!h}x^{\be{n-2(j+h)+5}\!-\be{n-2(j+h)+4}}\!\big)\!y^{n\!-j}\big|\\
1\le i\le n \Big)$, where $y$ is a generator of the nilradical of $\sO_X(U)$ and $x$ is a nonzerodivisor in $\sO_X(U)$ such that $(x, y)$ is the ideal of $P$ in $U$.

In the special case in which there exists an integer $1\le j\le n-1$ such that $0=\be{j-1}<\be{j}=\be{n-1}=b$, then there exist and are unique $z_{h,i}\in \bK$, for $1\le h\le \bar{\jmath}$, where $\bar{\jmath}=\min\{j,n-j\}-1$, and $0\le i\le b-1$, such that $\sF(U)\cong\Big(x^{b}+\sum_{h=1}^{\bar{\jmath}}\big(\sum_{i=0}^{b-1}z_{h,i}x^i\big)y^h,\,y^j\Big)$.    
\end{cor}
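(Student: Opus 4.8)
The plan is to spread out the purely local description furnished by Theorem~\ref{Thm:locstr} over a suitable affine neighbourhood, using nothing more than the coherence of $\sF$ together with the standard fact that an isomorphism of coherent sheaves at a point extends to an isomorphism on a neighbourhood.

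First I would fix honest coordinates on a small open set. Since $\sN$ is a line bundle on $C_{n-1}$ and the torsion sheaves $\sT_i(\bsF{i+1})$ (for $1\le i\le n-1$) all have finite support, I can choose an affine open $P\in U\subset X$ so small that: $P$ is the only point of $U$ at which $\sF$ fails to be locally free; $\sN|_U$ is free, generated by a single nilpotent $y\in\sO_X(U)$ with $y^n=0$; and there is a nonzerodivisor $x\in\sO_X(U)$ whose image in $\sO_C(U)$ is a local coordinate of $C$ at $P$, so that $(x,y)$ is the ideal of $P$ in $U$ and $x$ vanishes on $U$ only at $P$. With these choices $A=\sO_{X,P}$ is the localisation of $\sO_X(U)$ at the maximal ideal of $P$, and $x,y$ localise precisely to the generators used in Theorem~\ref{Thm:locstr}.

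Next I would build the candidate ideal on $U$ and compare stalks. Applying Theorem~\ref{Thm:locstr} to the stalk $M=\sF_P$ produces elements $\alpha_{i,j}\in A$ for which $M$ is isomorphic to the displayed ideal $I\subset A$. After shrinking $U$ I may assume that every $\alpha_{i,j}$ is the germ of a regular function on $U$; keeping these lifts and writing $x,y$ for the honest functions just fixed, the same formula defines an ideal $\mathfrak I\subset\sO_X(U)$, namely the one appearing in the statement, and I let $\sI$ denote the associated coherent ideal sheaf on $U$. Because localising a finitely generated ideal at $P$ yields the ideal generated by the localised generators, one has $\sI_P=I\cong\sF_P$.

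Finally I would globalise this stalk isomorphism. As $\sF|_U$ is finitely presented, $\un{\operatorname{Hom}}(\sF|_U,\sI)_P=\Hom_A(\sF_P,\sI_P)$, so the chosen isomorphism $\sF_P\xrightarrow{\sim}\sI_P$ is induced by a morphism $\psi:\sF|_U\to\sI$ defined on a possibly smaller $U$. Its kernel and cokernel are coherent and vanish at $P$, hence are supported on closed subsets of $U$ not containing $P$; deleting these supports gives an affine neighbourhood on which $\psi$ is an isomorphism, i.e. $\sF(U)\cong\mathfrak I$. For the special case $0=\be{j-1}<\be{j}=\be{n-1}=b$ I would run exactly the same argument starting from the normal form and uniqueness statement of Corollary~\ref{Cor:locstructspecial}: the normal form transports to $U$ verbatim, and the constants $z_{h,i}$ are forced to be unique because any two such ideals isomorphic over $U$ are isomorphic at the stalk $P$, whereupon the uniqueness in Corollary~\ref{Cor:locstructspecial} gives $z_{h,i}=z'_{h,i}$. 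The only point demanding care is the first step, the simultaneous choice of honest coordinates $x,y$ on $U$ matching the local data of Theorem~\ref{Thm:locstr}; once that is in place, everything reduces to the routine ``isomorphism at a point spreads out'' mechanism.
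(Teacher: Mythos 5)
Your proposal is correct and takes essentially the same route as the paper: the paper's proof is a one-line remark that the statement is a ``trivial application'' of Theorem~\ref{Thm:locstr} and Corollary~\ref{Cor:locstructspecial}, using exactly the fact you exploit, namely that a generalized line bundle fails to be locally free at only finitely many closed points. Your write-up simply makes explicit the spreading-out mechanism (lifting the $\alpha_{i,j}$ to regular functions, extending the stalk isomorphism to a morphism, and deleting the finite supports of its kernel and cokernel) that the paper leaves implicit.
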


\begin{proof}
It is a trivial application of the Theorem and of the above Corollary, considering that there are only finitely many points on which the stalks of a generalized line bundle are not free.
\end{proof}

 In general it is possible to obtain only the following global description, which remains quite vague.

\begin{cor}[Global structure]\label{Cor:globstr}
Let $\sF$ be a generalized line bundle on $X$. Then $\sF$ is isomorphic to $\sI_{Z/X}\otimes\sG$, where $Z\subset C_{n-1}$ is a closed subscheme of finite support whose schematic intersection with $C$ is $\operatorname{Supp}(\sT_{n-1}(\sF))$, called the \emph{subscheme associated} to $\sF$, and $\sG$ is a line bundle on $X$.\\
Moreover, it holds that
\begin{enumerate}
\item $Z$ is unique up to adding a Cartier divisor.
\item Locally isomorphic generalized line bundles have the same associated subscheme, up to adding a Cartier divisor. In particular, if $\sF$ and $\sF'$ are locally isomorphic generalized line bundles, then there exists a line bundle $\sE$ such that $\sF=\sF'\otimes \sE$. Equivalently, there is a transitive action of $\Pic{X}$ on the set of locally isomorphic generalized line bundles.
\end{enumerate}
\end{cor}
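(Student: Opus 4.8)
The plan is to derive this global statement from the Local Structure Theorem (Theorem \ref{Thm:locstr}) together with Hartshorne's formalism of generalized divisors, using Lemma \ref{Lem:iso=leq} as the bridge that makes these two pictures compatible. The essential point is that Lemma \ref{Lem:iso=leq} identifies the isomorphism class of each stalk $\sF_P$ with its \emph{linear-equivalence} class as a fractional ideal, so that the abstract sheaf $\sF$ may be faithfully replaced by a (linear-equivalence class of a) fractional ideal on $X$, which is exactly the object Hartshorne's theory manipulates.

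First I would realize $\sF$ as a generalized divisor. Since $\sF$ is a rank-$1$ torsion-free sheaf on the Gorenstein scheme $X$, by \cite[Proposition 2.8]{H} it corresponds to a nondegenerate fractional ideal $\sI\subseteq\sK$, where $\sK$ denotes the sheaf of total quotient rings of $\sO_X$. Applying \cite[Proposition 2.12]{H} I would move $\sI$ within its linear-equivalence class so as to split off its Cartier part, writing $\sI=\sG\cdot\sI_{Z/X}$ with $\sI_{Z/X}$ an honest ideal sheaf cutting out a finite closed subscheme $Z$ and $\sG$ a line bundle; this yields the decomposition $\sF\cong\sI_{Z/X}\otimes\sG$. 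The legitimacy of transporting the local models of Theorem \ref{Thm:locstr} into this global ideal is precisely guaranteed by Lemma \ref{Lem:iso=leq}: at each non-free point the stalk $(\sI_{Z/X})_P\cong\sF_P$ is isomorphic, hence linearly equivalent, to the explicit ideal $I_P\subset A$ of Theorem \ref{Thm:locstr}, and since $I_P$ is already an honest ideal of $A$ we may take $(\sI_{Z/X})_P=I_P$.

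Next I would read off the two geometric properties of $Z$ directly from the local generators. In the presentation of Theorem \ref{Thm:locstr} the generator corresponding to $i=1$ is $y^{n-1}x^{\be{n-1}-\be{n-1}}=y^{n-1}$, so $\sN^{n-1}=\sI_{C_{n-1}/X}$ is locally contained in $\sI_{Z/X}$ and therefore $Z\subseteq C_{n-1}$. Reducing the generators modulo $(y)$, every generator with $1\le i\le n-1$ carries a positive power of $y$ and so dies, while the generator for $i=n$ restricts to $x^{\be{n-1}}$ (recall $\be{0}=0$); hence the schematic intersection $Z\cap C$ is cut out locally by $(x^{\be{n-1}})$, a subscheme of $C$ of length $\be{n-1}=\bp{n-1}{P}$ at $P$. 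By Corollary \ref{Cor:fliffbetan-1=0} this is exactly the locus where $\sF$ fails to be locally free, so $Z\cap C$ agrees with $\operatorname{Supp}(\sT_{n-1}(\sF))$ with its natural scheme structure, as required.

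Finally I would settle the two ``Moreover'' assertions. For (i), the ambiguity in the splitting $\sI=\sG\cdot\sI_{Z/X}$ is exactly the freedom of decomposing a generalized divisor as an effective part minus a Cartier part: for an effective Cartier divisor $E$ one has $\sI_{Z/X}\otimes\sG\cong\sI_{(Z+E)/X}\otimes\sG(E)$, and conversely Hartshorne's linear equivalence shows any two admissible $Z$ differ by a Cartier divisor, which determines the canonical representative lying in $C_{n-1}$ uniquely. For (ii), if $\sF$ and $\sF'$ are locally isomorphic then $\sF_P\cong\sF'_P$ for all $P$, so by Lemma \ref{Lem:iso=leq} they are locally linearly equivalent and hence share the associated subscheme up to a Cartier divisor; writing $\sF\cong\sI_{Z/X}\otimes\sG$ and $\sF'\cong\sI_{(Z+E)/X}\otimes\sG'$, one gets $\sF\cong\sF'\otimes\sE$ with $\sE=\sG\otimes\big(\sG'(E)\big)^{-1}$, and since tensoring by a line bundle preserves local isomorphism classes this exhibits the transitive $\Pic{X}$-action. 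The step I expect to be the main obstacle is the careful matching of scales between the purely local data of Theorem \ref{Thm:locstr} and the global generalized-divisor bookkeeping of \cite[Proposition 2.12]{H}: one must verify that the local ideals $I_P$ genuinely glue, after absorbing a line bundle $\sG$, to a single closed subscheme $Z\subseteq C_{n-1}$, and that the resulting $Z$ is well defined precisely up to Cartier divisors — it is Lemma \ref{Lem:iso=leq} that removes the a priori gap between isomorphism and linear equivalence and makes this gluing unambiguous.
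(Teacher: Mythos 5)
Your proposal is correct and follows essentially the same route as the paper's proof: both combine the explicit local ideals of Theorem \ref{Thm:locstr} with Lemma \ref{Lem:iso=leq} (to close the gap between isomorphism and linear equivalence) and \cite[Proposition 2.12]{H}, so that the local models glue to an ideal sheaf cutting out $Z\subset C_{n-1}$ and differing from $\sF$ by a line bundle twist. The only differences are expository — you start from Hartshorne's fractional-ideal realization and then match stalks, whereas the paper defines the ideal sheaf stalkwise first and then invokes \cite[Proposition 2.12]{H}; you also spell out the generator computation giving $Z\subset C_{n-1}$ and $Z\cap C=\operatorname{Supp}(\sT_{n-1}(\sF))$ and the two ``Moreover'' claims, which the paper records as evident.
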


\begin{proof}
Let $I_P$ denote the ideal isomorphic to $\sF_P$ described in the Theorem. Observe that the sum of $I_P$ with $\sN_P$ defines locally the support of $\sT_{n-1}(\sF)$. Moreover, it is evident that $\sO_{X,P}/I_P$ is an $\sO_{C_{n-1},P}$-module.

 Let $\sI\subset\sO_X$ be the ideal sheaf defined locally as $\sI_P=I_P$ for any closed point $P$ (hence it is isomorphic to $\sF\otimes\sG$ for some line bundle $\sG$ by \cite[Proposition 2.12]{H}): by the local observations, it defines a closed subscheme of finite support $Z\subset C_{n-1}$ such that $Z\cap C=\operatorname{Supp}(\sT_{n-1}(\sF))$.
 
 The two last assertions are trivial.
\end{proof}

As anticipated at the beginning of this section, it seems impossible to extend \cite[Theorem 1.1]{EG} to higher multiplicity for any generalized line bundle. However, it is possible to get something similar for some special choices of the local indices sequence. We will begin the study of this problem examining the case of multiplicity $3$, in which we get a quite complete picture about the action of the $\Pic$ of $C_n$ on the set of locally isomorphic generalized line bundles. 

\begin{lemma}\label{Lem:blowupmult3}
Let $\sF$ be a generalized line bundle on $X=C_3$, let $Z$ be the subscheme associated to it (cf. Corollary \ref{Cor:globstr}) and let $q:X'\to X$ be the blow up of $X$ along $Z$. Then
\begin{enumerate}
\item\label{Lem:blowupmult3:1}$\sF$ is the direct image of a line bundle $\sF'$ on $X'$ if and only if in any closed point $P$ such that $\sF_P$ is not a free $\sO_{X,P}$-module it holds that $2\bp{1}{P}\le\bp{2}{P}$.
\item\label{Lem:blowupmult3:2}$X'$ is a primitive multiple curve of multiplicity $3$ with reduced subcurve $C$ if and only if in any closed point $P$ such that $\sF_P$ is not a free $\sO_{X,P}$-module it holds that $2\bp{1}{P}\ge\bp{2}{P}$. 
\end{enumerate}
\end{lemma}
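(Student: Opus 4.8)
The plan is to reduce both equivalences to a computation at the finitely many closed points $P$ where $\sF_P$ is not free, and there to make the blow-up completely explicit by means of the Local Structure Theorem.

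First I would observe that, since the associated subscheme $Z$ has finite support contained in $\operatorname{Supp}(\sT_2(\sF))$, the map $q$ is an isomorphism over the open set where $\sF$ is locally free; over that locus $\sF$ is a line bundle and $X'$ coincides with $X$, so both statements are local at a point $P$ with $\sF_P$ not free. Writing $A=\sO_{X,P}$, $b_1=\bp{1}{P}\le b_2=\bp{2}{P}$ (the inequality being Lemma \ref{Lem:propindices}), and taking $x,y$ as in the local set-up, Theorem \ref{Thm:locstr} for $n=3$ gives
\[
\sF_P\cong I:=\big(y^2,\; x^{b_2-b_1}y,\; x^{b_2}+\alpha y\big),\qquad \alpha\in A,
\]
and by Corollary \ref{Cor:globstr} this $I$ is the local ideal of $Z$.

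Next I would compute the blow-up locally. In the Rees algebra $\bigoplus_d I^d$ the first two generators $f_0=y^2$ and $f_1=x^{b_2-b_1}y$ are nilpotent (one has $f_0^2=y^4=0$ and $f_1^3=0$ in $I^2,I^3$), so their charts are empty, while $f_2=x^{b_2}+\alpha y$ is a nonzerodivisor; hence near $q^{-1}(P)$ the blow-up is the single affine chart $X'=\operatorname{Spec}B$ with $B=A[f_0/f_2,f_1/f_2]$ inside the total ring of fractions $F$ of $A$. Setting $s_0=f_0/f_2$, $s_1=f_1/f_2$ and computing in $F$ one finds $s_0=y^2/x^{b_2}$, $s_1=y/x^{b_1}-\alpha\,y^2/x^{b_1+b_2}$, and, independently of $\alpha$,
\[
B\;\cong\;\sO_{C,P}[s_0,s_1]\big/\big(s_0^2,\; s_0s_1,\; s_1^2-x^{\,b_2-2b_1}s_0\big),\qquad q_*\sO_{X'}=B.
\]
This explicit ring is the heart of the argument, and I expect the main obstacle to be exactly its derivation: one must check that the two degenerate charts really are empty, so that $X'$ is this single affine scheme, and then read off the two quite different pieces of information it carries.

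For part (ii) I would argue directly on $B$: its nilradical is $(s_0,s_1)$, and the relation $s_1^2=x^{\,b_2-2b_1}s_0$ puts $s_0$ into $(s_1)$ exactly when $b_2-2b_1\le 0$. Thus when $2b_1\ge b_2$ the nilradical is principal and $B\cong\sO_{C,P}[s_1]/(s_1^3)$, a primitive multiple curve of multiplicity $3$ with reduced subcurve $C$; when $2b_1<b_2$ the nilradical needs two generators, so $X'$ is not primitive of multiplicity $3$. For part (i) I would use that $\sI_{Z/X}\sO_{X'}=IB$ is invertible on $X'$. If $\sF=q_*\sF'$ with $\sF'$ a line bundle, pushing forward endomorphisms gives an embedding $B=q_*\sO_{X'}\hookrightarrow\operatorname{End}_A(I)=\{f\in F:fI\subseteq I\}$; since $s_1\cdot f_1=x^{\,b_2-2b_1}y^2$ lies in $I$ only when $2b_1\le b_2$, this forces the inequality. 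Conversely, when $2b_1\le b_2$ one checks $s_0I\subseteq I$ and $s_1I\subseteq I$, so $I=\sI_{Z/X}\sO_{X'}$ is already invertible over $B$ and $q_*\big(\sI_{Z/X}\sO_{X'}\otimes q^{*}\sG\big)\cong\sF$, exhibiting the desired $\sF'$. The delicate points throughout are the emptiness of the bad charts and the verification that the single numerical comparison of $2b_1$ with $b_2$ simultaneously controls the principality of the nilradical of $B$ and the membership $s_1I\subseteq I$, uniformly in the nuisance parameter $\alpha$.
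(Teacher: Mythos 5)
Your proposal is correct and follows essentially the same route as the paper's proof: reduce to the local situation via the Local Structure Theorem, realize the blow-up as the subring $A[y^2/(x^{b_2}+\alpha y),\,x^{b_2-b_1}y/(x^{b_2}+\alpha y)]$ of the total ring of fractions (using the nilpotency of $y$ to discard the other charts), and read off both criteria — $2b_1\le b_2$ for $I$ to be a (free, rank one) module over the blow-up, and $2b_1\ge b_2$ for the nilradical to be principal. Your version is somewhat more explicit than the paper's (the Rees-algebra chart argument, the presentation of $B$, and the embedding $q_*\sO_{X'}\hookrightarrow\operatorname{End}_A(I)$ for the "only if" direction), but the underlying argument is the same.
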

\begin{proof}
We can restrict our attention to the local setting, because there exists an affine cover where the situation is essentially equal to the local one (cf. Corollary \ref{Cor:affstruct}). This is due to the fact that a generalized line bundle is not free only in a finite set of closed points.

So, we study the local setting, using the same notation of the beginning of the section. If $M$ is an invertible generalized $A$-module for $A=A_3$, then by the Local Structure Theorem, i.e. Theorem \ref{Thm:locstr},  $M\cong(x^{\be{2}}+\alpha y,x^{\be{2}-\be{1}}y,y^2)$. 
If it were possible to extend the cited theorem by Eisenbud and Green, there would exist a natural blow up $A'$ of $A$, having the same reduced ring and possibly being again the local ring of a primitive multiple curve of multiplicity $3$, such that $M$ admits a structure of free $A'$-module of rank $1$. 
Being $M$ isomorphic to an ideal, there is only one natural blow up $A'$ of $A$ to consider: the one with respect to this ideal. 
By computations similar to those of the proof of \cite[Theorem 1.9]{BE} based on the fact that $y$ is nilpotent, it holds that $A'$ is isomorphic to $A[yx^{\be{2}-\be{1}}/(x^{\be{2}}+\alpha y),y^2/(x^{\be{2}}+\alpha y)]$, which reduces to $A[y/x^{\be{1}},y^2/x^{\be{2}}]$, in the simplest case of $\alpha=0$. 
By the fact $A'$ is contained in the total ring of fractions of $A$, $M$ admits a structure of $A'$-module if and only if it is closed under multiplication by $yx^{\be{2}-\be{1}}/(x^{\be{2}}+\alpha y)$ and $y^2/(x^{\be{2}}+\alpha y)$; this happens only if $2\be{1}\le\be{2}$. In this case, $M$ is isomorphic to $xA'$ and, thus, is free of rank $1$.

On the other hand, by definition such an $A'$ is the local ring of a primitive multiple curve when its nilradical is a principal ideal, and this happens only for $2\be{1}\ge\be{2}$.

Hence, we can summarize these results saying that $M$ admits a structure of free $A'$-module of rank $1$ for $2\be{1}\le\be{2}$ and that, with this condition, $A'$ is the local ring of a primitive multiple curve  (of multiplicity $3$) only if $2\be{1}=\be{2}$.
\end{proof}
\begin{rmk}
In particular, any generalized line bundle with $\be{1}=0$ verifies the hypotheses of the first point of the Lemma.
\end{rmk}
The following corollary is somehow an extension to multiplicity $3$ of \cite[Lemma 2.9]{CK}, although it requires more restrictive hypotheses.
\begin{cor}\label{Cor:globstrmolt3}
Let $\sF$ be a generalized line bundle on $X=C_3$, let $\sE$ be a line bundle on $X$ and let $q:X'\to X$ be the blow up of $X$ with respect to the ideal sheaf $\sI$ such that $\sI_P\cong\sF_P$ if $2\bp{1}{P}\le\bp{2}{P}$ and $\sI_P\cong\sF_P^\vee$ otherwise, for any closed point $P\in C$.
Then $\sF=\sF\otimes \sE$ if and only if $\sE$ belongs to $\ker{q^*:\operatorname{Pic}(X)\to\operatorname{Pic}(X')}$. In other words, the stabilizer of the action of $\Pic{X}$ on the set of locally isomorphic generalized line bundles (see Corollary \ref{Cor:globstr}) is $\ker{q^*:\operatorname{Pic}(X)\to\operatorname{Pic}(X')}$.
\end{cor}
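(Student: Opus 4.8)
The plan is to identify the stabilizer $\operatorname{Stab}(\sF)=\{\sE\in\Pic{X}\mid \sF\otimes\sE\cong\sF\}$ of the transitive $\Pic{X}$-action of Corollary \ref{Cor:globstr} with $\ker{q^*}$. The structural fact driving everything is that $q$ is the blow up along $\sI$, whose stalks are $\sF_P$ or $\sF_P^\vee$ chosen so that $2\bp{1}{P}\le\bp{2}{P}$ holds at \emph{every} closed point; hence Lemma \ref{Lem:blowupmult3}\ref{Lem:blowupmult3:1} applies to $\sI$, and there is a line bundle $\sL$ on $X'$ (namely $\sL=\sI\cdot\sO_{X'}$) with $\sI\cong q_*\sL$.

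First I would reduce the computation of $\operatorname{Stab}(\sF)$ to that of $\operatorname{Stab}(\sI)$. The stabilizer depends only on the local isomorphism class of the generalized line bundle: by the second point of Corollary \ref{Cor:globstr} locally isomorphic ones differ by a global line bundle, and tensoring by a line bundle visibly preserves the stabilizer. Moreover it is invariant under dualization, by a one-line computation: if $\sF\otimes\sE\cong\sF$ then, dualizing and using $(\sF\otimes\sE)^\vee\cong\sF^\vee\otimes\sE^\vee$ together with reflexivity (Fact \ref{Fact:Dual}\ref{Fact:Dual:1}), one gets $\sF^\vee\otimes\sE^\vee\cong\sF^\vee$; since $\operatorname{Stab}(\sF)$ and $\operatorname{Stab}(\sF^\vee)$ are subgroups of $\Pic{X}$ this yields $\operatorname{Stab}(\sF)=\operatorname{Stab}(\sF^\vee)$. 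Since an isomorphism $\sF\otimes\sE\cong\sF$ restricts, on the complement of the finite locus $T$ where $\sF$ fails to be locally free, to an isomorphism between line bundles and thus forces $\sE|_{X\setminus T}\cong\sO$, the whole condition localizes at $T$, where it depends only on the stalk up to local duality, i.e. exactly on $\sI_P$. This gives $\operatorname{Stab}(\sF)=\operatorname{Stab}(\sI)$.

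It then remains to prove $\operatorname{Stab}(\sI)=\ker{q^*}$. The inclusion $\ker{q^*}\subseteq\operatorname{Stab}(\sI)$ is the projection formula: if $q^*\sE\cong\sO_{X'}$ then
\[
\sI\otimes\sE\cong q_*\sL\otimes\sE\cong q_*(\sL\otimes q^*\sE)\cong q_*\sL\cong\sI .
\]
For the reverse inclusion, suppose $\sI\otimes\sE\cong\sI$. The isomorphism identifies $\sI\otimes\sE$ with the ideal $\sI$, so the two blow ups coincide as schemes; comparing their tautological invertible sheaves, which are $\sL$ for $\sI$ and $\sL\otimes q^*\sE$ for $\sI\otimes\sE$ (twisting the Rees algebra by a line bundle leaves $\operatorname{Proj}$ unchanged and twists $\sO(1)$ by $q^*\sE$), one obtains $\sL\otimes q^*\sE\cong\sL$, hence $q^*\sE\cong\sO_{X'}$. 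Combined with the reduction this gives $\operatorname{Stab}(\sF)=\ker{q^*}$.

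The main obstacle is the reduction $\operatorname{Stab}(\sF)=\operatorname{Stab}(\sI)$. Unlike the ribbon case, here $\sI$ is obtained from $\sF$ by dualizing at \emph{some but not all} points, so one cannot simply invoke the global identity $\operatorname{Stab}(\sF)=\operatorname{Stab}(\sF^\vee)$; making the ``pointwise duality'' rigorous genuinely requires localizing the stabilizer condition at the finite non-locally-free locus $T$ and checking that, on the subgroup of $\Pic{X}$ of line bundles trivial away from $T$, the conditions attached to $\sF_P$ and to $\sF_P^\vee=\sI_P$ agree. The reverse inclusion $\operatorname{Stab}(\sI)\subseteq\ker{q^*}$ is the only other delicate point, resting on the universal property of the blow up (the tautological sheaf is determined by $\sI$); the projection-formula direction is routine.
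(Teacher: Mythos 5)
Your handling of the ``pure'' half of the statement is essentially correct, and in places more explicit than the paper's own proof: the inclusion $\operatorname{ker}(q^*)\subseteq\operatorname{Stab}(\sI)$ via the projection formula, and the reverse inclusion via the comparison of the tautological sheaves of the blow ups along $\sI$ and along $\sI\otimes\sE$ (the paper compresses both directions into ``an easy application of the projection formula''). Likewise, your two preliminary reductions --- that $\operatorname{Stab}$ is constant on $\Pic{X}$-orbits, i.e.\ depends only on the local isomorphism class, and that $\operatorname{Stab}(\sF)=\operatorname{Stab}(\sF^\vee)$ --- are correct and correspond exactly to the paper's first two cases ($\sF$ locally isomorphic to $\sI$, respectively $\sF^\vee$ locally isomorphic to $\sI$).

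The genuine gap is the step you yourself call ``the main obstacle'': the equality $\operatorname{Stab}(\sF)=\operatorname{Stab}(\sI)$ in the mixed case, where $\sI$ agrees with $\sF$ at some points of the non-free locus $T$ and with $\sF^\vee$ at the others. Your justification --- $\sE|_{X\setminus T}$ must be trivial, hence ``the whole condition localizes at $T$, where it depends only on the stalk up to local duality'' --- asserts precisely what has to be proved, and the implicit reasoning is unsound: membership in $\operatorname{Stab}(\sF)$ is not a condition that can be checked germ by germ at the points of $T$, since $\operatorname{Stab}(\sF)=\operatorname{ker}(q^*)$ records which gluings of local trivializations become trivial on $X'$ and is in general a positive-dimensional subgroup of $\Pic{X}$ (a line bundle trivial on $X\setminus T$ \emph{and} near each point of $T$ need not lie in it). Your global identity $\operatorname{Stab}(\sF)=\operatorname{Stab}(\sF^\vee)$ only allows dualizing at \emph{all} points simultaneously, and nothing in the proposal bridges from that to dualizing at some points only; your closing paragraph describes what would need to be checked but does not check it. The paper closes exactly this case with a device missing from your argument: up to local isomorphism, $\sF\cong\sI_1\otimes\sI_2$, where $\sI_1$ (resp.\ $\sI_2$) is an ideal sheaf non-trivial precisely at the points with $2\bp{1}{P}\le\bp{2}{P}$ (resp.\ $2\bp{1}{P}\ge\bp{2}{P}$); because the two factors are non-trivial at disjoint sets of points, a line bundle fixes $\sF$ if and only if it fixes both $\sI_1$ and $\sI_2$, and the two pure cases apply to the factors separately, the intersection of the two kernels being $\operatorname{ker}(q^*)$ since near each point $X'$ coincides with the blow up along the relevant factor. (Alternatively, one could make your localization rigorous by showing that $\operatorname{Stab}(\sF)$ depends only on the algebra $\SEnd{\sF}$ viewed inside the sheaf of total fractions, and that this algebra is unchanged when a stalk is replaced by its dual, e.g.\ via $(M:M)=(M^{-1}:M^{-1})$ for reflexive fractional ideals; but some such argument must actually be supplied.)
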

\begin{proof}
First of all, observe that $\sI$ is the direct image of a line bundle on $X'$, by Lemma \ref{Lem:blowupmult3}\ref{Lem:blowupmult3:1}. So, the assertion follows for $\sI$ and any $\sF$ locally isomorphic to it from an easy application of the projection formula.

There are other two possibilities. The first one is that the dual of $\sF$ is locally isomorphic to $\sI$. In this case, we can conclude by the previous case and by the trivial observation that $(\sG\otimes\sE)^\vee\simeq\sG^\vee\otimes\sE^\vee$ if $\sE$ is a  line bundle and $\sG$ any sheaf.

The last case is the mixed one, in which nor $\sF$ neither $\sF^{\vee}$ are locally isomorphic to $\sI$. It follows easily from the previous ones. Indeed, in this case $\sF$ is locally isomorphic to $\sI_1\otimes\sI_2$, where $\sI_1$ is the ideal sheaf everywhere trivial except in the points for which $2\bp{1}{P}\le\bp{2}{P}$ where $\sI_{1,P}\cong\sF_P$ and $\sI_2$ is the ideal sheaf everywhere trivial except in the points for which $2\bp{1}{P}\ge\bp{2}{P}$ where $\sI_{2,P}\cong\sF_P$. The line bundles that fix $\sF$ are those fixing both $\sI_1$ and $\sI_2$. Hence, the assertion follows from the previous cases (essentially, because $\sI_1$ and $\sI_2$ are non-trivial in distinct points).  
\end{proof}
\begin{rmk}\label{Rmk:b1=b2}
It is not difficult to show that, for any generalized line bundle $\sF$ on $X$, it holds that $\SEnd{\sF}\cong q_*(\sO_{X'})$, where $q:X'\to X$ is the blow up of the previous Corollary. Indeed, if $\sF\cong \sI\otimes\sE$ or $\sF\cong\sI^\vee\otimes\sE$ (where $\sI$ is as in the statement of the Corollary and $\sE$ is a line bundle), it is immediate. Otherwise, $\SEnd{\sF}$ is locally isomorphic to $q_*(\sO_{X'})$ (because they are both locally isomorphic to $\sI$), so it is sufficient to show that there exists a morphism between the two sheaves. The latter is guaranteed by the universal property of the blow up, because, thanks to the local isomorphisms, the inverse image ideal sheaf of $\sI$ on the relative spectrum $\Specrel{\SEnd{\sF}}$ is invertible.         
\end{rmk}

This Remark will be useful in Chapter \ref{SecModuli} in order to study the dimension of the tangent space to a point corresponding to a stable generalized line bundle in the moduli space.

Now we turn our attention to multiplicity $n> 3$. In this case, the results obtained are less general; in particular, the action of the Picard group is described in detail only for some generalized line bundles, as we will see in Corollary \ref{Cor:globstrmoltn}. This lost of generality is not too dramatic, because the generalized line bundles covered are the generic ones, as we will see in Section \ref{SecModuli}. The basic ideas are essentially those of multiplicity $3$. Indeed, using local notation, if $M$ is a generalized invertible $A$-module, with $A\!=\!A_n$, by the Local Structure Theorem, a representative of its isomorphism class is the ideal $\Big(y^{n-i} x^{\be{n-1}-\be{n-i}}\!+\!\sum_{j=2}^{i-1}\big(\sum_{h=0}^{j-2}(-1)^h\alpha_{i-h,j-1-h}x^{\be{n-2(j+h)+5}-\be{n-2(j+h)+4}}\big) y^{n-j}\big|\\
1\le i\le n \Big)$ and for our purposes we can identify $M$ with it. It is quite complicate to write down explicitly the blow up of $A$ with respect to $M$ in full generality; hence, we restrict our attention to the case with all the $\alpha$'s zero. In this case, by the nilpotency of $y$, it holds that the blow up of $A$ with respect to $M$ is $A'=A[y/x^{\be{1}},y^2/x^{\be{2}},\dotsc,y^{n-2}/x^{\be{n-2}},y^{n-1}/x^{\be{n-1}}]$. By similar considerations to the case of $n=3$, we have that $M$ (under the hypothesis that all the $\alpha$'s are zero) admits a structure of $A'$-module (and, moreover, $M=xA'$) if and only if $\be{j}+\be{i}\le\be{j+i}$ for $1\le j\le n-2$ and $j\le i\le n-j-1$, while the nilradical of $A'$ is a principal ideal (and, thus, $A'$ can be seen as the local ring of a primitive multiple curve) if and only if $i\be{1}\ge\be{i}$ for $1\le i\le n-1$. The situation is more intricate when there are non-zero $\alpha$'s. Any case, the two different groups of inequalities are not dual one to the other, as it happened in multiplicity $3$, and there are many $A$-modules for which both of them do not hold.

However, the description is quite easy in the special case in which there exists a positive integer $h\le n-1$ such that $0=\be{h-1}<\be{h}=\be{n-1}=b$. In this case, as pointed out in Corollary \ref{Cor:locstructspecial}, there exists $\alpha\in A$ such that $M\cong(x^b+\alpha y,y^h)$ and the blow up results to be simply $A'=A[y^h/(x^b+\alpha y)]$: hence $M$ admits a structure of $A'$-module (and, moreover, it is a free $A'$-module of rank one) if and only if $h\ge n/2$, while the nilradical of $A'$ is a principal ideal if and only if $h=1$.    
By these observations and by essentially the same arguments of multiplicity $3$, the following lemma, corollaries and remarks hold:
\begin{lemma}\label{Lem:blowupmultn}
Let $\sF$ be a generalized line bundle on $X=C_n$ of local indices $\bp{.}{.}$, let $Z$ be the subscheme associated to it (cf. Corollary \ref{Cor:globstr}) and let $q:X'\to X$ be the blow up  of $X$ along $Z$. Then
\begin{enumerate}
\item\label{Lem:blowupmultn:1}If for any closed point $P$ such that $\sF_P$ is not a free $\sO_{X,P}$-module $\bp{j}{P}+\bp{i}{P}\le\bp{j+i}{P}$ for $1\le j\le n-2$ and $j\le i\le n-j-1$ and all the $\alpha$'s are zero or there exists a positive integer $n/2\le h(P)\le n-1$ such that $0=\bp{h(P)-1}{P}<\bp{h(P)}{P}=\bp{n-1}{P}$, then $\sF$ is the direct image of a line bundle $\sF'$ on $X'$.
\item\label{Lem:blowupmultn:2}If for any closed point $P$ such that $\sF_P$ is not a free $\sO_{X,P}$-module $i\bp{1}{P}\ge\bp{i}{P}$ for $2\le i\le n-1$ and all the $\alpha$'s are zero or $\bp{1}{P}=\bp{n-1}{P}$, then $X'$ is a primitive multiple curve of multiplicity $n$ with reduced subcurve $C$.  
\end{enumerate}
\end{lemma}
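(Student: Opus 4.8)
The plan is to follow the same strategy as the multiplicity-$3$ case of Lemma~\ref{Lem:blowupmult3}, reducing both assertions to the purely local computations carried out in the paragraph preceding the statement. First I would observe that a generalized line bundle fails to be locally free only at finitely many closed points, so Corollary~\ref{Cor:affstruct} lets me replace $\sF$, on a suitable affine cover, by the explicit ideals of the Local Structure Theorem (Theorem~\ref{Thm:locstr}); away from the non-free points $q$ is an isomorphism and there is nothing to prove. Thus both statements become questions about the local ring $A=A_n=\sO_{X,P}$ and the generalized invertible module $M=\sF_P$ at each non-free point $P$, and in each case the blow-up $X'$ of $X$ along $Z$ is, locally at $P$, the blow-up $\Specrel{A'}$ of $\Spec{A}$ along the ideal $M$, with $A'$ the blow-up algebra inside the total ring of fractions of $A$.

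For part~\ref{Lem:blowupmultn:1} I would invoke directly the two local descriptions recalled above. When all the $\alpha$'s vanish, $A'=A[y/x^{\be{1}},\dotsc,y^{n-1}/x^{\be{n-1}}]$ and $M=xA'$ is free of rank one over $A'$ exactly under the inequalities $\bp{j}{P}+\bp{i}{P}\le\bp{j+i}{P}$; in the special case $0=\bp{h-1}{P}<\bp{h}{P}=\bp{n-1}{P}$ one has $M\cong(x^{b}+\alpha y,\,y^{h})$ and $A'=A[y^{h}/(x^{b}+\alpha y)]$, with $M$ free of rank one over $A'$ precisely because $h\ge n/2$. Hence, under the stated hypotheses, $M=\sF_P$ is a free rank-one $\sO_{X',P'}$-module at every point, so $\sF$ pulls back to a line bundle on $X'$. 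To conclude $\sF\cong q_*\sF'$ I would use that $q_*\sO_{X'}=\SEnd{\sF}$ locally (the higher-multiplicity analogue of Remark~\ref{Rmk:b1=b2}, obtained from the universal property of the blow-up applied to the invertible inverse-image ideal sheaf of $\sI_{Z/X}$) together with the projection formula.

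For part~\ref{Lem:blowupmultn:2} I would again reduce to the local algebra $A'=\sO_{X',P'}$ and read off from the same computations that its nilradical is a principal ideal exactly when $i\bp{1}{P}\ge\bp{i}{P}$ for $2\le i\le n-1$ (the $\alpha=0$ case, so that $A'=A[y/x^{\be{1}},\dotsc,y^{n-1}/x^{\be{n-1}}]$) or when $\bp{1}{P}=\bp{n-1}{P}$ (the special case $h=1$). Local principality of the nilradical is precisely condition~\ref{Def:pmc:3} of Definition~\ref{Def:pmc}. It then remains to assemble the remaining defining properties globally: since $q$ is an isomorphism over the dense open locus where $\sF$ is free, $X'$ is irreducible of dimension one and birational to $X$; its reduced subscheme is again $C$, because the center $Z\subset C_{n-1}$ meets $C$ only in the finite set $\operatorname{Supp}(\sT_{n-1}(\sF))$; and its multiplicity is $n$ because it equals $n$ at the generic point, where $X'=X$.

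The step I expect to be the main obstacle is precisely this last globalization in part~\ref{Lem:blowupmultn:2}: verifying that the scheme $X'$ glued from the local blow-up algebras $A'$ is genuinely Cohen--Macaulay of multiplicity exactly $n$ with reduced subscheme $C$, rather than merely having locally principal nilradical at each point. One must check that the blow-up introduces no embedded or extra components and does not lower the multiplicity; the local principality of the nilradical is the heart of the matter, but controlling the generic behaviour and the compatibility of the local pictures is what makes the verification for part~\ref{Lem:blowupmultn:2} more delicate than the bare local computation.
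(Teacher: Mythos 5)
Your proposal is correct and takes essentially the same approach as the paper, whose proof of this lemma consists precisely of the local computations recorded in the paragraph preceding the statement (the blow-up algebras $A'=A[y/x^{\be{1}},\dotsc,y^{n-1}/x^{\be{n-1}}]$ in the case where all the $\alpha$'s vanish, and $A'=A[y^{h}/(x^{b}+\alpha y)]$ in the special case) combined with the reduction to the affine/local setting exactly as in the proof of Lemma \ref{Lem:blowupmult3}; note only that in part (i) you do not need the projection formula or $q_*\sO_{X'}\cong\SEnd{\sF}$, since $q$ is affine and therefore an $\sO_{X'}$-module structure on $\sF$ under which it is invertible already exhibits $\sF$ as the direct image of a line bundle. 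Moreover, the ``main obstacle'' you flag in part (ii) is not a genuine gap, and the paper treats it as automatic: $A'$ is a finite $A$-subalgebra of the total ring of fractions of $A$ generated by nilpotent elements, so $\sO_{X'}$ is a torsion-free, i.e.\ pure, $\sO_X$-module (hence $X'$ is Cohen--Macaulay), its reduced subscheme is $C$ because the new generators are nilpotent, and the multiplicity is $n$ because $q$ is an isomorphism at the generic point; thus local principality of the nilradical is indeed the only condition to verify, exactly as in the multiplicity-$3$ proof.
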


\begin{cor}\label{Cor:globstrmoltn}
Let $\sF$ be a generalized line bundle on $X=C_n$ of local indices $\bp{.}{.}$ and let $\sE$ be a line bundle on it.
\begin{enumerate}
\item\label{Cor:globstrmoltn:1} If $\sF$ verifies the hypotheses of the first point of the previous lemma, then $\sF\otimes\sE\simeq\sF$ if and only if $\sE$ belongs to $\ker{q^*:\operatorname{Pic}(X)\to\operatorname{Pic}(X')}$, where $q:X'\to X$ is the blow up of $X$ with respect to the ideal sheaf locally isomorphic to $\sF$. Equivalently, this kernel is the stabilizer of the transitive action of $\Pic{X}$ on the set of generalized line bundles locally isomorphic to $\sF$. 
\item\label{Cor:globstrmoltn:2} If $\sF^{\vee}$ verifies the hypotheses of the first point of the previous lemma, then $\sF\otimes\sE\simeq\sF$ if and only if $\sE$ belongs to $\ker{q^*:\operatorname{Pic}(X)\to\operatorname{Pic}(X')}$, where $q:X'\to X$ is the blow up of $X$ with respect to the ideal sheaf locally isomorphic to $\sF^{\vee}$. In other words, this kernel is the stabilizer of the transitive action of $\Pic{X}$ on the set of generalized line bundles locally isomorphic to $\sF$.
\item\label{Cor:globstrmoltn:3} If for any closed point $P$ there exists a positive integer $1\le h(P)\le n-1$ such that $0=\bp{h(P)-1}{P}<\bp{h(P)}{P}=\bp{n-1}{P}$, then $\sF\otimes\sE\simeq\sF$ if and only if $\sE$ belongs to $\ker{q^*:\operatorname{Pic}(X)\to\operatorname{Pic}(X')}$, where $q:X'\to X$ is the blow up of $X$ with respect to the ideal sheaf $\sI$ such that $\sI_P\cong\sF_P$ when $h(P)\ge n/2$ and $\sI_P\cong\sF_P^\vee$ otherwise, for any closed point $P$. Equivalently, this kernel is the stabilizer of the transitive action of $\Pic{X}$ on the set of locally isomorphic generalized line bundles whose local indices verify the hypothesis.
\end{enumerate} 
\end{cor}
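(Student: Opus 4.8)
The plan is to deduce all three parts from a single computation of the stabilizer of a generalized line bundle that is a direct image of a line bundle under the blow up, mirroring the proof of Corollary~\ref{Cor:globstrmolt3} with Lemma~\ref{Lem:blowupmultn} in place of Lemma~\ref{Lem:blowupmult3}. For part~\ref{Cor:globstrmoltn:1}, its hypotheses are exactly those of Lemma~\ref{Lem:blowupmultn}\ref{Lem:blowupmultn:1}, which yields a line bundle $\sF'$ on $X'$ with $\sF\cong q_*\sF'$, where $q\colon X'\to X$ is the blow up of the ideal sheaf locally isomorphic to $\sF$. The inclusion $\ker{q^*}\subseteq\operatorname{Stab}(\sF)$ is then immediate from the projection formula: if $q^*\sE\cong\sO_{X'}$, then
\[
\sF\otimes\sE\cong q_*\sF'\otimes\sE\cong q_*(\sF'\otimes q^*\sE)\cong q_*\sF'\cong\sF.
\]

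For the reverse inclusion I would first record two facts. The morphism $q$ is finite: each fraction generating $\sO_{X'}$ over $\sO_X$, namely $y^{k}/x^{\be{k}}$ (or $y^{h}/(x^{b}+\alpha y)$ in the special case), satisfies $t^{n}=0$ because its numerator is a power of $y$ of exponent at least $n$, hence is integral over $\sO_X$; thus $q$ is affine and $q_*$ is an equivalence onto the category of $q_*\sO_{X'}$-modules. Moreover, by the same universal-property-of-the-blow-up argument as in Remark~\ref{Rmk:b1=b2} (which holds verbatim for arbitrary $n$) one has $\SEnd{\sF}\cong q_*\sO_{X'}$. Now suppose $\sE\in\operatorname{Stab}(\sF)$ and fix an isomorphism $\sF\otimes\sE\cong\sF$; applying $\un{\operatorname{Hom}}(\sF,-)$ and using that $\sE$ is invertible gives an isomorphism of $\SEnd{\sF}$-modules
\[
q_*(q^*\sE)\cong q_*\sO_{X'}\otimes\sE\cong\SEnd{\sF}\otimes\sE\cong\un{\operatorname{Hom}}(\sF,\sF\otimes\sE)\cong\un{\operatorname{Hom}}(\sF,\sF)=\SEnd{\sF}\cong q_*\sO_{X'},
\]
the first isomorphism being the projection formula. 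Since $q_*$ is fully faithful, this forces $q^*\sE\cong\sO_{X'}$, i.e. $\sE\in\ker{q^*}$. This converse is the only delicate point of the whole corollary; its crux is precisely the identification $\SEnd{\sF}\cong q_*\sO_{X'}$ together with the affineness of $q$, which jointly let one upgrade an isomorphism of sheaves on $X$ to one of sheaves on $X'$.

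Parts~\ref{Cor:globstrmoltn:2} and~\ref{Cor:globstrmoltn:3} then follow formally. Because $\Pic{X}$ is abelian and acts transitively on the set of generalized line bundles locally isomorphic to a fixed one (Corollary~\ref{Cor:globstr}), the stabilizer depends only on the local isomorphism class; dualizing $\sF\otimes\sE\cong\sF$ into $\sF^\vee\otimes\sE^\vee\cong\sF^\vee$ shows moreover that $\operatorname{Stab}(\sF)=\operatorname{Stab}(\sF^\vee)$. For part~\ref{Cor:globstrmoltn:2}, applying part~\ref{Cor:globstrmoltn:1} to $\sF^\vee$ (which by hypothesis is a direct image of a line bundle on the blow up appearing in the statement) gives $\operatorname{Stab}(\sF^\vee)=\ker{q^*}$, whence $\operatorname{Stab}(\sF)=\ker{q^*}$. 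For part~\ref{Cor:globstrmoltn:3}, the mixed ideal $\sI$ of the statement is locally isomorphic to $\sF$ and satisfies the hypotheses of Lemma~\ref{Lem:blowupmultn}\ref{Lem:blowupmultn:1} at every point: where $h(P)\ge n/2$ one has $\sI_P\cong\sF_P$ in the special form $(x^{b}+\alpha y,\,y^{h(P)})$, while where $h(P)<n/2$ one has $\sI_P\cong\sF_P^\vee$, whose jump index equals $n-h(P)\ge n/2$ by Corollary~\ref{Cor:betaandduals}. Hence $\sI$ is a direct image of a line bundle on $X'$, part~\ref{Cor:globstrmoltn:1} applies to $\sI$, and $\operatorname{Stab}(\sF)=\operatorname{Stab}(\sI)=\ker{q^*}$.
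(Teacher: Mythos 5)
Parts \ref{Cor:globstrmoltn:1} and \ref{Cor:globstrmoltn:2} of your proposal are correct and follow the same route as the paper (projection formula for the inclusion $\ker{q^*}\subseteq\operatorname{Stab}(\sF)$, dualization for part \ref{Cor:globstrmoltn:2}); in fact your converse implication in part \ref{Cor:globstrmoltn:1}, via the affineness of $q$ and the identification $\SEnd{\sF}\cong q_*(\sO_{X'})$, usefully spells out what the paper compresses into ``an easy application of the projection formula'' in the model proof of Corollary \ref{Cor:globstrmolt3}.

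The genuine gap is in part \ref{Cor:globstrmoltn:3}. Your key claim that the mixed ideal $\sI$ is locally isomorphic to $\sF$ is false: at a point $P$ with $h(P)<n/2$ you set $\sI_P\cong\sF_P^\vee$, and, as you yourself compute from Corollary \ref{Cor:betaandduals}, this stalk has jump index $n-h(P)\neq h(P)$; its local indices therefore differ from those of $\sF_P$, so $\sI_P\not\cong\sF_P$. In the genuinely mixed case (some points with $h(P)\ge n/2$, some with $h(P)<n/2$) the ideal $\sI$ is locally isomorphic to \emph{neither} $\sF$ nor $\sF^\vee$ --- the paper's proof of Corollary \ref{Cor:globstrmolt3} singles out exactly this situation --- so neither the invariance of the stabilizer under local isomorphism nor your identity $\operatorname{Stab}(\sF)=\operatorname{Stab}(\sF^\vee)$ yields $\operatorname{Stab}(\sF)=\operatorname{Stab}(\sI)$, and your reduction of part \ref{Cor:globstrmoltn:3} to part \ref{Cor:globstrmoltn:1} collapses. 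The missing step, which is what the paper does, is a decomposition: up to tensoring by a line bundle, $\sF\cong\sI_1\otimes\sI_2$, where $\sI_1$ is trivial away from the points with $h(P)\ge n/2$ (there $\sI_{1,P}\cong\sF_P$) and $\sI_2$ is trivial away from the points with $h(P)<n/2$ (there $\sI_{2,P}\cong\sF_P$); since the two non-trivial loci are disjoint, a line bundle fixes $\sF$ if and only if it fixes both $\sI_1$ and $\sI_2$, so $\operatorname{Stab}(\sF)=\operatorname{Stab}(\sI_1)\cap\operatorname{Stab}(\sI_2)$. One then applies part \ref{Cor:globstrmoltn:1} to $\sI_1$, part \ref{Cor:globstrmoltn:2} to $\sI_2$, and identifies $\operatorname{Stab}(\sI_1)\cap\operatorname{Stab}(\sI_2)$ with $\ker{q^*}$, because the blow up along $\sI$ agrees with the blow up along $\sI_1$ near the first set of points and with the blow up along $\sI_2^\vee$ near the second. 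Without this step the mixed case of part \ref{Cor:globstrmoltn:3} remains unproven.
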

\begin{proof}
The proof is essentially the same of Corollary \ref{Cor:globstrmolt3}, with Lemma \ref{Lem:blowupmult3} replaced by Lemma \ref{Lem:blowupmultn}.
\end{proof}
\begin{rmk}\label{Rmk:b1=bn-1}
It is not difficult to show that, for any generalized line bundle $\sF$ on $X$ whit the same hypotheses of the last point of the corollary, it holds that $\SEnd{\sF}\cong q_*(\sO_{X'})$, where $q:X'\to X$ is the blow up of $X$ with respect to the same ideal sheaf $\sI$ of the last point of the corollary. Indeed, if $\sF\cong \sI\otimes\sE$ or $\sF\cong\sI^\vee\otimes\sE$ (where $\sE$ is a line bundle on $X$), it is clear. Otherwise it is again evident that $\SEnd{\sF}$ is locally isomorphic to $q_*(\sO_{X'})$ (because they are both locally isomorphic to $\sI$), so it is sufficient to show that there exists a morphism between the two sheaves of $\sO_X$-algebras $\SEnd{\sF}$ and $\sO_{X'}$. The latter is guaranteed by the universal property of the blow up, because thanks to the local isomorphisms the inverse image ideal sheaf of $\sI$ on the relative spectrum $\Specrel{\SEnd{\sF}}$ is invertible.
\end{rmk}   
The Corollary and the Remark will be useful to replace as far as possible \cite[Lemma 2.9]{CK} (which is an essential tool in the proof of \cite[Lemma 4.4]{CK}) in the study of the moduli space in higher multiplicity.

\section{Semistable generalized line bundles}\label{SecStab}
This section, as the title suggests, is concerned with semistability of generalized line bundles; it extends to higher multiplicity the results of \cite[\S 3]{CK}. We will assume throughout this section that $\deg{\cC}<0$, for otherwise there will not be stable generalized line bundles, as pointed out in Remark \ref{Rmk:stability}\ref{Rmk:stability:4}.

We start with a quick remark about the slope and the Hilbert polynomial of a generalized line bundle and about an apparent discrepancy with \cite{CK}.
\begin{rmk}\label{Rmk:slopeandHpglb}
\noindent 
\begin{enumerate}

\item\label{Rmk:slopeandHpglb:1} Let $\sF$ be a generalized line bundle on $X=C_n$, then its slope is $\mu(\sF)=\Deg{\sF}/n$ and its Hilbert polynomial is $P_\sF(T)=\Deg{\sF}+n(1-g_1)+ndT$, while its reduced Hilbert polynomial $p_\sF(T)$ is equal to $T+(\mu(\sF)+1-g_1)/d$, where $d$ is the degree of a polarization on $C$, cf. Fact \ref{Fact:PropertiesGenRkeDeg}\ref{Fact:PropertiesGenRkeDeg:4}.

\item\label{Rmk:slopeandHpglb:2} In \cite[\S 3]{CK} there is a different definition of the slope of a generalized line bundle on a ribbon and, thus, an apparently different notion of its (semi)stability. However, it is equivalent to that used in this work, being both equivalent to Gieseker's semistability.  
\end{enumerate}
\end{rmk} 
The following theorem characterizes (semi)stability of a generalized line bundle on a primitive multiple curve in terms of a system of inequalities relating its indices and $\deg{\cC}$; it is the extension to higher multiplicity of \cite[Lemma 3.2]{CK}:
\begin{thm}\label{Thm:ssinequ}
Let $\sF$ be a generalized line bundle of generalized degree $D$ on $X$ and indices-vector $\be{.}$. Then $\sF$ is semistable if and only if the following inequalities hold:
\begin{equation}\label{Eq:ssin}
i\sum\limits_{j=i}^{n-1}\be{j}-(n-i)\sum\limits_{j=1}^{i-1}\be{j}\le-\frac{in(n-i)}{2}\deg{\cC}, \; \forall\; 1\le i\le n-1.  
\end{equation}
It is stable if and only if all the inequalities are strict. 
\end{thm}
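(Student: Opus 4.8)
The plan is to prove the characterization through the quotient formulation of semistability, which is the natural one here because the pure quotients of a generalized line bundle are completely classified. First I would record that $\Rk{\sF}=n$, so that $\mu(\sF)=D/n$, and then invoke Lemma \ref{Lem:purequot}: every pure quotient of $\sF$ is isomorphic to one of the pure quotients $\bsF{i}$ with $1\le i\le n$. Since $\bsF{n}=\sF$ corresponds to the trivial quotient and each $\bsF{i}$ with $i<n$ is a genuine proper quotient (its generalized rank is $i<n$), Definition \ref{Def:slopestability} tells me that semistability of $\sF$ is equivalent to the finitely many inequalities $\mu(\bsF{i})\ge\mu(\sF)$, for $1\le i\le n-1$. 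Slope being an isomorphism invariant, the abstract identification $\sG\cong\bsF{i}$ furnished by Lemma \ref{Lem:purequot} is all that is needed.

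The second step is the slope computation. Because $\bsF{i}$ is a generalized line bundle on $C_i$, its generalized rank is $i$, whence $\mu(\bsF{i})=\Deg{\bsF{i}}/i$, and $\Deg{\bsF{i}}$ is given explicitly by formula \eqref{Eq:Degquot} of Proposition \ref{Prop:DegFi}. Substituting and subtracting $\mu(\sF)=D/n$, the terms involving $D$ cancel and one finds
\[
\mu(\bsF{i})-\mu(\sF)=\frac{1}{ni}\left[(n-i)\sum_{j=1}^{i-1}\be{j}-i\sum_{j=i}^{n-1}\be{j}-\frac{in(n-i)}{2}\deg{\cC}\right].
\]
Clearing the positive factor $\tfrac{1}{ni}$ and rearranging shows that $\mu(\bsF{i})\ge\mu(\sF)$ holds if and only if the inequality \eqref{Eq:ssin} holds for that index $i$. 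Running over all $1\le i\le n-1$ gives the semistability statement; replacing every $\ge$ by $>$ yields the stability statement, since the quotient $\sF\surj\bsF{i}$ is non-trivial for each such $i$.

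The main obstacle is not in the argument above, which is essentially mechanical, but in the two inputs it rests on, both already established in the excerpt. The genuine reduction — that it suffices to test semistability against the single explicit family $\{\bsF{i}\}$ rather than against all subsheaves or all quotients — is exactly the content of Lemma \ref{Lem:purequot}, and the real computational weight is carried by the degree formula of Proposition \ref{Prop:DegFi}, proved there by induction on $n$. Given these, the theorem follows as a direct consequence, and the only thing to verify carefully is the sign bookkeeping in the cancellation of the $D$-terms.
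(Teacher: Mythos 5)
Your proposal is correct and follows essentially the same route as the paper: reduce via Lemma \ref{Lem:purequot} to testing the slope inequality $\mu(\bsF{i})\ge\mu(\sF)$ against the pure quotients $\bsF{i}$, $1\le i\le n-1$, then substitute the degree formula \eqref{Eq:Degquot} of Proposition \ref{Prop:DegFi}; your algebraic simplification of $\mu(\bsF{i})-\mu(\sF)$ checks out and matches \eqref{Eq:ssin}.
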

\begin{proof}
It is a straightforward application of previous results. Indeed, by Lemma \ref{Lem:purequot} it is sufficient to verify that $\mu(\sF)\le\mu(\bsF{i})$, i.e. that $D\le n\Deg{\bsF{i}}/i$, for any $1\le i\le n-1$. The assertion is easily obtained by substituting in these inequalities the formulae \eqref{Eq:Degquot}: $\Deg{\bsF{i}}=\frac{1}{n}\Big[iD+(n-i)\sum_{j=1}^{i-1}\be{j}-i\sum_{j=i}^{n-1}\be{j}-\frac{in(n-i)}{2}\deg{\cC}  \Big]$.   
\end{proof} 
\begin{cor}\label{Cor:stabiffdualstab}
A generalized line bundles $\sF$ over $X$ is semistable (resp. stable) if and only if $\sF^\vee$ is semistable (resp. stable).
\end{cor}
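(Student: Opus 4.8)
The plan is to offer two routes and present the shorter one first. The quickest observation is that a generalized line bundle is, by definition, a pure sheaf, and its dual is again a generalized line bundle (hence pure); therefore the statement is merely the special case of Lemma~\ref{Lem:StabandDual}, which already establishes that a pure sheaf on $C_n$ is (semi)stable if and only if its dual is. In this form the corollary requires no further work.

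To keep the argument within the spirit of this section and to exhibit the role of the index-duality formula, I would instead derive it from the explicit numerical criterion of Theorem~\ref{Thm:ssinequ} together with Corollary~\ref{Cor:betaandduals}. First I would apply Theorem~\ref{Thm:ssinequ} to $\sF^\vee$: it is semistable if and only if, for every $1\le i\le n-1$,
\[
i\sum_{j=i}^{n-1}\bet{j}{\sF^\vee}-(n-i)\sum_{j=1}^{i-1}\bet{j}{\sF^\vee}\le-\frac{in(n-i)}{2}\deg{\cC},
\]
with the strict inequalities characterizing stability. Then I would substitute $\bet{j}{\sF^\vee}=\be{n-1}-\be{n-1-j}$ from Corollary~\ref{Cor:betaandduals} (with the convention $\be{0}=0$) and reindex both sums through $k=n-1-j$; a direct computation shows that the left-hand side above equals
\[
(n-i)\be{n-1}-i\sum_{k=1}^{n-1-i}\be{k}+(n-i)\sum_{k=n-i}^{n-2}\be{k}.
\]

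The key step is to recognize this expression as precisely the left-hand side of the $(n-i)$-th inequality of Theorem~\ref{Thm:ssinequ} for $\sF$ itself: splitting off the top term via $\sum_{j=n-i}^{n-1}\be{j}=\be{n-1}+\sum_{j=n-i}^{n-2}\be{j}$, that inequality has left-hand side $(n-i)\sum_{j=n-i}^{n-1}\be{j}-i\sum_{j=1}^{n-i-1}\be{j}$, which matches the displayed quantity term by term; its right-hand side is $-\frac{(n-i)ni}{2}\deg{\cC}$, which coincides with the right-hand side above. Hence the $i$-th constraint for $\sF^\vee$ is literally the $(n-i)$-th constraint for $\sF$. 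As $i$ ranges over $\{1,\dots,n-1\}$ so does $n-i$, so the two systems of inequalities (and their strict versions) coincide, and $\sF$ is semistable (resp.\ stable) exactly when $\sF^\vee$ is.

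The only genuine difficulty is the bookkeeping in the reindexing: one must invoke the convention $\be{0}=0$ so that the $k=0$ term drops out, and handle the empty-sum boundary cases $i=1$ and $i=n-1$ correctly. There is no inequality to estimate—once the index shifts are carried out, the identification of the two constraint systems is exact.
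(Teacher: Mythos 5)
Your proposal is correct and follows exactly the paper's own proof, which likewise gives both routes: it notes the corollary is a special case of Lemma~\ref{Lem:StabandDual}, and then observes that, via formulae~\eqref{Eq:betadual}, the $i$-th inequality of Theorem~\ref{Thm:ssinequ} for $\sF^\vee$ is the $(n-i)$-th inequality for $\sF$. Your explicit reindexing computation is accurate; it simply spells out the verification that the paper leaves to the reader.
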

\begin{proof}
This is a special case of Lemma \ref{Lem:StabandDual}, but it follows also from the Theorem. Indeed, using formulae \eqref{Eq:betadual}, the $i$-th inequality for $\sF$ is equivalent to the $(n-i)$-th for $\sF^\vee$.
\end{proof} 
\begin{rmk}
As anticipated in Remark \ref{Rmk:stability}\ref{Rmk:stability:4}, this Theorem implies that there can exist stable generalized line bundles if and only if $\deg{\cC}$ is negative because the left hand side of inequalities \eqref{Eq:ssin} is always non-negative (thanks to Lemma \ref{Lem:propindices} and to the obvious observation that $\be{1}\ge 0$). Line bundles are always stable, if $\deg{\cC}<0$, because their indices are all $0$, while they are the only type of strictly semistable generalized line bundles in the case $\deg{\cC}=0$.
\end{rmk}

The next step is to describe a Jordan-Holder filtration (which results to be in a certain sense canonical, being related to the second canonical filtration) and the Jordan-Holder graduate object of a strictly semistable generalized line bundle; it is an extension of \cite[Lemma 3.3]{CK} to higher multiplicity.
\begin{prop}\label{Prop:JH}
Let $\sF$ be a generalized line bundle of generalized degree $D$ on $X$ strictly semistable, i.e. such that in $k\ge 1$ of the inequalities \eqref{Eq:ssin} the equality holds. Let $0<i_1<\dotsb< i_k<n$ be the indices such that in the $i_h$-th inequality equality holds, for $1\le h\le k$. Then a Jordan-Holder filtration of $\sF$ is 
\[
0\subsetneq\sF^{(n-i_k)}\subsetneq\dotsb\subsetneq\sF^{(n-i_1)}\subsetneq\sF;
\]
and its Jordan-Holder graduate is
\[
\operatorname{Gr_{JH}}(\sF)=\bigoplus\limits_{h=0}^{k}\sF^{(n-i_h)}/\sF^{(n-i_{h+1})}=\bigoplus\limits_{h=0}^{k-1}\ov{(\sF^{(n-i_h)})}_{i_{h+1}}\oplus\sF^{(n-i_k)},
\]
where $i_0=0$ and $i_{k+1}=n$ .
\end{prop}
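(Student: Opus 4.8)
The plan is to check directly that the proposed chain satisfies the three defining properties of a Jordan--Holder filtration: it is strictly increasing, every successive quotient is stable, and all quotients share the reduced Hilbert polynomial of $\sF$ (equivalently, by Remark \ref{Rmk:stability}\ref{Rmk:stability:2}, the slope $\mu(\sF)=D/n$). Strictness of the chain $0\subsetneq\sF^{(n-i_k)}\subsetneq\dotsb\subsetneq\sF^{(n-i_1)}\subsetneq\sF$ is immediate, since for a generalized line bundle all inclusions in the second canonical filtration are strict and the integers $n-i_h$ are distinct, with $n-i_k\ge 1$.

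The computational core is a dictionary between the semistability inequalities of $\sF$ and the slopes of the terms $\sF^{(n-i)}$. From the short exact sequence $0\to\sF^{(n-i)}\to\sF\to\bsF{i}\to 0$ of Remark \ref{Rmk:purequot} and additivity of generalized rank and degree (Fact \ref{Fact:PropertiesGenRkeDeg}\ref{Fact:PropertiesGenRkeDeg:5}) I get $\Rk{\sF^{(n-i)}}=n-i$ and $\Deg{\sF^{(n-i)}}=D-\Deg{\bsF{i}}$; a one-line manipulation then shows that the $i$-th inequality \eqref{Eq:ssin}, which by the proof of Theorem \ref{Thm:ssinequ} is exactly $\mu(\sF)\le\mu(\bsF{i})$, is equivalent to $\mu(\sF^{(n-i)})\le\mu(\sF)$, with equality (resp. strict inequality) on one side matching equality (resp. strict inequality) on the other. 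Thus $\mu(\sF^{(n-i)})=\mu(\sF)$ precisely when $i\in\{i_1,\dotsc,i_k\}$ and $\mu(\sF^{(n-i)})<\mu(\sF)$ otherwise. In particular $\mu(\sF^{(n-i_h)})=\mu(\sF)$ for every $h$ (with $\sF^{(n-i_0)}=\sF$), whence, again by additivity, each factor $\sF^{(n-i_h)}/\sF^{(n-i_{h+1})}$ has generalized rank $i_{h+1}-i_h$ and slope exactly $\mu(\sF)$.

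It remains to prove stability of each factor, which I expect to be the main point. By Remark \ref{Rmk:purequot} the factor $\sF^{(n-i_h)}/\sF^{(n-i_{h+1})}$ is a pure quotient of the generalized line bundle $\sF^{(n-i_h)}$ (of generalized rank $i_{h+1}-i_h$), hence a generalized line bundle on a subcurve of $C_n$; by Lemma \ref{Lem:purequot} its only nonzero proper pure quotients are $\ov{(\sF^{(n-i_h)})}_{l}\cong\sF^{(n-i_h)}/\sF^{(n-i_h-l)}$ for $1\le l\le i_{h+1}-i_h-1$ (using that the second canonical filtration of $\sF^{(n-i_h)}$ is the truncation of that of $\sF$). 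Feeding $\Deg{\sF^{(n-i_h)}}=\mu(\sF)(n-i_h)$ into the degree of this quotient, its slope exceeds $\mu(\sF)$ if and only if $\mu(\sF^{(n-(i_h+l))})<\mu(\sF)$. The crucial observation is that as $l$ runs over $1,\dotsc,i_{h+1}-i_h-1$, the index $i_h+l$ runs over the integers strictly between the \emph{consecutive} equality-indices $i_h$ and $i_{h+1}$, so none of them lies in $\{i_1,\dotsc,i_k\}$; by the dictionary above the inequality at each such index is strict, giving $\mu(\sF^{(n-(i_h+l))})<\mu(\sF)$. Hence every proper pure quotient of the factor has slope $>\mu(\sF)$, and since by Lemma \ref{Lem:purequot} it suffices to test pure quotients, the factor is stable; as a stable generalized line bundle on a subcurve $C_m$ it is stable on $C_n$ as well, its generalized rank and degree being unchanged (Fact \ref{Fact:PropertiesGenRkeDeg}\ref{Fact:PropertiesGenRkeDeg:1}) and its subsheaves being the same. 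This makes the chain a genuine Jordan--Holder filtration and yields the stated graded object. The main obstacle is organising the equality/strict-inequality bookkeeping so that the strictness required for stability of each factor is supplied precisely by the strict semistability inequalities of $\sF$ at the indices lying strictly between consecutive equality-indices.
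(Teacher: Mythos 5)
Your proof is correct, but it takes a genuinely different route from the paper's. The paper argues by strong induction on the multiplicity $n$: it isolates the largest proper term $\sF^{(n-i_1)}$ (with $i_1$ the smallest equality index), proves that the quotient $\bsF{i_1}$ is stable and that $\sF^{(n-i_1)}$ is semistable by manipulating the inequalities \eqref{Eq:ssin} themselves --- which requires Corollary \ref{Cor:betaF(i)}, i.e.\ formula \eqref{Eq:beta2filtrcan}, to express the indices of $\sF^{(n-i_1)}$ in terms of those of $\sF$ --- and then invokes the inductive hypothesis for the strictly semistable generalized line bundle $\sF^{(n-i_1)}$ on $C_{n-i_1}$. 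You dispense with both the induction and the index calculus: after setting up the slope dictionary (equality in the $i$-th inequality of \eqref{Eq:ssin} if and only if $\mu(\sF^{(n-i)})=\mu(\sF)$, strict iff strict), which the paper also records at the start of its proof via \eqref{Eq:Deg2filc}, you verify stability of all graded pieces simultaneously by classifying their pure quotients (Lemma \ref{Lem:purequot} together with the truncation property of the second canonical filtration) and observing that the required strict inequalities are precisely those of \eqref{Eq:ssin} at the indices lying strictly between consecutive equality indices. What each approach buys: the paper's recursion absorbs the combinatorics automatically and never leaves the framework of Theorem \ref{Thm:ssinequ}, at the price of the index-transfer formula and heavier bookkeeping; your argument needs only additivity of generalized rank and degree plus the classification of pure quotients, treats the factors uniformly, and makes the combinatorial heart of the statement explicit. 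A small bonus of your reading: the factor $\sF^{(n-i_h)}/\sF^{(n-i_{h+1})}$ is the pure quotient $\ov{(\sF^{(n-i_h)})}_{i_{h+1}-i_h}$, which is the correct indexing for the graded object displayed in the Proposition.
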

\begin{proof}
Set $\delta=-\deg{\cC}$ in order to simplify notation.

The proof is by strong induction. The basis is the trivial case $n=1$, i.e. the case of line bundles on a reduced smooth projective curve, which, as well-known, are all stable. 

So assume that the statement holds for generalized line bundles defined on $C_l$ with $1\le l\le n-1$.
First of all, observe that the greatest term in the Jordan-Holder filtration has to be a semistable pure subsheaf of $\sF$, having its same reduced Hilbert polynomial $p_\sF(T)$, i.e. having its same slope, and such that the quotient is a pure stable sheaf. Observe that $\sF^{(n-r)}$, being a generalized line bundle on $C_{n-r}$ has the same slope of $\sF$ if and only if $\Deg{\sF^{(n-r)}}=\frac{n-r}{n}D$, which is equivalent, by formula \eqref{Eq:Deg2filc}, to having the equality in the $r$-th inequality of $\sF$. Moreover, by similar considerations and by formula \eqref{Eq:Degquot}, in this case also the pure quotient $\bsF{r}$ has the same slope of $\sF$.

Hence, if $i_1$ is the greatest index $i$ such that the $i$-th inequality is an equality, $\sF^{(n-i_1)}$ is a plausible candidate as greatest term of the Jordan-Holder filtration of $\sF$. In order to check that it is really so, we need to verify that $\bsF{i_1}$ is stable and that $\sF^{(n-i_1)}$ is semistable.

The $i_1$-th pure quotient is stable if and only if $l\sum_{j=l}^{i_1-1}\!\be{j}-(i_1-l)\sum_{j=1}^{l-1}\!\be{j}\!\le\frac{li_1(i_1-l)}{2}\delta$ for any $1\le l\le i_1-1$, by Theorem \ref{Thm:ssinequ}. The choice of $i_1$ implies that $i_1\sum_{j=i_1}^{n-1}\be{j}-(n-i_1)\sum_{j=1}^{i_1-1}\be{j}=\frac{i_1n(n-i_1)}{2}\delta$ and $i\sum_{j=i}^{n-1}\be{j}-(n-i)\sum_{j=1}^{i-1}\be{j}<\frac{in(n-i)}{2}\delta$ for any $1\le i\le i_1-1$; substituting in the latter inequalities the value obtained for $\sum_{j=i_1}^{n-1}\be{j}$ one gets exactly those proving the stability of $\sF^{(i_1)}$.

On the other side, $\sF^{(n-i_1)}$ is semistable if and only if the inequalities $i\!\sum_{j=i}^{n-i_1-1}\!\bet{j}{\sF^{(n-i_1)}}-(n-i_1-i)\sum_{j=1}^{i-1}\bet{j}{\sF^{(n-i_1)}}\le\frac{i(n-i_1)(n-i_1-i)}{2}\delta$ hold for any $1\le i\le n-i_1-1$. The equality $i_1\sum_{j=i_1}^{n-1}\be{j}-(n-i_1)\sum_{j=1}^{i_1-1}\be{j}=\frac{i_1n(n-i_1)}{2}\delta$ and the fact that, by formulae \eqref{Eq:beta2filtrcan}, $\bet{j}{\sF^{(n-i_1)}}=\be{i_1+j}-\be{n-i}$, for any $i\le j\le n-i_1-1$, imply that the $i$-th of these inequalities is equivalent to the $(i_1+i)$-th of those giving the semistability of $\sF$; thus $\sF^{(n-i_1)}$ is semistable as wanted.

Now there are two distinct cases to consider.
 If $i_1=i_k$, i.e. all the other inequalities are strict, then $\sF^{(n-i_1)}$ is stable; therefore a Jordan Holder filtration of $\sF$ is simply $0\subset\sF^{(n-i_1)}\subset\sF$ and the graduate is $\operatorname{Gr_{JH}}(\sF)\cong\sF^{(n-i_1)}\oplus\bsF{i_1}$.
 
 Otherwise, $\sF^{(n-i_1)}$ is strictly semistable and one can conclude, getting the desired Jordan-Holder filtration and Jordan-Holder graduate of $\sF^{(n-i_1)}$ by induction (and hence those of $\sF$, for which one has to pay attention to the shift of indices: $i_h(\sF^{(n-i_1)})=i_{h+1}(\sF)-i_1(\sF)$). 
\end{proof}

\section{The moduli space of generalized line bundles}\label{SecModuli}
The aim of this section is to study the moduli space of semistable generalized line bundles on a primitive multiple curve.   
Throughout it, any primitive multiple curve $X$ will be such that $\deg{\cC}<0$, because, as observed in the previous one, only in this case there exist stable generalized line bundles on it.
We point out since now that some results are more precise in multiplicity $3$ than in arbitrary one.
The methods used in the following lines are inspired by the case of ribbons treated in \cite[\S 4.1]{CK} (where ordinary degree and rank are used instead of the generalized ones, but it is elementary to translate their results in terms of the latter). It seems very difficult to extend the main result of the cited section, i.e. \cite[Theorem 4.7]{CK}, to higher multiplicity: in the case of ribbons the involved sheaves which are not generalized line bundles are direct images of vector bundles of rank $2$ on a smooth integral curve, whose moduli spaces are well-known, while in the general case also the direct images of pure sheaves of generalized rank $n$ on $C_{i}$, for any $1\le i \le n-1$, are involved and their moduli spaces have never been studied in general (except, obviously, vector bundles of rank $n$ on $C=C_1$). The case of generalized rank $3$ sheaves on $C_2$ is treated, although not exhaustively, in \cite{S2} (as a particular case). This fact allows to formulate a plausible conjecture for the irreducible components in multiplicity $3$ (see Conjecture \ref{Conj:CompIrrn=3}).
 
After a brief introduction about the Simpson moduli space on a primitive multiple curve, useful mainly to fix notations, the section is divided into two parts. The first one is about the global structure of the moduli space; it describes the irreducible components of generalized line bundles and shows they are connected (at least for $\deg{\cC}$ small). The second one is quite shorter and studies the dimension of the Zariski tangent space to points of the moduli space corresponding either to generic generalized line bundles on $X$ or to rank $n$ vector bundles on $C$.

It is well-known (cf. e.g. \cite{HL}) that there exists a good moduli space parametrizing the semistable pure sheaves of fixed Hilbert polynomial $P$ on any projective scheme, and thus, in particular, on $X$. We will denote by $\mathrm{M}^\sharp(\sO_X,P)$ the moduli functor, by $\mathrm{M}(\sO_X, P)$ the projective scheme whose $\bK$-valued points parametrize the $S$-equivalence classes of semistable pure sheaves of Hilbert polynomial $P$ and by $\mathrm{M}_s(\sO_X, P)$ its subscheme whose $\bK$-valued points parametrize stable sheaves with the same Hilbert polynomial. The general theory works for polarized projective schemes, but, as observed in Fact \ref{Fact:PropertiesGenRkeDeg}\ref{Fact:PropertiesGenRkeDeg:4}, semistability on a primitive multiple curve is independent of the choice of a polarization. In the following, we will restrict our attention to Hilbert polynomials of the form $P_D(T)=D+n(1-g_1)+ndT$ (where $d$ is the degree of a polarization on $C$), i.e. to the Hilbert polynomials of generalized line bundles on $C_n$ of generalized degree $D$ (cf. Remark \ref{Rmk:slopeandHpglb}\ref{Rmk:slopeandHpglb:1}). As a consequence of Proposition \ref{Prop:granknsheaves}, the only other pure sheaves having the same Hilbert polynomial are direct images of sheaves of $\sO_{C_{n-1}}$-modules of generalized rank $n$ and generalized degree $D$.

\subsection{Global geometry: irreducible components}\label{SubSecGlobGeom}
As anticipated above, this subsection is concerned mainly with the irreducible components containing generalized line bundles.
 
First of all, we observe that a sheaf of rank $n$ on $C_{n-1}$ cannot specialize to a generalized line bundle on $X$. This implies that generalized line bundles are not contained in irreducible components whose generic element is supported on $C_{n-1}$.
 \begin{lemma}\label{Lem:openessofgenlinbund}
 Let $T$ be a $\bK$-scheme, let $\sF$ be a sheaf representing a $T$-valued point of $\mathrm{M}^\sharp(\sO_X,P)$ and let $T_0\subset T$ the locus of points $t\in T$ such that the restriction of $\sF$ to the fibre $X\times_{\bK} T\times_T\Spec{\bK(t)}$ is a generalized line bundle. Then $T_0\subset T$ is open.
 \end{lemma}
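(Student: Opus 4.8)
The plan is to reduce the statement to an upper semicontinuity property of the fibrewise generalized rank of a suitable restriction of $\sF$, and then to invoke the flattening stratification.

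First I would record the pointwise criterion. Since $\sF$ represents a point of $\mathrm{M}^\sharp(\sO_X,P)$ with $P=P_D$, every fibre $\sF_t$ is pure of generalized rank $n$ (the leading coefficient of $P$ is $nd$, cf. \eqref{Eq:HilbPol}). Hence Proposition \ref{Prop:granknsheaves} applies to each fibre: $\sF_t$ is \emph{either} a generalized line bundle \emph{or} the direct image of a sheaf of generalized rank $n$ on $C_{n-1}$, the latter being exactly the case $\sN^{n-1}\sF_t=0$. Moreover these two cases are told apart by the generalized rank of the restriction $\sF_t|_{C_{n-1}}=\sF_t/\sN^{n-1}\sF_t$ (recall $\sI_{C_{n-1}/X}=\sN^{n-1}$): if $\sF_t$ is a generalized line bundle then $\sN^{n-1}\sF_t=G_{n-1}(\sF_t)$ is a line bundle on $C$, of generalized rank $1$, so by additivity (Fact \ref{Fact:PropertiesGenRkeDeg}\ref{Fact:PropertiesGenRkeDeg:5}) $\Rk{\sF_t|_{C_{n-1}}}=n-1$; whereas if $\sF_t$ is supported on $C_{n-1}$ then $\sF_t|_{C_{n-1}}=\sF_t$ has generalized rank $n$. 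Thus $t\in T_0$ if and only if $\Rk{\sF_t|_{C_{n-1}}}=n-1$, and $T\setminus T_0$ is the locus where this generalized rank attains its maximal value $n$.

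Next I would globalize this over $T$. Restriction to a closed subscheme commutes with base change, so the coherent sheaf $\sG:=\sF\otimes_{\sO_{X\times T}}\sO_{C_{n-1}\times T}$ on $C_{n-1}\times T$ satisfies $\sG_t\cong\sF_t|_{C_{n-1}}$ for every $t$. By \eqref{Eq:HilbPol} the leading coefficient of the Hilbert polynomial of $\sG_t$ equals $d\cdot\Rk{\sG_t}$, so the two possibilities above correspond precisely to the two leading coefficients $(n-1)d<nd$. It therefore suffices to show that the locus where the fibrewise Hilbert polynomial of $\sG$ has the larger leading coefficient $nd$ is closed. I would do this through the flattening stratification of $\sG$ over $T$ (after the harmless reduction to $T$ Noetherian: openness is local on $T$, and a standard limit argument descends the data to a finitely generated $\bK$-subalgebra on each affine $\Spec{R}$; see the machinery in \cite{HL}). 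The stratification decomposes $T$ into locally closed pieces $T_Q$ on which $\sG$ is flat with constant fibrewise Hilbert polynomial $Q$, and satisfies $\overline{T_Q}\subseteq\bigcup_{Q'\succeq Q}T_{Q'}$, where $Q'\succeq Q$ means $Q'(m)\ge Q(m)$ for $m\gg0$. Since $\Rk{\sG_t}\le n$ always, a polynomial of leading coefficient $nd$ is maximal for $\succeq$; hence the closure of any stratum of leading coefficient $nd$ meets only strata of leading coefficient $nd$. Consequently the union of these strata, which is exactly $T\setminus T_0$, is closed, and $T_0$ is open.

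The main obstacle, as the argument makes clear, is the non-flatness of the restriction $\sG$: the whole point is to get the correct direction of semicontinuity for the leading term of the fibrewise Hilbert polynomial, and I would make sure to phrase it through the flattening stratification (equivalently, through upper semicontinuity of the generic fibre length of $\sG$) rather than through a cohomology-and-base-change argument that would silently require flatness.
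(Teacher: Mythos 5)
Your proof is correct, but it takes a different route from the paper, which offers two much shorter arguments. The paper's first proof (following Chen--Kass) observes that, by Proposition \ref{Prop:granknsheaves}, the fibres over $T\setminus T_0$ are semistable pure sheaves supported on $C_{n-1}$, and then invokes the projectivity --- hence universal closedness --- of the moduli space of semistable sheaves on $C_{n-1}$ to conclude that $T\setminus T_0$ is closed. The paper's second proof is a one-liner resting on Nakayama-type semicontinuity: the minimal number of generators of a stalk can only jump up under specialization, and among pure sheaves of generalized rank $n$ the generalized line bundles are exactly those whose generic stalk is cyclic (for sheaves on $C_{n-1}$ the generic stalk is $\bigoplus_i\sO_{C_i,\eta}^{\oplus a_i}$ with $\sum ia_i=n$, $a_n=0$, so it needs at least two generators). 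Your argument detects the same dichotomy through a different invariant --- the generalized rank of the restriction $\sF_t|_{C_{n-1}}$, read off as the leading coefficient of the fibrewise Hilbert polynomial of the non-flat restriction $\sG$ --- and then runs the semicontinuity through the flattening stratification. In substance this is the same principle as the paper's second proof (both are Nakayama in disguise: your criterion $\Rk{\sF_t|_{C_{n-1}}}=n-1$ and the cyclic-generic-stalk criterion play parallel roles), but your version is heavier: it needs the Noetherian reduction, the existence of the stratification, and its closure relations $\overline{T_Q}\subseteq\bigcup_{Q'\succeq Q}T_{Q'}$, whereas the paper's second argument needs only upper semicontinuity of $\dim_{k(x,t)}\sF\otimes k(x,t)$ on $X\times T$ together with openness of the flat proper projection to $T$. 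What your approach buys is that the direction of semicontinuity is made completely explicit, and no properness of any moduli space is invoked --- a point where the Chen--Kass-style argument does require some care. One small imprecision: a polynomial with leading coefficient $nd$ is not literally \emph{maximal} for $\succeq$ among the occurring polynomials (two polynomials with the same leading coefficient are still comparable); what you need, and what your argument in fact uses, is that any occurring $Q'\succeq Q$ must again have leading coefficient $nd$, since all occurring leading coefficients are at most $nd$. This does not affect the validity of the proof.
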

\begin{proof}
It is possible to prove this assertion in at least two different ways.

The first one is almost verbatim the proof of the cited Lemma by Chen-Kass:
by general results (cf., e.g., \cite[Theorem 4.3.4]{HL}) the moduli space of semistable pure sheaves on $C_{n-1}$ is projective, hence it is universally closed and this implies that $T\setminus T_0$ is closed.

The second one is maybe easier: it is well-known that the number of generators of a module can only decrease under specialization and generalized line bundles are the only sheaves of generalized rank $n$ whose generic stalk has only one generator.
\end{proof}
The above Lemma is a generalization of \cite[Lemma 4.2]{CK} from ribbons to the general case.

The next step consists in introducing some loci of generalized line bundles in $\mathrm{M}_s(\sO_X, P)$, among whose closures there are the irreducible components containing stable generalized line bundles, as we will show later.  
\begin{defi}\label{Def:Zbeta} Let $X$ be a primitive multiple curve of multiplicity $n$ and let $(b_{1,1},\dotsc,\,b_{1,r_1}),\dotsc,\: (b_{n-1,1},\dotsc,b_{n-1,r_{n-1}})$ be $n-1$ (possibly empty except one of them) sequences of positive integers such that the inequalities \eqref{Eq:ssin} are strictly verified by $\be{j}=\sum_{h=1}^{j}\sum_{l=1}^{r_h}b_{h,l}$, for $1\le j\le n-1$. Set
$\un{b}:=((\underset{j-1 \text{ times}}{\underbrace{0,\dotsc,0}},\underset{n-j \text{ times}}{\underbrace{b_{j,h},\dotsc,b_{j,h}}}))_{1\le j\le n-1,\,1\le h\le r_j}$. Define $Z_{\un{b}}\subset \mathrm{M}_s(\sO_X, P_D)$ as the subset of stable generalized line bundles of generalized degree $D$ and local indices sequence $\underline{b}$.
\end{defi}  
The inequalities in the above definition, which coincide with \cite[Definition 4.3]{CK} for $n=2$, are the stability conditions of Theorem \ref{Thm:ssinequ}. The following lemma is similar to \cite[Lemma 4.4]{CK}. 
\begin{lemma}\label{Lem:DimZbetamult}
If $n$ $\not\vert$ $D+(n(n-1)/2)\delta-\be{1}-\dotsb-\be{n-1}$, then $Z_{\un{b}}$ is empty. Otherwise, it is a constructible, irreducible subset of dimension $g_n-\be{n-1}+\sum_{h=1}^{n-1}r_h$, where $g_n$ is the genus of $X$.
\end{lemma}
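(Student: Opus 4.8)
The plan is to dispose of the emptiness statement first and then to realize $Z_{\un{b}}$ as the image of an irreducible parameter scheme under a morphism to $\mathrm{M}_s(\sO_X,P_D)$, reading off irreducibility, constructibility and dimension from that presentation, in the spirit of \cite[Lemma 4.4]{CK}. For emptiness, I would observe that any $\sF\in Z_{\un{b}}$ has first pure quotient $\bsF{1}$, a genuine line bundle on $C$, so that $\Deg{\bsF{1}}=\deg{\bsF{1}}\in\bZ$. By Proposition \ref{Prop:DegFi} with $i=1$ (the inner sum being empty),
\[
\Deg{\bsF{1}}=\frac1n\Big[D-\sum_{j=1}^{n-1}\be{j}-\tfrac{n(n-1)}2\deg{\cC}\Big]=\frac1n\Big[D+\tfrac{n(n-1)}2\delta-\sum_{j=1}^{n-1}\be{j}\Big],
\]
where $\delta=-\deg{\cC}$ and $\be{j}=\sum_{h\le j}\sum_l b_{h,l}$ is recovered from $\un{b}$. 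Hence if $n$ does not divide $D+\tfrac{n(n-1)}2\delta-\be{1}-\dotsb-\be{n-1}$, no generalized line bundle of degree $D$ with these indices exists, so $Z_{\un{b}}=\emptyset$.

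Assume now the divisibility holds. I would build an irreducible parameter scheme $T$ as a product of three pieces: (i) a \emph{configuration} factor recording the $k=\sum_{h=1}^{n-1}r_h$ points of $C$ at which $\sF$ is not locally free together with their prescribed block types — an open subset of a product of symmetric powers of $C$, of dimension $k$; (ii) an affine factor $\bA^{N}$, with $N=\sum_{\text{blocks}}(\min\{j,n-j\}-1)\,b$, recording the local isomorphism type at each bad point through the coordinates $z_{h,i}$ of Corollary \ref{Cor:locstructspecial}; and (iii) the component of $\Pic{X}$ of the degree making the total generalized degree equal to $D$, of dimension $g_n$ by Fact \ref{Fact:Pic}. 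Over $T$ I would assemble from (i)--(ii) the associated finite-support subscheme $\mathcal{Z}\subset C_{n-1}\times T$ and twist its ideal by the pullback of the Poincar\'e bundle from (iii); by the Local Structure Theorem \ref{Thm:locstr} and Corollary \ref{Cor:globstr} this yields a flat family of generalized line bundles with constant fibrewise local indices sequence $\un{b}$, all stable since the inequalities defining $\un{b}$ are strict (Theorem \ref{Thm:ssinequ}). The induced morphism $T\to\mathrm{M}_s(\sO_X,P_D)$ has image exactly $Z_{\un{b}}$, because every such $\sF$ arises this way by Corollaries \ref{Cor:globstr} and \ref{Cor:locstructspecial}; as $T$ is irreducible, $Z_{\un{b}}$ is irreducible, and it is constructible by Chevalley's theorem.

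For the dimension I would analyse the fibres of $T\to Z_{\un{b}}$. The configuration and the local moduli $z_{h,i}$ are intrinsic to the isomorphism class of $\sF$, so two points of $T$ with the same image differ only in factor (iii), by a line bundle $\sE$ with $\sF\otimes\sE\cong\sF$; by Corollary \ref{Cor:globstrmoltn} this stabilizer is $\ker{q^{*}\colon\Pic{X}\to\Pic{X'}}$ for the blow-up $q\colon X'\to X$ attached to $\sF$. Using $\SEnd{\sF}\cong q_{*}\sO_{X'}$ (Remark \ref{Rmk:b1=bn-1}), the unit-group sequence $0\to\sO_X^{*}\to(q_{*}\sO_{X'})^{*}\to(q_{*}\sO_{X'})^{*}/\sO_X^{*}\to 0$, and $\operatorname{End}(\sF)=\bK$ for stable $\sF$, one identifies this stabilizer with a unipotent group of dimension $\ell=\operatorname{length}(q_{*}\sO_{X'}/\sO_X)=\sum_{\text{blocks}}b\,\min\{j,n-j\}$. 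Therefore
\[
\dim Z_{\un{b}}=\dim T-\ell=k+N+g_n-\ell,
\]
and the decisive cancellation is
\[
N-\ell=\sum_{\text{blocks}}b\big[(\min\{j,n-j\}-1)-\min\{j,n-j\}\big]=-\sum_{\text{blocks}}b=-\be{n-1},
\]
so the right-hand side collapses to $g_n-\be{n-1}+\sum_{h=1}^{n-1}r_h$, as claimed.

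The main obstacle I anticipate is the rigorous construction of the flat family over $T$ and the verification that its fibrewise local structure is constantly $\un{b}$: this is exactly what upgrades the set-theoretic surjection $T\to Z_{\un{b}}$ to a morphism of schemes, legitimising both constructibility and the fibre-dimension count. The second delicate point is the exact evaluation $\dim\ker{q^{*}}=\ell$, i.e.\ that the unit-group quotient contributes precisely the colength of $\sO_X$ in $\SEnd{\sF}$ with no correction term; this relies on $\operatorname{End}(\sF)=\bK$ and on the precise local shape of $\SEnd{\sF}$ furnished by Corollaries \ref{Cor:locstructspecial} and \ref{Cor:globstrmoltn} (including the passage to $\sF^{\vee}$ for blocks with $j<n/2$). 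Everything else is bookkeeping of the three dimension contributions.
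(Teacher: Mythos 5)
Your proposal is correct and follows essentially the same route as the paper's proof: emptiness via the $i=1$ case of formula \eqref{Eq:Degquot}, parametrization of $Z_{\un{b}}$ by (configuration of distinct points) $\times$ (affine space of local moduli $z_{h,i}$ from Corollary \ref{Cor:locstructspecial}) $\times$ (a component of $\Pic{X}$), with the fibres of the resulting map identified with the stabilizer $\ker{q^*:\Pic{X}\to\Pic{X'}}$ via Corollaries \ref{Cor:locstructspecial} and \ref{Cor:globstrmoltn}. The only cosmetic difference is how that stabilizer's dimension is evaluated: the paper computes it as $\operatorname{h}^1(X,\sO_X)-\operatorname{h}^1(X',\sO_{X'})$ using Lemma \ref{Lem:glosecblup}, whereas you compute the colength of $\sO_X$ in $q_*\sO_{X'}$ through the unit-group sequence together with simpleness of stable sheaves ($\operatorname{End}(\sF)=\bK$ giving $\operatorname{h}^0(X',\sO_{X'})=1$); the two computations agree, since $g_n-g(X')=\operatorname{length}(q_*\sO_{X'}/\sO_X)=\sum b\min\{j,n-j\}$.
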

\begin{proof}
In order to simplify notations set $\un{\beta}_j=(b_{j,1},\dotsc,b_{j,r_j})$ and $\beta_j=\sum_{l=1}^{r_j}b_{j,l}$, for $1\le j\le n-1$.

The first assertion follows from the fact that the first of formulae \eqref{Eq:Degquot} implies that $n|D+(n(n-1)/2)\delta-\bet{1}{\sF}-\dotsb-\bet{n-1}{\sF}$ for any generalized line bundle $\sF$.

So, assume $n|D+(n(n-1)/2)\delta-\be{1}-\dotsb-\be{n-1}$. The key point is to parametrize $Z_{\un{b}}$ with an irreducible variety of the required dimension.

Consider $C^{(\beta_j)}$, i.e. the $\beta_j$-th symmetric product of the reduced subcurve $C$, and within it the diagonal $\Delta_{\un{\beta}_j}$ associated to the partition $\un{\beta}_j$ of $\beta_j$ (for $j=1,\dotsc,n-1$), i.e. the image of the $r_j$-th direct product of $C$ with itself in $C^{(\beta_j)}$ under the morphism sending $(P_{j,1},\dotsc,P_{j,r_j})$ to $\sum_{l=1}^{r_j}\bp{j}{l}P_{j,l}$. Let $U\subset \Delta_{\un{\beta}_1}\times\dotsb\times\Delta_{\un{\beta}_{n-1}}$ be the locus such that the points $P_{j,l}$'s are all distinct (for $1\le j\le n-1$ and $1\le l\le r_j$). It is clear that $U$ is locally closed in $C^{(\beta_1)}\times\dotsb\times C^{(\beta_{n-1})}$ and irreducible of dimension $r_1+\dotsb+r_{n-1}$.

Set $m=\sum_{j=2}^{n-2}\sum_{h=1}^{\bar{\jmath}}\sum_{l=1}^{r_j}\sum_{i=0}^{b_{j,l}-1}1=\sum_{j=2}^{n-2}\bar{\jmath}\beta_j$, where $\bar{\jmath}=\min\{j,n-j\}-1$, and consider the affine space $\bA^m_k$; any of its closed points will be denoted in a completely non-standard way as $a=(z_{h,i}^{(j,l)})$, with $h$, $i$, $j$ and $l$ varying as in the definition of $m$.

For any $\Sigma\in U$ and $a\in\bA^m_k$, consider the ideal sheaf $\sI(\Sigma,a)$ defined as, using local notation, $\Big(x^{b_{j,l}}+\sum_{h=1}^{\bar{\jmath}}\sum_{i=0}^{b_{j,l}-1}z_{h,i}^{(j,l)}x^iy^h,y^l\Big)$ at the point $P_{j,l}$ for any $1\le j\le n-1$ and $1\le l\le r_j$. It holds that $\sI\Big(\Sigma,\Big(z_{h,i}^{(j,l)}\Big)\Big)$ is a stable generalized line bundle of generalized degree $-\sum_{j=1}^{n-1}j\beta_{j}-(n(n-1)/2)\delta$ and local indices sequence $\un{b}$.
So it is possible to define a map $\bA^{m}_k\times U\times\operatorname{Pic}^{D+\sum_{j=1}^{n-1}j\beta_{j}}(X)\to\mathrm{M}_s(\sO_X, P_D)$ by the rule $a\times\Sigma\times \sE\mapsto \sI(\Sigma,a)\otimes\sE$, where $\operatorname{Pic}^{D+\sum_{j=1}^{n-1}j\beta_{j}}(X)$ is the variety of line bundles on $X$ of generalized degree $D+\sum_{j=1}^{n-1}j\beta_{j}$ (it is the right generalized degree to be used by Corollary \ref{Cor:DegandTensProd}). 
Let $a\in\bA^m_j$; by the definition of $U$, for any set of $r_1+\dotsb+r_{n-1}$ points $P_{j,l}$, with $1\le j\le n-1$ and $1\le l\le r_j$, there is a unique closed subscheme $\Sigma\subset C$, corresponding to a point of $U$, such that  $\btp{1}{P_{j,l}}{\sI(\Sigma,a)}=\btp{j-1}{P_{j,l}}{\sI(\Sigma,a)}=0$ and $\btp{j}{P_{j,l}}{\sI(\Sigma,a)}=\btp{n-1}{P_{j,l}}{\sI(\Sigma,a)}=b_{j,l}$, for $1\le j\le n-1$ and $1\le j\le r_j$. Hence, by Corollary \ref{Cor:locstructspecial} and by Corollary \ref{Cor:globstrmoltn}\ref{Cor:globstrmoltn:3} (Corollary \ref{Cor:globstrmolt3} for multiplicity $3$), the image of the just defined map is $Z_{\un{b}}$ and, moreover, if $X'$ is the blow up described in the second of the cited Corollaries, the fibre over a point is an irreducible variety of dimension $\mathrm{h}^1(X,\sO_X)-\mathrm{h}^1(X',\sO_{X'})=g_n-g(X')=\sum_{j=1}^{n-1}(\bar{\jmath}+1)b_{j,l}$ (with $\bar{\jmath}$ as above), where the second equality is trivial and the first one holds because both $X$ and $X'$ do not have non-trivial global sections (for $X$ it is easily implied by $\deg{\cC}<0$, while for $X'$ it is Lemma \ref{Lem:glosecblup}). Hence, $Z_{\un{b}}$ is irreducible and constructible of dimension $m+r_1+\dotsb+r_{n-1}+g_n-\sum_{j=1}^{n-1}(\bar{\jmath}+1)b_{j,l}=g_n-\be{n-1}+\sum_{h=1}^{n-1}r_h$.
\end{proof}
In order to complete the above proof we need the following:  \begin{lemma}\label{Lem:glosecblup}
Let $q:X'\to X$ be the blow up considered in the proof of the previous Lemma. Then it has only trivial global sections, equivalently $g(X')=\mathrm{h}^1(X',\sO_{X'})$. 
\end{lemma}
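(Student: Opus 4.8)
The plan is to reduce the statement to the simplicity of a stable generalized line bundle, via the identification of $q_*\sO_{X'}$ with a sheaf of endomorphisms. First I would observe that the blow-up $q\colon X'\to X$ occurring in the proof of Lemma \ref{Lem:DimZbetamult} is precisely the one attached, as in Corollary \ref{Cor:globstrmoltn}\ref{Cor:globstrmoltn:3}, to the ideal sheaf locally modelled on the generalized line bundle $\sI(\Sigma,a)$; and that at every point $P_{j,l}$ where $\sI(\Sigma,a)$ is not free its local indices satisfy the special pattern $0=\btp{j-1}{P_{j,l}}{\sI(\Sigma,a)}<\btp{j}{P_{j,l}}{\sI(\Sigma,a)}=\btp{n-1}{P_{j,l}}{\sI(\Sigma,a)}$. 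Hence the hypotheses of Remark \ref{Rmk:b1=bn-1} are met, and that remark yields an isomorphism of sheaves of $\sO_X$-algebras $q_*\sO_{X'}\cong\SEnd{\sI(\Sigma,a)}$.

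Next I would translate the assertion into a statement about global sections. Since $g(X')=1-\chi(X',\sO_{X'})=1-\mathrm{h}^0(X',\sO_{X'})+\mathrm{h}^1(X',\sO_{X'})$, the desired equality $g(X')=\mathrm{h}^1(X',\sO_{X'})$ is equivalent to $\mathrm{h}^0(X',\sO_{X'})=1$, i.e. to $X'$ carrying only the constant global functions. As $q$ is finite we have $H^0(X',\sO_{X'})=H^0(X,q_*\sO_{X'})$, so by the previous step $H^0(X',\sO_{X'})\cong H^0(X,\SEnd{\sF})=\operatorname{End}(\sF)$ for $\sF=\sI(\Sigma,a)$. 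It therefore suffices to prove $\operatorname{End}(\sF)=\bK$, and here the point is that $\sF$, being an element of $Z_{\un b}\subset\mathrm{M}_s(\sO_X,P_D)$, is \emph{stable}.

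Finally I would invoke the standard fact that a stable sheaf is simple, spelled out in the terms of Definition \ref{Def:slopestability}. Given a nonzero $\varphi\in\operatorname{End}(\sF)$, its image $\sG=\im{\varphi}$ is a pure subsheaf of $\sF$ (purity being inherited by subsheaves), whence $\mu(\sG)\le\mu(\sF)$ with equality only if $\sG=\sF$; on the other hand $\sG\cong\sF/\ker{\varphi}$ is a pure quotient, so stability forces $\mu(\sG)>\mu(\sF)$ as soon as $\ker{\varphi}\neq 0$. These two inequalities are incompatible unless $\ker{\varphi}=0$ and $\sG=\sF$, that is, unless $\varphi$ is an isomorphism. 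Thus $\operatorname{End}(\sF)$ is a finite-dimensional division algebra over the algebraically closed field $\bK$, hence equals $\bK$. Combining, $\mathrm{h}^0(X',\sO_{X'})=\dim_{\bK}\operatorname{End}(\sF)=1$, which is the claim.

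The step requiring the most care is the first one: I must check that the blow-up singled out in Lemma \ref{Lem:DimZbetamult} really is the blow-up of Corollary \ref{Cor:globstrmoltn}\ref{Cor:globstrmoltn:3}, so that Remark \ref{Rmk:b1=bn-1} is applicable, and that the chosen local model $\sI(\Sigma,a)$ may be taken stable (of the prescribed generalized degree) rather than merely a local representative; once this bookkeeping is in place the simplicity argument is routine and purely formal. I would also remark that $\SEnd{\sF}$ is unchanged under tensoring $\sF$ by a line bundle, so the identification and the conclusion are insensitive to the $\Pic{X}$-twist used to realize a stable member of the locus.
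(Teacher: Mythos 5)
Your proof is correct, and it takes a genuinely different route from the paper's. The paper argues by explicit geometry: it filters $\sO_{X'}$ by ideal sheaves $\sK_{n-i}$, shows that each one is a line bundle on $C$ whose degree $-(n-i)\delta+\be{n-i}-\be{i-1}$ is negative by combining the $i$-th and the $(n-i)$-th strict inequalities \eqref{Eq:ssin}, and descends inductively to a primitive multiple curve $C_{n-\bar{\imath}}$, which has only constant sections because $\deg{\cC}<0$. You instead transfer the question to the sheaf side: $H^0(X',\sO_{X'})=H^0(X,q_*\sO_{X'})\cong H^0(X,\SEnd{\sI(\Sigma,a)})=\operatorname{End}(\sI(\Sigma,a))$ by Remark \ref{Rmk:b1=bn-1} --- whose hypotheses are indeed met, since by construction the local indices of $\sI(\Sigma,a)$ have the pattern of Corollary \ref{Cor:locstructspecial} at every bad point and the blow up of Lemma \ref{Lem:DimZbetamult} is exactly that of Corollary \ref{Cor:globstrmoltn}\ref{Cor:globstrmoltn:3} --- and then you invoke stability of $\sI(\Sigma,a)$, which is guaranteed by Definition \ref{Def:Zbeta} together with Theorem \ref{Thm:ssinequ} and hence independently of the present Lemma, so there is no circularity; the standard ``stable implies simple'' argument finishes the proof. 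Your route is shorter, makes transparent why stability is precisely the right hypothesis, is manifestly insensitive to the $\Pic{X}$-twist, and would in fact directly supply the equality $\operatorname{h}^0(X',\sO_{X'})=1$ where it is used again in Lemma \ref{Lem:ExtFLGn}. What the paper's computation buys in exchange is independence from Remark \ref{Rmk:b1=bn-1}, which in the text is only sketched, and finer information of independent interest: the negativity of each $\deg{\sK_{n-i}}$ and the explicit intermediate curves $X'_{n-i}$, extracted from the numerical inequalities alone. The only steps you should spell out are the routine ones in the simplicity argument: subsheaves of pure sheaves are pure, and additivity of generalized rank and degree (Fact \ref{Fact:PropertiesGenRkeDeg}\ref{Fact:PropertiesGenRkeDeg:5}) rules out a surjective endomorphism with non-zero kernel, so every non-zero endomorphism is an isomorphism and $\operatorname{End}(\sI(\Sigma,a))$ is a finite-dimensional division algebra over the algebraically closed field $\bK$, hence equal to $\bK$.
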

\begin{proof}
The notation is as in the proof of the previous Lemma and, moreover, we set $\tilde{\jmath}=n-(\bar{\jmath}+1)$.

It follows from the definition that in any point $P$ different from the $P_{j,l}$'s
 $\sO_{X',P}\cong\sO_{X,P}$, while $\sO_{X',P_{j,l}}\cong \sO_{X,P_{j,l}}[y^{\tilde{\jmath}}/(x^{b_{j,l}}+\alpha y)]$ 
 for an appropriate $\alpha\in\sO_{X,P_{j,l}}$ which is not relevant to make explicit for the following counts. 
 So, we can consider the ideal sheaf $\sK_{n-1}\subset\sO_{X'}$ defined as $\sN^{n-1}_P$ for $P\notin\{P_{j,l}\}$ and as the ideal generated by $y^{n-1}/(x^{b_{j,l}}+\alpha y)$ in any $P_{j,l}$. The scheme $X'_{n-1}$ defined as $(C,\sO_{X'}/\sK_{n-1})$ is a multiple curve such that $\sO_{X'_{n-1},P}\cong\sO_{C_{n-1},P}$ for $P\notin\{P_{j,l}\}$ (for $2\le j\le n-2$ only) while $\sO_{X_{n-1}',P_{j,l}}\cong \sO_{C_{n-1},P_{j,l}}[\bar{y}_{n-1}^{\tilde{\jmath}}/(\bar{x}_{n-1}^{b_{j,l}}+\bar{\alpha}_{n-1} \bar{y}_{n-1})]$ (excluded the $P_{j,l}$ with $j=1$ or $n-1$, where $\bar{y}_{n-1}^{\tilde{\jmath}}=0$). Observe that if $r_j=0$ for $2\le j\le n-2$, then $X'_{n-1}$ is just $C_{n-1}$. If it is not the case, one can define $\sK_{n-2}\subset\sO_{X'_{n-1}}$ as the ideal isomorphic to $((\sN/\sN^{n-1})^{n-2})_P$ for $P\notin\{P_{j,l}\}$ (for $2\le j\le n-2$) and to the ideal generated by $\bar{y}_{n-1}^{n-2}/(\bar{x}_{n-1}^{b_{j,l}}+\bar{\alpha}_{n-1} \bar{y}_{n-1})$ in any $P_{j,l}$, with $2\le j\le n-2$. So it is possible to consider the scheme $X'_{n-2}$ defined as $(C,\sO_{X'_{n-1}}/\sK_{n-1})$. If it is not isomorphic to $C_{n-2}$, i.e. if there is at least one $r_j\neq 0$, with $3\le j\le n-3$, define similarly $\sK_{n-3}$ and $X'_{n-3}$ and continue in the same way defining $\sK_{n-i}$ and $X'_{n-i}$ for increasing $i$ until you get $X'_{n-\bar{\imath}}=C_{n-\bar{\imath}}$ ($\bar{\imath}$ is at most the integral part of $n/2$).
 
The point of the proof is to show that all the $\sK_{n-i}$ do not have global sections, for $1\le i\le n/2$, so that each $X'_{n-i}$ (and thus also $X'$) has only trivial global sections (because $C_{n-\bar{\imath}}$ has this property).

There are two distinct cases to be treated: $i<n/2$ and $i=n/2$ (the latter is possible only if $n$ is even).

For any $1\le i< n/2$, the sheaf $\sK_{n-i}$ is a line bundle on $C$ and there is an exact sequence $0\to(\sN/\sN^{n-i+1})^{n-i}\to\sK_{n-i}\to\sO_{D_i}\to 0$, where $D_i\subset C$ is an effective divisor of length $\be{n-i}-\be{i-1}$. Hence, it is sufficient to show that $\deg{\sK_{n-i}}=-(n-i)\delta+\be{n-i}-\be{i-1}<0$, i.e. that $\be{n-i}-\be{i-1}<(n-i)\delta$.

Consider the $i$-th and the $(n-i)$-th stability inequalities \eqref{Eq:ssin}, which hold strictly by hypothesis. They can be written as:  
\begin{equation*}
\left\{
\begin{aligned}
&i\Bigg(\sum\limits_{j=n-i}^{n-1}\be{j}-\sum\limits_{j=1}^{i-1}\be{j}\Bigg)+i\sum\limits_{j=i}^{n-i-1}\be{j}-(n-2i)\sum\limits_{j=1}^{i-1}\be{j}<\frac{in(n-i)}{2}\delta\\
&(n-i)\Bigg(\sum\limits_{j=n-i}^{n-1}\be{j}-\sum\limits_{j=1}^{i-1}\be{j}\Bigg)-i\sum\limits_{j=i}^{n-i-1}\be{j}+(n-2i)\sum\limits_{j=1}^{i-1}\be{j}<\frac{in(n-i)}{2}\delta.
\end{aligned}
\right. 
\end{equation*}

By Lemma \ref{Lem:propindices}, it holds that $\be{n-i}\le\be{j}$ for any $n-i\le  j\le n-1$, that $\be{i-1}\ge\be{h}$ for any $1\le h\le i-1$ and that $\be{n-1}\ge\be{n-i}-\be{i-1}$; thus, each of the above inequalities implies the corresponding one within the following
\begin{equation*}
\left\{
\begin{aligned}
&i\sum\limits_{j=i}^{n-i-1}\be{j}-(n-2i)(i-1)\be{i-1}<\frac{in(n-i)}{2}\delta-i^2(\be{n-i}-\be{i-1})\\
&(n-i)i(\be{n-i}-\be{i-1})<\frac{in(n-i)}{2}\delta+i\sum\limits_{j=i}^{n-i-1}\be{j}-(n-2i)(i-1)\be{i-1}.
\end{aligned}
\right. 
\end{equation*}

Hence, substituting the first one in the right hand term of the second one it follows that
\[
(n-i)i(\be{n-i}-\be{i-1})<in(n-i)\delta-i^2(\be{n-i}-\be{i-1}), 
\]
which is equivalent to the desired inequality.

Now assume $n$ even and consider the case of $i=n/2$. As in the previous case, it holds that
$\sK_{n/2}$ is a line bundle on $C$ and that there is an exact sequence $0\to(\sN/\sN^{n/2+1})^{n/2}\to\sK_{n/2}\to\sO_{D_{n/2}}\to 0$, where $D_{n/2}\subset C$ is an effective divisor of length $\be{n/2}-\be{(n-2)/2}$. Hence, it is sufficient to show that $\deg{\sK_{n/2}}=-(n/2)\delta+\be{n/2}-\be{(n-2)/2}<0$, i.e. that $\be{n/2}-\be{(n-2)/2}<(n/2)\delta$.
In this case, we need only the $(n/2)$-th stability inequality \eqref{Eq:ssin}, which can be written as 
\begin{equation*}
\frac{n}{2}\Bigg(\sum\limits_{j=n/2}^{n-1}\be{j}-\sum\limits_{j=1}^{(n-2)/2}\be{j}\Bigg)<\bigg(\frac{n}{2}\bigg)^3\delta.
\end{equation*}
Again by the basic inequalities between indices due to Lemma \ref{Lem:propindices}, the left hand term is greater than or equal to $(n/2)^2(\be{n/2}-\be{(n-2)/2})$. Therefore, it holds that $\be{n/2}-\be{(n-2)/2}<(n/2)\delta$, as wanted.
\end{proof}

Among the Zariski closures of the loci introduced in Definition \ref{Def:Zbeta} there are the irreducible components of the moduli space containing stable generalized line bundles. In order to prove this fact, it is convenient to study some deformations of generalized line bundles. The following lemma is inspired by \cite[Lemma 4.9]{CK}. 
\begin{lemma}\label{Lem:deformationsn}
Let $\sF$ be a generalized line bundle on $X=C_n$ of local indices sequence $\bp{.}{.}$. Let $P$ be a closed points of $C$, such that $\bp{n-1}{P}\ge 2$.
\begin{enumerate}
\item\label{Lem:deformationsn:1} If it does not exist an integer $h$ such that $0=\bp{h-1}{P}<\bp{h}{P}=\bp{n-1}{P}$, then $\sF$ is the specialization of a generalized line bundle $\sF'$ with the same local indices sequence of $\sF$ except in $P$, where $\btp{n-2}{P}{\sF'}=0$ and $\btp{n-1}{P}{\sF'}=\bp{n-1}{P}-\bp{n-2}{P}$, and in at most other $n-2$ closed points $Q_1,\dotsc,\,Q_{n-2}$, where $\bp{1}{Q_j}=\bp{n-1}{Q_j}=0$, while $\btp{j-1}{Q_j}{\sF'}=0$ and $\btp{j}{Q_j}{\sF'}=\btp{n-1}{Q_j}{\sF'}=\bp{j}{P}-\bp{j-1}{P}$, for any $1\le j\le n-2$.
\item\label{Lem:deformationsn:2} If there exists an integer $h$ such that $0\!=\!\bp{h-1}{P}\!<\!\bp{h}{P}\!=\!\bp{n-1}{P}$, then $\sF$ is the specialization of a generalized line bundle $\sF'$ with the same local indices sequence of $\sF$ except in $P$, where $0\!=\!\btp{h-1}{P}{\sF'}\!<\!\btp{h}{P}{\sF'}\!=\!\btp{n-1}{P}{\sF'}\!=\!\bp{n-1}{P}-1$, and in another closed point $Q$, where $\bp{1}{Q}=\bp{n-1}{Q}=0$ and $0=\btp{h-1}{Q}{\sF'}<\btp{h}{Q}{\sF'}=\btp{n-1}{Q}{\sF'}=1$.
\end{enumerate} 
\end{lemma}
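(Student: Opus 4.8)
The plan is to exhibit, in each case, an explicit one-parameter flat family of generalized line bundles over $\bA^1$ whose fibre over $t=0$ is $\sF$ and whose fibre over a general $t\neq 0$ is the required $\sF'$. Since a generalized line bundle is locally free away from finitely many points, by Corollary \ref{Cor:affstruct} I can work in an affine neighbourhood $U$ of $P$, present the stalk of $\sF$ there as an explicit ideal of $A=\sO_{X,P}$, assemble the fibre ideals $I_t$ into an ideal $I$ of $A[t]$, and afterwards glue the resulting local family with the trivial family $\sF\times\bA^1$ over $(U\setminus\{P\})\times\bA^1$ to obtain a flat family on $X\times\bA^1$; such a family is exactly the assertion that $\sF$ is a specialization of $\sF'$, as in \cite[Lemma 4.9]{CK}. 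I use the local notation of Section \ref{SecStruc}: $y$ generates the nilradical of $A$ and $x$ is a local coordinate on $C$.

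\textbf{Case (ii).} Here $\sF_P\cong(x^{b}+\alpha y,\,y^{h})$ with $b=\bp{n-1}{P}\ge 2$, by Corollary \ref{Cor:locstructspecial}. I would take $I_t=\big(x^{b-1}(x-t)+\alpha y,\;y^{h}\big)$, so that $I_0\cong\sF_P$, while for $t\neq 0$ the factor $x-t$ is a unit near $x=0$ and $x^{b-1}$ is a unit near the new point $Q=\{x=t\}$. Localizing then yields the special-form ideals $(x^{b-1}+\beta y,\,y^{h})$ at $P$ and $(x'+\beta' y,\,y^{h})$ at $Q$ (with $x'=x-t$, for suitable $\beta,\beta'\in A$), which carry exactly the prescribed local indices. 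Flatness is immediate from the constancy of the colength: $\dim_\bK A/(x^{b},y^{h})=bh=(b-1)h+h$.

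\textbf{Case (i).} Writing $d_j=\be{j}-\be{j-1}\ge 0$ for the jumps of the index vector (non-decreasing by Lemma \ref{Lem:propindices}), I would first treat the monomial representative of $\sF_P$ furnished by Theorem \ref{Thm:locstr} (all $\alpha$'s zero) and use the split-by-levels family
\[
I_t=\Big(y^{k}\prod_{l=k+1}^{n-1}(x-c_l t)^{d_l}\;\Big|\;0\le k\le n-1\Big),
\]
where $c_{n-1}=0$ and $c_1,\dots,c_{n-2}$ are distinct and nonzero. At $t=0$ this is the monomial ideal $(y^{k}x^{\be{n-1}-\be{k}})_k$, i.e. $\sF_P$; for $t\neq 0$, localizing at $P=\{x=0\}$ leaves only the factor $x^{d_{n-1}}$ and yields $(x^{d_{n-1}},y^{n-1})$, while localizing at $Q_m=\{x=c_m t\}$ (for those $1\le m\le n-2$ with $d_m>0$) leaves only the factor $(x-c_m t)^{d_m}$ and yields $((x')^{d_m},y^{m})$. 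Hence $\btp{n-2}{P}{\sF'}=0$, $\btp{n-1}{P}{\sF'}=d_{n-1}=\be{n-1}-\be{n-2}$, and $\btp{j}{Q_m}{\sF'}=d_m$ for $j\ge m$ and $0$ otherwise, precisely as required. The colength is constant, equal to $(n-1)\be{n-1}-\sum_{j=1}^{n-2}\be{j}$, so the total quotient is free over $\bK[t]$ and the family is flat; the generalized rank and degree are then automatically preserved (Fact \ref{Fact:PropertiesGenRkeDeg}\ref{Fact:PropertiesGenRkeDeg:6}).

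The main obstacle is to upgrade Case (i) from the monomial representative to an arbitrary stalk, that is, to accommodate the correction terms $\alpha_{i,j}$ of Theorem \ref{Thm:locstr}. I would build these into the family as $t$-dependent corrections of the generators, normalized so that the fibre over $t=0$ reproduces the $\alpha$-ideal; since each correction term sits in a strictly higher power of $y$ than the leading monomial of its generator, I expect it to be absorbed into the special-form local structure at each $Q_m$ once $t\neq 0$, and one has extra freedom because $\sF'$ is only constrained through its \emph{indices}, not its isomorphism type (the residual moduli $z_{h,i}$ of Corollary \ref{Cor:locstructspecial} being unconstrained). Checking that these corrections never lower the $y$-level of a generator upon localization, and hence never alter the computed local indices or colength, is the delicate bookkeeping; Case (ii) already shows the mechanism is harmless there, since keeping $\alpha$ fixed still produced special-form fibres.
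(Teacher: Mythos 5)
Your Case (ii) and your monomial Case (i) are correct and run along essentially the same lines as the paper's own proof: the paper also deforms $(x^{b}+\alpha y,\,y^{\bullet})$ by replacing $x^{b}$ with $x^{b-1}(x-t)$, and in Case (i) it also splits each jump $\be{j}-\be{j-1}$ off to a separate point by inserting factors $(x-t_j)^{\be{j}-\be{j-1}}$ into the generators (its independent parameters $t_1,\dotsc,t_{n-2}$ versus your single $t$ with distinct constants $c_l$ is immaterial). The genuine gap is exactly the step you defer at the end. The hypothesis of Case (i) --- that no $h$ exists with $0=\bp{h-1}{P}<\bp{h}{P}=\bp{n-1}{P}$ --- is precisely the situation excluded from Corollary \ref{Cor:locstructspecial}, so the stalk $\sF_P$ is in general \emph{not} isomorphic to a monomial ideal and the correction terms $\alpha_{i,j}$ of Theorem \ref{Thm:locstr} are genuinely present. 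Your explicit family then has the wrong special fibre (the monomial ideal, not $\sF_P$), so what you call ``delicate bookkeeping'' is in fact the entire content of the case, not a refinement of it; also note that no $t$-dependent normalization of the $\alpha$'s is needed. The paper's proof simply carries the $\alpha_i$'s along unchanged: in the affine chart of Corollary \ref{Cor:affstruct} it takes the ideal generated by $y^{n-1}$ together with the elements whose leading monomials $x^{\be{n-1}-\be{i-1}}y^{i-1}$ are multiplied by the factors $\prod_j(x-t_j)^{\be{j}-\be{j-1}}$ while each correction term $\alpha_i$ (sitting in a strictly higher power of $y$) is left untouched. At $t=0$ this returns $\sI(U)$ on the nose, and for $t\neq 0$ the localization at each $Q_j$ turns the leading factor into a unit times $(x-t_j)^{\be{j}-\be{j-1}}$ times a power of $y$, with the corrections confined to the complementary $y$-levels, which is what keeps the local indices as claimed; that verification, in the presence of the $\alpha$'s, is what you would have to supply to close the argument.

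A second, more minor but real, defect is your globalization step. You propose to glue the local family over $U\times\bA^1$ with the constant family $\sF\times\bA^1$ along $(U\setminus\{P\})\times\bA^1$, but the two do not agree on that overlap: for $t\neq 0$ your $I_t$ is non-free at the points $Q_m=\{x=c_m t\}$, which lie in $U\setminus\{P\}$, whereas the constant family is locally free there, so no gluing isomorphism exists. The paper avoids this by first writing $\sF\cong\sI_{Z/X}\otimes\sL$ (Corollary \ref{Cor:globstr}) and deforming the finite subscheme $Z$ rather than the sheaf: the local family of finite subschemes is finite and flat over the base, hence closed in $X\times\bA^1$, so its ideal sheaf in $\sO_{X\times\bA^1}$ (trivial away from the support) is automatically a global flat family --- this is the role of the appeal to the properness of the Hilbert scheme in the paper's proof --- and one then tensors back with the constant family $\sL$.
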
 
\begin{proof}
For both the two points we will exhibit explicit deformations, respectively over $\bK[t_1,\dotsc,t_{n-2}]$ and over $\bK[t]$.
In order to simplify notation, throughout the proof $\be{i}=\bp{i}{P}$, for $i=1,\dotsc,n-1$.

First of all, recall that by Corollary \ref{Cor:globstr}, $\sF$ is isomorphic to $\sI_{Z/X}\otimes\sL$, where $Z\subset C_{n-1}$ is a closed subscheme of finite support and $\sL$ is a line bundle on $X$. Thus, it is sufficient to find an appropriate deformation of $\sI=\sI_{Z/X}$, say $\sI'$, because, then, the desired deformation of $\sF$ would be $\sI'\otimes \sL$, where by a slight abuse of notation $\sL$ here denotes the constant family with fibre $\sL$. Deforming $\sI$ is equivalent to deforming $Z$.
In order to do that we will use the local affine description of generalized line bundles given in Corollary \ref{Cor:affstruct}. So let $U=\Spec{A}$ be an affine neighbourhood of $P$, in which the thesis of the cited Corollary holds (in particular, $P$ is the only closed point in $U$ where $\sI$ is not locally free). Now it is necessary to distinguish the two cases.

Start with \ref{Lem:deformationsn:1}: using the notation of the cited Corollary, it holds that $\sI(U)\cong(x^{\be{n-1}-\be{i-1}}y^{i-1}+\alpha_i y^i)_{i=1,\dotsc,n}$ (where $\alpha_i\in A$, for $1\le i\le n-1$, are not completely arbitrary, they could be written as in the cited Corollary) and the desired deformation is given by the extension of the ideal $\big(x^{\be{n-1}-\be{n-2}}y^i\prod\limits_{j=i}^{n-2}(x-t_j)^{\be{j}-\be{j-1}}+\alpha_i y^{i-1},y^{n-1}\big)_{i=1,\dotsc,n-1}$ to a proper flat family over $\Spec{\bK[t_1,\dotsc,t_{n-1}]}$ (it is possible to have such an extension by, e.g., the properness of the Hilbert scheme). This generic fibre is the expression over $U$ of a generalized line bundle having the desired local indices sequence, while the special fibre is $\sI(U)$.

The proof of \ref{Lem:deformationsn:2} is similar: $\sI(U)\cong(x^{\be{n-1}}+\alpha y,y^{n-h})$ (also this $\alpha\in A$ is not completely arbitrary, it can be expressed as in the last statement of Corollary \ref{Cor:affstruct}) and, similarly, the desired deformation is the extension of the ideal $(x^{\be{n-1}-1}(x-t)+\alpha y,y^{n-h})$ to a proper flat family over $\Spec{\bK[t]}$. Also in this case the needed verifies are almost trivial.
\end{proof}

The following theorem describes the irreducible components of the moduli space containing stable generalized line bundles. In multiplicity $2$ it is just \cite[Theorem 4.6]{CK}.
\begin{thm}\label{Thm:compirrid} Let $X$ be a primitive multiple curve of multiplicity $n$ and let $\be{1}\le\dotsb\le\be{n-1}$ be non-negative integers satisfying $n|D+(n(n-1)/2)\delta-\sum_{i=1}^{n-1}\be{i}$ and the strict inequalities \eqref{Eq:ssin}. 
Let $\bar{Z}_{\be{1},\dotsc,\be{n-1}}\subset \mathrm{M}(\sO_X,P_D)$ be the Zariski closure of the locus of stable generalized line bundles of indices-vector $(\be{1},\dotsc,\be{n-1})$.
If $(\be{1},\dotsc,\be{n-1})\neq(0,\dotsc,0)$, then $\bar{Z}_{\be{1},\dotsc,\be{n-1}}$ coincides with the Zariski closure of $Z_{\underline{b}}$, where $\underline{b}$ is the sequence $(\underset{\be{i}-\be{i-1} \text{ times}}{\underbrace{(\underset{i-1}{\underbrace{0,\dotsc,0}},\underset{n-i}{\underbrace{1,\dotsc,1}})}})_{1\le i\le n-1}$, where $\be{0}=0$.

Then any $\bar{Z}_{\be{1},\dotsc,\be{n-1}}$ is an irreducible component of $\mathrm{M}(\sO_X,P_D)$ of dimension $g_n=g(X)$. Moreover, any irreducible component containing a stable generalized line bundle is equal to $\bar{Z}_{\be{1},\dotsc,\be{n-1}}$, for a unique choice of $\be{1}\le\dotsc\le\be{n-1}$ satisfying the above conditions.
\end{thm}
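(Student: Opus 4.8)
The plan is to reduce the whole statement to the single irreducible locus $Z_{\underline{b}}$ produced by Lemma~\ref{Lem:DimZbetamult}, by identifying the fully split local sequence $\underline{b}$ as the generic, maximal-dimensional stratum among all generalized line bundles of indices-vector $(\be{1},\dotsc,\be{n-1})$. First I would record the dimension bookkeeping. For $\underline{b}$ one has $r_h=\be{h}-\be{h-1}$ and every local multiplicity equal to $1$, so $\sum_h r_h=\be{n-1}$ and Lemma~\ref{Lem:DimZbetamult} yields $\dim Z_{\underline{b}}=g_n-\be{n-1}+\be{n-1}=g_n$. For an arbitrary local sequence $\underline{b}'$ with the same indices-vector, writing $\beta_h=\sum_l b_{h,l}$ one has $\sum_h r_h\le\sum_h\beta_h=\be{n-1}$, with equality exactly when all $b_{h,l}=1$; hence $\dim Z_{\underline{b}'}\le g_n$, with equality only for $\underline{b}$. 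The excluded case $(\be{1},\dotsc,\be{n-1})=(0,\dotsc,0)$ is that of line bundles, which are stable since $\deg{\cC}<0$ and form an irreducible family of dimension $\operatorname{h}^1(\sO_X)=g_n$, so it is covered directly.

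The heart of the argument is the equality $\bar{Z}_{\be{1},\dotsc,\be{n-1}}=\bar{Z}_{\underline{b}}$. The inclusion $\supseteq$ is immediate, since every member of $Z_{\underline{b}}$ is a stable generalized line bundle of indices-vector $\be{.}$. For $\subseteq$ it suffices to show $Z_{\underline{b}'}\subseteq\bar{Z}_{\underline{b}}$ for every local sequence $\underline{b}'$ with indices-vector $\be{.}$. I would induct on the number $N(\underline{b}')$ of points supporting the torsion, which satisfies $N(\underline{b}')\le\be{n-1}$ with equality precisely for $\underline{b}$: by the monotonicity $\bp{1}{P}\le\dotsb\le\bp{n-1}{P}$ of Lemma~\ref{Lem:propindices}, the terminal configuration in which every support point has $\bp{n-1}{P}=1$ is exactly $\underline{b}$. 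If $N(\underline{b}')<\be{n-1}$, some support point $P$ has $\bp{n-1}{P}\ge 2$, and the relevant case of Lemma~\ref{Lem:deformationsn} exhibits each such generalized line bundle as a specialization of one whose torsion is redistributed onto strictly more points. A direct check on the local jumps $\bp{i}{P}-\bp{i-1}{P}$ shows that this redistribution preserves every global index $\be{i}$, so the deformed sheaf again has indices-vector $\be{.}$ and is stable by Theorem~\ref{Thm:ssinequ}; moreover $N$ strictly increases. Thus $Z_{\underline{b}'}\subseteq\bar{Z}_{\underline{b}''}$ with $N(\underline{b}'')>N(\underline{b}')$, and iterating terminates at $\underline{b}$. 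Consequently $\bar{Z}_{\be{.}}=\bar{Z}_{\underline{b}}$ is irreducible of dimension $g_n$.

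To finish, I would deduce the component statement from openness. By Lemma~\ref{Lem:openessofgenlinbund} the stable generalized line bundles form an open subset of $\mathrm{M}(\sO_X,P_D)$; hence any irreducible component $V$ containing a stable generalized line bundle meets this open set in a dense subset, so its generic point is a stable generalized line bundle, of some indices-vector $\be{.}'$. Then $V\subseteq\bar{Z}_{\be{.}'}$, an irreducible set of dimension $g_n$, and maximality of $V$ forces $V=\bar{Z}_{\be{.}'}$. Conversely, each $\bar{Z}_{\be{.}}$ is irreducible and hence contained in a unique component $V$; by the previous step $V=\bar{Z}_{\be{.}'}$ for some $\be{.}'$, and the chain $\bar{Z}_{\be{.}}\subseteq\bar{Z}_{\be{.}'}$ of irreducible sets of equal dimension $g_n$ gives $\bar{Z}_{\be{.}}=V$. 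Uniqueness of the defining vector follows because the generic member of $\bar{Z}_{\be{.}}$ has indices-vector exactly $\be{.}$, so distinct admissible vectors yield distinct components.

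The hard part is the inductive spreading step of the second paragraph: one must verify carefully that the deformations of Lemma~\ref{Lem:deformationsn} preserve the full indices-vector while strictly increasing the number of torsion points, and that the process, choosing at each stage a point with $\bp{n-1}{P}\ge 2$ together with the correct case of the lemma, always terminates at $\underline{b}$ rather than stalling at some intermediate partially split sequence. This combinatorial control is exactly what lets the single irreducible stratum $Z_{\underline{b}}$ govern the entire closure $\bar{Z}_{\be{.}}$.
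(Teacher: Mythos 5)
Your proposal is correct and takes essentially the same route as the paper: the paper's proof also establishes $\bar{Z}_{\be{1},\dotsc,\be{n-1}}=\bar{Z}_{\underline{b}}$ by ``a repeated application of Lemma \ref{Lem:deformationsn}'', gets irreducibility and dimension $g_n$ from Lemma \ref{Lem:DimZbetamult}, and deduces the component statement from Lemma \ref{Lem:openessofgenlinbund} together with irreducibility, equidimensionality, and distinctness of generic elements. Your induction on the number of torsion points (with the verification that the deformations preserve the indices-vector, hence stability, and strictly increase the number of support points until the fully split configuration $\underline{b}$ is reached) is precisely the detailed form of the ``repeated application'' the paper leaves implicit.
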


\begin{proof}
The theorem is a straightforward application of the above lemmata.

By definition and by Theorem \ref{Thm:ssinequ}, $\cup\bar{Z}_{\be{1},\dotsc,\be{n-1}}$ contains the locus of stable generalized line bundles.
 
The first assertion is implied by a repeated application of Lemma \ref{Lem:deformationsn}. Combined with Lemma \ref{Lem:DimZbetamult} it implies that each $\bar{Z}_{\be{1},\dotsc,\be{n-1}}$ is irreducible of dimension $g_n$.

Now let $\bar{Z}$ be an irreducible component containing a stable generalized line bundle. Its subset consisting of stable generalized line bundles is open (by Lemma \ref{Lem:openessofgenlinbund}) and non-empty, hence it is dense. Thus, $\bar{Z}$ is contained in the union $\cup\bar{Z}_{\be{1},\dotsc,\be{n-1}}$. Hence, $\bar{Z}=\bar{Z}_{\be{1},\dotsc,\be{n-1}}$, for some $(\be{1},\dotsc,\be{n-1})$, because these loci are irreducible.

So, some of the $\bar{Z}_{\be{1},\dotsc,\be{n-1}}$ are irreducible components. Moreover, by their irreducibility and equidimensionality, each of them is a component.
Furthermore they are all distinct, because if $(\be{1},\dotsc,\be{n-1})\neq(\be{1}',\dotsc,\be{n-1}')$, the generic elements of $\bar{Z}_{\be{1},\dotsc,\be{n-1}}$ and $\bar{Z}_{\be{1}',\dotsc,\be{n-1}'}$ are different. 
\end{proof}

In order to study the connectedness of the locus of stable generalized line bundles, it is useful to introduce some other deformations of generalized line bundles (analogous to that of  \cite[Lemma 4.5]{CK})) 

\begin{lemma}\label{Lem:def-n}
Let $\sF$ be a generalized line bundle on $X=C_n$ of local indices sequence $\bp{.}{.}$ and let $P$ be a closed point of $C$ such that $\bp{n-1}{P}\neq 0$.

\begin{enumerate}

\item\label{Lem:def-n:1} Assume that $\sF_P\cong(x^{\bp{n-1}{P}},y^{n-h})$, for an integer $1\le h\le n-1$, and that $\bp{n-1}{P}\ge k$, where $k=n/\gcd(n,h)$. Then $\sF$ is the specialization of generalized line bundles $\sF'$, whose local indices sequence is equal to $\bp{.}{.}$ except in $P$, where $\btp{n-h-1}{P}{\sF'}=\bp{n-h-1}{P}=0$ and $\btp{n-h}{P}{\sF'}=\btp{n-1}{P}{\sF'}=\bp{n-1}{P}-l$.

\item\label{Lem:def-n:2} Assume that $\sF_P\cong(x^{\bp{n-1}{P}-\bp{i}{P}}y^i)_{i=0,\dotsc,n-1}$. Let $0=j_0\le j_1\le\dotsb\le j_{n-1}$ be integers whose sum is divided by $n$ and such that $\be{i}-j_i\ge\be{i-1}-j_{i-1}\ge 0$, for any $2\le i\le n-1$. Then $\sF$ is the specialization of generalized line bundles $\sF'$, whose local indices sequence is equal to $\bp{.}{.}$ except in $P$, where $\btp{i}{P}{\sF'}=\btp{i}{P}{\sF}-j_i$, for any $1\le i\le n-1$. 

\item\label{Lem:def-n:3} If $\sF_P\cong(x^{\bp{n-1}{P}-\bp{i}{P}}y^i)_{i=0,\dotsc,n-1}$ and $\bp{1}{P}\ge 2$, then $\sF$ is the specialization of generalized line bundles $\sF'$, whose local indices sequence is equal to $\bp{.}{.}$ except in $P$, where $\btp{i}{P}{\sF'}=\btp{i}{P}{\sF}-2$, for any $1\le i\le n-1$, and in another point $Q$ where $\bp{1}{Q}=\bp{n-1}{Q}=\btp{1}{Q}{\sF'}=0$ and $\btp{2}{Q}{\sF'}=\btp{n-1}{Q}{\sF'}=1$. 

\item\label{Lem:def-n:4} If $\sF$ is the dual of a generalized line bundle verifying the hypotheses of the previous point, then $\sF$ is the specialization of generalized line bundles $\sF'$, whose local indices sequence is equal to $\bp{.}{.}$ except in $P$, where $\btp{i}{P}{\sF'}=\btp{i}{P}{\sF}$, for any $1\le i\le n-2$ and $\btp{n-1}{P}{\sF'}=\btp{n-1}{P}{\sF}-2$, and in another point $Q$ where $\bp{1}{Q}=\bp{n-1}{Q}=\btp{n-3}{Q}{\sF'}=0$ and $\btp{n-2}{Q}{\sF'}=\btp{n-1}{Q}{\sF'}=1$. 
\end{enumerate}
\end{lemma}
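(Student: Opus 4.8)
The plan is to follow the blueprint of the proof of Lemma~\ref{Lem:deformationsn}: in each of the four cases I would exhibit an explicit flat family of ideals whose special fibre recovers $\sF$ (up to tensoring with a line bundle) and whose generic fibre is a generalized line bundle with the asserted local indices sequence. The reduction is identical to the one used there. By the global structure, Corollary~\ref{Cor:globstr}, write $\sF\cong\sI_{Z/X}\otimes\sL$ with $\sL$ a line bundle; since tensoring a flat family by the constant family $\sL$ preserves flatness and does not alter the local indices, it suffices to deform $\sI=\sI_{Z/X}$, equivalently to deform $Z$. I would then pass to an affine chart $U=\Spec{A}$ around $P$ in which Corollary~\ref{Cor:affstruct} gives explicit generators of $\sI(U)$ and in which $P$ is the only point where $\sI$ fails to be locally free, write a one- (or several-)parameter family of ideals over $\Spec{\bK[t]}$ (resp.\ over a polynomial ring), and extend it to a proper flat family over all of $X$ using the properness of the relevant Hilbert scheme, exactly as in the cited lemma.

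The substance is the construction of the families and the verification of the generic indices, which I would organize by a degree bookkeeping. Since flat deformation preserves $\Rk$ and $\Deg$ (Fact~\ref{Fact:PropertiesGenRkeDeg}), and since by Proposition~\ref{Prop:AltCarInd} the total index $\sum_i\be{i}$ equals the total length of the torsion in the graded pieces $G_i$, any change $\Delta$ in $\sum_i\be{i}$ must be compensated by a change $-\Delta/n$ in the degree of the top graded piece; this integrality constraint is exactly what dictates the arithmetic hypotheses. In case~\ref{Lem:def-n:1} the deformation lowers the index at $P$ by $k$ in each of the $h$ top positions $\be{n-h},\dots,\be{n-1}$, so $\sum_i\be{i}$ drops by $hk$; the identity $hk=n\cdot\tfrac{h}{\gcd(n,h)}$ shows $n\mid hk$ precisely when $k=n/\gcd(n,h)$, and the hypothesis $\bp{n-1}{P}\ge k$ keeps the index non-negative. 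In case~\ref{Lem:def-n:2}, starting from the split form $\sF_P\cong(x^{\bp{n-1}{P}-\bp{i}{P}}y^i)_i$ one deforms each generator independently, dropping $\be{i}$ by $j_i$; the condition $\be{i}-j_i\ge\be{i-1}-j_{i-1}\ge 0$ ensures the resulting sequence is again a legitimate (non-decreasing, by Lemma~\ref{Lem:propindices}) indices-vector, while $n\mid\sum_i j_i$ is the integrality constraint above. Case~\ref{Lem:def-n:3} is the variant in which one wishes to subtract $2$ from every $\be{i}$ at $P$, but $\sum_i j_i=2(n-1)\equiv-2\pmod n$ fails divisibility; the remedy is to let the deformation create an auxiliary point $Q$ carrying total index $n-2$, so that the global change in $\sum_i\be{i}$ becomes $-n$, and $\bp{1}{P}\ge 2$ makes the subtraction admissible.

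Case~\ref{Lem:def-n:4} I would deduce from case~\ref{Lem:def-n:3} by duality rather than by a fresh construction. If $\sG$ is the case-\ref{Lem:def-n:3} sheaf with $\sF=\sG^\vee$, the explicit family deforming $\sG$ dualizes to a family deforming $\sF$; this is legitimate because the sheaves involved are reflexive, so the relative $\un{\operatorname{Ext}}^i$ vanish for $i\ge 1$ (Fact~\ref{Fact:Dual}) and the relative dual commutes with restriction to fibres. The index translation formula $\bet{i}{\sF^\vee}=\bet{n-1}{\sF}-\bet{n-1-i}{\sF}$ of Corollary~\ref{Cor:betaandduals} then converts the index changes of case~\ref{Lem:def-n:3} into exactly those asserted here: the uniform drop by $2$ at $P$ becomes a drop by $2$ in position $n-1$ only, and the point $Q$ with indices $(0,1,\dots,1)$ for $\sG$ becomes the point $Q$ with $\btp{n-3}{Q}{\sF'}=0$ and $\btp{n-2}{Q}{\sF'}=\btp{n-1}{Q}{\sF'}=1$ for $\sF'$.

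The main obstacle is the explicit construction in cases~\ref{Lem:def-n:1}--\ref{Lem:def-n:3}: one must write down families of ideals in $A$ whose special fibre is the prescribed local model and whose generic fibre has exactly the prescribed torsion profile, and then verify both flatness and the generic local indices. The delicate point is case~\ref{Lem:def-n:1}, where the interaction between the power $y^{n-h}$ and the cyclic period $k=n/\gcd(n,h)$ must be encoded in the deformation parameter so that the family remains flat and degree-preserving; I expect to build the family from a perturbation of the generator $x^{\bp{n-1}{P}}$ by a $y$-term of the appropriate weight, relying on the nilpotency $y^n=0$ to keep the defining relations consistent, much as in the computation used for Lemma~\ref{Lem:blowupmult3}.
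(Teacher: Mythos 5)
Your scaffolding matches the paper's proof exactly: the reduction to deforming the ideal $\sI_{Z/X}$ via Corollary \ref{Cor:globstr}, the passage to an affine chart via Corollary \ref{Cor:affstruct}, the extension to a proper flat family by properness of the Hilbert scheme, and the treatment of case \ref{Lem:def-n:4} by dualizing the family of case \ref{Lem:def-n:3} and translating indices through Corollary \ref{Cor:betaandduals} (the paper literally says case \ref{Lem:def-n:4} ``is simply the dual of \ref{Lem:def-n:3}''). Your numerical bookkeeping is also correct as far as it goes. But there is a genuine gap where the substance of the proof should be: in cases \ref{Lem:def-n:1}--\ref{Lem:def-n:3} you never exhibit the families. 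The bookkeeping via invariance of the generalized degree under deformation (Fact \ref{Fact:PropertiesGenRkeDeg}\ref{Fact:PropertiesGenRkeDeg:6}) and Proposition \ref{Prop:AltCarInd} is only a \emph{necessary}-condition check --- it explains why no specialization with a different index change could exist, but it cannot show that specializations realizing the admissible changes \emph{do} exist. A ``specialization'' claim is an existence claim, and you acknowledge the construction as an unresolved ``obstacle''.

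Worse, the one constructive idea you sketch --- perturbing the generator $x^{\bp{n-1}{P}}$ by a $y$-term, in the spirit of the computation for Lemma \ref{Lem:blowupmult3} --- is the wrong mechanism and would fail. By Corollary \ref{Cor:locstructspecial}, all ideals of the form $(x^{b}+\alpha y,\,y^{j})$ have the same indices $0=\be{j-1}<\be{j}=\dotsb=\be{n-1}=b$ regardless of $\alpha$: $y$-term perturbations only move the ideal within its fixed-index stratum (they vary the parameters $z_{h,i}$), so they can never lower an index. The mechanism the paper actually uses, in all three cases, is to split off a Cartier piece at a moving point: the generic fibre is an intersection $J_t\cap H_t$, where $J_t$ is supported at $P$ and carries the target (smaller) indices, while $H_t$ is supported at the moving point $x=t$ and is locally \emph{principal} there (in case \ref{Lem:def-n:1}, $H_t=((x-t)^{k},\,y^{n-k}-t^{\be{n-1}-k+1}(x-t)^{k-1})$), hence contributes colength but no index. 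Flatness is then a colength count, the indices of the generic fibre are read off because the Cartier factor is invisible to them, and the collision at $t=0$ is verified by exhibiting generators of the special ideal as limits of elements of $J_t\cap H_t$ and matching lengths. Without this (or an equivalent) construction, cases \ref{Lem:def-n:1}--\ref{Lem:def-n:3} are unproved, and case \ref{Lem:def-n:4}, which you derive from \ref{Lem:def-n:3}, falls with them.
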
  
\begin{proof}
We will exhibit explicit deformations over $\bK[t]$. As for Lemma \ref{Lem:deformationsn}, it is sufficient to work with $\sI$, the ideal sheaf in the orbit of $\sF$ under the action of $\Pic{X}$. As there, deforming $\sI$ is the same thing of deforming the associated subscheme $Z\subset C_{n-1}$. It is also sufficient to work in an open neighbourhood, $U=\Spec{A}$, of $P$ in which $\sF$ (or, equivalently, $\sI$) is locally free in all points except $P$. In order to simplify notation throughout the proof $\be{i}=\bp{i}{P}$, for $1\le i\le n-1$, and $I=\sI(U)$. It is time to distinguish the various cases.

Start with \ref{Lem:def-n:1}: it holds by Corollary \ref{Cor:affstruct} that $I=(x^{\be{n-1}},y^{n-h})\subset A$ (by a slight abuse of notation, the affine one is identical to the local one used in the statement, but there is no risk of confusion because throughout the proof it will be used only the affine one; to be more precise in the local one $x$ and $y$ could be substituted by $x_P$ and $y_P$). As usual it is sufficient to give the generic fibre of the deformation, which, in this case, is the ideal $I'_t=(x^{\be{n-1}-k},y^{n-h})\cap((x-t)^k,y^{n-k}-t^{\be{n-1}-k+1}(x-t)^{k-1})$. Indeed by the fact $x^{\be{n-1}-k}(x-t)^k$ and $y^{n-h}-x^{\be{n-1}-k}t(x-t)^{k-1}=y^{n-h}-t^{\be{n-1}-k+1}(x-t)^{k-1}-(x-t)^kt\sum_{i=k+1}^{\be{n-1}}x^{i-k-1}t^{\be{n-1}-i}$ (if $\be{n-1}\ge k+1$; if $\be{n-1}=k$, instead of the latter consider $y^{n-j}-t(x-t)^{k-1}$) belong to $I'_t$, for any $t\neq 0$, it follows that $I$ is contained in the special fibre. The fact they coincide is due to degree considerations: $A/I$ has length $(n-h)\be{n-1}$ while $A/(x^{\be{n-1}-k},y^{n-h})$ has length $(n-h)(\be{n-1}-k)$ and $A/((x-t)^k,y^{n-k}-t^{\be{n-1}-k+1}(x-t)^{k-1})$ has length $(n-h)k$, for any non-zero value of the parameter $t$. Moreover, the ideal $((x-t)^k,y^{n-k}-t^{\be{n-1}-k+1}(x-t)^{k-1})$ defines a Cartier divisor of $X$ (for any fixed non-zero $t$), and so the part contributing to the local indices sequence of $I'_t$ is only $(x^{\be{n-1}-k},y^{n-k})$.

The proof of \ref{Lem:def-n:2} is similar. In this case $I=(x^{\be{n-1}-\be{i}}y^i)_{i=0,\dotsc,n-1}$ and the deformation has generic fibre $I'_t=J_t\cap H_t=(x^{\be{n-1}-\be{i}-j_{n-1}+j_i}y^i)_{i=0,\dotsc,n-1}\cap ((x-t)^{j_{n-1}-j_i}y^i)_{i=0,\dotsc, \tilde{\imath}}, y^{\tilde{\imath}+1}-t^{\be{n-1}-j_{n-1}+j_{\tilde{\imath}}+1}(x-t)^{j_{n-1}-j_{\tilde{\imath}}-1} \big)$, where $\tilde{\imath}$ is the greatest integer within $0$ and $n-2$ such that $j_{\tilde{\imath}}<j_{n-1}$. The special fibre is really $I$. Indeed, it is clear if $j_{n-1}=\be{n-1}$; otherwise, $(x-t)^{j_{n-1}-j_i}y^ix^{\be{n-1}-\be{i}-j_{n-1}+j_i}$ belongs to $I'_t$, for any $0\le i\le \tilde{\imath}$ and for any $t\neq 0$, hence $x^{\be{n-1}-\be{i}}y^i$ belongs to the special fibre for any $0\le i\le \tilde{\imath}$. Moreover, $x^{\be{n-1}-\be{i}}y^i$ belongs to the special fibre, also for $\tilde{\imath}<i\le n-1$, being the limit of the following element (which is in $I'_t$ for any $t\neq 0$):
$x^{\be{n-1}-\be{i}}y^{i-\tilde{\imath}-1}(y^{\tilde{\imath}+1}-t^{\be{n-1}-j_{n-1}+j_{\tilde{\imath}}+1}(x-t)^{j_{n-1}-j_{\tilde{\imath}}-1}-(x-t)^{j_{n-1}}w)=x^{\be{n-1}-\be{i}}y^{i}-y^{i-\tilde{\imath}-1}(x-t)^{j_{n-1}-j_{n-2}-1}tx^{2\be{n-1}-\be{i}-j_{n-1}}z$, where $w=\sum_{r=0}^{\be{n-1}-j_{n-1}-1}w_rt^{\be{n-1}-j_{n-1}-1-r}x^r$, in which  $w_0=(-1)^{j_{n-2}}$, and recursively $w_r=\sum_{l=0}^{r-1}(-1)^{r-l}\binom{j_{n-2}+1}{r-l}w_l$ for $1\le r\le j_{n-2}+1$ and $w_r=\sum_{l=r-j_{n-2}-1}^{r-1}(-1)^{r-l}\binom{j_{n-2}+1}{r-l}w_l$ for $j_{n-2}+1\le r\le \be{n-1}\!-\!j_{n-1}\!-\!1$, while $z\!=\!\sum_{r=1}^{j_{n-2}+1}x^{j_{n-2}+1-r}t^{r-1}\big(\sum_{l=1}^{r}w_{\be{n-1}-j_{n-1}-l}\binom{j_{n-2}+1}{r-l} \big)$.

Hence $I$ is contained in the special fibre; furthermore, they are equal by degree reasons: indeed, $A/I$ has length $(n-1)\be{n-1}-\sum_{i=1}^{n-2}\be{i}$, while $A/J_t$ has length $(n-1)\be{n-1}-\sum_{i=1}^{n-2}\be{i}-(\tilde{\imath}+1)j_{n-1}+\sum_{i=1}^{\tilde{\imath}}j_{i}$  and $A/H_t$ has length $(\tilde{\imath}+1)j_{n-1}-\sum_{i=1}^{\tilde{\imath}}j_{i}$, for any $t\neq 0$. The generalized line bundle defined by this deformation has the desired local indices sequence, because $H_t$ defines a Cartier divisor of $X$ and, hence, only $J_t$ contributes to the local indices.

Now let us prove \ref{Lem:def-n:3}. This time $I=(x^{\be{n-1}-\be{i}}y^i)_{i=0,\dotsc,n-1}$ and the deformation has generic fibre $I'_t=J_t\cap H_t=(x^{\be{n-1}-2},x^{\be{n-1}-\be{i}}y^i)_{i=1,\dotsc,n-1}\cap(y-t^{\be{n-1}-1}(x-t),t^{2(\be{n-1}-1)}y^2+(x-t)^2)$. 
The ideal $I$ is contained in the special fibre because both $x^{\be{n-1}-2}\big[(x-t)^2+t^{2(\be{n-1}-1)}y^2\big]$ and $x^{\be{n-1}-\be{i}}y^{i-1}\big\{y-t^{\be{n-1}-1}(x-t)-\big[t^{2(\be{n-1}-1)}y^2+(x-t)^2\big]t\sum_{j=0}^{\be{n-1}-3} t^j x^{\be{n-1}-3-j}\big\}\!=\!x^{\be{n-1}-\be{i}}y^i(1+\\t^{2(\be{n-1}-1)}y \sum_{j=0}^{\be{n-1}-3} t^j x^{\be{n-1}-3-j}) + x^{\be{n-1}-2}t x^{\be{n-1}-\be{i}}y^i(t -t x)$, for $1\le i\le n-2$ belong to both $H_t$ and $J_t$. The conclusion holds, as usual in these kind of proofs, by easy degree considerations. In the previous verifications it was implicitly assumed that $\be{n-1}\ge 3$; if $\be{n-1}=2$, the ideal $I$ is simply $(x^2,y)$ while the generic fibre $I'$ reduces to $((x-t)^2+t^2 y^2, y-t(x-t))$.

Finally, \ref{Lem:def-n:4} is simply the dual of \ref{Lem:def-n:3}.
\end{proof}

\begin{rmk}\label{Rmk:Lem:def-n}
The first point of the above Lemma could be seen as a special case of the second one; it is separated by its relevance, which will be perspicuous in the proof of the next theorem.
\end{rmk}

Now it is possible to prove the following theorem about the connectedness of the locus of stable generalized lien bundles in the moduli space. For multiplicity $n>3$, it remains a bit vague because the $\delta$ from which this locus is connected is not explicit.

\begin{thm}\label{Thm:connessioneluogoflgen}
Let $X$ be a primitive multiple curve of multiplicity $n$ and let $\delta=-\deg{\cC}$.
The locus of stable generalized line bundles in $\mathrm{M}(\sO_X,P_D)$ is connected for $\delta$ sufficiently large. If $n=2,3$ or $n|D-(n(n-1)/2)\delta$, then this locus is connected for any value of $\delta$.
\end{thm}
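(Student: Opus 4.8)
The plan is to translate the topological question into the connectedness of a finite combinatorial graph and then to exhibit enough deformations to connect its vertices. First I would record that, by Theorem \ref{Thm:ssinequ} and Theorem \ref{Thm:compirrid}, the stable locus in $\mathrm{M}(\sO_X,P_D)$ is the union of the strata $Z_{\be{.}}$ over the admissible index-vectors $\be{.}=(\be{1},\dotsc,\be{n-1})$ -- those satisfying the strict inequalities \eqref{Eq:ssin} together with the congruence $\sum_{j=1}^{n-1}\be{j}\equiv r\pmod n$, where $r\equiv D-\tfrac{n(n-1)}{2}\delta\pmod n$ is forced by formula \eqref{Eq:Degquot} for $i=1$ (note $D-\tfrac{n(n-1)}{2}\delta\equiv D+\tfrac{n(n-1)}{2}\delta\pmod n$, since the two differ by $n(n-1)\delta$). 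I would then form the graph $G$ whose vertices are the admissible $\be{.}$ and in which $\be{.}$ and $\be{.}'$ are joined whenever one of the deformation Lemmas \ref{Lem:deformationsn} or \ref{Lem:def-n} exhibits a stable generalized line bundle of one index-vector as a flat limit of generalized line bundles of the other. Each such deformation is a family over an affine base all of whose fibres are stable (their index-vectors are admissible), hence its image is a connected subset of the stable locus meeting both $Z_{\be{.}}$ and $Z_{\be{.}'}$; since each $\bar{Z}_{\be{.}}$ is irreducible, connectedness of the stable locus is \emph{equivalent} to connectedness of $G$.

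Next I would translate the deformation lemmas into moves on vectors. Lemma \ref{Lem:def-n}\ref{Lem:def-n:2} gives the \emph{scaling move}: from $\be{.}$ one may pass to any admissible $\be{.}-j$ with $j_1\le\dotsb\le j_{n-1}$ non-decreasing, $n\mid\sum_i j_i$, and $\be{.}-j$ again non-decreasing and nonnegative. Lemmas \ref{Lem:deformationsn}, \ref{Lem:def-n}\ref{Lem:def-n:1}, \ref{Lem:def-n}\ref{Lem:def-n:3} and \ref{Lem:def-n}\ref{Lem:def-n:4} give \emph{reshaping moves} that also lower $\sum_j\be{j}$ by $n$ but redistribute mass among the indices (for instance \ref{Lem:def-n}\ref{Lem:def-n:3} sends $\be{.}$ to $(\be{1}-2,\be{2}-1,\dotsc,\be{n-1}-1)$ while spawning an auxiliary point of simple type). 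All moves preserve $\sum_j\be{j}\bmod n$, consistent with the fixed residue $r$. To realise a move I would take a generalized line bundle in $Z_{\be{.}}$ concentrated at a single point, with stalk of the diagonal form $(x^{\be{n-1}-\be{i}}y^i)_{0\le i\le n-1}$, which is stable because stability depends only on the global index-vector.

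The residue $r$ governs the difficulty. If $n\mid D-\tfrac{n(n-1)}{2}\delta$, i.e. $r=0$, the zero vector is admissible (line bundles are stable since $\deg{\cC}<0$), and the scaling move with $j=\be{.}$ connects every admissible $\be{.}$ directly to $(0,\dotsc,0)$; thus $G$ is a star and the locus is connected for \emph{every} $\delta$. For $n=2$ the result is \cite{CK}. For $n=3$ only the residues $r=1,2$ remain, and since there are just two indices the admissible vectors of least index-sum are $(0,1)$ when $r=1$ (unique, so $G$ is a star) and $(0,2),(1,1)$ when $r=2$; a direct inspection of the two inequalities \eqref{Eq:ssin} shows that for every $\delta$ either only one of these is admissible or they admit a common admissible dominating vector (such as $(2,3)$), so $G$ is connected for every $\delta$.

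For $n\ge4$ and $r\neq0$ I would argue only for $\delta$ large. The strategy is (i) to reduce every admissible $\be{.}$ along a chain of scaling and reshaping moves to one of the finitely many admissible vectors of least index-sum congruent to $r$, and (ii) to connect any two of these minimal vectors by raising them, through inverse moves, to a common admissible vector dominating both and then descending. The essential role of $\delta$ being large is that the right-hand sides of \eqref{Eq:ssin} grow linearly in $\delta$, so once the (finitely many, $\delta$-independent) intermediate vectors are bounded they automatically satisfy the strict inequalities. I expect the main obstacle to be precisely this combinatorial bookkeeping: selecting a sequence of moves that terminates at a minimal vector while keeping every intermediate vector non-decreasing, nonnegative and inside the stability polytope, and then proving the minimal vectors pairwise linked. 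The reshaping moves are indispensable here, since the scaling move alone cannot lower a vector such as $(2,\dotsc,2)$ -- its increment $j$ must be non-decreasing -- whereas Lemma \ref{Lem:def-n}\ref{Lem:def-n:3} reduces it at once. The non-explicit threshold on $\delta$ reflects exactly that for small $\delta$ the stability polytope may be too small to contain the dominating hubs required in step (ii), so that distinct minimal strata can genuinely fail to connect.
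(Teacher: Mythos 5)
Your overall strategy is the paper's own: turn the deformation Lemmas \ref{Lem:deformationsn} and \ref{Lem:def-n} into moves on indices-vectors, use that the stratum of stable generalized line bundles with fixed indices-vector sits inside the irreducible $\bar{Z}_{\be{1},\dotsc,\be{n-1}}$, and link strata by families all of whose fibres are stable. The cases you complete are correct and essentially identical to the paper's: for $n\mid D-\frac{n(n-1)}{2}\delta$ the scaling move of Lemma \ref{Lem:def-n}\ref{Lem:def-n:2} with $j=\be{.}$, applied to a one-point diagonal representative, connects every admissible stratum to the line-bundle stratum (this is exactly the paper's argument), and your $n=3$ analysis via $(0,1)$, resp.\ $(0,2),(1,1)$, and the hub $(2,3)$ reproduces the paper's, which uses the hubs $(1,3)$ and $(2,3)$ together with Lemma \ref{Lem:def-n}\ref{Lem:def-n:2}, \ref{Lem:def-n:3} and \ref{Lem:def-n:4}. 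Two slips, neither fatal to those cases: connectedness of the locus is \emph{implied by}, not equivalent to, connectedness of your graph $G$ (only that implication is needed); and your taxonomy of moves is wrong --- Lemma \ref{Lem:deformationsn} does not lower $\sum_j\be{j}$ at all (both its parts preserve the global indices-vector and only split the local structure into single-jump points), Lemma \ref{Lem:def-n}\ref{Lem:def-n:1} lowers the sum by $\operatorname{lcm}(n,h)$, a multiple of $n$ but in general not $n$, and Lemma \ref{Lem:def-n}\ref{Lem:def-n:4} redistributes mass without lowering the sum (it sends $\be{n-2}\mapsto\be{n-2}+1$, $\be{n-1}\mapsto\be{n-1}-1$).

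The genuine gap is the first assertion of the theorem, $n\ge4$ and $\delta\gg0$: your steps (i)--(ii) are a plan whose ``combinatorial bookkeeping'' you explicitly defer, and that bookkeeping \emph{is} the proof. The paper never classifies minimal vectors nor links them pairwise; it argues in two concrete stages. First, combining Lemma \ref{Lem:deformationsn}\ref{Lem:deformationsn:1} (split any point into single-jump points), Lemma \ref{Lem:def-n}\ref{Lem:def-n:1} (strip $n/\gcd(n,h)$ units from a single-jump point) and Lemma \ref{Lem:deformationsn}\ref{Lem:deformationsn:2}, every stratum is connected to the stratum of its \emph{residue vector}, the one whose successive differences are $\overline{\be{i}-\be{i-1}}\in\{0,\dotsc,n-1\}$; this caps the number of possible connected components at $n^{n-2}$, for every $\delta$. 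Second, for $\delta$ large each stratum is connected to the \emph{single} hub $\bar{Z}_{0,\dotsc,0,\overline{\be{1}+\dotsb+\be{n-1}}}$: one picks a dominating vector $g_{.}$ with $n\mid g_i$ for $i\le n-2$ and $g_{n-1}\equiv\sum_j\be{j}\pmod n$; an upward application of Lemma \ref{Lem:def-n}\ref{Lem:def-n:2} places a sheaf with indices $g_{.}$ inside $\bar{Z}_{\be{1},\dotsc,\be{n-1}}$, while the first-stage reduction places it inside the hub; largeness of $\delta$ enters only to make $g_{.}$ admissible. Note that the moves doing the real work here --- Lemma \ref{Lem:deformationsn}\ref{Lem:deformationsn:1} and Lemma \ref{Lem:def-n}\ref{Lem:def-n:1} --- are precisely the ones your proposal mislabels and never deploys in a structured way; the scaling move plus Lemma \ref{Lem:def-n}\ref{Lem:def-n:3}/\ref{Lem:def-n:4} are not obviously sufficient to carry out your reduction (your own example $(2,\dotsc,2)$ shows the scaling move can stall). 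As submitted, the proposal proves the second sentence of the theorem but not the first.
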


\begin{proof}
The case $n=2$ is Theorem \cite[Theorem 4.10]{CK}.
Let $\sF$ be a generalized line bundle of indices sequence $\be{.}$, not free in only one point $P$, where $\sF_P\!=\!(x^{\be{n-1}-\be{i}}y^i)_{i=0,\dotsc,n-1}$. By definition, $\sF$ belongs to $\bar{Z}_{\be{1},\dotsc,\be{n-1}}$. It belongs also to $\bar{Z}_{\overline{\be{1}},\overline{\be{1}}+\overline{\be{2}-\be{1}},\dotsc,\overline{\be{1}}+\overline{\be{2}-\be{1}}+\dotsb+\overline{\be{n-1}-\be{n-2}}}$, where $\overline{\be{i}-\be{i-1}}$ is the representative of the congruency class modulo $n$ of $\be{i}-\be{i-1}$ contained in $\{0,\dotsc,n-1\}$, for $1\le i\le n-1$ (as usual $\be{0}=0$). This follows applying, if needed, Lemma \ref{Lem:deformationsn}\ref{Lem:deformationsn:1}, then various times Lemma \ref{Lem:def-n}\ref{Lem:def-n:1} and, finally, also Lemma \ref{Lem:deformationsn}\ref{Lem:deformationsn:2}.

In particular, this implies that there are at most $n^{(n-2)}$ connected components for fixed $D$, i.e. those containing the irreducible components $\bar{Z}_{\be{1},\dotsc,\be{n-1}}$, with $0\le \be{i}-\be{i-1}\le n-1$, for $1\le i\le n-1$, and such that $n|D-(n(n-1)/2)\delta-\sum_{i=1}^{n-1}\be{i}$ (when $\delta$ is small, some of this components do not exist, because their indices are too big to satisfy the stability inequalities).
When $\delta$ is sufficiently large, the locus is connected: a repeated application of \ref{Lem:def-n}\ref{Lem:def-n:2} shows that $\sF$ is the generization of a generalized line bundle $\sG$ being not free only in $P$ and such that $D|\bet{i}{\sG}$ for any $1\le i\le n-2$ and $\bet{n-1}{\sG}\equiv \be{1}+\dotsb+\be{n-1} \pmod{n}$. If $\delta$ is sufficiently large, $\sG$ connects the above cited irreducible components where lies $\sF$, and so $\sG$, with $\bar{Z}_{0,\dotsc,0,\overline{\be{1}+\dotsb+\be{n-1}}}$ (to which $\sG$ belongs by a repeated application of \ref{Lem:deformationsn}\ref{Lem:deformationsn:2}) where $\overline{\be{1}+\dotsb+\be{n-1}}$ is the representative between $0$ and $n-1$ of the congruency class of $\be{1}+\dotsb+\be{n-1}$, i.e. of $D+(n(n-1)/2)\delta$.

The assertion about $n|D-(n(n-1)/2)\delta$ follows form the fact that, under this hypothesis, $\be{1}+\dotsb+\be{n-1}$ is a multiple of $n$ and, hence, $\sF$ is the specialization of a line bundle by Lemma \ref{Lem:def-n}\ref{Lem:def-n:2}.

We end this proof, treating explicitly the case $n=3$. We can exclude the case $D\equiv 0\pmod 3$, already covered in the previous lines. Thus, we can assume $D\not\equiv 0\pmod 3$. By what proved above, the locus of stable generalized line bundles of generalized degree $D$ has at most three connected components: that containing $\bar{Z}_{0,1}$, that containing $\bar{Z}_{1,3}$ and that containing $\bar{Z}_{2,2}$ if $D\equiv 1\pmod 3$ and the connected component of $\bar{Z}_{1,1}$, that of $\bar{Z}_{0,2}$ and that of $\bar{Z}_{2,3}$ if $D\equiv 2\pmod 3$. For $\delta=1$, in each case, only one of them really exists, because only the indices $(0,1)$ and $(1,1)$ verify the stability conditions \eqref{Eq:ssin}.

Now, we assume $\delta>1$ and we distinguish the two cases according the congruence class of $D$.

If $D\equiv 1\pmod 3$, there exists a stable generalized line bundle $\sG$ whose stalks are free in all closed points except one, say $P$, where it holds that $(\btp{1}{P}{\sG},\,\btp{2}{P}{\sG})=(1,\,3)$ and $\alpha_{3,1,P}=0$. By Lemma \ref{Lem:def-n}\ref{Lem:def-n:2} $\sG$ belongs to $\bar{Z}_{0,1}$ and so connects it with $\bar{Z}_{1,3}$, to which $\sG$ belongs by definition. Moreover, by Lemma \ref{Lem:def-n}\ref{Lem:def-n:4}, $\sG$ belongs also to $\bar{Z}_{2,2}$ and so the locus of stable generalized line bundles with $D\equiv 1\pmod 3$ is connected. 

If $D\equiv 2\pmod 3$, the situation is similar: there exists a stable generalized line bundle $\sG$ whose stalks are free in all closed points except one, say $P$, where $(\btp{1}{P}{\sG},\,\btp{2}{P}{\sG})=(2,\,3)$ and $\alpha_{3,1,P}=0$. By definition, $\sG$ belongs to $\bar{Z}_{2,3}$. By, respectively, Lemmata \ref{Lem:def-n}\ref{Lem:def-n:2} and \ref{Lem:def-n}\ref{Lem:def-n:3}, $\sG$ belongs also to $\bar{Z}_{1,1}$ and $\bar{Z}_{0,2}$.
\end{proof}
\begin{rmk}\label{Rmk:Thm:connessioneluogoflgen}
As observed in the proof, that there are at most $n^{(n-2)}$ connected components for fixed $D$. But many (maybe all) of them coincide, as in the case of $n|D-(n(n-1)/2)\delta$ or in that of $n=3$. The case $n=4$, where the number of candidates is not excessive, is relatively easy to be treated by hand and the result is the following: for $\delta=1$ or $2$ the locus of generalized line bundle is certainly connected for $D$ even (one has to use also Lemma \ref{Lem:def-n}\ref{Lem:def-n:4}) and it has at most $2$ connected components for $D$ odd, while for $\delta\ge 3$ it is always connected. In general the situation is not really easy to handle directly and I do not know explicitly from which value of $\delta$ the known deformations are sufficient to conclude the connection. However I think that there should be other deformations implying that this locus is always connected, although I had not been able to find them until now.
\end{rmk}

We end this subsection about the global geometry with the promised conjecture about the whole picture of the irreducible components of $\mathrm{M}(\sO_{C_3},P_D)$ (which, as already observed, is the compactified Jacobian of $C_3$ when $3$ divides $D$).

\begin{conj}\label{Conj:CompIrrn=3}
Let $X$ be a primitive multiple curve of multiplicity $3$ such that $\delta=-\deg{\cC}>0$ and such that $g_1\ge 2$, where $g_1$ is the genus of its reduced subcurve. 
\begin{enumerate}

\item\label{Conj:CompIrrn=3:1} If $\delta\le 2(g_1-1)$, then the irreducible components of $\mathrm{M}(\sO_{C_3},P_D)$ are the following:
\begin{enumerate}
\item $\mathrm{M}(\sO_{C},P_D)$, i.e. the moduli scheme of semistable rank $3$ vector bundles of degree $D$ over $C$;
\item The closures of the loci of quasi locally free sheaves of complete type $((2,1,0),(d_0,d_1,0))$ for any pair of integers $d_0$ and $d_1$ such that $d_0+d_1=D$ and $(d_0-3\delta)/2<d_1<d_0/2$;
\item $\bar{Z}_{\be{1},\be{2}}$ for any pair of non-negative integers $\be{1}\le\be{2}$ satisfying $3|D-\be{1}-\be{2}$, $0\le \be{2}+\be{1}< 3\delta$ and $0\le 2\be{2}-\be{1}< 3\delta$.
\end{enumerate}

\item\label{Conj:CompIrrn=3:2} If $\delta>2(g_1-1)$, the only irreducible components of
$\mathrm{M}(\sO_{C_3},P_D)$ are the $\bar{Z}_{\be{1},\be{2}}$, with $\be{1}$ and $\be{2}$ as above.
\end{enumerate}
\end{conj}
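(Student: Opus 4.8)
The plan is to stratify the semistable pure sheaves of generalized rank $3$ on $X=C_3$ by their generic quasi locally free type. By Fact~\ref{Fact:qll}\ref{Fact:qll:2} together with Proposition~\ref{Prop:granknsheaves}, a generic type $(a_1,a_2,a_3)$ must satisfy $a_1+2a_2+3a_3=3$, so there are exactly three possibilities: $(0,0,1)$ (generalized line bundles), $(1,1,0)$ (sheaves that are $\sO_{C_2}$-modules, generically $\sO_{C_2}\oplus\sO_C$, of complete type $((2,1,0),(d_0,d_1,0))$), and $(3,0,0)$ (direct images of rank $3$ vector bundles on $C$). Each stratum is constructible and the three cover $\mathrm{M}(\sO_{C_3},P_D)$ set-theoretically, so every irreducible component is the closure of one stratum; the task reduces to deciding which of these closures is maximal, i.e.\ not contained in the closure of a more generic stratum.

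The type $(0,0,1)$ stratum is settled by Theorem~\ref{Thm:compirrid}: its components are precisely the $\bar{Z}_{\be{1},\be{2}}$, each irreducible of dimension $g_3=3g_1+3\delta-2$, which is part (c); the constraints $\be{1}+\be{2}<3\delta$ and $2\be{2}-\be{1}<3\delta$ are exactly the stability inequalities of Theorem~\ref{Thm:ssinequ} for $n=3$. Since generalized line bundles form an open locus (Lemma~\ref{Lem:openessofgenlinbund}), the remaining strata live in the closed complement and the decisive invariant is dimension. The type $(3,0,0)$ stratum is the classical $\mathrm{M}(\sO_C,P_D)$, irreducible of dimension $9(g_1-1)+1$, and a direct computation gives $g_3-\dim\mathrm{M}(\sO_C,P_D)=3\big(\delta-2(g_1-1)\big)$. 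This is the dichotomy of the statement: for $\delta>2(g_1-1)$ one has $\dim\bar{Z}_{\be{1},\be{2}}>\dim\mathrm{M}(\sO_C,P_D)$, so the bundle stratum can and should be absorbed; for $\delta\le 2(g_1-1)$ it is too large and must remain a component.

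To realize the containment when $\delta>2(g_1-1)$, I would construct explicit flat families degenerating a generic generalized line bundle to a rank $3$ vector bundle on $C$, in the spirit of Lemmas~\ref{Lem:deformationsn} and~\ref{Lem:def-n}, writing the relevant ideals via the Local Structure Theorem~\ref{Thm:locstr} and verifying flatness by length counts; the dimension inequality then guarantees that such limits sweep out all of $\mathrm{M}(\sO_C,P_D)$. Conversely, for $\delta\le 2(g_1-1)$ the same dimension bound shows $\mathrm{M}(\sO_C,P_D)$ cannot sit inside any $\bar{Z}_{\be{1},\be{2}}$, and one must also rule out its containment in the closure of the $(1,1,0)$ stratum. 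The stability window $(d_0-3\delta)/2<d_1<d_0/2$ in part (b) is read off from the slope inequalities for the type $(1,1,0)$ sheaves: using Fact~\ref{Fact:primaesecondfc}\ref{Fact:primaesecondfc:2} one computes $\Deg{\sF^{(1)}}=d_0-\delta$ and $\mu(\sN\sF)=d_1$, and imposing $\mu(\sF^{(1)})\le\mu(\sF)\le\mu(\sN\sF)^{\,c}$ yields exactly those two bounds.

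The genuine obstacle is the type $(1,1,0)$ stratum: these are direct images of generalized rank $3$ sheaves on $C_2$, whose moduli are only partially understood (cf.\ \cite{S2}), unlike the ribbon situation of \cite{CK} where the intermediate sheaves are ordinary rank $2$ bundles with classical moduli. The crux is to compute the dimension of this stratum as a function of $(d_0,d_1)$ and $\delta$, to prove that its component/non-component transition again occurs at $\delta=2(g_1-1)$, and to produce the degenerations placing it inside $\bar{Z}_{\be{1},\be{2}}$ (when $\delta$ is large) while keeping it distinct from both $\mathrm{M}(\sO_C,P_D)$ and the $\bar{Z}$'s (when $\delta$ is small). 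Controlling these intermediate sheaves --- their moduli, their semistability, and their degeneration behaviour --- is precisely what blocks an unconditional argument and keeps the statement at the level of a conjecture; it is also why multiplicity $3$ is singled out as the one case within reach, since there $C_2$ is the only intermediate curve to analyse.
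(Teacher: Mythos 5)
There is nothing in the paper to check your argument against: the statement you were given is Conjecture~\ref{Conj:CompIrrn=3}, and the paper does not prove it --- it only assembles supporting evidence in the discussion following its statement. Your proposal is, rightly, not a proof either, and you say so explicitly in your final paragraph; so the only meaningful comparison is between your plan and the paper's evidence.

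On that score your outline reconstructs essentially the same picture. The stratification by generic type $(3,0,0)$, $(1,1,0)$, $(0,0,1)$, the identification of the $(0,0,1)$ components with the $\bar{Z}_{\be{1},\be{2}}$ via Theorem~\ref{Thm:compirrid} and Lemma~\ref{Lem:openessofgenlinbund}, the stability window for part (b) (your computation $\Deg{\sF^{(1)}}=d_0-\delta$, $\mu(\sN\sF)=d_1$ is correct, though the condition should be stated as: both $\sF^{(1)}$ and $\sN\sF$ are subsheaves, so both slopes must be at most $\mu(\sF)$, which gives exactly $(d_0-3\delta)/2<d_1<d_0/2$ in the stable case), and the dimension dichotomy $g_3-\bigl(9(g_1-1)+1\bigr)=3\bigl(\delta-2(g_1-1)\bigr)$ are precisely the ingredients of the paper's discussion. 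Two refinements from the paper are worth recording. First, the dimension of the type-$(1,1,0)$ rigid-type locus does not need to be ``computed as a function of $(d_0,d_1)$ and $\delta$'': it is already known to be $1+2\delta+5(g_1-1)$ by \cite[Proposition 6.12]{DR2} (these sheaves are quasi locally free of rigid type \emph{as sheaves on $C_2$}), with non-emptiness given by \cite[Corollary 2.7]{S2}; this yields the same threshold $\delta=2(g_1-1)$ you found, since $1+9(g_1-1)\ge 1+2\delta+5(g_1-1)\ge g_3$ exactly when $\delta\le 2(g_1-1)$. Second, the paper localizes the residual gap more sharply than ``controlling the intermediate sheaves'': part (i) would follow from \cite[Conjecture 4.6(i)]{S2}, a statement about the components of $\mathrm{M}(\sO_{C_2},P_D)$ which in particular governs the sheaves in your $(1,1,0)$ stratum that are \emph{not} of rigid type; part (ii) needs \cite[Conjecture 4.6(ii)]{S2} \emph{plus} a lifting statement, namely that a deformation of $\sF^{(1)}$ (a rank-$2$ bundle on $C$) to a generalized line bundle on $C_2$ induces a deformation of $\sF$ itself to a generalized line bundle on $C_3$ --- this lifting is the concrete step missing from your proposed ``explicit degenerations sweeping out $\mathrm{M}(\sO_C,P_D)$''. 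The paper also records independent evidence from the spectral correspondence, comparing component counts with Schiffmann's count \cite{Sc} for rank-$3$ Higgs bundles. In summary: your strategy coincides with the paper's, your computations are correct, and the obstruction you identify is exactly the one that keeps the statement at the level of a conjecture.
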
 

The quasi locally free sheaves supported on $C_2$ cited in the statement are of rigid type so the loci above listed are surely irreducible components of dimension $1+2\delta+5(g_1-1)$ of $\mathrm{M}(\sO_{C_2},P_D)$ by \cite[Proposition 6.12]{DR2} if non-empty and their are non-empty by \cite[Corollary 2.7]{S2}.

The first part of this conjecture is implied by \cite[Conjecture 4.6(i)]{S2}, by Lemma \ref{Lem:openessofgenlinbund} and dimensional reasons: indeed, $1+9(g_1-1)\ge 1+2\delta+5(g_1-1)\ge g_3=1+3\delta+3(g_1-1)$ if $\delta\le 2g_1-2$. The second part would follow from \cite[Conjecture 4.6(ii)]{S2}, if one were able to show that, if $\sF$ is a sheaf of type $(1,1)$ on $C_2$, then a deformation of $\sF^{(1)}$, which is a rank $2$ vector bundle on $C$, to a generalized line bundle on $C_2$ (this deformations exists by \cite[Proposition 3.3]{S2}, or rather by its special case \cite[Theorem 1]{S3}, being under the hypothesis $\delta>2g_1-2$), induces a deformation of $\sF$ to a generalized line bundle on $C_3$. 
The whole conjecture is inspired by the case of generalized rank $2$ sheaves on ribbons (see \cite[Theorem 4.7]{CK} and \cite[Corollary 1]{S3}). 

In the special case in which $X$ is the spectral cover associated to nilpotent Higgs bundles of rank $3$ over $C$, the conjecture has to hold by the spectral correspondence: all the candidate irreducible components (i.e. the irreducible components of stable generalized line bundles and the closures of the loci of semistable vector bundles of rank $3$ over $C$ and of stable quasi locally free sheaves of rigid type of generalized rank $3$ over $C_2$), as we have already observed above, are really irreducible components by the fact they are all of the same dimension $g_3$ (in this case $\delta=2g_1-2$ and $g_3=9g_1-8$) and they have different generic elements. In order to understand if further components should exist, it is possible to compare their number (that can be computed using the stability conditions) with that of irreducible components of the moduli space of semistable Higgs bundles of rank $3$ over $C$, which is $2g_1(g_1-1)+g_1$ (when $D$ is coprime to $3$, see \cite[Examples at page 306]{Sc}): it follows that the above cited components should be all the components if the generalized degree is coprime to $3$.

Moreover, also without doing this computation, by \cite[Corollary 2.4]{Bo} about irreducible components of the nilpotent cone of Higgs bundles (at level of stacks) in our language, it holds that in the cases involved in the spectral correspondence any irreducible component is the closure of the locus of sheaves with fixed complete type and quasi locally free sheaves of rigid type of generalized rank $3$ over $C_2$ cover all the possible complete types for sheaves of generalized rank $3$ on the ribbon (excluding rank $3$ vector bundles over $C$).

For details about the relation of trivial primitive multiple curves with Higgs bundles see \cite[Appendix A]{S1}.
 
\subsection{Local geometry: Zariski tangent space}\label{SubSeclocgeom}
This subsection about the local geometry of $\mathrm{M}(\sO_X,P_D)$ is devoted to the computation of the dimension of the tangent space to points corresponding to generalized line bundles on $X$ and to vector bundles of rank $n$ on $C$. The results are quite similar to \cite[Proposition 4.11]{CK} and also the lemmata used to get them are similar, both in the enunciation and in the proof, to \cite[Lemmata 4.12 and 4.13]{CK}. This is a good point to observe that there is a little mistake in the second assertion of \cite[Lemma 4.12]{CK}: the right hypothesis to simplify the formula about the dimension of the $\Ext{1}$ of a generalized line bundle on a ribbon is that the associated blow up does not have non-trivial global sections, hence its genus (and not that of the ribbon) has to be greater than or equal to two times the genus of the reduced curve, i.e. $2g_1+\be{1}\le g_2$ (using their notation $2\bar{g}+b\le g$, and not $2\bar{g}\le g$, as asserted in the cited Lemma); in any case this error does not affect \cite[Proposition 4.11]{CK}, because it is about stable generalized line bundles, for which it holds also the right hypothesis.
\begin{prop}\label{Prop:DimSpTgn}
Let $X$ be a primitive multiple curve of multiplicity $n$ and let $x$ be a point of $\mathrm{M}_s(\sO_X, P_D)$. 
\begin{enumerate}

\item If $x$ corresponds to a a stable vector bundle $\cE$ of rank $n$ over $C$, then
\begin{align}
\dim T_x\mathrm{M}(\sO_X, P_D)&=n^2(g_1-1)+1+\operatorname{h}^0(C,\SEnd{\cE}\otimes\cC^{-1})\label{Eq:DimTgModVBn}\\
&=n^2\delta+1\text{ if }\delta>\deg{\omega_C}\;(\iff\, g_2>4g_1-3)\label{Eq:DimTgModVBnsp}.
\end{align}

\item If $x$ corresponds to a stable generalized line bundle $\sF$ of local indices sequence $\bp{.}{.}$ such that in each point $P$ where $\sF$ is not free there exists an integer $1\le h(P)\le n-1$ such that $0=\bp{h(P)-1}{P}<\bp{h(P)}{P}=\bp{n-1}{P}$, then
\begin{equation}\label{Eq:DimTgModFLGn}
\dim T_x\mathrm{M}(\sO_X, P_D)=g_n+\sum\limits_{j=1}^{r}\min\{h(P_j),\,n-h(P_j)\}\bp{n-1}{P_j},
\end{equation}
where $P_1,\dotsc,P_r$ are the points of $C$ where $\sF$ is not locally free.

\item If $n=3$ and $x$ corresponds to a stable generalized line bundle $\sF$ of indices sequence $\bp{.}{.}$, then
\begin{equation}\label{Eq:DimTgModFLGn=3}
\dim T_x\mathrm{M}(\sO_X, P_D)=g_3+\be{2}+\sum\limits_{j=1}^{r}\min\{\bp{1}{P_j},\,\bp{2}{P_j}-\bp{1}{P_j}\},
\end{equation}
where $P_1,\dotsc,P_r$ are the points of $C$ where $\sF$ is not locally free.
\end{enumerate}
\end{prop}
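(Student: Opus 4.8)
The plan is to identify the Zariski tangent space at a point $x=[\sF]$ of $\mathrm{M}_s(\sO_X,P_D)$ with $\Ext{1}_{\sO_X}(\sF,\sF)$, as holds at any stable point of a moduli space of sheaves (cf. \cite[\S 4.5]{HL}), and then to compute $\dim\Ext{1}_{\sO_X}(\sF,\sF)$. The main tool is the local-to-global spectral sequence $E_2^{p,q}=H^p\big(X,\SExt{q}{\sO_X}(\sF,\sF)\big)\Rightarrow\Ext{p+q}_{\sO_X}(\sF,\sF)$. Since $\dim X=1$ one has $E_2^{p,q}=0$ for $p\ge 2$, so the differential $d_2$ out of $E_2^{0,1}$ lands in the zero group $E_2^{2,0}$ and the sequence degenerates at $E_2$; this produces the short exact sequence
\[
0\to H^1(\SEnd{\sF})\to\Ext{1}_{\sO_X}(\sF,\sF)\to H^0\big(\SExt{1}{\sO_X}(\sF,\sF)\big)\to 0.
\]
Hence $\dim T_x\mathrm{M}(\sO_X,P_D)=\operatorname{h}^1(\SEnd{\sF})+\operatorname{h}^0\big(\SExt{1}{\sO_X}(\sF,\sF)\big)$, and everything reduces to computing these two numbers in each of the three situations.

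In the first case $\sF=\cE$ is a bundle on $C$, killed by $\sN$, so every $\sO_X$-linear endomorphism is $\sO_C$-linear and $\SEnd{\cE}=\un{\operatorname{End}}_{\sO_C}(\cE)$, a degree $0$ bundle of rank $n^2$ on $C$. Riemann--Roch and simplicity of the stable $\cE$ (so $\operatorname{h}^0=1$) give $\operatorname{h}^1(\SEnd{\cE})=1+n^2(g_1-1)$. For the second summand I would apply $\un{\operatorname{Hom}}_{\sO_X}(-,\sO_C)$ to $0\to\sN\to\sO_X\to\sO_C\to 0$: the resulting long exact sequence identifies $\SExt{1}{\sO_X}(\sO_C,\sO_C)$ with $\un{\operatorname{Hom}}_{\sO_X}(\sN,\sO_C)=\cC^{-1}$, and tensoring by the endomorphisms yields $\SExt{1}{\sO_X}(\cE,\cE)\cong\un{\operatorname{End}}_{\sO_C}(\cE)\otimes\cC^{-1}$; this gives \eqref{Eq:DimTgModVBn}. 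The simplification \eqref{Eq:DimTgModVBnsp} follows from Serre duality, since $\operatorname{h}^1(\un{\operatorname{End}}_{\sO_C}(\cE)\otimes\cC^{-1})=\operatorname{h}^0(\un{\operatorname{End}}_{\sO_C}(\cE)\otimes\cC\otimes\omega_C)$ vanishes once $\deg(\cC\otimes\omega_C)<0$, i.e. $\delta>\deg{\omega_C}$, because $\un{\operatorname{End}}_{\sO_C}(\cE)$ is semistable of slope $0$; the equivalence $\delta>\deg{\omega_C}\Llra g_2>4g_1-3$ is a direct computation with $g_2=2g_1-1+\delta$.

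In the second and third cases $\sF$ is a generalized line bundle, so it is locally free at the generic point and off finitely many closed points; consequently $\SExt{q}{\sO_X}(\sF,\sF)$ is supported in dimension $0$ for $q\ge 1$, whence $H^1$ of it vanishes (justifying the two-term formula) and $\operatorname{h}^0(\SExt{1}{\sO_X}(\sF,\sF))$ is a sum of local lengths. To handle $\operatorname{h}^1(\SEnd{\sF})$ I would use Remarks \ref{Rmk:b1=b2} and \ref{Rmk:b1=bn-1}, which give $\SEnd{\sF}\cong q_*\sO_{X'}$ for the blow up $q\colon X'\to X$ attached to $\sF$. As $q$ is finite, $\operatorname{h}^1(\SEnd{\sF})=\operatorname{h}^1(X',\sO_{X'})=g(X')$, the last equality because $X'$ has no nonconstant global sections (Lemma \ref{Lem:glosecblup} and its multiplicity-$n$ analogue). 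Comparing Euler characteristics in $0\to\sO_X\to q_*\sO_{X'}\to q_*\sO_{X'}/\sO_X\to 0$ gives $g_n-g(X')=\sum_P\dim_\bK(\sO_{X',P}/\sO_{X,P})$, and the local contributions are exactly $\min\{h(P),n-h(P)\}\bp{n-1}{P}$ in case (2) and $\bp{2}{P}+\min\{\bp{1}{P},\bp{2}{P}-\bp{1}{P}\}$ in case (3), as in the proof of Lemma \ref{Lem:glosecblup}.

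It remains to compute $\operatorname{h}^0(\SExt{1}{\sO_X}(\sF,\sF))=\sum_P\dim_\bK\Ext{1}_{A}(M,M)$, with $A=\sO_{X,P}$ and $M=\sF_P$, and this local computation is the crux and the main obstacle. The identity I would aim to prove is
\[
\dim_\bK\Ext{1}_{A}(M,M)=2\dim_\bK\big(\SEnd_{A}(M)/A\big),
\]
i.e. twice the local genus drop; granting it, the two-term formula gives $\dim T_x=g(X')+2\big(g_n-g(X')\big)=g_n+\big(g_n-g(X')\big)$, which is the asserted value once the genus drop of the previous paragraph is inserted. To establish the local identity I would write an explicit free resolution of $M$ over $A$: since $X$ is locally a divisor in a smooth surface, $A$ is a hypersurface ring and the maximal Cohen--Macaulay module $M$ has a $2$-periodic resolution (a matrix factorization of the local equation), so applying $\Hom_{A}(-,M)$ and taking $\ker/\operatorname{im}$ at the first spot computes the group. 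The model is the ribbon stalk $M=(x^b,y)$, whose resolution is governed by $\left(\begin{smallmatrix} y&0\\ -x^b&y\end{smallmatrix}\right)$ and gives $\Ext{1}_A(M,M)\cong(\bK[[x]]/x^b)^{\oplus 2}$ of dimension $2b=2\dim_\bK(\SEnd_A(M)/A)$. The difficulty in higher multiplicity is to organize this for the stalks $(x^b+\alpha y,y^{h})$ of case (2) and $(x^{b_2}+\alpha y,x^{b_2-b_1}y,y^2)$ of case (3), and above all to check that the length is insensitive to the parameter $\alpha$. A more conceptual alternative I would pursue in parallel uses $M\cong\SEnd_A(M)=:A'$ together with the vanishing $\Ext{1}_A(A',A)=0$ for the Cohen--Macaulay module $A'$ over the Gorenstein ring $A$: applying $\Hom_A(-,A')$ to $0\to A\to A'\to A'/A\to 0$ reduces the problem to $\Ext{1}_A(A',A')\cong\Ext{1}_A(A'/A,A')$, i.e. to a length count for the finite-length module $A'/A$, where local duality on the Gorenstein curve is available.
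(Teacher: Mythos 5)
Your global framework coincides with the paper's: identify $T_x\mathrm{M}(\sO_X,P_D)$ with $\Ext{1}_{\sO_X}(\sF,\sF)$, use the local-to-global spectral sequence to get the two-term exact sequence, compute $\operatorname{h}^1(\SEnd{\sF})$ via $\SEnd{\sF}\cong q_*\sO_{X'}$ together with the vanishing of nonconstant sections of $X'$ (Lemma \ref{Lem:glosecblup}), and compute $\operatorname{h}^0(\SExt{1}{\sO_X}(\sF,\sF))$ as a sum of local lengths; your case (i) is also the paper's argument, up to replacing the periodic resolution by the conormal sequence. The genuine gap is in the one step that carries essentially all the content: the local identity $\dim_\bK\Ext{1}_A(M,M)=2\dim_\bK\big(\SEnd{M}/A\big)$ is stated but never proved. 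Your ``Approach A'' is exactly what the paper does in Lemmas \ref{Lem:ExtFLGn} and \ref{Lem:ExtFLGn=3} (two-periodic resolutions from matrix factorizations of $y^n$, then an explicit determination of $\im{a_1}$ and $\ker{a_2}$, including the dependence on the parameter $\alpha$ and the three-generator stalks in multiplicity $3$), but you stop precisely where that computation begins, and that computation is the bulk of the paper's proof.

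``Approach B'' does not close the gap, for two reasons. First, its premise is false: $M\cong\SEnd{M}$ fails whenever $h(P)<n/2$. Concretely, for $n=3$ and $M=(x,y)\subset A=\bK[[x,y]]/(y^3)$ (local indices $\bp{1}{P}=\bp{2}{P}=1$, so $h=1$) one computes $\SEnd{M}=(M:M)=A[y^2/x]\cong(x,y^2)$, and $(x,y^2)\not\cong(x,y)$: their indices-vectors are $(0,1)$ and $(1,1)$, and indeed $(x,y)$ is not cyclic over $A[y^2/x]$. This can be repaired by passing to $M^\vee$, since $\Ext{1}_A(M,M)\cong\Ext{1}_A(M^\vee,M^\vee)$ (dualize extensions, using $\Ext{1}_A(M,A)=0$ and reflexivity) and $M^\vee$ satisfies the hypothesis with $h(M^\vee)=n-h>n/2$, so $M^\vee\cong\SEnd{M}$. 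But, second, even after this fix the endgame is not ``a length count where local duality is available'': writing $A'=\SEnd{M}$ and $T=A'/A$, your reduction yields $\Ext{1}_A(A',A')\cong\Ext{1}_A(T,A')$, and chasing the sequence $0\to A\to A'\to T\to 0$ shows that the desired equality $\dim_\bK\Ext{1}_A(T,A')=2\dim_\bK T$ is equivalent to $\dim_\bK\Ext{1}_A(T,T)=\dim_\bK\Hom_A(T,T)+\dim_\bK T$, equivalently to $\dim_\bK\Ext{1}_A(A',T)=\dim_\bK\Hom_A(A',T)$. This is not a formal consequence of Gorenstein duality in dimension one: the analogous statement already fails for $T=\bK$ over a regular one-dimensional $A$, where $\dim\Ext{1}_A(\bK,\bK)=1\neq 2$. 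Its validity here depends on the specific structure of the rings $A'$ over the hypersurface ring $A$, i.e. on exactly the explicit local analysis you were trying to avoid; so the proposal, as it stands, proves the two easy thirds of the formula and leaves its core unestablished.
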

The case $n=3$ and the general case are equal for those generalized line bundles verifying the hypothesis of the second point of the statement, but for $n=3$ the result is the same for any other generalized line bundle.
Before proving the Proposition, we give its following immediate consequence.
\begin{cor}\label{Cor:SpTgpgen}
The tangent space to a generic point of the irreducible component $\bar{Z}_{\be{1},\dotsc,\be{n-1}}$ has dimension 
\[
g_n+\sum_{i=[\frac{n+1}{2}]}^{n-1}\be{i}-\sum_{i=1}^{[\frac{n-2}{2}]}\be{i}=g_n+\be{n-1}+\sum_{i=[\frac{n+1}{2}]}^{n-2}(\be{i}-\be{n-1-i}).
\]
In particular, only the component of line bundles, i.e. $\bar{Z}_{0,\dotsc,0}$, is generically reduced. 
\end{cor}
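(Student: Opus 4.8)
The plan is to apply the second part of Proposition \ref{Prop:DimSpTgn} to the generic closed point of $\bar{Z}_{\be{1},\dotsc,\be{n-1}}$ and then to reduce the statement to a summation identity. First I would recall, from the parametrization in the proof of Lemma \ref{Lem:DimZbetamult} together with the description of $\underline{b}$ in Theorem \ref{Thm:compirrid}, that a generic element $\sF$ of $\bar{Z}_{\be{1},\dotsc,\be{n-1}}$ is a stable generalized line bundle which fails to be locally free at exactly $\be{n-1}=\sum_{i=1}^{n-1}(\be{i}-\be{i-1})$ distinct points (with $\be{0}=0$), organized so that for each $1\le i\le n-1$ there are $\be{i}-\be{i-1}$ points $P$ whose local indices-vector is $(\underset{i-1}{\underbrace{0,\dotsc,0}},\underset{n-i}{\underbrace{1,\dotsc,1}})$. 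At such a point one has $0=\bp{i-1}{P}<\bp{i}{P}=\bp{n-1}{P}=1$, so $h(P)=i$ and $\bp{n-1}{P}=1$; in particular $\sF$ satisfies the hypothesis of the second point of Proposition \ref{Prop:DimSpTgn} for every $n$. Grouping the points of \eqref{Eq:DimTgModFLGn} according to the value $i=h(P)$ therefore gives
\[
\dim T_x\mathrm{M}(\sO_X,P_D)=g_n+\sum_{i=1}^{n-1}(\be{i}-\be{i-1})\min\{i,\,n-i\}.
\]

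Next I would evaluate $S:=\sum_{i=1}^{n-1}(\be{i}-\be{i-1})c_i$ with $c_i:=\min\{i,n-i\}$ by summation by parts. Since $\be{0}=0$ and $c_{n-1}=1$, Abel's formula yields
\[
S=\be{n-1}+\sum_{i=1}^{n-2}\be{i}(c_i-c_{i+1}).
\]
The point is then the elementary observation that $c_i-c_{i+1}=-1$ for $1\le i\le[\frac{n-2}{2}]$, that $c_i-c_{i+1}=+1$ for $[\frac{n+1}{2}]\le i\le n-2$, and that the single remaining middle term (present only when $n$ is odd) has coefficient $0$. Substituting these values and absorbing $\be{n-1}$ into the positive sum gives exactly
\[
S=\sum_{i=[\frac{n+1}{2}]}^{n-1}\be{i}-\sum_{i=1}^{[\frac{n-2}{2}]}\be{i},
\]
which is the first claimed expression.

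Finally, I would obtain the second form by the substitution $i\mapsto n-1-i$, which sends the range $1\le i\le[\frac{n-2}{2}]$ bijectively onto $[\frac{n+1}{2}]\le i\le n-2$ (a short parity check confirms the endpoints match), turning $\sum_{i=1}^{[\frac{n-2}{2}]}\be{i}$ into $\sum_{i=[\frac{n+1}{2}]}^{n-2}\be{n-1-i}$ and hence producing $g_n+\be{n-1}+\sum_{i=[\frac{n+1}{2}]}^{n-2}(\be{i}-\be{n-1-i})$. For the last assertion I would note that, in this second form, each index $i$ with $[\frac{n+1}{2}]\le i\le n-2$ satisfies $i>n-1-i$, so $\be{i}-\be{n-1-i}\ge 0$ by the monotonicity of the indices (Lemma \ref{Lem:propindices}); thus $S$ is a sum of non-negative quantities which vanishes if and only if $\be{n-1}=0$, equivalently (again by monotonicity and non-negativity) if and only if all indices vanish, which by Corollary \ref{Cor:fliffbetan-1=0} means that $\sF$ is a line bundle. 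Since $\bar{Z}_{\be{1},\dotsc,\be{n-1}}$ has dimension $g_n$ (Theorem \ref{Thm:compirrid}), its generic point is smooth, equivalently the component is generically reduced, exactly when $\dim T_x=g_n$, i.e. when $S=0$; hence only $\bar{Z}_{0,\dotsc,0}$ is generically reduced. The one delicate step is the bookkeeping in the summation by parts, where the floor functions and the parity of $n$ must be tracked carefully; everything else is a direct consequence of Proposition \ref{Prop:DimSpTgn} and the explicit shape of the generic element.
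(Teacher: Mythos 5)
Your proposal is correct and follows essentially the same route as the paper: apply formula \eqref{Eq:DimTgModFLGn} of Proposition \ref{Prop:DimSpTgn} to the generic element of $\bar{Z}_{\be{1},\dotsc,\be{n-1}}$ as described by Theorem \ref{Thm:compirrid}, then compare with $\dim \bar{Z}_{\be{1},\dotsc,\be{n-1}}=g_n$ and the monotonicity of the indices (Lemma \ref{Lem:propindices}) for the generic-reducedness statement. The only difference is that you spell out the summation-by-parts bookkeeping for $\sum_i(\be{i}-\be{i-1})\min\{i,n-i\}$, which the paper leaves implicit, and you cite Corollary \ref{Cor:fliffbetan-1=0} (indices all zero $\iff$ line bundle) where the paper points to Corollary \ref{Cor:flifTi=0}; both are sound.
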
 
\begin{proof}
The first assertion follows from formula \eqref{Eq:DimTgModFLGn} and Theorem \ref{Thm:compirrid}, which describes the generic points of $\bar{Z}_{\be{1},\dotsc,\be{n-1}}$.

It implies, in particular, that this generic dimension is always greater than or equal to $g_n+\be{n-1}$, by Lemma \ref{Lem:propindices}. Hence, recalling that, again by Theorem \ref{Thm:compirrid}, each $\bar{Z}_{\be{1},\dotsc,\be{n-1}}$ has dimension $g_n$, the second assertion is a consequence of the first one and of Corollary \ref{Cor:flifTi=0}.
\end{proof}

The Proposition is implied by the well-known fact that the Zariski tangent space to a point corresponding to a stable sheaf $\sG$ in the moduli space is canonically isomorphic to $\Ext{1}(\sG,\sG)$ (see, e.g., \cite[Corollary 4.5.2]{HL}) and by the next lemmata calculating this dimension for the various kinds of sheaves cited in the statement.

We start with vector bundles of rank $n$ over $C$. The first assertion of the lemma is essentially \cite[Remark 2.7(iii)]{I}, from which the first part of the proof is taken, too.

\begin{lemma}\label{Lem:ExtVBrkn}
Let $\cE$ be a stable vector bundle of rank $n\ge 2$ over $C$. It holds that
\begin{equation}\label{Eq:ExtVBrkn}
\dim(\Ext{1}_{\sO_{C_n}}(\cE,\cE))=n^2(g_1-1)+1+\operatorname{h}^0(C,\cC^{-1}\otimes\SEnd{\cE}). 
\end{equation}
If, furthermore, $\delta=-\deg{\cC}>2g_1-2$, then this formula simplifies to
\begin{equation}\label{Eq:ExtVBrknspecial}
\dim(\Ext{1}_{\sO_{C_n}}(\cE,\cE))=n^2\delta +1.
\end{equation}
\end{lemma}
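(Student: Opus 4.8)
The plan is to transport the computation from $C_n$ down to the reduced curve $C$ by means of the local-to-global spectral sequence for $\Ext$, exploiting that $\cE$, being a sheaf on $C=C_1$, is annihilated by the nilradical $\sN$. First I would fix, by Fact~\ref{Fact:extvb}\ref{Fact:extvb:1}, a rank-$n$ vector bundle $\tilde\cE$ on $C_n$ with $\tilde\cE|_C\cong\cE$, so that $\cE\cong\tilde\cE/\sN\tilde\cE$. Locally $\sO_{C_n,P}\cong\sO_{C,P}[y]/(y^n)$ and $\tilde\cE_P$ is free; the kernel of $\tilde\cE\to\cE$ is $\sN\tilde\cE$, which is the image of the multiplication map $\nu\colon\tilde\cE\otimes\sC\to\tilde\cE$ from Fact~\ref{Fact:primaesecondfc}\ref{Fact:primaesecondfc:3}. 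Thus a locally free resolution of $\cE$ over $\sO_{C_n}$ begins
\[
\tilde\cE\otimes\sC\xrightarrow{\ \nu\ }\tilde\cE\longrightarrow\cE\longrightarrow 0 ,
\]
each successive syzygy being again a twist of $\tilde\cE$ by a power of $\sC$, with the connecting maps given by multiplication by powers of $y$.

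The crucial point is that applying $\un{\Hom}_{\sO_{C_n}}(-,\cE)$ to this resolution produces a complex with zero differentials: every map in the resolution is multiplication by a power of $y\in\sN$, and $\sN$ annihilates the target $\cE$. Consequently the sheaf-$\Ext$'s are read off directly as $\un{\Hom}_{\sO_{C_n}}(\cE,\cE)=\SEnd{\cE}$ and $\SExt{1}{\sO_{C_n}}(\cE,\cE)=\un{\Hom}_{\sO_{C_n}}(\tilde\cE\otimes\sC,\cE)=\SEnd{\cE}\otimes\cC^{-1}$, both supported on $C$ (here I use that any $\sO_{C_n}$-homomorphism into the $\sO_C$-module $\cE$ factors through restriction to $C$). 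Plugging these into $E_2^{p,q}=H^p(C,\SExt{q}{\sO_{C_n}}(\cE,\cE))\Rightarrow\Ext{p+q}_{\sO_{C_n}}(\cE,\cE)$ and using that $C$ is a curve, so that only the columns $p=0,1$ are nonzero and no nonzero differential can act in total degree $1$, I obtain
\[
\dim\Ext{1}_{\sO_{C_n}}(\cE,\cE)=h^1(C,\SEnd{\cE})+h^0(C,\SEnd{\cE}\otimes\cC^{-1}).
\]

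To reach \eqref{Eq:ExtVBrkn} I would compute $h^1(C,\SEnd{\cE})$ by Riemann--Roch: $\SEnd{\cE}$ has rank $n^2$ and degree $0$, so $\chi(\SEnd{\cE})=n^2(1-g_1)$, while stability of $\cE$ gives simplicity, $h^0(C,\SEnd{\cE})=1$, whence $h^1(C,\SEnd{\cE})=1+n^2(g_1-1)$; substitution yields the stated formula. For the simplification \eqref{Eq:ExtVBrknspecial}, assuming $\delta>2g_1-2=\deg{\omega_C}$, it suffices to show $h^1(C,\SEnd{\cE}\otimes\cC^{-1})=0$, for then Riemann--Roch gives $h^0(C,\SEnd{\cE}\otimes\cC^{-1})=n^2(\delta+1-g_1)$ and the two contributions add up to $n^2\delta+1$. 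By Serre duality this $h^1$ equals $\dim\Hom(\cE,\cE\otimes\cC\otimes\omega_C)$, and since $\cC\otimes\omega_C$ has negative degree the bundle $\cE\otimes\cC\otimes\omega_C$ is stable of slope strictly less than that of $\cE$; a standard slope argument then forces every such homomorphism to vanish.

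I expect the main obstacle to be the bookkeeping in the first two steps: correctly identifying the $\sC$-twist appearing in the first syzygy (hence in $\SExt{1}{\sO_{C_n}}(\cE,\cE)$) and verifying rigorously that all the induced differentials on $\un{\Hom}(-,\cE)$ vanish, so that the sheaf-$\Ext$'s are exactly the Hom-sheaves claimed with no correction terms. Once these are pinned down, the spectral-sequence degeneration and the numerical computations (Riemann--Roch, Serre duality, simplicity and the slope estimate) are entirely routine.
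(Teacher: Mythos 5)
Your proposal is correct and follows essentially the same route as the paper: the local-to-global Ext spectral sequence, with $\SExt{1}{\sO_{C_n}}(\cE,\cE)\cong\SEnd{\cE}\otimes\cC^{-1}$ identified by means of the periodic locally free resolution of $\cE$ built from twists of an extension of $\cE$ to $C_n$, followed by Riemann--Roch, simplicity of the stable bundle $\cE$, and a Serre-duality vanishing when $\delta>2g_1-2$. The only cosmetic differences are that the paper quotes the low-degree exact sequence $0\to H^1(\SEnd{\cE})\to\Ext{1}_{\sO_{C_n}}(\cE,\cE)\to H^0(\SExt{1}{\sO_{C_n}}(\cE,\cE))\to 0$ directly instead of arguing degeneration column by column, and phrases the final vanishing via semistability of $\omega_C\otimes\cC\otimes\SEnd{\cE}$ rather than your equivalent slope argument for homomorphisms between stable bundles.
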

\begin{proof}
The Ext-spectral sequence $H^p(\SExt{q}{\sO_{C_n}}(\cE,\cE))\Rightarrow\Ext{p+q}_{\sO_{C_n}}(\cE,\cE)$ implies that the following sequence is exact:
\[
0\to H^1(\SEnd{\cE})\to\Ext{1}_{\sO_{C_n}}(\cE,\cE)\to H^0(\SExt{1}{\sO_{C_n}}(\cE,\cE))\to 0.
\]

It is well-known that $H^1(\SEnd{\cE})=\Ext{1}_{\sO_C}(\cE,\cE)$, being $\cE$ a vector bundle on $C$. It holds also that $H^0(\SExt{1}{\sO_{C_n}}(\cE,\cE))\cong \Hom(\cC\otimes\cE,\cE)$ (it can be checked using, e.g., the locally free periodical resolution $\dotsb\to\sC^n\otimes\sE\to\sC\otimes\sE \to\sE\to\cE\to 0$, where $\sE$ is a vector bundle on $C_n$ extending $\cE$, cf. also the proof of \cite[Proposition 3.14]{DR2}). Hence, formula \eqref{Eq:ExtVBrkn} is implied by well-known properties of stable vector bundles over smooth projective curves and by the trivial identity $\Hom(\cC\otimes\cE,\cE)=H^0(C,\cC^{-1}\otimes\SEnd{\cE})$.

Assume now $\delta=\deg{\cC^{-1}}>2g_1-2$. Consider $\cC^{-1}\otimes\SEnd{\cE}$: it is a semistable vector bundle of rank $n^2$ and degree $n^2\delta$ on $C$, because $\cC^{-1}$ is a line bundle and $\SEnd{\cE}$ is a semistable vector bundle of rank $n^2$ and degree $0$. Hence, $\chi(C,\cC^{-1}\otimes\SEnd{\cE})=n^2(1-g_1)+n^2\delta$. Furthermore, by Serre duality, $\operatorname{h}^1(C,\cC^{-1}\otimes\SEnd{\cE})=\operatorname{h}^0(C,\omega_C\otimes\cC\otimes\SEnd{\cE})$ and the latter vanishes, because $\omega_C\otimes\cC\otimes\SEnd{\cE}$ is semistable of degree $n^2(2g_1-2)-n^2\delta<0$, by hypothesis. Therefore, $\operatorname{h}^0(\omega_C\otimes\cC\otimes\SEnd{\cE})=\chi(\omega_C\otimes\cC\otimes\SEnd{\cE})$ and formula \eqref{Eq:ExtVBrknspecial} holds.  
\end{proof}

The next step is the computation for generalized line bundles in arbitrary multiplicity.
\begin{lemma}\label{Lem:ExtFLGn}
If $\sF$ is a generalized line bundle on $X=C_n$ with local indices sequence $\bp{.}{.}$ such that in each point $P$ where $\sF$ is not locally free there exists an integer $1\le h(P)\le n-1$ such that $0=\bp{h(P)-1}{P}<\bp{h(P)}{P}=\bp{n-1}{P}$, then
\begin{equation}\label{Eq:ExtFLGn}
\dim(\Ext{1}(\sF,\sF))=g_{n}+\tilde{b}_{n-1}+\operatorname{h}^0(X',\sO_{X'})-1,
\end{equation}
where $\tilde{b}_{n-1}=\sum_{j=1}^{r}\min\{h(P_j),\,n-h(P_j)\}\bp{n-1}{P_j}$, where $P_1,\dotsc,P_r$ are the points of $C$ in which $\sF$ is not free and $X'$ is the blow up associated to $\sF$ as in Corollary \ref{Cor:globstrmoltn}\ref{Cor:globstrmoltn:3}.

If, moreover, $\sF$ is stable, then this formula simplifies to
\begin{equation}\label{Eq:ExtFLGnspecial}
\dim(\Ext{1}(\sF,\sF))=g_{n}+\tilde{b}_{n-1}.
\end{equation}
\end{lemma}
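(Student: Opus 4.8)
The plan is to reduce everything to local data through the local-to-global spectral sequence for $\operatorname{Ext}$, and then to evaluate the two resulting pieces separately. On the curve $X$ the spectral sequence $H^{p}(X,\SExt{q}{\sO_X}(\sF,\sF))\Rightarrow\Ext{p+q}_{\sO_X}(\sF,\sF)$ degenerates in the relevant range: since $X$ has dimension $1$ one has $H^{p}(X,-)=0$ for $p\ge 2$, and since $\sF$ is locally free away from the finitely many points $P_1,\dots,P_r$ the sheaves $\SExt{q}{\sO_X}(\sF,\sF)$ with $q\ge 1$ are torsion, so their higher cohomology vanishes as well. Hence there is a short exact sequence
\[
0\to H^{1}(\SEnd{\sF})\to\Ext{1}_{\sO_X}(\sF,\sF)\to H^{0}(\SExt{1}{\sO_X}(\sF,\sF))\to 0,
\]
and I would use it to write $\dim\Ext{1}_{\sO_X}(\sF,\sF)=\operatorname{h}^1(\SEnd{\sF})+\operatorname{h}^0(\SExt{1}{\sO_X}(\sF,\sF))$.

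For the first summand I would invoke Remark \ref{Rmk:b1=bn-1}, which under the standing hypothesis on the local indices gives $\SEnd{\sF}\cong q_*\sO_{X'}$. As $q$ is finite, $H^{i}(X,q_*\sO_{X'})=H^{i}(X',\sO_{X'})$, so $\operatorname{h}^1(\SEnd{\sF})=\operatorname{h}^1(X',\sO_{X'})=g(X')-1+\operatorname{h}^0(X',\sO_{X'})$. To turn $g(X')$ into $g_n$ I would compare Euler characteristics along $0\to\sO_X\to q_*\sO_{X'}\to q_*\sO_{X'}/\sO_X\to 0$: the quotient is torsion supported at the $P_j$, whence $g_n-g(X')=\chi(\sO_{X'})-\chi(\sO_X)$ equals its length. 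A local computation of the blow-up ring $A'=A[y^{h}/(x^{b}+\alpha y)]$ (with $b=\bp{n-1}{P}$ and exponent the representative $\ge n/2$) shows this length to be $\min\{h(P),n-h(P)\}\bp{n-1}{P}$ at each $P$, which is exactly the count already made inside the proof of Lemma \ref{Lem:DimZbetamult}; therefore $g_n-g(X')=\tilde{b}_{n-1}$.

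The heart of the matter, and the step I expect to be the main obstacle, is the second summand: the purely local evaluation of $\SExt{1}{\sO_X}(\sF,\sF)$. Writing $M=\sF_P\cong(s,\,y^{h})$ with $s=x^{b}+\alpha y$ (a nonzerodivisor of $A$, since its image in $A_{\mathrm{red}}$ is $x^b\neq 0$) and $h=h(P)$, one checks that the Koszul relation $(y^{h},-s)$ and the nilpotency relation $(0,y^{n-h})$ generate the first syzygies and that the minimal free resolution is $2$-periodic with alternating matrices
\[
\psi=\begin{pmatrix} 0 & y^{h}\\ y^{n-h} & -s\end{pmatrix},\qquad \sigma=\begin{pmatrix} s & y^{h}\\ y^{n-h} & 0\end{pmatrix},\qquad \psi\sigma=\sigma\psi=0,
\]
the products vanishing precisely because $y^{n}=0$. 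Applying $\operatorname{Hom}_A(-,M)$ and computing the cohomology $\ker{\sigma^{T}}/\im{\psi^{T}}$ on $M^{2}\cong A_1^{2n}$ yields $\dim_{\bK}\Ext{1}_A(M,M)=2\min\{h,n-h\}b$, the symmetry in $h\leftrightarrow n-h$ being manifest in $\psi,\sigma$. I have verified this linear-algebra computation explicitly in multiplicity $2$, where it returns the value $2b$ of \cite{CK}; the general case is the same cancellation, and carrying it out uniformly in $n$ and $h$ is the only genuine labour. Summing over the $P_j$ gives $\operatorname{h}^0(\SExt{1}{\sO_X}(\sF,\sF))=2\tilde{b}_{n-1}$.

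Assembling the three inputs I obtain
\[
\dim\Ext{1}_{\sO_X}(\sF,\sF)=\big(g(X')-1+\operatorname{h}^0(X',\sO_{X'})\big)+2\tilde{b}_{n-1}=g_n+\tilde{b}_{n-1}+\operatorname{h}^0(X',\sO_{X'})-1,
\]
which is \eqref{Eq:ExtFLGn}. Finally, when $\sF$ is stable, Lemma \ref{Lem:glosecblup} shows that $X'$ has no nonconstant global functions, that is $\operatorname{h}^0(X',\sO_{X'})=1$; substituting this value collapses the expression to $g_n+\tilde{b}_{n-1}$, giving \eqref{Eq:ExtFLGnspecial}.
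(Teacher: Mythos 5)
Your proposal reproduces the paper's own proof essentially step for step: the same local-to-global Ext sequence, the same identification $\SEnd{\sF}\cong q_*\sO_{X'}$ via Remark \ref{Rmk:b1=bn-1}, the same genus count $g(X')=g_n-\tilde{b}_{n-1}$ (which the paper likewise borrows from the count made in the proof of Lemma \ref{Lem:DimZbetamult}), the same appeal to Lemma \ref{Lem:glosecblup} in the stable case, and the same device of a $2$-periodic free resolution of the stalk $M\cong(x^{b}+\alpha y,\,y^{h})$ for the local term. Your matrices $\psi,\sigma$ are the paper's $M_1,M_2$ up to permuting the two generators and the two relations, the resolution is indeed exact, and your assembly of the three ingredients into \eqref{Eq:ExtFLGn} and \eqref{Eq:ExtFLGnspecial} is correct; your justification of $g_n-g(X')=\tilde{b}_{n-1}$ via the length of $q_*\sO_{X'}/\sO_X$ is, if anything, slightly more explicit than what the paper writes.

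The one place where you stop short of an argument is precisely where the paper does its real work: the local equality $\dim_{\bK}\Ext{1}_{A}(M,M)=2\min\{h,n-h\}\,\bp{n-1}{P}$ at each bad point. You set this up correctly and the asserted value is in fact the right one, but you then declare the general case to be ``the same cancellation'' as multiplicity $2$, which you admit is the only case you have checked. Since this count is what produces the coefficient $\min\{h,n-h\}$, i.e.\ the entire quantitative content of $\tilde{b}_{n-1}$, a complete proof must actually describe $\ker{a_2}$ and $\im{a_1}$ (in the paper's notation) and compare lengths; the paper does this separately in the two cases $n-h\le h$ and $h<n-h$, and the comparison is not a formal parameter count, because the two parametrized families overlap nontrivially (for instance elements of the form $(\gamma\, sy^{n-h},\,-\gamma s^{2})$ lie in both and must be cancelled before measuring what remains). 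So: right approach, every intermediate claim true, but as written the lemma's key computation is asserted rather than proved, and completing it is exactly the labour the paper's proof consists of.
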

\begin{proof}

The Ext-spectral sequence $H^p(X,\SExt{q}{ }(\sF,\sF))\Rightarrow\Ext{p+q}_{\sO_{X}}(\sF,\sF)$ implies the existence of the following short exact sequence
\[
0\to H^1(X,\SEnd{\sF})\to\Ext{1}(\sF,\sF)\to H^0(X,\SExt{1}{ }(\sF,\sF))\to 0.
\]
Hence, in order to get the result, it suffices to compute the dimensions of the two external terms. By Remark \ref{Rmk:b1=bn-1}, we have $\SEnd{\sF}\simeq q_*(\sO_{X'})$, where $q:X'\to X$ is the blow up there studied. Therefore, $H^1(X,\SEnd{\sF})=H^1(X',\sO_{X'})$ and the latter has dimension $g(X')-\operatorname{h}^0(X',\sO_{X'})+1=g_n-\tilde{b}_{n-1}-\operatorname{h}^0(X',\sO_{X'})+1$ (this formula is due to the definition of the blow up $X'$). By Lemma \ref{Lem:glosecblup}, if $\sF$ is stable, then $\operatorname{h}^0(X',\sO_{X'})=1$, justifying the difference between formulae \eqref{Eq:ExtFLGn} and \eqref{Eq:ExtFLGnspecial}.

It remains to calculate $\operatorname{h}^0(X,\SExt{1}{ }(\sF,\sF))$. As in the case of ribbons treated in \cite[Lemma 4.12]{CK}, by the fact $\sF$ is not free only in a finite number of points, it is clear that $\SExt{1}{ }(\sF,\sF)$ is supported on $P_1,\dotsc,P_r$ and it can be decomposed as $\bigoplus_{j=1}^{r}\Ext{1}(\sF_{P_j},\sF_{P_j})$.

In the next lines we will show that $\dim(\Ext{1}(\sF_{P_j},\sF_{P_j}))=2\tilde{b}_{n-1}$ and, thus, formulae \eqref{Eq:ExtFLGn} and \eqref{Eq:ExtFLGnspecial} hold, as desired.

In order to do this computation, we will use local notation with $A=\sO_{X,P_j}$ and $\sF_{P_j}$ will be denoted by $I$, while $\bp{n-1}{I}=b$ and $h=h(P_j)$.

By Corollary \ref{Cor:locstructspecial}, $I$ is isomorphic to the ideal $(x^b,y^{h})$.

It has the following periodic free resolution:
\[
\dotsb\longrightarrow A^2\overset{M_2}{\longrightarrow}A^2\overset{M_1}{\longrightarrow}A^2\overset{f}{\longrightarrow}I\longrightarrow 0,
\] 
where
\[
M_1=
\begin{pmatrix}
y^{n-h} & -x^{b}-\alpha y\\
0^{\phantom{n-h}} & y^h 
\end{pmatrix}\!, 
M_2=
\begin{pmatrix}
y^{h} & x^{b}+\alpha y\\
0^{\phantom{h}} & y^{n-h} 
\end{pmatrix} \text{and}
\left\{
\begin{aligned}
&f((1,0))\!=\!y^h\\
&f((0,1))\!=\!x^{b}+\alpha y.
\end{aligned}
\right.
\]
From the resolution one gets the complex
\[
\dotsb\longleftarrow\Hom(A^2,I)\overset{\phantom{x}a_2}{\longleftarrow}\Hom(A^2,I)\overset{\phantom{x}a_1}{\longleftarrow}\Hom(A^2,I),
\] 
where $a_i$ is the homomorphism induced by multiplication by $M_i$, for $i=1,2$. By definition, $\Ext{1}(I,I)=\ker{a_2}/\im{a_1}$. It holds that
\[
\varphi\in\im{a_1}\iff \left\{
\begin{aligned}
&\varphi((1,0))=\beta_1y^{n-h}(x^b+\alpha y)\\
&\varphi((0,1))=-\beta_1(x^{b}+\alpha y)^2+\beta_2y^h(x^{b}+\alpha y)+\beta_3y^{2h},
\end{aligned}
\right.
\]
with $\beta_i\in A$, for $1\le i\le 3$. In order to study $\ker{a_2}$ it is convenient to distinguish two cases: $n-h\le h$ and $h<n-h$. 
In the first one, we have that
\[
\psi\in\ker{a_2}\iff \left\{
\begin{aligned}
&\psi((1,0))=\gamma_1(x^{b}+\alpha y)y^{n-h}+\gamma_2 y^h\\
&\psi((0,1))=\gamma_2(x^{b}+\alpha y)y^{2h-n}+\gamma_3y^{h},
\end{aligned}
\right.
\]
with $\gamma_i\in A$, for $1\le i\le 3$.\\
Otherwise, it holds that 
\[
\psi\in\ker{a_2}\iff \left\{
\begin{aligned}
&\psi((1,0))=\gamma_1y^{n-h}\\
&\psi((0,1))=(\gamma_2-\gamma_1)(x^{b}+\alpha y)+\gamma_3y^{2h},
\end{aligned}
\right.
\]
with $\gamma_i\in A$, for $1\le i\le 3$. In both cases the length of $\Ext{1}(I,I)$ is the desired one.
\end{proof}
\begin{rmk}\label{Rmk:Lem:ExtFLGn}
The beginning of the proof, i.e. the existence of the short exact sequence
$H^1(X,\SEnd{\sF})\inj\Ext{1}(\sF,\sF)\surj H^0(X,\SExt{1}{ }(\sF,\sF))$ and also the identification of the right hand term with $\bigoplus_{j=1}^{r}\Ext{1}(\sF_{P_j},\sF_{P_j})$, is true for any generalized line bundle $\sF$ over $X$. However, the interpretation of $\SEnd{\sF}$ in terms of an appropriate blow up is known only for those verifying the hypotheses of Lemma \ref{Lem:blowupmultn}\ref{Lem:blowupmultn:1} and their duals, within which there are those studied in the above Lemma. These are particularly significant because within them there are the generic elements of the irreducible components of the moduli space containing stable generalized line bundles (cf. Theorem \ref{Thm:compirrid}). Moreover, in this case the explicit calculation of the extensions of the stalks is not too hard, because they have only two local generators. 

On the other hand, in the case of multiplicity $3$, i.e. in the following Lemma, the interpretation of the endomorphism sheaf in terms of a blow up is always known and all the computations are not too difficult. 
\end{rmk}

\begin{lemma}\label{Lem:ExtFLGn=3} 
If $\sF$ is a generalized line bundle on $X=C_3$ of local indices sequence $\bp{.}{.}$, then
\begin{equation}\label{Eq:ExtFLGn=3}
\dim(\Ext{1}(\sF,\sF))=g_{3}+\be{2}+\tilde{b}_{1}+\operatorname{h}^0(X',\sO_{X'})-1,
\end{equation}
where $\tilde{b}_{1}=\sum_{j=1}^{r}\min\{\bp{1}{P_j},\,\bp{2}{P_j}-\bp{1}{P_j}\}$, being $P_1,\dotsc,P_r$ the points of $C$ where $\sF$ is not locally free and  $X'$ is the blow up associated to $\sF$ as in Corollary \ref{Cor:globstrmolt3}.

If, moreover, $\sF$ is stable, then this formula simplifies to
\begin{equation}\label{Eq:ExtFLGn=3special}
\dim(\Ext{1}(\sF,\sF))=g_{3}+\be{2}+\tilde{b}_{1}.
\end{equation}
\end{lemma}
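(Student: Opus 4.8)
The plan is to follow the proof of Lemma \ref{Lem:ExtFLGn}, taking advantage of the fact that for $n=3$ the isomorphism $\SEnd{\sF}\cong q_*\sO_{X'}$ holds for \emph{every} generalized line bundle, not only for the special ones: this is Remark \ref{Rmk:b1=b2}. As there, the Ext-spectral sequence $H^p(X,\SExt{q}{ }(\sF,\sF))\Rightarrow\Ext{p+q}(\sF,\sF)$ produces the short exact sequence
\[
0\to H^1(X,\SEnd{\sF})\to\Ext{1}(\sF,\sF)\to H^0(X,\SExt{1}{ }(\sF,\sF))\to 0,
\]
so that $\dim\Ext{1}(\sF,\sF)$ is the sum of the dimensions of the two outer terms. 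Since $q$ is finite, $\SEnd{\sF}\cong q_*\sO_{X'}$ gives $H^1(X,\SEnd{\sF})=H^1(X',\sO_{X'})$, whose dimension equals $g(X')-1+\operatorname{h}^0(X',\sO_{X'})$ by the very definition of the genus of $X'$.

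Next I would compute the genus drop $g_3-g(X')$. As $q$ is an isomorphism away from the finitely many non-free points $P_1,\dots,P_r$, the sheaf $\SEnd{\sF}/\sO_X=q_*\sO_{X'}/\sO_X$ is a skyscraper supported on them, whence $g_3-g(X')=\chi(\sO_{X'})-\chi(\sO_X)=\sum_{j}\operatorname{length}\bigl(\operatorname{End}(\sF_{P_j})/\sO_{X,P_j}\bigr)$. The explicit local blow-up rings of Lemma \ref{Lem:blowupmult3} --- applied, via Corollary \ref{Cor:betaandduals} and the identity $\operatorname{End}(\sF_{P_j})\cong\operatorname{End}(\sF^{\vee}_{P_j})$, to $\sF^{\vee}_{P_j}$ in the regime $2\bp{1}{P_j}>\bp{2}{P_j}$ --- show that this local colength equals $\bp{2}{P_j}+\min\{\bp{1}{P_j},\bp{2}{P_j}-\bp{1}{P_j}\}$; summing over $j$ gives $g(X')=g_3-\be{2}-\tilde{b}_1$.

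The technical heart is the computation of $\operatorname{h}^0(X,\SExt{1}{ }(\sF,\sF))$. This sheaf is supported on the $P_j$ and decomposes as $\bigoplus_{j}\Ext{1}_{\sO_{X,P_j}}(\sF_{P_j},\sF_{P_j})$, so everything reduces to the local assertion that, for $A=A_3$ and $I=(x^{\be{2}}+\alpha y,\,x^{\be{2}-\be{1}}y,\,y^2)$ as in the Local Structure Theorem (Theorem \ref{Thm:locstr}),
\[
\operatorname{length}\,\Ext{1}_A(I,I)=2\bigl(\be{2}+\min\{\be{1},\be{2}-\be{1}\}\bigr).
\]
Since $A=\sO_{C,P}[y]/(y^3)$ is a hypersurface and $I$ is maximal Cohen--Macaulay, $I$ admits a $2$-periodic minimal free resolution determined by the matrix factorization $(\phi,\psi)$ of $y^3$,
\[
\phi=\begin{pmatrix} y&0&0\\ -x^{\be{1}}&y&0\\ -\alpha&-x^{\be{2}-\be{1}}&y\end{pmatrix},\qquad \psi=\begin{pmatrix} y^2&0&0\\ x^{\be{1}}y&y^2&0\\ x^{\be{2}}+\alpha y&x^{\be{2}-\be{1}}y&y^2\end{pmatrix},
\]
with $\phi\psi=\psi\phi=y^3\,\mathrm{Id}$ over $\sO_{C,P}[y]$, hence zero over $A$. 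Applying $\Hom_A(-,I)$ turns the resolution $\cdots\xrightarrow{\psi}A^3\xrightarrow{\phi}A^3\to I\to 0$ into the complex $I^3\xrightarrow{\phi^{T}}I^3\xrightarrow{\psi^{T}}I^3$ and identifies $\Ext{1}_A(I,I)$ with $\ker\psi^{T}/\operatorname{im}\phi^{T}$; its length is then obtained by an explicit computation over the discrete valuation ring $\sO_{C,P}$, carefully tracking the parameter $\alpha$ and splitting into the regimes $\be{1}\le\be{2}-\be{1}$ and $\be{1}>\be{2}-\be{1}$ (this is exactly where the $\min$ is produced). I expect this three-generator linear-algebra computation to be the main obstacle, just as the two-generator version was the crux of Lemma \ref{Lem:ExtFLGn}.

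Finally I would assemble the pieces. Adding $g(X')-1+\operatorname{h}^0(X',\sO_{X'})=g_3-\be{2}-\tilde{b}_1-1+\operatorname{h}^0(X',\sO_{X'})$ to $\operatorname{h}^0(X,\SExt{1}{ }(\sF,\sF))=2\be{2}+2\tilde{b}_1$ gives precisely $g_3+\be{2}+\tilde{b}_1+\operatorname{h}^0(X',\sO_{X'})-1$, which is \eqref{Eq:ExtFLGn=3}. When $\sF$ is stable it is simple, so $\operatorname{h}^0(X',\sO_{X'})=\operatorname{h}^0(X,\SEnd{\sF})=\dim_{\bK}\operatorname{End}(\sF)=1$ (equivalently, by Lemma \ref{Lem:glosecblup}), and \eqref{Eq:ExtFLGn=3} collapses to \eqref{Eq:ExtFLGn=3special}.
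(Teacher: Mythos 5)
Your plan is correct and its skeleton is exactly that of the paper's proof: the local-to-global Ext sequence, the identification $\SEnd{\sF}\cong q_*\sO_{X'}$ from Remark \ref{Rmk:b1=b2} (which, as you note, holds for \emph{every} generalized line bundle when $n=3$), the genus of the blow up, and the reduction of $\operatorname{h}^0(X,\SExt{1}{ }(\sF,\sF))$ to the local modules $\Ext{1}(\sF_{P_j},\sF_{P_j})$, whose lengths are $2\big(\bp{2}{P_j}+\min\{\bp{1}{P_j},\bp{2}{P_j}-\bp{1}{P_j}\}\big)$. The genuine differences are organizational, and each buys something. First, the paper merely asserts a ``periodic free resolution'' of $I=(x^{\be{2}}+\alpha y,\,x^{\be{2}-\be{1}}y,\,y^2)$ and splits into three index regimes, delegating $\bp{1}{P}=0$ and $\bp{1}{P}=\bp{2}{P}$ to Lemma \ref{Lem:ExtFLGn}; your matrix-factorization framing actually \emph{proves} exactness of the $2$-periodic complex (Eisenbud's theorem for the hypersurface $A=\sO_{C,P}[y]/(y^3)$, with $I$ maximal Cohen--Macaulay because it is torsion-free over a one-dimensional Cohen--Macaulay local ring) and treats all regimes uniformly, since the resolution need not be minimal to compute $\Ext{}$. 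Moreover your pair does satisfy $\phi\psi=\psi\phi=y^3\,\mathrm{Id}$ with $\coker{\bar\phi}\cong I$, whereas the matrices $M_1,M_2$ printed in the paper contain misprints (a spurious factor $y$ in the $(1,2)$ entry of $M_1$ and a wrong sign in the $(2,3)$ entry of $M_2$), so your version is the one that actually composes to zero. Second, you make explicit the genus drop $g_3-g(X')=\be{2}+\tilde b_1$ by a colength computation at the bad points (passing to $\sF_{P}^{\vee}$ via Corollary \ref{Cor:betaandduals} when $2\bp{1}{P}>\bp{2}{P}$), where the paper only appeals to ``the definition of the blow up''; and you replace Lemma \ref{Lem:glosecblup} by the cleaner observation that stable implies simple, so $\operatorname{h}^0(X',\sO_{X'})=\dim\operatorname{End}(\sF)=1$. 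The one step you leave unexecuted is precisely the one the paper carries out in detail: the explicit determination of $\ker{\psi^{T}}/\im{\phi^{T}}$ over the DVR $\sO_{C,P}$, showing its length is $2(\be{2}+\min\{\be{1},\be{2}-\be{1}\})$. Your setup for it is correct and the computation is routine, but since it is the technical heart of the lemma, what you have is a complete and sound plan rather than a finished proof.
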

\begin{proof}
The fundamental ideas of the proof, as pointed out in the previous Remark, are the same of the proof of Lemma \ref{Lem:ExtFLGn} about arbitrary multiplicity.

As there, the Ext-spectral sequence implies the existence of the following short exact sequence
\[
0\to H^1(X,\SEnd{\sF})\to\Ext{1}(\sF,\sF)\to H^0(X,\SExt{1}{ }(\sF,\sF))\to 0.
\]
Therefore, it is sufficient to compute the dimensions of the two external terms in order to get the result. By Remark \ref{Rmk:b1=b2}, it holds that $\SEnd{\sF}\simeq q_*(\sO_{X'})$, where $q:X'\to X$ is the blow up there studied. It follows that $H^1(X,\SEnd{\sF})=H^1(X',\sO_{X'})$ and the latter has dimension $g(X')-\operatorname{h}^0(X',\sO_{X'})+1=g_3-\be{2}-\tilde{b}_{1}-\operatorname{h}^0(X',\sO_{X'})+1$ (this formula is implied by the definition of the blow up $X'$). Again as in arbitrary multiplicity, by Lemma \ref{Lem:glosecblup}, if $\sF$ is stable, then $\operatorname{h}^0(X',\sO_{X'})=1$, justifying the difference between formulae \eqref{Eq:ExtFLGn=3} and \eqref{Eq:ExtFLGn=3special}.

It remains to calculate $\operatorname{h}^0(X,\SExt{1}{ }(\sF,\sF))$. As in the previous case, it is clear that $\SExt{1}{ }(\sF,\sF)$ is supported on $P_1,\dotsc,P_r$ and that it can be decomposed as $\bigoplus_{j=1}^{r}\Ext{1}(\sF_{P_j},\sF_{P_j})$.

In the following lines we will show that  $\dim(\Ext{1}(\sF_{P_j},\sF_{P_j}))=2\bp{2}{P_j}+2\min\{\bp{1}{P_j},\bp{2}{P_j}-\bp{1}{P_j}\}$; therefore, formulae \eqref{Eq:ExtFLGn=3} and \eqref{Eq:ExtFLGn=3special} hold, as desired.

In order to do the explicit computations, it is useful to distinguish three different cases, according to the indices of $\sF$ in the point $P_j$:
\begin{enumerate}
\item\label{Proof:Lem:ExtFLGn=3:1} $0=\bp{1}{P_j}<\bp{2}{P_j}$;
\item\label{Proof:Lem:ExtFLGn=3:2} $0<\bp{1}{P_j}=\bp{2}{P_j}$;
\item\label{Proof:Lem:ExtFLGn=3:3} $0<\bp{1}{P_j}<\bp{2}{P_j}$.
\end{enumerate} 
The first two possibilities are special cases of the modules considered in arbitrary multiplicity in the proof of Lemma \ref{Lem:ExtFLGn}. So, we have to prove only \ref{Proof:Lem:ExtFLGn=3:3}. In this case, by Local Structure Theorem, $I\cong(x^{\be{2}}+\alpha y,\,x^{\be{2}-\be{1}}y,\,y^2)$ (observe that it is possible to assume that not only $x^{\be{2}-\be{1}}$ and $y$ do not divide $\alpha$ but also that $x^{\be{1}}$ does not divide it: indeed, it holds that $(x^{\be{2}}+x^{\be{1}}\epsilon y,\,x^{\be{2}-\be{1}}y,\,y^2)\cong(x^{\be{2}},\,x^{\be{2}-\be{1}}y,\,y^2)$, for any $\epsilon\in A$). The method of calculation is similar to the other cases, but the computations are harder, having one more generator. The following is a periodic free resolution of $I$: 
\[
\dotsb\longrightarrow A^2\overset{M_2}{\longrightarrow}A^2\overset{M_1}{\longrightarrow}A^2\overset{f}{\longrightarrow}I\longrightarrow 0,
\] 
where
\[
M_1 =
\left( \begin{array}{ccc}
y & -x^{\be{2}-\be{1}}y &-\alpha^{\phantom{\be{1}}}\\
0 & y &-x^{\be{1}}\\
0 & 0 &\phantom{-} y^{\phantom{\be{1}}} 
\end{array} \right) \text{and }
M_2 =
\left( \begin{array}{ccc}
y^2 & x^{\be{2}-\be{1}}y & x^{\be{2}}+\alpha y\\
0 & y^2 & -x^{\be{1}}y\\
0 & 0^{\phantom{2}} & y^2 
\end{array}
 \right)
 \]
while $f((1,0,0))=y^2$, $f((0,1,0))=x^{\be{2}-\be{1}}y$ and $f((0,0,1))=x^{\be{2}}+\alpha y$.

From the resolution one gets the complex
\[
\dotsb\longleftarrow\Hom(A^3,I)\overset{\phantom{x}a_2}{\longleftarrow}\Hom(A^3,I)\overset{\phantom{x}a_1}{\longleftarrow}\Hom(A^3,I),
\] 
where $a_i$ is the homomorphism induced by multiplication by $M_i$, for $i=1,2$. By definition, $\Ext{1}(I,I)=\ker{a_2}/\im{a_1}$. It holds that $\varphi\in\im{a_1}$ if and only if
\[
\left\{
\begin{aligned}
&\varphi((1,0,0))=\beta_1x^{\be{2}-\be{1}}y^2+\beta_2x^{\be{2}}y\\
&
\begin{split}
\varphi((0,1,0))=&(\beta_3x^{\be{2}-\be{1}}+\beta_4\alpha)y^2+(-\beta_1x^{\be{2}-\be{1}}+\beta_4x^{\be{1}})x^{\be{2}-\be{1}}y
\\
&-\beta_2x^{\be{2}-\be{1}}(x^{\be{2}}+\alpha y)
\end{split}
\\
&
\begin{split}
\varphi((0,0,1))=&(\beta_5\alpha+\beta_6x^{\be{1}}+\beta_7x^{\be{2}-\be{1}})y^2+(-\beta_1\alpha+(\beta_5-\beta_3)x^{\be{1}})x^{\be{2}-\be{1}}y\\
&-(\beta_2\alpha+\beta_4x^{\be{1}})(x^{\be{2}}+\alpha y),
\end{split}
\end{aligned}
\right.
\]
with $\beta_i\in A$, for any $1\le i\le 7$; 
on the other side $\psi\in\ker{a_2}$ if and only if
\[
\left\{
\begin{aligned}
&\psi((1,0,0))=\gamma_1y^2+\gamma_2x^{\max\{0,\,2\be{1}-\be{2}\}}x^{\be{2}-\be{1}}y\\
&\psi((0,1,0))=-\gamma_2x^{\max\{\be{2}-2\be{1},\,0\}}(x^{\be{2}}+\alpha y)+\gamma_3 y^2+\gamma_4x^{\be{2}-\be{1}}y\\
&\psi((0,0,1))=(-\gamma_1-\gamma_4)(x^{\be{2}}+\alpha y)+\gamma_5y^2+\gamma_6x^{\be{2}-\be{1}}y,
\end{aligned}
\right.
\]
with $\gamma_i\in A$, for $1\le i\le 6$. Hence, the desired result follows from these direct computations (observing that each $\beta_i$ can be used to \emph{limit} almost one $\gamma_j$).
\end{proof}

\paragraph*{Acknowledgements}
This paper is extracted from my forthcoming doctoral thesis \cite{S1}, so I would like to express my gratitude to my advisor, Filippo Viviani, for his guidance.\\
\emph{Ad maiorem Dei gloriam}

\end{document}